\documentclass[letterpaper,11pt,oneside,reqno]{amsart}


\usepackage[pdftex,backref=page,colorlinks=true,linkcolor=blue,citecolor=red]{hyperref}
\usepackage[alphabetic,nobysame]{amsrefs}

\usepackage{amsmath,amssymb,amsthm,amsfonts,mathtools}
\usepackage{graphicx,color}
\usepackage{upgreek}
\usepackage{esint}
\usepackage[mathscr]{euscript}

\allowdisplaybreaks
\numberwithin{equation}{section}

\usepackage{tikz}
\usetikzlibrary{shapes,arrows,positioning,decorations.markings}

\usepackage{ytableau}
\usepackage{array}
\usepackage{adjustbox}
\usepackage{cleveref}
\usepackage{enumerate}
\usepackage[margin=10pt]{caption}

\usepackage[DIV=12]{typearea}

\synctex=1

\newcommand{\ssp}{\hspace{1pt}}
\renewcommand{\le}{\leqslant}
\renewcommand{\ge}{\geqslant}
\renewcommand{\Re}{\operatorname{Re}}

\newcommand{\inv}{\operatorname{inv}}
\newcommand{\Prob}{\operatorname{\mathbb{P}}}
\newcommand{\ls}{\mathsf{s}}
\newcommand{\bs}{\mathsf{S}}
\DeclareMathOperator{\PD}{PD}
\DeclareMathOperator{\RPD}{RPD}
\DeclareMathOperator{\BPD}{BPD}
\DeclareMathOperator{\RBPD}{RBPD}
\DeclareMathOperator{\cross}{cross}

\DeclareMathOperator{\NWbump}{NWbump}
\DeclareMathOperator{\emptytile}{empty}
\DeclareMathOperator{\wt}{wt}

\newtheorem{proposition}{Proposition}[section]
\newtheorem{lemma}[proposition]{Lemma}
\newtheorem{corollary}[proposition]{Corollary}
\newtheorem{theorem}[proposition]{Theorem}
\newtheorem{conjecture}[proposition]{Conjecture}

\theoremstyle{definition}
\newtheorem{definition}[proposition]{Definition}
\newtheorem{remark}[proposition]{Remark}


\begin{document}

\title[Grothendieck Shenanigans]{Grothendieck Shenanigans:\\
Permutons from pipe dreams via integrable probability}

\author{
	A. H. Morales,
	G. Panova,
	L. Petrov,
	D. Yeliussizov}

\date{}

\begin{abstract}
	We study random permutations corresponding to pipe dreams. Our
	main model is motivated by the Grothendieck
	polynomials with parameter $\beta=1$ arising in the
	$K$-theory of the flag variety. The probability weight of a
	permutation is proportional to the principal specialization
	(setting all variables to~$1$)
	of the
	corresponding
	Grothendieck polynomial. By mapping
	this random permutation to a version of TASEP (Totally
	Asymmetric Simple Exclusion Process), we describe the
	limiting permuton and fluctuations around it as the order
	$n$ of the permutation grows to infinity. The fluctuations
	are of order $n^{\frac13}$ and have the Tracy--Widom GUE distribution,
	which places this algebraic ($K$-theoretic) model
	into the Kardar--Parisi--Zhang universality class.
	As an application, we find the expected
	number of inversions in this random permutation, and
	contrast it with the case of
	non-reduced pipe dreams.

	Inspired by
	Stanley's question for the maximal value of principal
	specializations of Schubert polynomials, we resolve the
	analogous question for $\beta=1$ Grothendieck polynomials,
	and provide bounds for general~$\beta$. This analysis uses a correspondence with the free fermion six-vertex model, and the frozen boundary of the Aztec diamond.
\end{abstract}

%

\maketitle

\section{Introduction}
\label{sec:intro}

\subsection{A story from Algebra to Probability}
\label{sub:overview}

Algebraic Combinatorics established itself as a field that uses combinatorial methods to understand algebraic behavior in problems ranging from Group Theory to Algebraic Geometry. It started with stark exact formulas, like the celebrated hook-length formula for the dimension of irreducible modules of the symmetric group $S_n$; beautiful interpretations, such as the Littlewood--Richardson rule for the structure constants of representations of the general linear group $GL_n$; powerful and intricate bijections, such as the Robinson--Schensted--Knuth correspondence. However, exact answers only go so far, leaving room for questions like  ``approximately how many'', ``what are the typical objects'',  and ``what is the typical behavior''. These questions lead us into the realm of Asymptotic Algebraic Combinatorics, which aims to answer them with the help of tools originating outside of Combinatorics. In the present work, we employ Integrable Probability, a rapidly evolving field focused on developing and analyzing interacting particle systems and random growth models possessing a certain degree of structure or symmetry. The arising probabilistic models exhibit rich structure leading to new permutons representing the typical permutations. The connection between the algebraic model and statistical mechanics is multi-fold via a correspondence between the so-called bumpless pipe dream models for Schubert/Grothendieck and the six-vertex model. The well studied free fermion six-vertex model and the frozen boundary of the Aztec diamond are key to understanding the maximal permutations.

On the algebro-combinatorial side, many significant questions concerning exact formulas and combinatorial interpretations in the cohomology and $K$-theory of the flag variety remain open. Understanding their asymptotic behavior is thus even more natural. Stanley \cite{stanley2017some} asked the most basic question on the  principal specializations of Schubert polynomials $\mathfrak{S}_w$
(representing cohomology classes of the flag variety): does the following limit exist
\begin{equation*}
	\lim_{n\to \infty} \frac{1}{n^2} \log_2 \max_{w \in S_n} \mathfrak{S}_w\bigl(
	\underbrace{1,\ldots,1 }_n\bigr),
\end{equation*}
and if so, what is it and for which permutations $w$ is this achieved.
This question (including the existence of the limit) is still open.
In~\cite{MoralesPakPanova2019}, a lower bound of about $0.29$ was established for {\em layered permutations}. An upper bound of about $0.37$ comes from a remarkable connection with Alternating Sign Matrices and the six-vertex model (see \Cref{rem:size ASMs vs PD}). As we shall see later, this question has an even more interesting interpretation from the Probability/Statistical Mechanics side as it is asking for the asymptotic growth of the energy in a model with \emph{long-range interactions} (pipes are not allowed to cross more than once), and for such models no general tools are known.

A natural one-parameter generalization of Schubert
polynomials are the Grothendieck polynomials
$\mathfrak{G}_w^{\beta}$, which represent $K$-theoretic
classes of the flag variety. Extending Stanley's questions, we
would like to understand the asymptotic behavior of maximal
 principal specializations $\max_{w\in S_n}\mathfrak{G}_w^{\beta}(1^n)$
of Grothendieck polynomials. This
question was first touched on in
\cite{MPP4GrothExcited}, \cite{dennin2022}. Establishing a family of
such maximal permutations would shed light on the
Schubert questions as $\beta \to 0$, as well as on the corresponding long-range interaction model.

Thanks to a combinatorial model for Grothendieck and
Schubert polynomials, these questions have very natural
statistical mechanics interpretations.
Namely, both polynomials are partition functions of tilings
into crosses and elbows of a size $n$ triangle (staircase) shape,
which result in a configuration of $n$ ``pipes''. Such pipe configurations
are often called \emph{pipe dreams}.
In the Schubert case, the only valid
tilings are the ones where no two pipes cross more than
once. This is a global (long-range interaction) condition,
which makes the model much less tractable.
In the Grothendieck case
for $\beta=1$, all tilings are allowed, but the pipes must be
resolved (reduced) to obtain a permutation. One of the key ideas leading to our analysis is that this model can be mapped to a
colored stochastic six-vertex model
(and furter to TASEP, the Totally Asymmetric Simple Exclusion Process).
The resulting interacting particle systems have only local (short-range) interactions, and are
amenable to techniques from Integrable Probability.

\medskip

In this work,
we investigate the asymptotics of the \emph{typical
$\beta=1$ Grothendieck random permutations}
and characterize their limit shape which is described by a permuton.
We also study fluctuations of Grothendieck random permutations around the limiting permuton.
They
are of order $n^{\frac13}$ and asymptotically have the Tracy--Widom GUE distribution.
This distribution was
first observed in the fluctuations of
the largest eigenvalue of Gaussian
random matrices
with unitary symmetry. By now, having Tracy--Widom fluctuations is an indication that a
model is within the
Kardar--Parisi--Zhang (KPZ) universality class \cite{CorwinKPZ}, which includes a wide range of
random growth models and interacting particle systems. We also derive the expected number of inversions, which are of order $n^2$.

\medskip

We also study the natural, yet not as algebraically
motivated, model, where the staircase is tiled with crosses
with probability $p$ and elbows with probability $1-p$, and
the pipes follow the tiles without any reductions or
reassignments. Using simple random walks, we
show that the resulting random permutation is close to the identity, and, moreover,
that the total
displacement
$\sum_{i=1}^{n}|i-w_i|$
and length (number of inversions)
of this permutation
are of order $n^{3/2}$. This was motivated by, and partially resolves, an open problem of Colin Defant.

\medskip

Returning to the original question, we  investigate the asymptotics
of $\max_{w \in S_n} \mathfrak{G}_w^\beta(1^n)$.
When
$\beta=1$, this maximum is known to be $O(2^{\binom{n}{2}})$. Using the
correspondence with
2-enumerated Alternating Sign Matrices
(equivalently, the six-vertex model with domain wall boundary conditions
and free-fermion weights; or the model of uniform domino tilings of the Aztec diamond),
we show that a large family of layered
permutations also achieve this asymptotic maximum. For
general $\beta$, we establish certain bounds for the maximal
 principal specialization.

\subsection{Pipe dreams and permutations}
\label{sub:pipe_dreams}
We denote by $S_n$ the set permutations of
$\{1,2,\ldots,n\}$ that we write in the one-line notation
$w=w_1w_2\cdots w_n$ unless indicated otherwise.
We also denote the image of $i$ under $w$ by $w(i)$, and use the notation
$w_i=w(i)$ interchangeably when this does not lead to confusion.
The longest
permutation $n\,n-1\ldots 2\,1$ is denoted by $w_0=w_0(n)$.
For a permutation $w$ of length $\ell$, we denote by $R(w)$
the set of \emph{reduced words} of $w$, that is, tuples
$(r_1,\ldots,r_{\ell})$ such that $s_{r_1}\cdots s_{r_\ell}$
is a reduced decomposition of $w$, where $s_i=(i, i+1)$ are the
simple transpositions.

Grothendieck polynomials can  be defined combinatorially
via pipe dreams (equivalently, rc-graphs), as partition functions of the following model.

A \emph{pipe dream}
of order $n$
is a tiling of the
staircase shape
(having $n-1$ boxes in the first row,
$n-2$ boxes in the second row, and so on,
with boxes left-justified)
by tiles of two types:
\emph{bumps}
\raisebox{-2pt}{\includegraphics[scale=0.4]{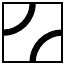}}
and
\emph{crossings}
\raisebox{-2pt}{\includegraphics[scale=0.4]{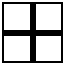}}.
The $n$-th diagonal below the staircase is
equipped with half bumps
\raisebox{-2pt}{\includegraphics[scale=0.4]{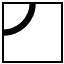}}
(whose boundary we do not draw).
Each of the boxes can have a tile of any type, so there
are $2^{\binom{n}{2}}$ pipe dreams of order $n$.
See \Cref{fig:pipe_dream}, left, for an example of a pipe dream of order $6$.

\begin{figure}[htpb]
	\centering
	\includegraphics[width=.75\textwidth]{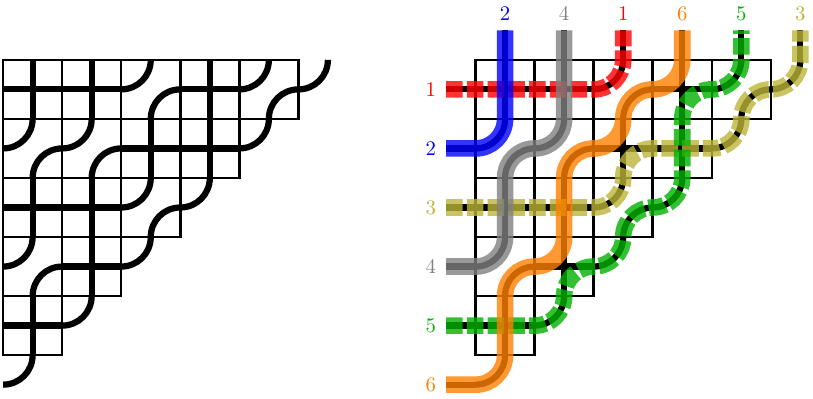}
	\caption{\textbf{Left}: A pipe dream $D$ of order $6$.
		\textbf{Right}: A reduction of the pipe dream leading to the permutation
		$w(D)=241653$.
		The right image appears in color online. Dashing is added for the
		printed version and accessibility.}
	\label{fig:pipe_dream}
\end{figure}

A pipe dream (a tiling of the staircase shape) forms
a collection of strands (or \emph{pipes}) labeled~$1$ to $n$ from
the row where they start.
A pipe dream is called \emph{reduced} if
any two pipes cross through each other at most once.

\begin{definition}[Reduction of a pipe dream]
	\label{def:reduction_pipe_dream}
	Given a pipe dream~$D$
	that is not necessarily reduced, the \emph{reduction} of~$D$
	is a unique reduced pipe dream~$D'$ obtained as follows:
	starting at the bottom left tile traverse the pipe dream upwards along columns and to the right. For each encountered crossing,
	replace it with a bump if the pipes have already crossed in the already traversed squares.
	The labeling of pipes together with a reduction
	is indicated by colored paths in
	\Cref{fig:pipe_dream}, right.
\end{definition}

\begin{definition}[Permutation from a pipe dream]
	\label{def:permutation_from_pipe_dream}
	One can associate a permutation $w(D)\in S_n$ to a pipe dream
	of order $n$ as
	follows. If $D$ is reduced then $w(D)^{-1}_j$ is the column where
	the pipe $j$ ends up in.\footnote{Throughout
	the paper, the column coordinate~$j$ increases from left to right,
	and the row coordinate~$i$ increases from top to bottom.}
	Equivalently, the column $j$ contains the exiting pipe of color
	$w(D)_j$. Note that to capture the column number for the pipe $j$,
	we need the inverse permutation $w(D)^{-1}$.

	If $D$ is not reduced, then $w(D)$
	is the permutation associated to the reduction $D'$ of $D$.
\end{definition}

\begin{remark}
	\label{rmk:Demazure_product}
	Alternatively, the permutation $w(D)$
	from a non-reduced pipe dream
	can be defined using the \emph{Demazure product}
	\cite{demazure1974desingularisation}.
	Namely, for an elementary transposition
	$s_i=(i,i+1)$ (with $1\le i\le n-1$)
	and a permutation $w\in S_n$, the Demazure
	(or 0-Hecke) product
	is defined as
	\begin{equation}
		\label{eq:Demazure_product}
		s_i\star w\coloneqq
		\begin{cases}
			ws_i, & \textnormal{if}\ \ell(ws_i)>\ell(w),\\
			w, & \textnormal{otherwise}.
		\end{cases}
	\end{equation}
	Here $\ell(\cdot)$ is the length of a permutation, that is,
	its number of inversions.

	To each cross in $D$, associate a transposition $s_i$.
	Reading from the bottom left to top right, we obtain a
	(not necessarily reduced) word.
	For example, the pipe dream in \Cref{fig:pipe_dream}, left,
	corresponds to the word $s_5s_5s_3s_4s_1s_2s_4s_5s_4$.
	Replacing this product
	of elementary transpositions
	by the Demazure product, we obtain the permutation
	$w(D)^{-1}$. For our example, the Demazure product is
	\begin{equation*}
		s_5\star
		s_5\star
		s_3\star
		s_4\star
		s_1\star
		s_2\star
		s_4\star
		s_5\star
		s_4=
		s_5s_2s_1s_4s_3s_5=316254,
	\end{equation*}
	which is the inverse of $w(D)$ given in \Cref{fig:pipe_dream}, right.
\end{remark}

Let $\PD(n)$ and $\RPD(n)$ be the sets of pipe
dreams and reduced pipe dreams of size $n$.
For each $w\in S_n$,
let $\PD(w)$ and
$\RPD(w)$ be, respectively, the sets of pipe dreams and reduced pipe dreams~$D$ such that $w(D)=w$. Note that
$\#\PD(n)=2^{\binom{n}{2}}$, whereas there is no simple
known formula for $\#\RPD(n)$
\cite[\href{http://oeis.org/A331920}{A331920}]{oeis}. Given
a pipe dream $D$, let $\cross(D)$ be the set of
coordinates~$(i,j)$
of the cross tiles.
The \emph{weight} of $D$ is the monomial
$\wt(D)\coloneqq \prod_{(i,j)\in \cross(D)} x_i$.
For example, for the pipe dream in \Cref{fig:pipe_dream}, left, we have
$\wt(D)=x_1^3 x_2^2 x_3^2 x_4 x_5$.

The nonsymmetric Grothendieck polynomials generalize the Schubert and the Schur polynomials,
and capture the $K$-theory of the flag variety.
They can be defined or interpreted in a number of ways,
including via
divided difference operators \cite{Lascoux1990}, see \Cref{sub:Grothendieck_polynomials_intro},
pipe dreams \cite{BilleyBergeron}, bumpless pipe dreams
\cite{Weigandt2020_bumpless},
\cite{LamLeeShimozono},
and solvable lattice models
\cite{BrubakerALCO}, \cite{BuciumasScrimshaw}.
We can also define them as the partition functions of this model, via the
the following
result  due to
\cite{fomin1996yang}, \cite{fomin1994grothendieck}, and \cite{BilleyBergeron}.
\begin{theorem}[Grothendieck polynomials as sums over pipe dreams]
	\label{thm:Grothendieck_pipe_dreams_intro}
	For any $w\in S_n$, we have
\begin{equation} \label{eq: Groth in terms of pd}
	\mathfrak{G}_w^\beta(x_1,\ldots,x_n ) \,=\, \sum_{D \in \PD(w)} \beta^{\#\cross(D)-\ell(w)} \wt(D).
\end{equation}
In particular, setting $\beta=0$ forbids non-reduced
pipe dreams, so the Schubert polynomial is
\begin{equation}
	\label{eq: schubs in terms of rpd}
	\mathfrak{S}_w({\bf x}) = \sum_{D\in \RPD(w)} \wt(D).
\end{equation}
\end{theorem}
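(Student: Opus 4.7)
My plan is to prove \eqref{eq: Groth in terms of pd} by downward induction on $\ell(w)$, taking as definition of $\mathfrak{G}_w^\beta$ the Lascoux--Fomin--Kirillov recursion: $\mathfrak{G}_{w_0}^\beta(\mathbf{x}) = \prod_{i=1}^{n-1} x_i^{n-i}$, and $\mathfrak{G}_{ws_i}^\beta = \pi_i^\beta \mathfrak{G}_w^\beta$ whenever $\ell(ws_i) = \ell(w) - 1$, where $\pi_i^\beta$ is the isobaric $\beta$-divided difference operator (a standard choice being $\pi_i^\beta f = \partial_i\bigl((1+\beta x_{i+1})f\bigr)$ with $\partial_i f = (f - s_i f)/(x_i - x_{i+1})$). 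Writing $F_w(\mathbf{x};\beta)$ for the right-hand side of \eqref{eq: Groth in terms of pd}, the proof reduces to checking the base case $F_{w_0} = \prod_i x_i^{n-i}$ and the recursion $\pi_i^\beta F_w = F_{ws_i}$ whenever $\ell(ws_i) < \ell(w)$. The Schubert identity \eqref{eq: schubs in terms of rpd} then follows by setting $\beta = 0$, which kills all non-reduced pipe dreams.

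The base case is immediate from the Demazure-product description in \Cref{rmk:Demazure_product}: for any $D \in \PD(w_0)$ one has $\binom{n}{2} = \ell(w_0) \le \#\cross(D) \le \binom{n}{2}$, the first inequality because the Demazure product of $k$ generators has length at most $k$, and the second because the staircase contains only $\binom{n}{2}$ tiles. Hence $D$ must be the unique pipe dream with a cross in every tile; its contribution is $\prod_{i=1}^{n-1} x_i^{n-i}$ with $\beta$-exponent zero, which matches $\mathfrak{G}_{w_0}^\beta$.

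For the inductive step, I plan to establish $\pi_i^\beta F_w = F_{ws_i}$ via a local analysis confined to rows $i$ and $i+1$ of the staircase. Since $\pi_i^\beta$ involves only $x_i$ and $x_{i+1}$, and the only tiles contributing these variables live in those two rows, one may group pipe dreams of $w$ by the configuration of tiles outside of rows $i, i+1$ and reduce to a family of two-row identities. The core combinatorial tool is a local ``ladder move'' on two-row profiles: for the reduced case this is the Bergeron--Billey move from \cite{BilleyBergeron}, and for the non-reduced case the Fomin--Kirillov extension from \cite{fomin1994grothendieck}, which can be phrased as a Yang--Baxter equation among local $R$-matrices attached to crosses and bumps. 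The goal is to show that each two-row profile of a pipe dream for $w$ is sent under $\pi_i^\beta$ to a weighted sum of two-row profiles for $ws_i$ (with appropriate $\beta$-exponents), so that summing over the fixed exterior configuration and over the two-row profile gives exactly $F_{ws_i}$.

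The main obstacle is the bookkeeping of the excess-crosses exponent $\#\cross(D) - \ell(w)$ across this local bijection. A ladder move can slide a cross from one row to another, and such a move may change whether that cross is reducible; the same tile configuration can thus contribute to $w$ and to $ws_i$ with different $\beta$-exponents. Verifying that the weighted sums match reduces to a finite case analysis of the possible local two-row patterns, ensuring that reduced and non-reduced contributions assemble correctly on both sides. This case analysis is the heart of the argument, and it is often cleanest in the $R$-matrix / six-vertex reformulation, where the desired identity becomes a direct consequence of the Yang--Baxter equation and commuting transfer matrices.
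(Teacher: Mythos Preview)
The paper does not prove this theorem: it is stated as a result \emph{due to} \cite{fomin1996yang}, \cite{fomin1994grothendieck}, and \cite{BilleyBergeron}, with no proof given. Your proposal is essentially the standard argument from those references --- downward induction on $\ell(w)$, with the Yang--Baxter / ladder-move verification that the pipe-dream generating function satisfies the same isobaric divided-difference recursion as $\mathfrak{G}_w^\beta$ --- so there is nothing to compare against in the paper itself.

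Two small points on the sketch. First, the operator you write, $\pi_i^\beta f = \partial_i\bigl((1+\beta x_{i+1})f\bigr)$, differs from the paper's $\pi_i$ in \Cref{sub:Grothendieck_polynomials_intro} by the sign of $\beta$; since you are proving the paper's formula with its sign convention, you should match its operator. Second, the ``two-row reduction'' is more delicate in the non-reduced case than your phrasing suggests: the Demazure product in \Cref{rmk:Demazure_product} reads the crosses in an order that interleaves rows $i$ and $i+1$ with all other rows, so fixing the tiles outside those rows does not immediately localize the permutation computation. The clean way around this (and the one actually used in \cite{fomin1994grothendieck}, \cite{fomin1996yang}) is to work with the full generating function $\sum_D \beta^{\#\cross(D)}\wt(D)\, T_{w(D)}$ valued in the nil-Hecke (or $\beta$-Hecke) algebra and verify the recursion there via the Yang--Baxter equation, rather than fixing $w$ and attempting a direct bijective two-row analysis.
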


\subsection{The origins of Grothendieck polynomials}
\label{sub:Grothendieck_polynomials_intro}

The \emph{(single) Grothendieck polynomials} were introduced
by Lascoux and Sch\"utzenberger
\cite{lascoux1982structure},
\cite{Lascoux1990} to study the $K$-theory of
flag varieties. Their original recursive definition is as
follows. Let $\pi_i:\mathbb{Z}[\beta][{\bf x}]\to
\mathbb{Z}[\beta][{\bf x}]$ denote the \emph{isobaric
divided difference operator}:
\begin{equation*}
	\pi_i f\coloneqq\frac{(1-\beta x_{i+1})\ssp f-(1-\beta x_i)\ssp s_i\cdot  f}{x_i-x_{i+1}},
\end{equation*}
where $s_i$ acts on a polynomial $f$ by permuting $x_i$ and $x_{i+1}$.
The Grothendieck polynomials $\mathfrak{G}_w^\beta$ are recursively
determined by the following conditions:
\begin{enumerate}[$\bullet$]
	\item
		For the longest permutation, we have
		$\mathfrak{G}_{w_0(n)}^\beta=
		x_1^{n-1}x_2^{n-2}\cdots x_{n-1}$.
	\item For all $w\in S_n$ and $i=1,\ldots, n-1$
		such that
		$\ell(w s_i)=\ell(w)+1$,
		we have
		$\mathfrak{G}_w^\beta =
		\pi_i \mathfrak{G}_{ws_i}^\beta$.
\end{enumerate}
Setting $\beta=0$ in $\mathfrak{G}_w^\beta$, we obtain a Schubert
polynomial $\mathfrak{S}_w$ which represents cohomology
classes of Schubert cycles in the flag variety
\cite{bernstein1973schubert},
\cite{demazure1974desingularisation},
\cite{lascoux1982structure}.
Note that some authors use the parameter $(-\beta)$ instead of $\beta$.
Our choice of the sign of $\beta$ is dictated by having positive coefficients in combinatorial
formulas for the Grothendieck polynomials.

\subsection{Grothendieck random permutations from reduced pipe dreams}
\label{sub:random_permutations}

Fix $p\in[0,1]$.
Equip the set of all pipe dreams
with a probability measure by independently
placing the tiles in each box:
\begin{equation}
	\label{eq:pipe_dream_measure}
	\raisebox{-6pt}{\includegraphics[scale=0.6]{cross}}
	\ \; \textnormal{with probability}\ p,
	\qquad
	\raisebox{-6pt}{\includegraphics[scale=0.6]{elbows}}
	\ \; \textnormal{with probability}\ 1-p.
\end{equation}
In particular, for $p=\frac12$, we have the uniform measure on the set of pipe dreams.

By reducing this random pipe dream as in \Cref{def:reduction_pipe_dream},
we obtain a
random permutation $\mathbf{w}\in S_n$
which we call the \emph{Grothendieck random permutation} (of order~$n$ and with parameter~$p$;
we suppress the dependence on $n$ and $p$ in the notation).
The name is justified by a connection with the polynomials $\mathfrak{G}_w^{\beta=1}$.
Indeed,
we have for any $w\in S_n$:
\begin{equation}
	\label{eq:pipe_dream_specialization_p_Grothendieck_intro}
	\Prob(\mathbf{w}=w)
	=
	\sum_{D \in \PD(w)} p^{\cross(D)}(1-p)^{\mathrm{elbow}(D)}
	=
	(1-p)^{\binom{n}{2}}\ssp \mathfrak{G}_w^{\beta=1}
	\left( {\frac{p}{1-p},\ldots, \frac{p}{1-p}} \right),
\end{equation}
where the first equality is simply the definition of the measure \eqref{eq:pipe_dream_measure},
and the second immediately follows from \Cref{thm:Grothendieck_pipe_dreams_intro} with $\beta=1$.
In \Cref{sub:square_ice_aztec_asymptotics} below, we explain how the
same distribution on permutations arises from
the six-vertex model with domain wall boundary conditions and free-fermion weights (corresponding
to 2-enumeration of Alternating Sign Matrices).
\begin{figure}[htpb]
	\centering
	\includegraphics[width=.32\textwidth]{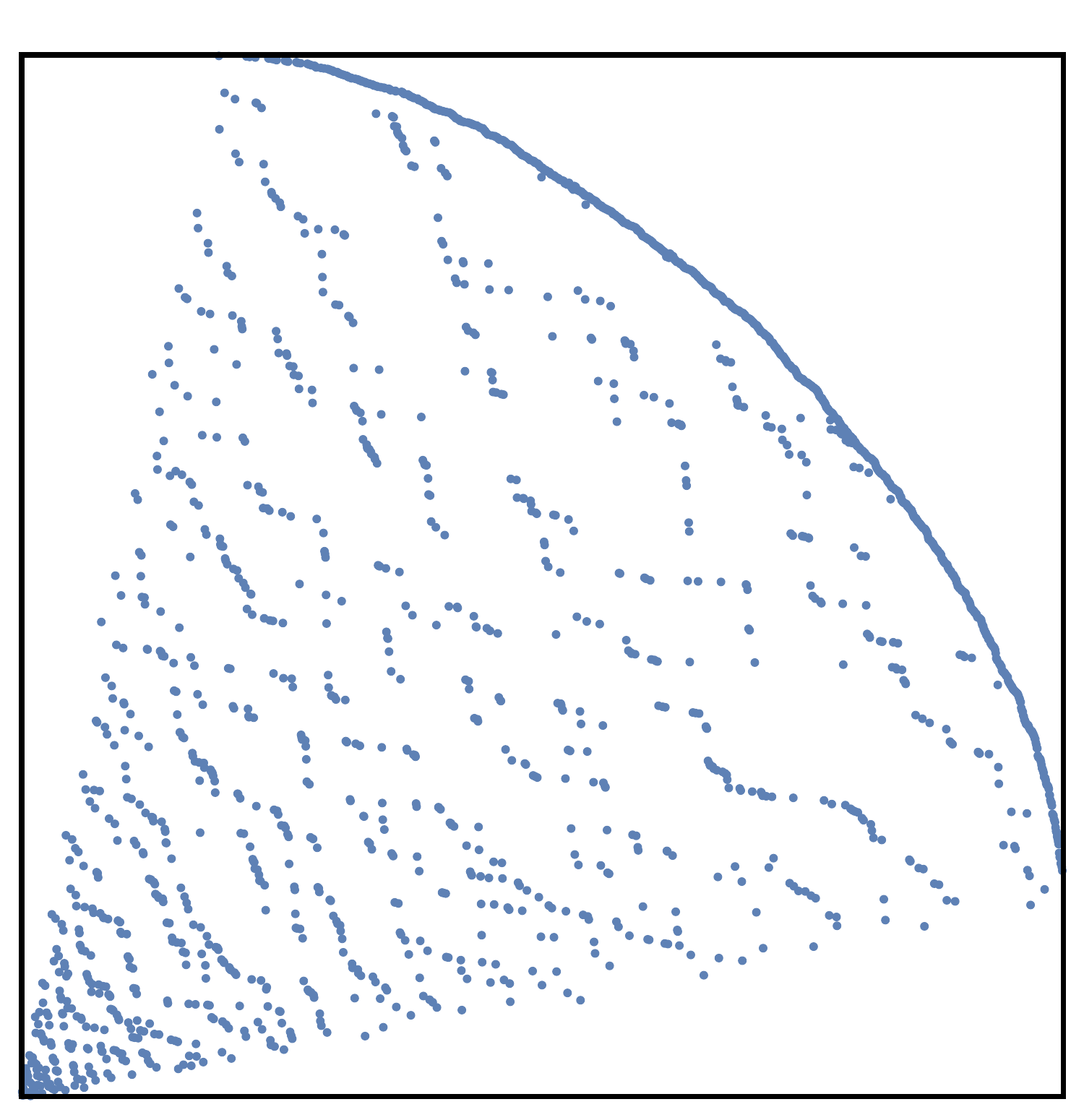} \
	\includegraphics[width=.32\textwidth]{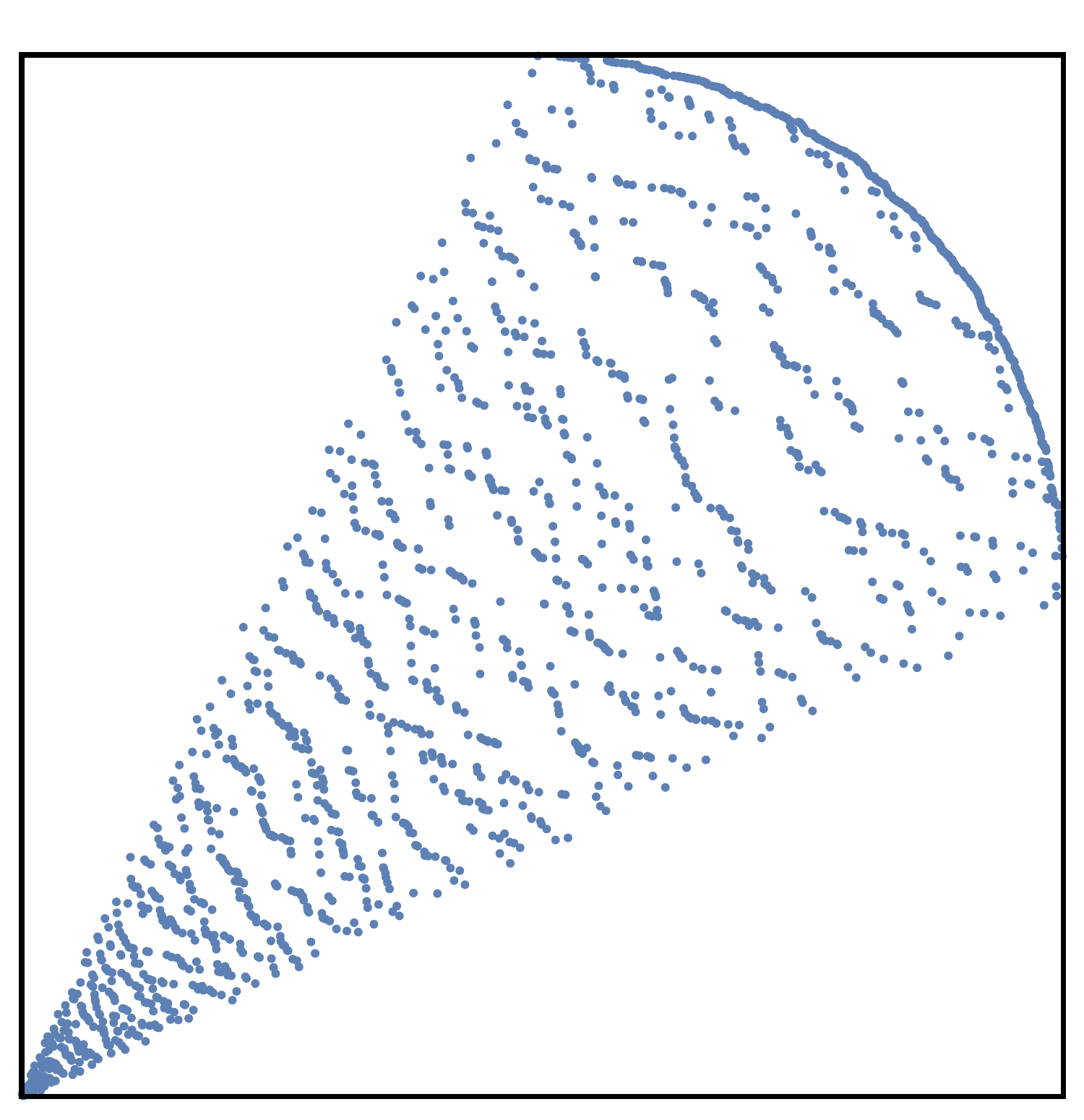} \
	\includegraphics[width=.32\textwidth, angle=90]{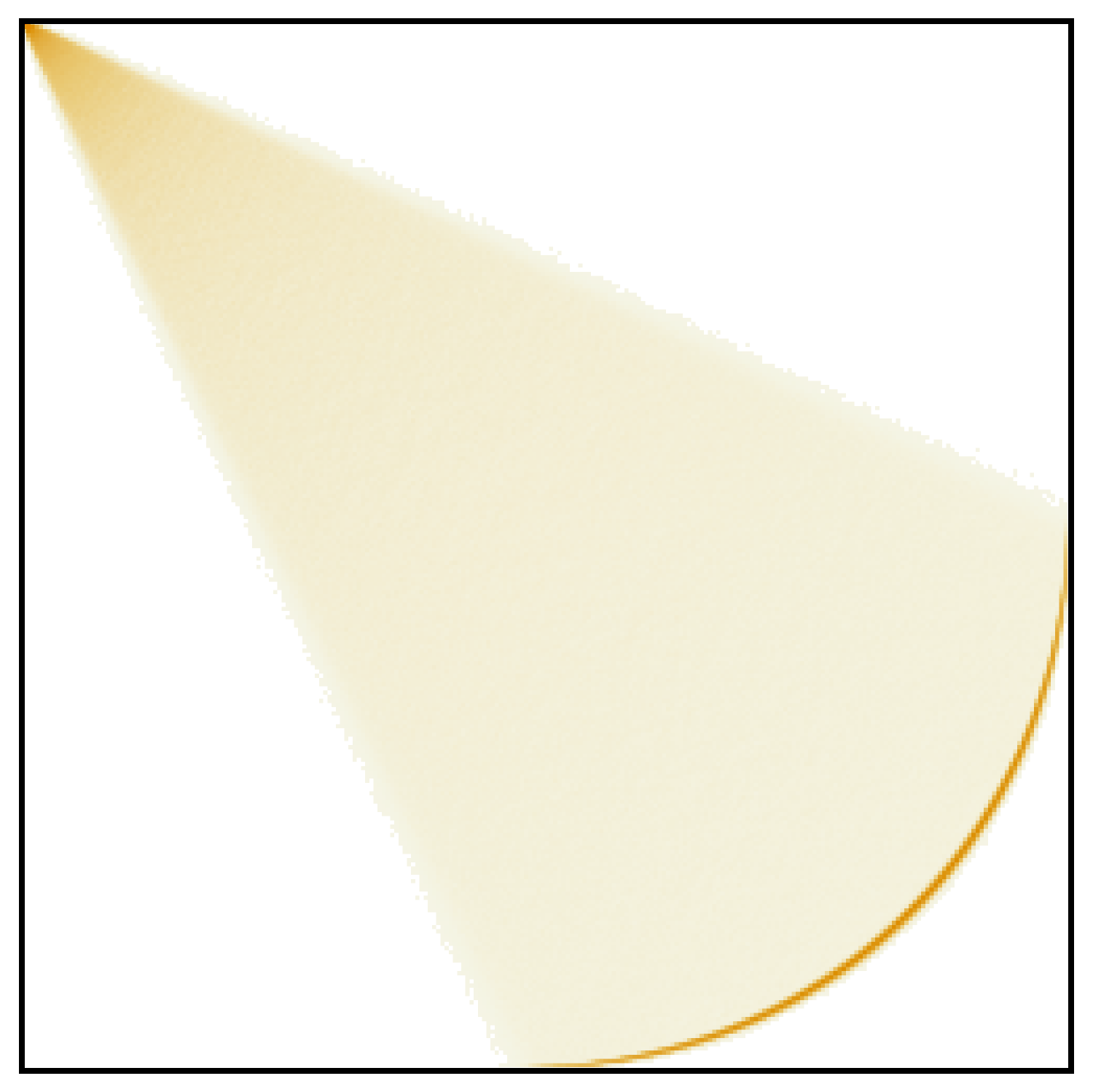}
	\caption{\textbf{Left and center}: A sample of a Grothendieck random permutation of order $n=2000$ with
		$p=\frac{4}{5}$ and $p=\frac{1}{2}$, respectively.
		\textbf{Right}: An average of Grothendieck
		random permutations with $n=2000$ and $p=\frac{1}{2}$
		over $2000$ samples.
		We take a sum of permutation matrices, and coarse-grain the result
		into $8\times 8$ blocks. The plot is the heat map of the resulting matrix,
		which approximates the Grothendieck permuton.
		The \texttt{C} code for these simulations is available
		on the \texttt{arXiv} as an ancillary file.
		An interactive simulation is available at
		\cite{Grothendieck_Simulations_1}.}
	\label{fig:intro_simulations}
\end{figure}

When $p=0$ or $p=1$, we almost surely have
$\mathbf{w}=\mathrm{id}$ or $\mathbf{w}=(n,n-1,\ldots,1 )$, respectively.
These cases are trivial, and in the rest of the paper we assume that $p\in(0,1)$.
Individual samples of
Grothendieck random permutations with $p=\frac{4}{5}$ and $p=\frac{1}{2}$
and a plot constructed from averaging over many samples
are given in
\Cref{fig:intro_simulations}.

\subsection{Asymptotics of Grothendieck random permutations}
\label{sub:asymptotics_permutations}

For $w\in S_n$,
define its \emph{height function} by
\begin{equation*}
	H(x,y)\coloneqq
	\operatorname{\#}\left(
		\left\{ w^{-1}(x),w^{-1}(x+1),\ldots,w^{-1}(n)  \right\}
		\cap
	\left\{ y,y+1,\ldots,n  \right\} \right),
\end{equation*}
where $1\le x,y\le n$. In terms of the pipe dream as in \Cref{fig:pipe_dream}, right,
$H(x,y)$ is the number of pipes of color $\ge x$ which exit through the positions
$\ge y$ at the top. In the example in \Cref{fig:pipe_dream}, right,
we have $H(4,3)=2$. See also \Cref{fig:height_function_permutation_matrix}
for an interpretation in terms of the permutation matrix of $w$,
where $H(x,y)$ gives the number of 1s in the rectangle with lower left corner at $(x,y)$.

Let now $\mathbf{w}\in S_n$ be the Grothendieck random permutation with parameter $p\in(0,1)$
which we assume fixed.
This makes the height function $H(x,y)$ random, too.
We show that $H(x,y)$ satisfies the law of large numbers,
and characterize its asymptotic fluctuations:

\begin{theorem}
	\label{thm:Grothendieck_permutations_asymptotics_intro}
	\begin{enumerate}[{\bf{}1.\/}]
		\item
			There exists a limiting height function $\mathsf{h}^\circ$ such that
			\begin{equation*}
				\lim_{n\to\infty} n^{-1}\ssp H(\lfloor n\ssp\mathsf{x} \rfloor,
				\lfloor n\ssp\mathsf{y} \rfloor)=\mathsf{h}^\circ(\mathsf{x},\mathsf{y}),
				\qquad
				(\mathsf{x},\mathsf{y})\in[0,1]^2,
			\end{equation*}
			where the convergence is in probability.
			The function $\mathsf{h}^\circ(\mathsf{x},\mathsf{y})$
			is
			explicit, see
			\eqref{eq:h_circ_by_zones}.
			It is continuous and depends only on $p$.
			The graph of $\mathsf{h}^\circ$
			is given in
			\Cref{fig:Grothendieck_limit_shape_zones}, right.
		\item The fluctuations of $H(x,y)$ around $\mathsf{h}^\circ$
			belong to the KPZ universality class:
			\begin{equation*}
				\lim_{n\to\infty}\Prob
				\left( \frac{H(\lfloor n\ssp\mathsf{x} \rfloor,
				\lfloor n\ssp\mathsf{y} \rfloor)-n\ssp\mathsf{h}^\circ(\mathsf{x},\mathsf{y})}
				{\mathsf{v}(\mathsf{x},\mathsf{y})\ssp n^{1/3}}
				\leq r \right)=F_2(r),\qquad r\in \mathbb{R},
			\end{equation*}
			where $F_2$
			is the cumulative distribution function of the Tracy--Widom GUE distribution,
			and $\mathsf{v}(\mathsf{x},\mathsf{y})$ is given in \eqref{eq:V_constant_for_H}.
	\end{enumerate}
\end{theorem}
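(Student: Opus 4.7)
The plan is to map the Grothendieck random permutation to a colored stochastic six-vertex model (and, via a projection, to a single-species TASEP with step-Bernoulli initial data), and then invoke known integrable-probability asymptotics for the latter. Concretely, I would process the random pipe dream tile-by-tile along diagonals from the bottom-left, interpreting the state after processing the first $k$ diagonals as the configuration of $n$ colored particles on a line. At a new box, a cross tile (probability $p$) \emph{swaps} the two pipe labels passing through it, and an elbow tile (probability $1-p$) does not. The content of the reduction in \Cref{def:reduction_pipe_dream} - equivalently, the Demazure product in \Cref{rmk:Demazure_product} - is that a swap is actually performed only when the two colors are still in their original relative order, i.e. when they have not yet met; once two pipes have crossed, any future attempted crossing becomes a bump. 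Because the Demazure rule $s_i\star w=ws_i$ only when $\ell(ws_i)>\ell(w)$ depends solely on the current permutation (not on history), the update at each tile is local and Markovian. This local rule is precisely the update rule of the colored (higher-spin or Hecke-type) stochastic six-vertex model with homogeneous spectral parameters tuned to $p$.

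With this identification, the height function $H(x,y)$ becomes, up to an affine shift and a swap of coordinates, the standard height function of the colored six-vertex model at space-time position $(x,y)$; equivalently, after projecting colors $\{1,\dots,x-1\}\to 0$ and $\{x,\dots,n\}\to 1$, it is the integrated current of a single-species stochastic six-vertex model / discrete-time TASEP with step-Bernoulli initial data where the densities and jump parameter are explicit functions of $p$. The hydrodynamic theorem for this model (Burgers equation with step data) has a well-known explicit solution featuring two frozen constant-density regions separated by a rarefaction fan. Plugging in the parameters coming from $p$ and $(\mathsf{x},\mathsf{y})$ yields the piecewise formula \eqref{eq:h_circ_by_zones} and proves Part 1.

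For Part 2, I would import the one-point Tracy--Widom GUE asymptotics for the stochastic six-vertex / TASEP height function inside the rarefaction fan, as established by Johansson (for TASEP with Bernoulli--step data) and by Borodin--Corwin--Gorin (for the stochastic six-vertex model). The contour-integral / Fredholm-determinant representation of the one-point distribution admits a saddle-point analysis in which the critical point lies in the rarefaction zone, and standard steepest-descent yields the $F_2$ limit at scale $n^{1/3}$. The constant $\mathsf{v}(\mathsf{x},\mathsf{y})$ in \eqref{eq:V_constant_for_H} is recovered from the third derivative of the action at the critical point, which reduces to an explicit function of the partial derivatives of $\mathsf{h}^\circ$, matching the KPZ scaling prediction $\mathsf{v}\propto(-\partial_{\mathsf{x}\mathsf{x}}\mathsf{h}^\circ\cdot\partial_{\mathsf{x}}\mathsf{h}^\circ(1-\partial_{\mathsf{x}}\mathsf{h}^\circ))^{1/3}$.

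The main obstacle is the first step: giving a rigorous and \emph{exact} identification between the reduced-pipe-dream distribution and a Markovian, short-range colored six-vertex model. Although the non-reduced pipe dream is trivially a product measure, the reduction in \Cref{def:reduction_pipe_dream} is defined by a global sweep, and it is not a priori obvious that the induced distribution on permutations coincides with that of a local stochastic update. The key is to recognize that the Demazure product can be computed incrementally along any fixed reading order, so that the pipe-dream reduction is in fact a composition of local Hecke-type swaps with Bernoulli$(p)$ parameters; this yields a $U_q(\widehat{\mathfrak{sl}}_{n+1})$ stochastic vertex model whose single-color projection is known-solvable. Once this correspondence is pinned down cleanly - which also fixes the precise correspondence of parameters and coordinates needed for \eqref{eq:h_circ_by_zones} and \eqref{eq:V_constant_for_H} - the remaining steps are essentially applications of established theorems in integrable probability.
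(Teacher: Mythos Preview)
Your overall architecture matches the paper's: identify the reduced pipe dream with a colored stochastic six-vertex model (this is \Cref{prop:pipe_dreams_to_vertex_model}), project colors $\ge x$ to a single species (\Cref{prop:color_forgetting}), and then import one-point asymptotics from integrable probability. The Demazure-product argument you sketch for why the reduction is a local Markov update is exactly what the paper uses.

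Where your proposal drifts from the paper is in the identification of the single-species model. You describe it as a stochastic six-vertex model or discrete-time TASEP with ``step-Bernoulli initial data,'' and plan to cite Borodin--Corwin--Gorin. But the color-blind weights here have $w^\bullet_p(0,1;0,1)=0$ (an incoming horizontal pipe \emph{must} turn up), which is the $q=0$ degeneration of the stochastic six-vertex model, not the generic two-parameter model BCG analyze. The paper exploits this degeneracy: after the coordinate change $\xi_m(t)=n+1-t-\eta_m(t)$ the model becomes discrete-time TASEP with \emph{parallel geometric} jumps and \emph{pure step} (not step-Bernoulli) initial data; the color parameter $x$ becomes the particle number $k=n-x+1$, and the spatial threshold $y$ becomes a moving exit boundary (\Cref{thm:vertex_TASEP_matching}). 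This TASEP is then tied to the Schur measure with $a_i=1$, $b_j=p$ (\Cref{prop:TASEP_Schur_measure}), and the constants $\mathsf{c},\mathsf{v}_1,\mathsf{v}_2$ come from a direct steepest-descent on the Okounkov kernel rather than from a BCG-type black box. Your route could be made to work, but as written the ``step-Bernoulli'' and generic six-vertex citations would not land on the correct model, and you would not recover the explicit formulas \eqref{eq:h_circ_by_zones} and \eqref{eq:V_constant_for_H} without redoing this identification.
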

\Cref{thm:Grothendieck_permutations_asymptotics_intro}
justifies
the simulations in
\Cref{fig:intro_simulations}. We prove it in
\Cref{sec:Grothendieck_asymptotics}
by realizing $H(x,y)$ as an observable of a
discrete time
TASEP with parallel geometrically distributed jumps started from the densely packed
(step) initial configuration.
Here we omit the technical details of this coupling, and refer to
\Cref{thm:vertex_TASEP_matching} in the text.
By employing known techniques
from Integrable Probability and passing back to the
Grothendieck random permutation, we obtain the limit shape and fluctuations.

\medskip

The law of large numbers in the first part of
\Cref{thm:Grothendieck_permutations_asymptotics_intro}
means that the Grothendieck random permutations $\mathbf{w}\in S_n$
converge to a deterministic \emph{permuton}.
Recall that permutons are Borel probability measures $\mu$
on $[0,1]^2$ with uniform marginals, that
is,
$\mu\left( [0,1]\times [a,b] \right)=
\mu\left( [a,b]\times [0,1] \right)=b-a$.
For details on permutons
and their connection to pattern frequencies in permutations, we refer to
\cite{hoppen2013limits},
\cite{bassino2019universal}, and the survey
\cite{grubel2023ranks}.
Our limiting \emph{Grothendieck permuton} is completely determined
by the limiting height function
via
\begin{equation*}
	\mu\left( [\mathsf{x},1]\times [\mathsf{y},1] \right)
	=\mathsf{h}^\circ(\mathsf{x},\mathsf{y}).
\end{equation*}
Note that the ``complementary''
function
$\mu\left(
	[0,\mathsf{x}]\times [0,\mathsf{y}]
\right)$ is often referred to as \emph{copula} in the literature, cf.
\cite{grubel2023ranks}.
Our limiting permuton has a singular part, that is, a positive portion of its mass
is concentrated along the curve
\begin{equation*}
	\mathscr{E}_p\coloneqq
	\left\{
		(\mathsf{x},\mathsf{y})
		\colon
		(\mathsf{y}-\mathsf{x})^2/p+(\mathsf{y}+\mathsf{x}-1)^2/(1-p)=1,\
		1-p\le \mathsf{x}\le 1
	\right\}\subset [0,1]^2.
\end{equation*}
The total mass supported on this curve is equal to
(\Cref{prop:delta_mass})
\begin{equation}
	\label{eq:delta_mass_intro}
	\gamma_p\coloneqq 1-\sqrt{\frac{1-p}{p}}\ssp \arccos\sqrt{1-p}.
\end{equation}
In particular, $\gamma_{\frac{1}{2}}=1-\frac{\pi}{4}$.

As a corollary of the permuton convergence of \Cref{thm:Grothendieck_permutations_asymptotics_intro},
one can also obtain laws of large numbers for
arbitrary pattern counts in Grothendieck random permutations.
The simplest example is the number of inversions:

\begin{proposition}[\Cref{prop:Grothendieck_permuton_inversions}]
	\label{prop:Grothendieck_permuton_inversions_intro}
	Let
	$\mathbf{w}=\mathbf{w}(n)\in S_n$ be the Grothendieck random permutations with a
	fixed parameter $p\in(0,1)$.
	We have
	\begin{equation}
		\label{eq:Grothendieck_permuton_inversions_intro}
		\lim_{n\to\infty}\frac{\inv\left( \mathbf{w}(n) \right)}{\binom n2}=\gamma_p,
	\end{equation}
	where $\gamma_p$ is given by \eqref{eq:delta_mass_intro}.
\end{proposition}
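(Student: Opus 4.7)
The plan is to transfer the computation of $\inv(\mathbf{w}(n))/\binom{n}{2}$ to the limit permuton $\mu$ using \Cref{thm:Grothendieck_permutations_asymptotics_intro}(1), and then evaluate the resulting integral explicitly.

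Part~1 of \Cref{thm:Grothendieck_permutations_asymptotics_intro} gives convergence in probability of the normalized height function $n^{-1} H(\lfloor n\mathsf{x}\rfloor,\lfloor n\mathsf{y}\rfloor)$ to the continuous limit $\mathsf{h}^\circ(\mathsf{x},\mathsf{y})$. Since $H(x,y)/n$ is (up to $O(1/n)$ boundary corrections) the complementary distribution function $\mu_{\mathbf{w}(n)}([\mathsf{x},1]\times[\mathsf{y},1])$ of the empirical permuton of $\mathbf{w}(n)$, pointwise convergence to a continuous limit upgrades to weak convergence in probability $\mu_{\mathbf{w}(n)}\to\mu$. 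By the general theory of permutons \cite{hoppen2013limits}, weak convergence entails convergence of every pattern density. Applied to the pattern $21$, whose count in a permutation $w$ equals $\inv(w)$, this yields
\[
\frac{\inv(\mathbf{w}(n))}{\binom{n}{2}} \;\xrightarrow{\Prob}\; 2\iint_{([0,1]^2)^2} \mathbf{1}[x_1<x_2,\,y_1>y_2]\,d\mu(x_1,y_1)\,d\mu(x_2,y_2).
\]
(One checks that the discontinuity set $\{x_1=x_2\}\cup\{y_1=y_2\}$ is $\mu^{\otimes 2}$-null: the $x$- and $y$-marginals of $\mu$ are uniform, so they have no atoms, and the singular part $\mu_{\mathrm{sing}}$ of $\mu$ is supported on a smooth arc with no vertical or horizontal segments.)

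Next, I would rewrite the right-hand side via $\mathsf{h}^\circ$. Using $\mathsf{h}^\circ(\mathsf{x},\mathsf{y}) = \mu([\mathsf{x},1]\times[\mathsf{y},1])$ together with the uniform-marginal property $\mu([a,b]\times[0,1]) = b-a$, a short Fubini computation gives
\[
2\iint \mathbf{1}[x_1<x_2,\,y_1>y_2]\,d\mu^{\otimes 2} \;=\; 1-2\int_{[0,1]^2}\mathsf{h}^\circ(\mathsf{x},\mathsf{y})\,d\mu(\mathsf{x},\mathsf{y}).
\]
This identity is also visible at the finite level: a direct count gives $\inv(w) = \binom{n}{2}-\sum_{c=1}^{n}H(w(c),c+1)$, and $n^{-2}\sum_c H(w(c),c+1)$ is a Riemann sum for $\int\mathsf{h}^\circ\,d\mu$ against the empirical measure of the $1$'s in the permutation matrix of $\mathbf{w}(n)$. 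Either route reduces \eqref{eq:Grothendieck_permuton_inversions_intro} to the identity
\[
\int_{[0,1]^2}\mathsf{h}^\circ\,d\mu \;=\; \frac{1-\gamma_p}{2}.
\]

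Finally, the integral is evaluated using the explicit description of $\mu$. Decompose $\mu=\mu_{\mathrm{ac}}+\mu_{\mathrm{sing}}$, where $\mu_{\mathrm{sing}}$ is the singular part of total mass $\gamma_p$ supported on the arc $\mathscr{E}_p$, and use the zone-wise formula \eqref{eq:h_circ_by_zones} for $\mathsf{h}^\circ$. A convenient geometric simplification is that $\mathsf{y}$ is a strictly decreasing function of $\mathsf{x}$ along $\mathscr{E}_p$, so two distinct points of $\mu_{\mathrm{sing}}$ are automatically in $21$-configuration; this pins down the singular--singular contribution as $\gamma_p^2$. The principal obstacle is then the case-wise integration of $\mathsf{h}^\circ$ against both the absolutely continuous and the singular parts of $\mu$, across the several zones of \eqref{eq:h_circ_by_zones}, and verifying that the outcome collapses to the compact expression $\gamma_p = 1-\sqrt{(1-p)/p}\,\arccos\sqrt{1-p}$. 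I would look for an integration-by-parts/Green's-theorem identity that converts the bulk integrals of $\mathsf{h}^\circ$ against the absolutely continuous density into boundary contributions along $\mathscr{E}_p$, thereby reducing the entire calculation to a one-dimensional integral along the arc that should match \eqref{eq:delta_mass_intro} on the nose.
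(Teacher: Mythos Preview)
Your reduction is essentially the paper's: permuton convergence from \Cref{thm:Grothendieck_permutations_asymptotics_intro}(1) plus the pattern-density machinery of \cite{hoppen2013limits}, followed by an explicit evaluation of the resulting integral against $\mu$. Your formula $\gamma_p=1-2\int\mathsf{h}^\circ\,d\mu$ is equivalent to the paper's $\gamma_p=\operatorname{\mathbb{E}}[1-\mathsf{h}^\circ(X,Y)-\mathsf{h}^\bullet(X,Y)]$ (since $\mathsf{h}^\bullet=\mathsf{x}+\mathsf{y}-1+\mathsf{h}^\circ$ and $\operatorname{\mathbb{E}}[X]=\operatorname{\mathbb{E}}[Y]=\tfrac12$), so you save computing the copula separately.

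The one divergence is the endgame. The paper does \emph{not} find an integration-by-parts or Green's-theorem shortcut; it simply writes out the conditional density $f_{X\mid Y=\mathsf{y}}(\mathsf{x})=\tfrac{1}{2p}\sqrt{(1-p)/(\mathsf{x}\mathsf{y})}\,\mathbf{1}_{\mathscr{C}}$ and the atom on $\mathscr{E}_p$, splits the expectation into a double integral over $\mathscr{C}$ and a single integral over $[1-p,1]$, and evaluates both by elementary antiderivatives (assisted by computer algebra). Your observation that the singular--singular contribution equals $\gamma_p^2$ is correct and pleasant (indeed $\mathsf{x}_p'(\mathsf{y})<0$ on $(1-p,1]$), but it does not by itself dispose of the mixed and absolutely-continuous terms; absent the hoped-for boundary reduction, you would still need to carry out the zone-wise integration as the paper does.
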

The fact that the scaled number of inversions converges to $\gamma_p$,
the singular part of the Grothen\-dieck permuton, is surprising.
Besides exact computations given in \Cref{sub:Grothendieck_permuton_density},
we do not have a conceptual explanation for this phenomenon.

\begin{remark}
	\label{remark:Grothendieck_and_ASM}
	In \Cref{sub:square_ice,sub:square_ice_aztec_asymptotics}, we connect
	Grothendieck random permutations
	to
	2-enumerated Alternating Sign Matrices
	(bijectively corresponding to the six-vertex model with
	domain wall boundary conditions and free-fermion weights, and also to
	domino tilings of the Aztec diamond).
	The connection between Grothendieck polynomials and the
	six-vertex model goes back to
	\cite{Lascoux02ice}, see also \cite{Weigandt2020_bumpless}.
	Equivalences between 2-enumerated Alternating Sign Matrices,
	six-vertex models, and domino tilings of the Aztec diamond
	are well-known in statistical mechanics and integrable probability,
	see, for example,
	\cite{cohn-elki-prop-96}, \cite{CohnKenyonPropp2000}, and
	\cite{elkies1992alternating}, \cite{zinn2000six}, \cite{ferrari2006domino}.

	Via this connection,
	our asymptotic results (\Cref{thm:Grothendieck_permutations_asymptotics_intro})
	can be recast as results about a random permutation coming from the free-fermion
	six-vertex model
	with domain wall boundary conditions.
	Moreover, there is a vast amount of asymptotic results
	about domino tilings and free-fermion six-vertex models,
	and in \Cref{sub:square_ice_aztec_asymptotics}
	we show how some of them lead to further asymptotic properties
	of Grothendieck random permutations.
\end{remark}

\subsection{A deformation and random non-reduced pipe dreams}
\label{sub:variant_without_reduction}

Let us discuss a deformation of the Grothendieck random permutation
by means of a new parameter
$q\in[0,1]$.
Take a pipe dream of order~$n$ obtained
by randomly placing the tiles in the same way as in \eqref{eq:pipe_dream_measure}
(depending on $p\in (0,1)$). However, instead of reduction (\Cref{def:reduction_pipe_dream}),
let us apply a modified ($q$-randomized) procedure:
\begin{definition}[$q$-Reduction of a pipe dream]
	\label{def:q_reduction_pipe_dream}
	Assume that $D$ is a pipe dream
	that is not necessarily reduced.
	For each crossing tile, consider the incoming pipes.
	There are two cases:
	\begin{enumerate}[$\bullet$]
		\item If the lower colored pipe enters from the left (and the higher colored one from the bottom),
			then the pipes cross through each other with probability $1$.
		\item Otherwise, if the lower colored pipe enters from the bottom (and the higher colored one from the left),
			then the pipes cross through each other with probability $q$.
			That is, with probability $1-q$ we replace the crossing with a bump.
	\end{enumerate}
	The random $q$-reduction steps are done independently at each crossing tile
	when
	reading the staircase diagonal by diagonal starting from the bottom left.
	See \Cref{sub:matching_pipe_dreams} below for a detailed definition
	of the Markov process for $q=0$, which has the same order of reduction steps.
	Denote the random permutation obtained by the $q$-reduction of a random pipe dream
	with the measure \eqref{eq:pipe_dream_measure} by $\mathbf{w}^{(q)}\in S_n$.
	See
	\Cref{fig:q_reduction} for simulations.
\end{definition}

\begin{remark}
	\label{rmk:q_reduction}
	For $q=0$, the $q$-reduction procedure becomes the usual reduction from \Cref{def:reduction_pipe_dream}.
	For $q>0$, the $q$-reduction no longer tracks whether the pipes
	have crossed before. Instead, it simply applies the local $q$-reduction rule at each
	crossing tile,
	based on the relative colors of the incoming pipes.
\end{remark}

\begin{remark}
	The $q$-reduction of a pipe dream
	can be viewed as applying the $q$-Hecke product
	instead of the Demazure (0-Hecke)
	product as in \Cref{rmk:Demazure_product}.
	Now, instead of the symmetric group
	$S_n$, we need to pass to the corresponding (Iwahori-)Hecke
	algebra $\mathcal{H}_n(q)$, and the $q$-reduction of a pipe dream
	becomes a linear combination of the Hecke elements $T_w$
	corresponding to various permutations $w\in S_n$.
	By choosing the Hecke product in a ``stochastic'' way (as in
	\cite{bufetov2020interacting}), namely,
	\begin{equation*}
		T_{s_i}T_w=
		\begin{cases}
			(1-q)\ssp T_w+q\ssp T_{ws_i}, & \textnormal{if}\ \ell(ws_i)>\ell(w),\\
			T_w, & \textnormal{otherwise},
		\end{cases}
	\end{equation*}
	the linear combination of the elements $T_w$ becomes convex, and
	thus corresponds to a probability distribution on $S_n$.
	This distribution is the law of the inverse $(\mathbf{w}^{(q)})^{-1}$.
\end{remark}

The asymptotic analysis of $\mathbf{w}^{(q)}$ for $q\in (0,1]$
may be performed similarly to the case $q=0$.
The underlying particle system is a certain
$q$-deformation of TASEP which allows both left and right jumps (governed by different rules).
Setting $q=1$ removes the asymmetry, which changes the
normalization in the law of large numbers and fluctuations (as well as
the fluctuation distribution).
This behavior
parallels the different
scales of laws of large numbers
in the
KPZ and the
Edwards--Wilkinson (EW) universality classes.
The KPZ class asymptotics arise in TASEP and its asymmetric deformations,
while symmetric versions of TASEP fall into the EW class.
We refer to
\cite{CorwinKPZ} for a detailed discussion of these two universality classes.

In this paper, we mainly focus on the case $q=0$, and obtain
results outlined in \Cref{sub:asymptotics_permutations}.  In
\Cref{sec:non-reduced}, we also treat the case $q=1$.  In
the latter model, the $q$-reduction does not change the pipe
dream at all. Probabilistically, this is the most natural
way to associate a permutation to a random pipe dream with
distribution \eqref{eq:pipe_dream_measure}. On the other
hand, the $q=1$ model apparently lacks a rich algebraic
underpinning.

Finding the asymptotic number of inversions of
$\mathbf{w}^{(1)}=\mathbf{w}^{(1)}(n)
\in S_n$ as $n\to\infty$ was posed as an open
problem by Colin
Defant at the Richard Stanley's 80th birthday
conference.\footnote{Conference ``The Many Combinatorial Legacies of Richard P. Stanley: Immense Birthday Glory of the Epic Catalonian Rascal,'' \url{https://www.math.harvard.edu/event/math-conference-honoring-richard-p-stanley/}. Accessed: 07/23/2024.}
We show
the following
asymptotics:
\begin{theorem}[\Cref{thm:random_permutation_bounds}
	and
	\Cref{prop:id_permuton_limit}]
	\label{thm:non_reduced_intro}
	For $p\in[0,1)$ and any $\varepsilon>0$,
	the expected number of inversions satisfies for all sufficiently large $n$:
	\begin{equation}
		\label{eq:non_reduced_intro_bounds}
		\frac{2}{3\sqrt{\pi}} (1-\varepsilon)\ssp  n^{3/2} \sqrt{\frac{p}{1-p}}\leq
		\operatorname{\mathbb{E}}\bigl[\inv\bigl(\mathbf{w}^{(1)}(n)\bigr)\bigr] \leq
		\frac{4}{3\sqrt{\pi}} (1+\varepsilon) \ssp n^{3/2} \sqrt{\frac{p}{1-p}}.
	\end{equation}
	Moreover,
	the random permutations $\mathbf{w}^{(1)}(n)$ converge in distribution
	to the identity permuton supported
	on the diagonal of $[0,1]^2$.
\end{theorem}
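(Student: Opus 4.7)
The plan is to analyze each pipe as an independent one-dimensional random walk, derive Gaussian fluctuations for its exit column, and then convert the resulting Spearman footrule estimate into inversion counts via the Diaconis--Graham inequality.

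Fix pipe $i$. Because the tiles are i.i.d.\ and pipe $i$ never revisits a tile, the sequence of directions $D_1,D_2,\ldots\in\{U,R\}$ taken by pipe $i$ forms a Markov chain that \emph{continues} (prob.~$p$) or \emph{switches} (prob.~$1-p$) at each step, started with $D_1=R$ with probability $p$. The pipe exits after exactly $i$ up-moves (modulo diagonal deflections, discussed below), so its exit column is $c_i:=1+R_i$ where $R_i$ is the number of right-moves. Decompose $R_i=r_0+r_1+\cdots+r_{i-1}$ into the runs of right-moves before and between consecutive up-moves: a short computation shows $r_0$ is geometric with $\mathbb{E}[r_0]=p/(1-p)$, while $r_1,\ldots,r_{i-1}$ are i.i.d.\ with $\mathbb{E}[r_k]=1$ and $\mathrm{Var}(r_k)=2p/(1-p)$. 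Hence $\mathbb{E}[R_i]=i-1+p/(1-p)$, $\mathrm{Var}(R_i)\sim 2ip/(1-p)$, and the central limit theorem gives
\begin{equation*}
	\mathbb{E}\bigl[\,|c_i-i|\,\bigr]=(1+o(1))\cdot 2\sqrt{\frac{ip}{\pi(1-p)}}\qquad(i\to\infty),
\end{equation*}
using $\mathbb{E}|Z|=\sqrt{2/\pi}$ for $Z\sim\mathcal{N}(0,1)$.

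Next, observe that $r+c$ along pipe $i$'s path executes an approximately symmetric simple random walk that must stay below $n+1$. A standard Gaussian tail bound shows this constraint is inactive with probability $1-o(n^{-1})$ whenever $n-i\gg\sqrt{n\log n}$, so the unconstrained analysis applies uniformly to all but $O(\sqrt{n\log n})$ of the bottom pipes, whose trivial contribution $O(n\log n)=o(n^{3/2})$ is negligible. Summing the single-pipe estimate over $i$, and using $\sum_{i=1}^{n}\sqrt{i}\sim (2/3)\,n^{3/2}$,
\begin{equation*}
	\mathbb{E}\!\left[\sum_{i=1}^{n}|c_i-i|\right]=(1+o(1))\cdot\frac{4}{3\sqrt{\pi}}\sqrt{\frac{p}{1-p}}\,n^{3/2}.
\end{equation*}
Since the left-hand side equals $\mathbb{E}[F(\mathbf{w}^{(1)})]$ for the Spearman footrule $F(w)=\sum_i|w(i)-i|$ (using $F(w)=F(w^{-1})$), the classical Diaconis--Graham inequality $F(w)/2\le\inv(w)\le F(w)$ yields both bounds in~\eqref{eq:non_reduced_intro_bounds} after taking expectations.

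For the permuton statement, the uniform tail bound $\max_{i}|c_i-i|=O_{\mathbb{P}}(\sqrt{n\log n})$ (by a union bound over $i$ together with Berry--Esseen applied to the sum $r_1+\cdots+r_{i-1}$) implies $|c_i/n-i/n|\to 0$ in probability uniformly in $i$, so the empirical measures $\tfrac{1}{n}\sum_i\delta_{(i/n,\,c_i/n)}$ converge in probability to the uniform measure on the diagonal of $[0,1]^2$, i.e.\ the identity permuton. The main technical obstacle is making the CLT uniform across $i\in[1,n]$ while rigorously absorbing the diagonal boundary into an $o(n^{3/2})$ error; I expect this to follow from a Berry--Esseen estimate for $r_1+\cdots+r_{i-1}$ combined with Hoeffding-type tails for the walk $r+c$.
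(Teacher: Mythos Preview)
Your argument for the inversion bounds is essentially the paper's: model each pipe as a random walk whose between-up-move increments are i.i.d.\ with mean $1$ and variance $2p/(1-p)$, invoke the CLT to get $\operatorname{\mathbb{E}}[|c_i-i|]\sim 2\sqrt{ip/(\pi(1-p))}$, sum via a Riemann approximation, and convert the footrule to inversions by Diaconis--Graham. The paper's Proposition~5.2 states exactly your single-pipe estimate, with the same large-deviation dismissal of the diagonal. One caveat: your claimed $O(n\log n)$ for the bottom $O(\sqrt{n\log n})$ pipes needs a line of justification (each such pipe still has $\operatorname{\mathbb{E}}[|c_i-i|]=O(\sqrt n)$, which one sees by tracking the reflected walk $D_k=(n-k)-J_k$); the paper is equally terse here, simply ``extending the Riemann sum to $n$.''

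Where you diverge is the permuton statement. The paper does \emph{not} bound $\max_i|c_i-i|$. Instead it observes that the footrule bound plus Markov's inequality gives $t(21,\mathbf{w}^{(1)})=\inv(\mathbf{w}^{(1)})/\binom{n}{2}\to 0$ in probability, and then uses the pattern-frequency identity $t(21,w)=\sum_{\tau\in S_\ell}t(21,\tau)\,t(\tau,w)$ to conclude that every non-identity pattern frequency vanishes, whence convergence to the identity permuton by the characterization of Hoppen et~al. Your route is more direct and gives a quantitative rate, but the tools you name are slightly off: Berry--Esseen controls Kolmogorov distance, not tails, and Hoeffding requires bounded summands, while your $r_k$ have geometric tails. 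Replace both by a Bernstein-type (subexponential) inequality for $\sum_{k}(r_k-1)$, which yields $\Prob(|c_i-i|>C\sqrt{n\log n})\le n^{-2}$ and makes your union bound rigorous. With that fix, your argument goes through and is arguably more informative than the paper's soft one; the paper's virtue is that it needs nothing beyond the $O(n^{3/2})$ footrule estimate already in hand.
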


Based on simulations (see \Cref{fig:q_reduction} for examples)
and supporting numerics, we make
further conjectures about the behavior of $\mathbf{w}^{(q)}$.
\eqref{eq:non_reduced_intro_bounds}:
\begin{conjecture}[\Cref{conj:natural_model_number_of_inversions}; settled in \cite{defant2024randomsubwordspipedreams}]
	\label{conj:q_reduction_q1}
	For any $p\in[0,1)$, we have convergence in probability:
	\begin{equation*}
		n^{-3/2}\ssp \inv(\mathbf{w}^{(1)})\to \varkappa\ssp \sqrt{\frac{p}{1-p}},\qquad n\to\infty,
	\end{equation*}
	where $\varkappa$ is a constant independent of $p$ whose value is near $0.53$.\footnote{After this article appeared on the \texttt{arXiv}, Defant settled this conjecture in \cite{defant2024randomsubwordspipedreams} and showed that $\varkappa=\dfrac{2\sqrt{2}}{3\sqrt{\pi}}$.}
\end{conjecture}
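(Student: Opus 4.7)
The plan is to pin down $\varkappa$ by a direct random-walk calculation: compute $\operatorname{\mathbb{E}}[\inv(\mathbf{w}^{(1)})]$ via a Gaussian approximation of pipe exit columns, and then upgrade the expectation-level result to convergence in probability by a variance estimate.

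First, decompose the exit column of pipe $i$ into its row-wise horizontal displacements. Pipe $i$ starts at $(i,1)$ going right; the number of cross tiles it traverses in row $i$ before turning up is $d'_i$ with $\Prob(d'_i = k) = (1-p)p^k$ for $k \ge 0$, so $\operatorname{\mathbb{E}}[d'_i] = p/(1-p)$. For each $r < i$, pipe $i$ enters row $r$ from below going up, and its horizontal displacement $d^{(i)}_r$ in row $r$ satisfies $\Prob(d^{(i)}_r = 0) = p$ (the first tile is a cross and the pipe continues up immediately) and $\Prob(d^{(i)}_r = k) = (1-p)^2 p^{k-1}$ for $k \ge 1$, with mean $1$ and variance $2p/(1-p)$. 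Because the displacement in row $r$ depends only on the tiles in row $r$ visited by the pipe, and tiles in distinct rows are independent, for a fixed pipe $i$ the $\{d^{(i)}_r\}_{r<i}$ are mutually independent. Writing $X_i = 1 + d'_i + \sum_{r=1}^{i-1} d^{(i)}_r$ for the exit column, the classical CLT yields $(X_i - i)/\sqrt{2ip/(1-p)} \Rightarrow \mathcal{N}(0,1)$.

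Next, use $\operatorname{\mathbb{E}}[\inv(\mathbf{w}^{(1)})] = \sum_{i<j}\Prob[X_j < X_i]$. The key claim is that for pipes $i$ and $j = i+k$ with $k$ in the Gaussian window $O(\sqrt{i})$, the pair $(X_i, X_j)$ is asymptotically a pair of independent Gaussians with variances $\sim 2ip/(1-p)$ and $\sim 2jp/(1-p)$, so that
\[
\Prob[X_{i+k} < X_i] \;\sim\; \Phi\bigl(-k\sqrt{(1-p)/(4ip)}\bigr).
\]
Summing over $k \ge 1$ via $\int_0^\infty \Phi(-u)\,du = 1/\sqrt{2\pi}$ gives $\sum_{k\ge1}\Prob[X_{i+k}<X_i] \sim \sqrt{2ip/(\pi(1-p))}$, and summing over $i = 1, \ldots, n$ yields
\[
\operatorname{\mathbb{E}}[\inv(\mathbf{w}^{(1)})] \;\sim\; \frac{2}{3}\,n^{3/2}\sqrt{\frac{2p}{\pi(1-p)}} \;=\; \frac{2\sqrt{2}}{3\sqrt{\pi}}\,n^{3/2}\sqrt{\frac{p}{1-p}},
\]
identifying $\varkappa = 2\sqrt{2}/(3\sqrt{\pi})$. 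Convergence in probability then follows from a variance bound $\operatorname{Var}(\inv(\mathbf{w}^{(1)})) = o(n^3)$, obtained by controlling covariances of inversion indicators (indicators depending on disjoint tile sets are independent, while overlapping ones contribute a subleading amount) and applying Chebyshev.

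The main obstacle is the ``approximate pairwise independence'' step above. Pipes $i$ and $j$ may share tiles (always crosses: one pipe going right, one going up), and such shared crosses induce correlations between their displacements. Rigorously bounding the resulting covariance of $(X_i, X_j)$ to be of lower order than the individual variances in the window $j - i = O(\sqrt{i})$ requires careful geometric tracking of how two nearby pipes' paths overlap across rows. This is the technical heart of the argument, and notably, the subword approach of the follow-up work \cite{defant2024randomsubwordspipedreams} appears to circumvent this obstacle by supplying a more direct combinatorial representation of $\inv(\mathbf{w}^{(1)})$.
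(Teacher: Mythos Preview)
The paper does \emph{not} prove this statement; it is posed as a conjecture, with only the bounds $2/(3\sqrt{\pi}) \le \varkappa \le 4/(3\sqrt{\pi})$ established via the Diaconis--Graham inequality applied to a single-pipe CLT (\Cref{thm:random_permutation_bounds} and \Cref{prop:abs_value_sigma_i}). The value $\varkappa = 2\sqrt{2}/(3\sqrt{\pi})$ was obtained only in the follow-up work cited in the footnote. So there is no paper proof to compare against; you are attempting something the paper explicitly left open.

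Your heuristic is sound and recovers the correct constant, and you are right that the crux is the approximate joint Gaussianity of $(X_i,X_{i+k})$ with negligible correlation for $k=O(\sqrt{i})$. But this is a genuine gap, not a technicality. Two nearby pipes interact repeatedly: if pipe $i$ (going right) reaches the column where pipe $j$ enters row $r$ from below, then that box is a cross and pipe $j$ is forced to have $d^{(j)}_r=0$; conversely, if pipe $i$ turns up earlier, the two use disjoint tiles in that row. For $j-i=O(\sqrt{i})$, the signed gap between the two pipes' columns is itself a random walk starting at height $O(\sqrt{i})$ run for $i$ steps, and hence hits zero with probability bounded away from $0$---so such encounters recur a nonvanishing number of times. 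Establishing that the resulting correlation satisfies $\mathrm{Cov}(X_i,X_j)=o(i)$, let alone the joint CLT you actually need for $\Prob(X_j<X_i)\sim\Phi(-k/\sigma)$, requires controlling this recurrent interaction, and I do not see a short argument. Your concentration step has the same issue: most pairs of inversion indicators $\mathbf{1}_{X_i>X_j}$ and $\mathbf{1}_{X_{i'}>X_{j'}}$ with all four indices within $O(\sqrt{n})$ of one another depend on overlapping tile sets, and showing their covariances sum to $o(n^3)$ is again a multi-pipe correlation question.

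A minor omission: your decomposition of $X_i$ ignores the diagonal half-elbows, so the stated law of $d'_i$ (and of the $d^{(i)}_r$) is only valid conditionally on the pipe not reaching the boundary, which fails with non-negligible probability once $n-i=O(\sqrt{n})$. As in the paper's \Cref{prop:abs_value_sigma_i}, the boundary contribution to $\operatorname{\mathbb{E}}[\inv]$ is $o(n^{3/2})$ and harmless, but it should be said.
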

\Cref{thm:non_reduced_intro} implies that
$\varkappa$ (if it exists) must satisfy
$2/(3\sqrt\pi)\le \varkappa \le 4/(3\sqrt\pi)$, and the approximate value $0.53$
is conjectured based on numerical data.

We also expect that as $p=p(n)\to1$ with $n$, the random
permutations $\mathbf{w}^{(1)}(n)$ converge in
distribution to a nontrivial deterministic permuton (which
depends on the speed and the parameters in $p(n)\to1$).

\medskip

In the intermediate cases $q\in(0,1)$, let $H^{(q)}(x,y)$ be the height function of $\mathbf{w}^{(q)}$,
defined in the same way as in \Cref{sub:asymptotics_permutations} above.
\begin{conjecture}
	\label{conj:q_reduction_ASEP_like}
	For any fixed $p,q\in(0,1)$, the random permutations
	$\mathbf{w}^{(q)}(n)\in S_n$ converge to a permuton
	determined by a height function
	$\mathsf{h}^{(q)}(\mathsf{x},\mathsf{y})$ on $[0,1]^2$.
	That is, we have the limit in
	probability:
	\begin{equation*}
		\lim_{n\to\infty} n^{-1}\ssp H^{(q)}(\lfloor n\ssp\mathsf{x} \rfloor,
		\lfloor n\ssp\mathsf{y} \rfloor)=\mathsf{h}^{(q)}(\mathsf{x},\mathsf{y}),
		\qquad
		(\mathsf{x},\mathsf{y})\in[0,1]^2.
	\end{equation*}
\end{conjecture}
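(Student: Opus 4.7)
The plan is to mirror the strategy used for $q=0$ (proof of \Cref{thm:Grothendieck_permutations_asymptotics_intro} in \Cref{sec:Grothendieck_asymptotics}), replacing the coupling with a geometric-jump TASEP by a coupling with a colored ASEP-type interacting particle system. The $q$-Hecke description in \Cref{rmk:q_reduction} is exactly what one needs: the per-tile transition operator is already presented as a stochastic Hecke generator, which is the building block of colored stochastic vertex models in the sense of Borodin--Wheeler, Bufetov, and Aggarwal--Borodin.

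First, I would read the pipe dream diagonal by diagonal from the bottom left (as in \Cref{def:q_reduction_pipe_dream}) and interpret the colors of the pipes crossing the current antidiagonal as the configuration of $n$ colored particles on $\{1,\ldots,n\}$. Each tile acts on two adjacent particles as an independent stochastic operator: a bump (probability $1-p$) does nothing, while a crossing (probability $p$) swaps the two incoming pipes with probability $1$ or $q$ according to their relative color order. This gives a discrete-time colored Markov dynamics whose one-step generator coincides with the stochastic Hecke operator $T_{s_i}$ at every site and every time step, so the dynamics is a colored stochastic six-vertex model (equivalently, a discrete-time colored ASEP) with a specific step-type initial condition encoded by the staircase shape.

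Next, I would invoke the \emph{color-projection} property of such models: for each threshold $x \in \{1,\ldots,n\}$, the coarsening that collapses all colors $\ge x$ to ``particle'' and all colors $<x$ to ``hole'' evolves autonomously as an uncolored stochastic six-vertex model. Applying the hydrodynamic limit theorems available for these uncolored models (Borodin--Corwin--Gorin and subsequent works), the single-color height function $\widetilde H_x(\lfloor n\mathsf{s}\rfloor,\lfloor n\mathsf{t}\rfloor)/n$ converges in probability to the entropy solution of a Burgers-type conservation law with the staircase-induced initial data. Since $H^{(q)}(x,y)$ is nothing but $\widetilde H_x$ read on the top boundary of the staircase, this yields the pointwise convergence $n^{-1}H^{(q)}(\lfloor n\mathsf{x}\rfloor,\lfloor n\mathsf{y}\rfloor)\to \mathsf{h}^{(q)}(\mathsf{x},\mathsf{y})$ for each $(\mathsf{x},\mathsf{y})\in[0,1]^2$. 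Monotonicity and the $1$-Lipschitz property of both sides in each coordinate (which are preserved under $q$-reduction) then upgrade pointwise convergence to uniform convergence in probability and hence permuton convergence.

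The main obstacle is twofold. On the one hand, one must verify attractiveness / monotone coupling for the stochastic $q$-Hecke dynamics with the staircase (triangular) initial-boundary geometry, since most colored-ASEP hydrodynamic results in the literature are stated for half-line step data; a comparison argument with a half-space step configuration should work, but the monotonicity has to be justified at the level of the two-sided swap rule and verified to propagate through the diagonal-by-diagonal reading order. On the other hand, for generic $q\in(0,1)$ the limit $\mathsf{h}^{(q)}$ is not expected to admit the explicit closed form available in the free-fermion case $q=0$ (the entropy solution must instead be described variationally), so the theorem would deliver existence, continuity, and a characterization via a Burgers-type PDE rather than an explicit formula; extending the result to Tracy--Widom fluctuations for general $q\in(0,1)$ would require one-point asymptotics for the colored stochastic six-vertex model beyond the free-fermion case and is therefore beyond this plan.
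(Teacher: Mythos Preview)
The statement you are attempting to prove is labeled \textbf{Conjecture} in the paper, and the paper contains no proof of it. The authors state it as an expected but unproven result, and immediately after it they speculate on the fluctuation regime without claiming any rigorous argument. So there is nothing in the paper to compare your proposal against; you are outlining a possible attack on an open problem.

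As a research plan, your outline is the natural one and matches what the paper's discussion implicitly suggests. The identification of the $q$-reduction with a colored stochastic six-vertex model is correct: combining the tile randomness with the $q$-randomized resolution gives vertex weights $w(b,a;b,a)=p$, $w(b,a;a,b)=1-p$, $w(a,b;a,b)=pq$, $w(a,b;b,a)=1-pq$ for $a<b$, which is exactly the stochastic $R$-matrix of $U_q(\widehat{\mathfrak{sl}_{n+1}})$ with nonzero quantum parameter. Color projection then yields an uncolored stochastic six-vertex model with parameters $(b_1,b_2)=(p,pq)$, for which quadrant hydrodynamics with step data are available in the literature you cite.

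The places where your plan is not yet a proof are precisely the ones you flag, plus one you underemphasize. First, the staircase geometry is not literally the quadrant: you must argue that the height function read along the top boundary of $\updelta_n$ coincides with the quadrant height function restricted to that line, which for $q=0$ the paper does via the explicit TASEP/moving-exit-boundary coupling (\Cref{thm:vertex_TASEP_matching}); for $q>0$ the analogous particle system allows both left and right jumps and you need to check that the diagonal half-elbows do not feed information back into the bulk (they should not, since pipes only move up and right, but this deserves a sentence). Second, the existing hydrodynamic theorems for the uncolored stochastic six-vertex model are typically stated for the quadrant with step data; you should either cite a version that covers your effective initial/boundary data directly or reduce to it via the embedding just mentioned. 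Third, permuton convergence requires a limiting height function that is continuous with uniform marginals; this must be checked from the Burgers description, not assumed.

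In short: your strategy is sound and is almost certainly what the authors had in mind, but turning it into a theorem requires carrying out the geometry reduction and invoking (or slightly extending) the six-vertex hydrodynamic limit in the correct setting. That work is why the statement is a conjecture rather than a corollary.
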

Fluctuations of $H^{(q)}(x,y)$ around $\mathsf{h}^{(q)}$ should
be governed by one of the laws appearing in the
KPZ universality class in the presence of a boundary
(Tracy--Widom GUE distribution,
its GOE/GSE counterparts and crossovers,
or Gaussian fluctuations on scale $n^{1/2}$).
We refer to \cite{baik2018pfaffian}, \cite{barraquand2018stochastic} for further discussion of
fluctuations in half-space models from the KPZ universality class.

\begin{figure}[htpb]
	\centering
	\includegraphics[width=.24\textwidth, angle=90]{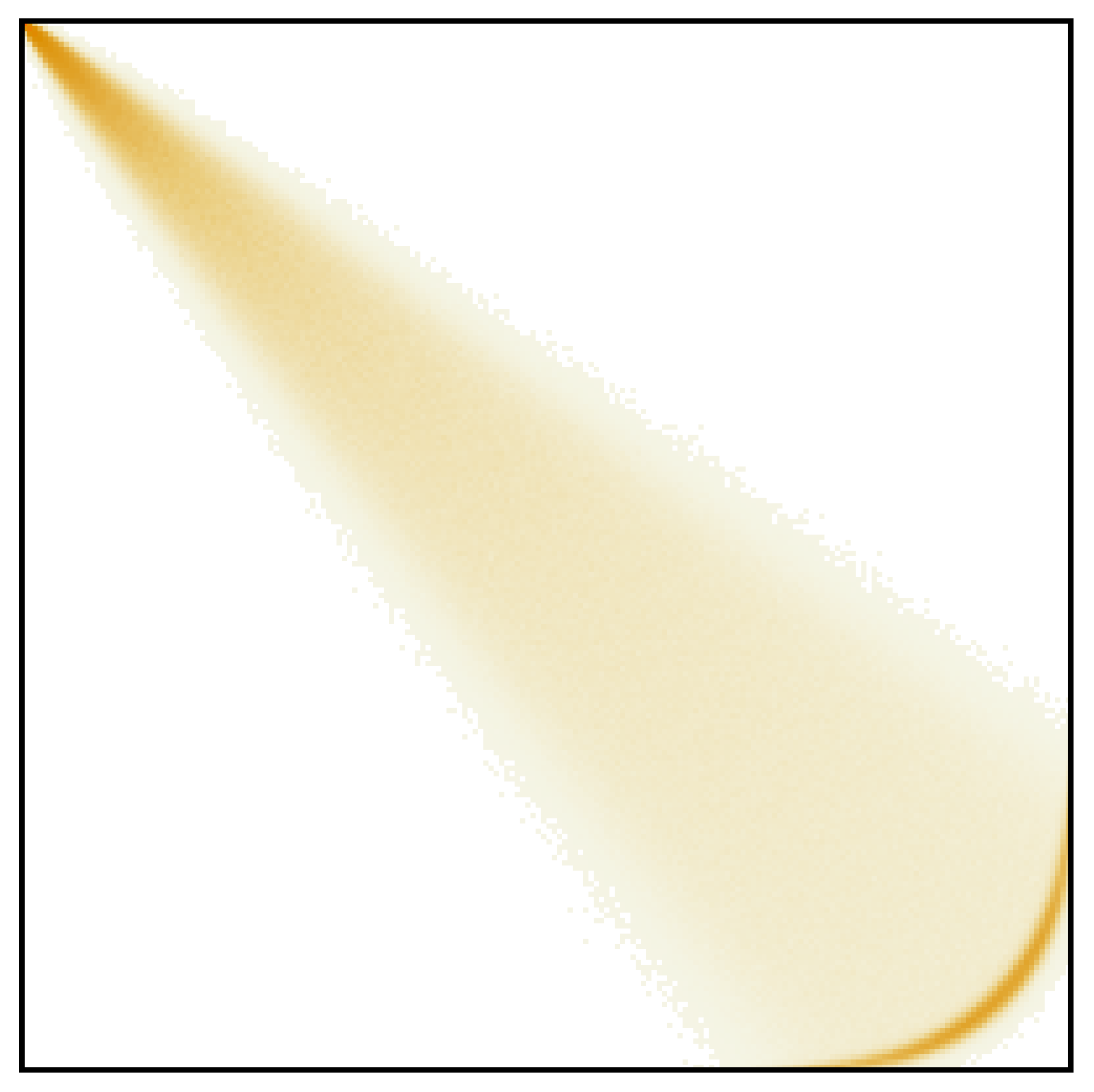}\
	\includegraphics[width=.24\textwidth, angle=90]{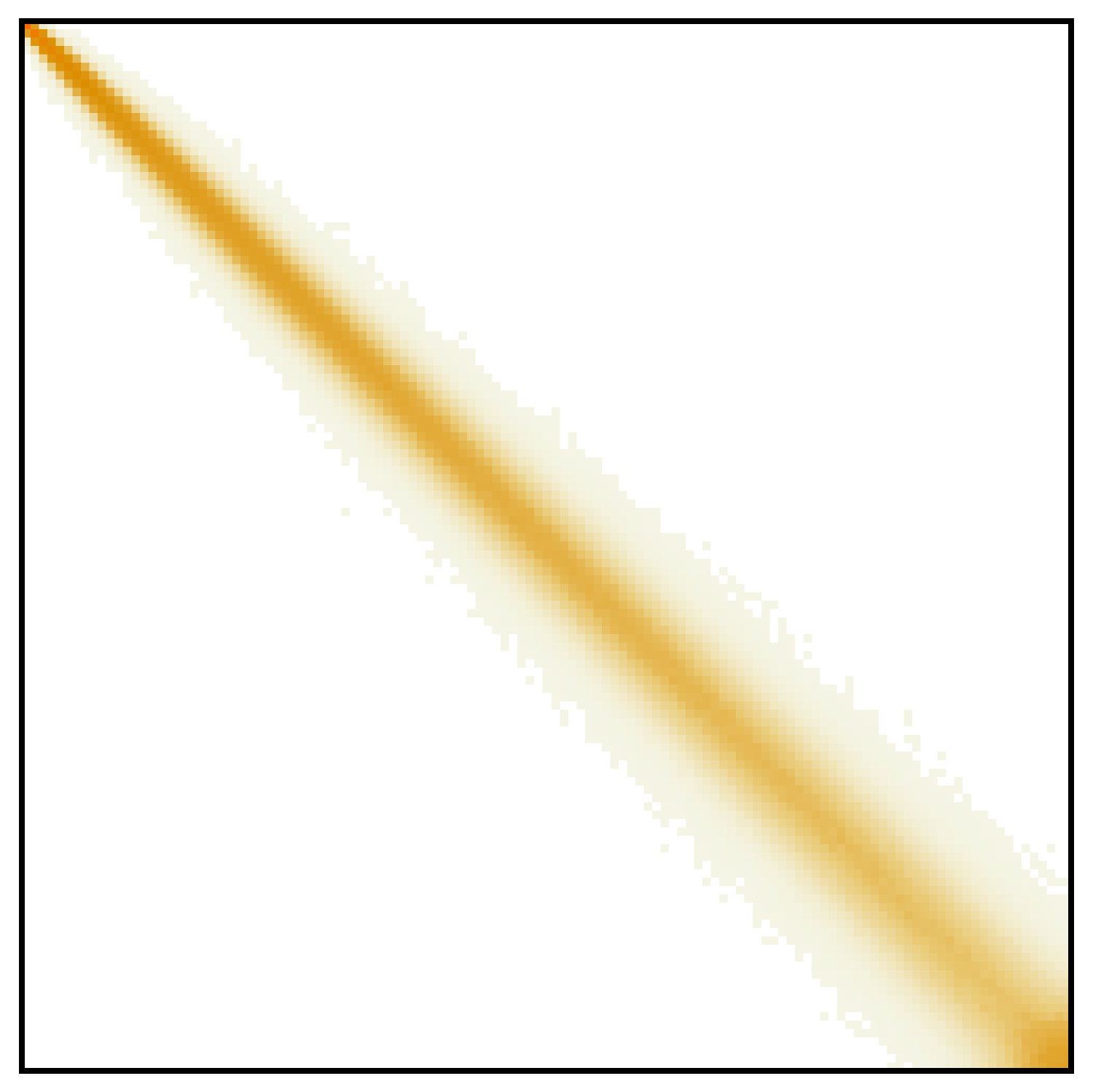}\
	\includegraphics[width=.24\textwidth, angle=90]{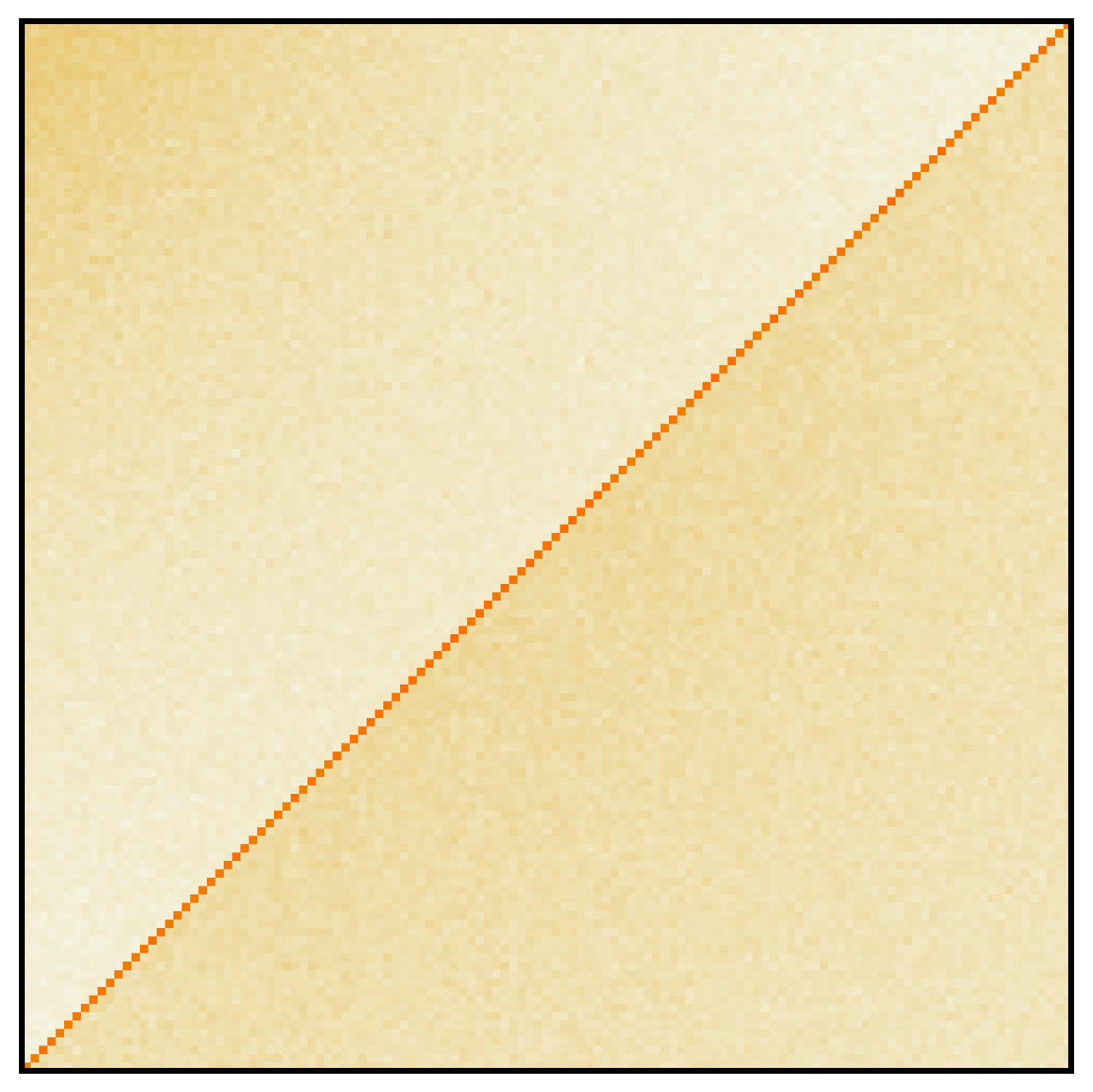}\
	\includegraphics[width=.232\textwidth]{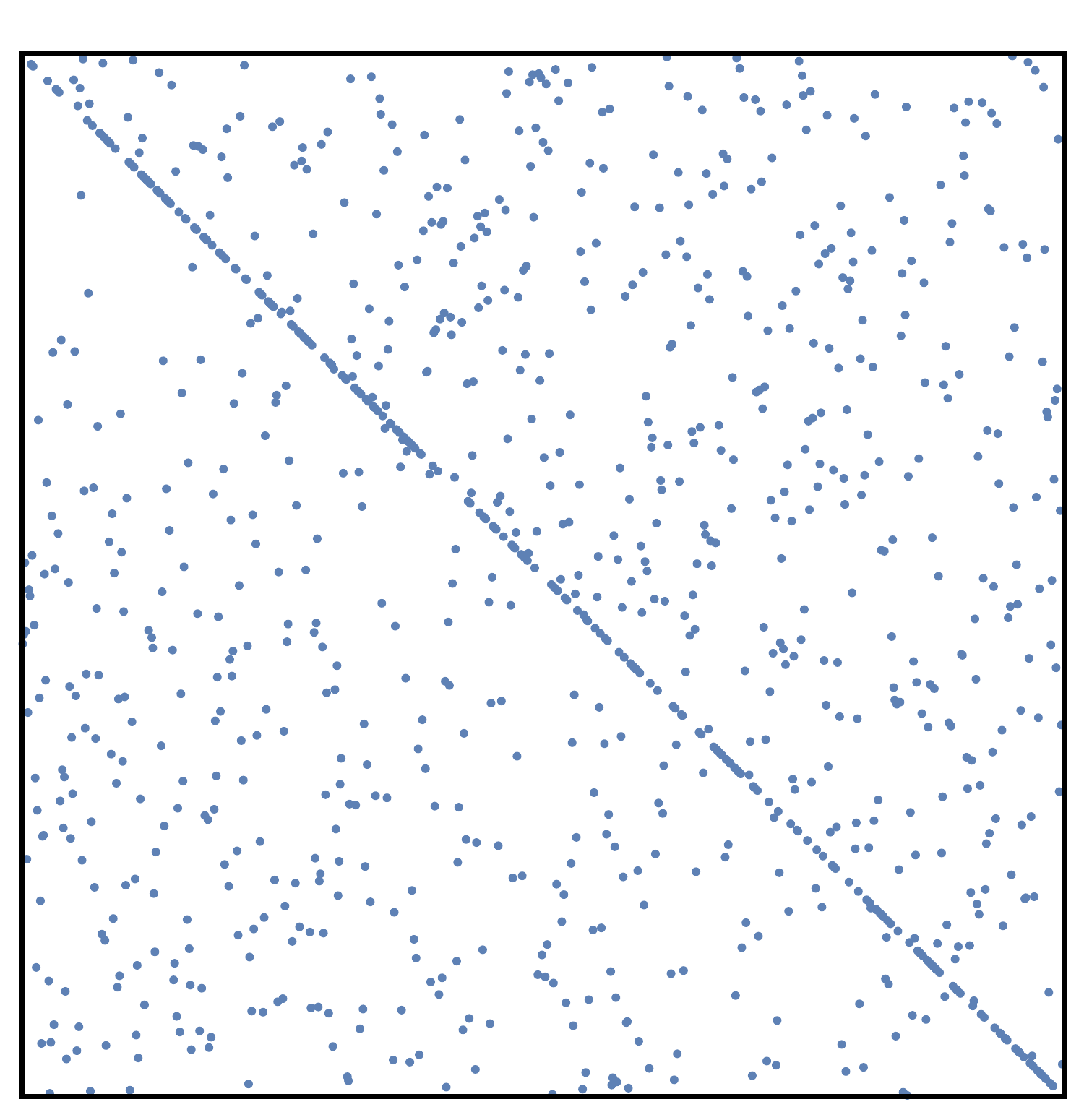}
	\caption{Simulations of $\mathbf{w}^{(q)}$, from left to right:
		(1) An average over $2000$ samples with $n=1000$, $p=0.7$, and $q=0.5$. We see curved boundary
		in the top right, similarly to $q=0$.
		(2) An average over $2000$ samples with $n=1000$, $p=0.5$, and $q=1$. The random permutation $\mathbf{w}^{(1)}$
		is
		close to the identity (and converges to it, see \Cref{thm:non_reduced_intro}).
		(3) An average over $2000$ samples with $n=1000$, $p=0.9985$, and $q=1$. The permutation has a positive mass
		close to the anti-diagonal.
		(4) A single sample with $n=1000$, $p=0.9985$, and $q=1$, with the anti-diagonal clearly visible. The latter two simulations suggest a permuton limit as $p=p(n)\to 1$.
	The \texttt{C} code for these simulations is available on the \texttt{arXiv} as an ancillary file.}
	\label{fig:q_reduction}
\end{figure}

\subsection{Asymptotics and maxima of Grothendieck principal specializations}
\label{sub:intro_Groth_spec}

We are interested in the following \emph{principal specializations} of the
Grothendieck polynomials:
\begin{equation}
	\label{eq:Upsilon_w_definition_intro}
	\Upsilon_w(\beta) \coloneqq \mathfrak{G}_w^{\beta}\bigl (\underbrace{1,1,\ldots,1}_n\bigr),
	\qquad w\in S_n.
\end{equation}
In particular, $\Upsilon_w(0)$ is the principal specialization of the Schubert polynomial.
For $\beta=1$, these quantities are the (unnormalized) probability weights
of the Grothendieck random permutation~$\mathbf{w}$ with $p=\frac{1}{2}$,
see \eqref{eq:pipe_dream_specialization_p_Grothendieck_intro}.

Set
\begin{equation}
	\label{eq:v_n_u_n_notation_intro}
	v_n(\beta)\coloneqq \sum_{w \in S_n} \Upsilon_w(\beta),\qquad
	u_n(\beta)\coloneqq \max_{w\in S_n} \Upsilon_w(\beta).
\end{equation}
From \Cref{thm:Grothendieck_pipe_dreams_intro}, we have:
\begin{equation}
	\label{eq:Grothendieck_specialization_v_n_u_n_intro}
	v_n(1)=2^{\binom{n}{2}},\qquad  u_n(1)=2^{\binom{n}{2}-o(n^2)}, \qquad n\to\infty.
\end{equation}
The first equality is exact, while the second one is asymptotic.
This asymptotic behavior
follows from
the cardinality
$n!\sim e^{n\log n+O(n)}\ll 2^{\binom{n}{2}}$ of the set over which we take the maximum.

For $u \in S_k, w \in S_m$, we denote
\begin{equation}
	\label{eq:cross_product_of_permutations}
	u \times w \coloneqq (u(1), \ldots, u(k), w(1)+k, \ldots, w(m) + k)\in S_{k+m}.
\end{equation}
For a composition $b = (b_{\ell}, \ldots, b_1)$ of $n = b_1 + \cdots + b_{\ell}$, the {\it layered permutation} $w(b) \in S_n$ is defined as follows:
\begin{equation}
	\label{eq:layered_permutation_definition_intro}
w(b) = w_0(b_{\ell}) \times \cdots \times w_0(b_{1}),
\end{equation}
where $w_0(k)$ is the full reversal permutation of order $k$.
Denote by $L_n\subset S_n$ the subset of layered permutations,
and let
\begin{equation}
	\label{eq:u_n_prime_definition_intro}
	u_n'(1)\coloneqq \max_{w\in L_n} \Upsilon_w(1).
\end{equation}

\begin{figure}[htbp]
	\raisebox{-113pt}{\includegraphics[width=.5\textwidth]{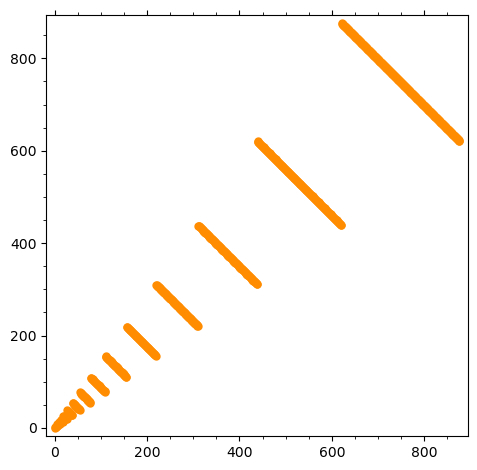}}
  $
	\qquad \qquad
\begin{array}{rrr}
\hline
\sum_{i=1}^k b_i & b_1  & \qquad \quad f(n)\\ \hline
4&2&0.175460\\
6&2&0.215599\\
9& 3&0.279931\\
13& 4&0.328982\\
19&6&0.375242\\
27&8&0.406540\\
39&12&0.433385\\
55&16&0.450935\\
78&23&0.464651\\
110&32&0.474448\\
156&46&0.481823\\
220&64&0.486974\\
311&91&0.490735 \\
439&128&0.493404\\
621&182 & 0.495329 \\
877&256 & 0.496684\\ \hline
\end{array}
$
\caption{\textbf{Left}: Permutation matrix of a layered permutation $w(b)\in S_{877}$, where
		the composition is
		$b=(256, 182, 128, 91, 64, 46, 32, 23, 16, 12, 8, 6, 4, 3, 2, 2, 1, 1)$.
		Note that
		$b_i /b_{i+1} \approx 1/\sqrt{2}$.
		\textbf{Right}: Table of exact values for $3\le k\le 19$ of layered permutations $w(b)$ with $b_i/b_{i+1} \approx 1/\sqrt{2}$. The third column is
	$f(n)\coloneqq  \frac{1}{n^2} \log_2 \Upsilon_{w(b)}(1)$, where $n=\sum_i b_i$.}
	\label{fig:value_ps_Groth_layered}
\end{figure}

In \Cref{sub:asymptotically_maximal_specializations}, we prove that
on layered permutations, the
$\beta=1$ Grothendieck polynomials
attain their asymptotic maximum:

\begin{theorem}[\Cref{thm:gmax}]
	\label{thm:gmax_intro}
There are sequences of layered permutations $w(b^{(n)})\in S_n$ so that
$$
\lim_{n \to \infty} \frac{1}{n^2} \log_2
\Upsilon_{w(b^{(n)})} (1)
= \frac{1}{2}.
$$
\end{theorem}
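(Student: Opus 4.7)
The upper bound $\limsup_{n\to\infty}\tfrac{1}{n^2}\log_2\Upsilon_{w(b^{(n)})}(1)\le \tfrac12$ is immediate from the identity $v_n(1)=\sum_{w\in S_n}\Upsilon_w(1)=2^{\binom{n}{2}}$ recorded in \eqref{eq:Grothendieck_specialization_v_n_u_n_intro}, since any single layered term is at most $2^{\binom{n}{2}}$. For the lower bound, I would use the interpretation $\Upsilon_w(1)=\#\PD(w)$ (which is \Cref{thm:Grothendieck_pipe_dreams_intro} at $\beta=1$) and exhibit layered $w(b^{(n)})$ for which a $2^{-o(n^2)}$ fraction of the $2^{\binom{n}{2}}$ total pipe dreams of order $n$ reduces to $w(b^{(n)})$. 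Note that the naive candidate $b=(n)$, giving $w(b)=w_0(n)$, fails: because $\mathfrak{G}^\beta_{w_0(n)}=x_1^{n-1}\cdots x_{n-1}$ (equivalently, any pipe dream reducing to $w_0(n)$ must have at least $\binom{n}{2}$ crosses, hence be the unique all-crosses tiling), one has $\Upsilon_{w_0(n)}(1)=1$, so compositions with many blocks are required.

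\textbf{Step 1 (Structural decomposition).} For a layered $w(b)$ with blocks $b_\ell,\dots,b_1$, the staircase splits into $\ell$ triangular ``block regions'' of total area $\sum_i\binom{b_i}{2}$ and $\binom{\ell}{2}$ rectangular ``inter-block regions'' $R_{ij}$ of dimensions $b_i\times b_j$. Since pipes from different blocks are non-inverted, the unique reduced pipe dream of $w(b)$ has block triangles filled with crosses and inter-block rectangles filled with elbows. All other pipe dreams in $\PD(w(b))$ arise by adding crosses inside the $R_{ij}$ whose Demazure contribution cancels in the 0-Hecke reading word of \Cref{rmk:Demazure_product}.

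\textbf{Step 2 (Free-tile lower bound and choice of $b^{(n)}$).} I would establish a bound of the form $\#\PD(w(b))\ge\prod_{i<j}2^{|F_{ij}|}$ by identifying, inside each $R_{ij}$, a subset $F_{ij}$ of tiles on which cross/elbow can be chosen independently without changing the Demazure product of the reading word. Candidates are interior tiles where the two incident pipes already crossed earlier in the reading order (so an extra cross is absorbed by an existing crossing in the 0-Hecke product), with boundary defects of order $O(b_i+b_j)$. Combining with the geometric composition suggested by \Cref{fig:value_ps_Groth_layered} (blocks decreasing by factor $1/\sqrt 2$, so $\ell=\Theta(\log n)$), one checks that the residual
\[
\sum_i\binom{b_i}{2}\;+\;\sum_{i<j}\bigl(b_ib_j-|F_{ij}|\bigr)
\]
is $o(n^2)$, which yields $\log_2\Upsilon_{w(b^{(n)})}(1)\ge\binom{n}{2}-o(n^2)$ and hence the limit $\tfrac12$.

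\textbf{Main obstacle.} The hard part is Step~2: obtaining a sharp lower bound on $|F_{ij}|$ purely combinatorially is delicate because a single additional cross can cascade through the 0-Hecke product and alter the reduction in a nonlocal way. A cleaner route, strongly suggested by the paper's emphasis on the free-fermion six-vertex model and the Aztec diamond (\Cref{remark:Grothendieck_and_ASM}), is to recast $\Upsilon_{w(b)}(1)$ as the partition function of a free-fermion vertex model on a domain whose boundary conditions encode the block structure of $b$, evaluate it via a Lindström--Gessel--Viennot or Izergin--Korepin determinant, and then extract the asymptotics $\binom{n}{2}-o(n^2)$ from the classical frozen-boundary analysis of the Aztec diamond; the geometric ratio $1/\sqrt 2$ would then emerge as the parameter minimizing the sub-leading correction in this exact formula.
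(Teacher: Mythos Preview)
Your upper bound is correct. The lower bound has a genuine gap.

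In Step~1, layered $w(b)$ does \emph{not} have a unique reduced pipe dream: $\#\RPD(w(b))=\mathfrak{S}_{w(b)}(1^n)$, which by the factorization of \Cref{prop:gprod} (at $\beta=0$) together with \Cref{thm:Proctor} is a product of Proctor counts and is typically exponential in $n$. More importantly, the free-tile mechanism of Step~2 fails as stated: in your base configuration the pipes from distinct blocks have \emph{not} crossed (layered $w(b)$ has no inter-block inversions), so toggling a single tile of $R_{ij}$ from elbow to cross introduces a new inversion and changes the Demazure product rather than being absorbed. There is thus no large set $F_{ij}$ of independently toggleable tiles; the pipe dreams in $\PD(w(b))$ are constrained nonlocally, and the estimate $|F_{ij}|=b_ib_j-O(b_i+b_j)$ is unsupported.

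The paper's argument bypasses direct tile-counting. First, the exact factorization of \Cref{prop:gprod} gives $\Upsilon_{w(\ldots,b_2,b_1)}(1)=\prod_{i\ge 1}F(k_i,k_{i-1}-k_i)$ with $k_i=n-b_1-\cdots-b_i$ and $F(k,m)=\Upsilon_{w_0(k;m)}(1)$, reducing everything to the \emph{elementary} layered permutation $\mathrm{id}_k\times w_0(n-k)$. Second, \Cref{thm:Groth_beta_1} supplies the closed form $F(k,n-k)=2^{-\binom{k}{2}}\det[\ls_{n-k-2+i+j}]_{i,j=1}^k$ in little Schr\"oder numbers. Third, \Cref{lemma:fbound} reads this Hankel determinant via Lindstr\"om--Gessel--Viennot as a count of nonintersecting Schr\"oder paths and identifies $2^{-\binom{n}{2}}\det[\ls_{\cdots}]$ with the probability, under the uniform domino tiling of the Aztec diamond of order $n-1$, that the innermost $n-k$ paths lie in their lowest configuration. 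For $k>n/\sqrt{2}$ these paths stay below the arctic circle, so this probability is $1-e^{-O(n)}$, yielding $\log_2 F(k,n-k)=\tfrac{n^2}{2}-\tfrac{k^2}{2}-O(n)$. Telescoping over the blocks gives $\tfrac{n^2}{2}-o(n^2)$ once $b^{(n)}$ has $o(n)$ parts with each ratio $k_i/k_{i-1}>1/\sqrt{2}$. Your final paragraph correctly anticipates the LGV/Aztec route, but the two ingredients that make it actually go through---the factorization of \Cref{prop:gprod} and the Schr\"oder determinant of \Cref{thm:Groth_beta_1}---are precisely what your proposal is missing.
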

\Cref{thm:gmax_intro} implies that in leading order,
$u_n'(1)$ asymptotically behaves in the same way as
$v_n(1)$ and $u_n(1)$ \eqref{eq:Grothendieck_specialization_v_n_u_n_intro}.

Explicit constructions of such sequences of layered
permutations are given in \Cref{thm:gmax}. In particular, we
can take the parts of the compositions $b^{(n)} = (\ldots, b_2, b_1)$
to be
geometric $b_i \sim
(1- \alpha)\alpha^{i-1} n$ for any $\alpha \in [1/\sqrt{2}, 1)$.
See
\Cref{fig:value_ps_Groth_layered} for an illustration.
Note, however that we do not know in the limit what
compositions $b$ of size $n$ yield the global maximum of
$\Upsilon_{w(b)}$ over all layered permutations. For
Schubert polynomials, the analogous question was settled in
\cite{MoralesPakPanova2019}. We present numerics
for principal specializations of the $\beta=1$ Grothendieck polynomials in
\Cref{app:data_Groth}, and explain the difference between the (numerically)
optimal layered permutations and the ones constructed in the proof of \Cref{thm:gmax}.

Regarding Stanley's problem on permutations achieving the
maximal Schubert specialization $u_n(0)$ (the case
$\beta=0$), the Merzon--Smirnov conjecture
\cite{merzon2016determinantal} states that \emph{the maximum
is attained on layered permutations}. On one hand, our
\Cref{thm:gmax_intro}
establishes that an asymptotic analog of this conjecture holds for
the $\beta=1$
Grothendieck polynomials.
On the other hand, the typical shape of Grothendieck
random permutations
(\Cref{fig:intro_simulations}, center and right; see also
\Cref{thm:Grothendieck_permutations_asymptotics_intro})
suggests that the
global maximum $u_n(1)$
may be achieved on permutations whose shape is far from layered.
In \Cref{sub:bounds_Groth}, we obtain new bounds on the quantities
$v_n(\beta)$ and $u_n(\beta)$ for general values of $\beta$,
but do not consider the relation
of maximal principal specializations to layered permutations.

\subsection{Outline}

In the Introduction, we defined the model and formulated our main results.
In \Cref{sec:pipe_dreams_colored_six_vertex}, we explain how the Grothendieck random permutation
$\mathbf{w}$ from \Cref{sub:random_permutations}
is sampled by running a Markov chain, more precisely, the colored stochastic six-vertex model.
In \Cref{sec:six_vertex_TASEP}, we connect the vertex model
to a discrete time TASEP with parallel update and geometrically distributed jumps.
This process is well-studied within Integrable Probability,
which leads to law of large numbers and Tracy--Widom fluctuation
results.
In \Cref{sec:Grothendieck_asymptotics}, we recast the asymptotic results about
TASEP into
limiting properties of
Grothendieck random permutations.
In \Cref{sec:non-reduced}, we consider a variant of our
model coming from non-reduced pipe dreams (described in \Cref{sub:variant_without_reduction}).
In \Cref{sec:maximal_specializations_Grothendieck}, we
address the question of asymptotics of  principal specializations
of $\beta=1$ Grothendieck polynomials, and show that they are asymptotically the largest
on layered permutations.
We also obtain new bounds on the principal specializations for general $\beta$,
based on an interpretation of Grothendieck polynomials in terms of
Alternating Sign Matrices.
In \Cref{sec:add_remarks_and_open_problems}, we discuss further directions and a few
open problems.

In \Cref{app:data_Groth}, we provide tables of maximal  principal specializations of $\beta=1$ Grothendieck polynomials on layered permutations.
In \Cref{sec:app_TASEP_asymptotics}, we provide the steepest descent computations
leading to the asymptotics in TASEP. While these computations are standard by
now, we include them to justify the exact values of the constants
in TASEP limit shape and fluctuations.

\subsection*{Acknowledgments}

We are very grateful to Jehanne Dousse for her valuable insights in the development of this project and doing experiments in Sage. We thank Igor Pak for bringing us together and initiating our collaboration.
We thank Daoji Huang,
Mark Shimozono and Tianyi Yu for sharing an early version of \cite{HSY}. We would also like to thank Dave Anderson, Philippe Biane, Jacopo Borga, Filippo Colomo, Colin Defant, Philippe DiFrancesco, Rick Kenyon, Matthew Nicoletti, Khaydar Nurligareev, Mikhail Tikhonov, and Anna Weigandt for helpful discussions.

The project was begun at the SQuaRE ``Young Tableaux Asymptotics''
at the American Institute of Mathematics. The authors thank
AIM for providing a supportive and mathematically rich
environment.
Part of this research was performed in Spring 2024 while
some of the authors were
visiting the
program
``Geometry, Statistical Mechanics, and Integrability''
at the Institute for Pure and Applied Mathematics
(IPAM), which is supported by the NSF grant DMS-1925919.

AM was partially supported by the NSF grant DMS-2154019, the NSERC Discovery Grant
RGPIN-2024-06246, and the FRQNT Team grant 341288. GP was partially supported by the NSF grant CCF-2302174. LP was partially supported by the NSF grant DMS-2153869 and the Simons Collaboration Grant for Mathematicians 709055. DY was partially supported by the \mbox{MSHE RK} grant AP23489146.

\section{From pipe dreams to the colored stochastic six-vertex model}
\label{sec:pipe_dreams_colored_six_vertex}

\subsection{Colored stochastic six-vertex model}
\label{sub:colored_six_vertex}

Introduce the vertex weights
$$
w_p(a,b;c,d)
=
w_p
\bigl(
	\begin{tikzpicture}[baseline=-3,scale=.7,very thick]
			\draw[fill] (0,0) circle [radius=0.025];
			\draw (0.5,0) -- (0.05,0);
			\draw (-0.5,0) -- (-0.05,0);
			\draw (0,0.05) -- (0, 0.5);
			\draw (0,-0.05) -- (0,-0.5);
			\node at (.25,-.4) {\scriptsize $a$};
			\node at (-.7,0) {\scriptsize $b$};
			\node at (-.25,.4) {\scriptsize $c$};
			\node at (.7,0) {\scriptsize $d$};
\end{tikzpicture}
\bigr),$$ where
$a,b,c,d\in \left\{0, 1,\ldots,n  \right\}$.
Here~$0$ represents
the absence of a pipe, and
positive numbers indicate the
pipes' colors.
We view $(a,b)$ and $(c,d)$ as incoming and outgoing pipes, respectively.
The weights are
defined as follows:
\begin{equation}
	\label{eq:vertex_weights}
	\begin{split}
		w_p(a,a;a,a) &=1;
		\\
		w_p(b,a;b,a)&=p, \qquad  w_p(b,a;a,b)=1-p;
		\\
		w_p(a,b;a,b)&=0, \qquad  w_p(a,b;b,a)=1,
	\end{split}
\end{equation}
where $0\le a<b\le n$.
The weights of all other configurations not listed in \eqref{eq:vertex_weights}
are zero. Let us indicate a few crucial properties of the weights:
\begin{enumerate}[$\bullet$]
	\item The weights \emph{conserve} the pipes:
		$w_{p}(a,b;c,d)=0$ unless $\{a,b\}=\{c,d\}$ as sets.
	\item The weights are \emph{stochastic}:
		\begin{equation*}
			w_p(a,b;c,d)\ge0,\qquad \sum_{c,d=0}^n w_p(a,b;c,d)=1 \quad \textnormal{for all}\ a,b.
		\end{equation*}
\end{enumerate}

\begin{remark}
	The weights $w_p$ come from the
	fundamental stochastic $R$-matrix
	for the quantum group $U_q(\widehat{\mathfrak{sl}_{n+1}})$
	\cite{Jimbo:1985ua}, see also \cite[Section~2.1]{borodin_wheeler2018coloured}
	for a brief overview.
	For the application to random pipe dreams, we specialize
	the \emph{quantum parameter} $q$ to zero
	(equivalently, to infinity in the normalization of \cite{borodin_wheeler2018coloured} --- this choice
	depends only on
	the order in which colors correspond to basis vectors of
	the fundamental representation of $\mathfrak{sl}_{n+1}$).
	The remaining parameter $p$ is related to the \emph{spectral parameter}
	in an integrable vertex model.
\end{remark}

\subsection{Matching random pipe dreams to the stochastic vertex model}
\label{sub:matching_pipe_dreams}

Attach coordinates
$(i,j)\in \mathbb{Z}_{\ge1} ^{2}$
to boxes of the
staircase shape, with~$i$ increasing down,
and~$j$ increasing to the right.
The staircase shape is then
$\updelta_n\coloneqq \left\{ (i,j)\colon i+j\le n \right\}$.

Place a stochastic vertex with the weight $w_p$ at each box $(i,j)\in \updelta_n$.
Let the initial condition along the left boundary of $\updelta_n$ be
the \emph{rainbow} one, with colors $1$ to $n$ from top to bottom.
Then we can sample a random configuration
of pipes as in \Cref{fig:pipe_dream}, right,
by running a discrete time Markov chain with time $\tau=j-i$, where
$-(n-1)\le \tau\le n-1$. At each step $\tau\to \tau+1$,
the configuration with $j-i\le \tau$
is already sampled, which determines the incoming colors at all boxes
$(i,j)\in \updelta_n$ with $j-i=\tau+1$. The next
step $\tau\to \tau+1$ consists in an independent update of the outgoing colors
at all boxes $(i,j)\in \updelta_n$ with $j-i=\tau+1$,
using the stochastic vertex weights
$w_p(a,b;\cdot,\cdot)$
\eqref{eq:vertex_weights}, $1\le a,b\le n$.
Here we view each $w_p(a,b;\cdot,\cdot)$ as a probability distribution on possible outputs,
$(a,b)$ or $(b,a)$.
Reading off the outgoing colors at the top boundary of $\updelta_n$,
we arrive at a random permutation $\mathbf{w}\in S_n$.

\begin{proposition}
	\label{prop:pipe_dreams_to_vertex_model}
	The random permutation $\mathbf{w}\in S_n$ obtained from the colored stochastic six-vertex model
	as described above has the same distribution as the Grothendieck random permutation
	defined in \Cref{sub:random_permutations}.
\end{proposition}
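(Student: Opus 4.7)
The plan is to realize the global reduction of \Cref{def:reduction_pipe_dream} as a cell-by-cell local operation and then directly compare the induced transition kernel with the vertex weights \eqref{eq:vertex_weights}. Because the tiles in \eqref{eq:pipe_dream_measure} are independent, each tile may be revealed only at the moment its cell is processed, in any order compatible with the flow of information (each cell's output depends only on its left and bottom inputs). Both the column-wise bottom-to-top order used in \Cref{def:reduction_pipe_dream} and the anti-diagonal order $\tau=j-i$ used by the six-vertex model are compatible, and both yield the same reduced pipe dream, hence the same random permutation in $S_n$.

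The heart of the argument is the following geometric lemma (essentially the content of \Cref{rmk:q_reduction}): at any cell $(i,j)\in\updelta_n$, after reducing all previously processed cells, the two pipes entering $(i,j)$ have already crossed in a previously processed cell if and only if the higher-colored pipe enters from the left. I would prove this using the elementary observation that pipes travel only right or up, so a pipe of color $k$ is confined to rows $\le k$. Consequently, at a first meeting of two pipes $\alpha<\beta$, pipe $\alpha$ must enter from the left and $\beta$ from the bottom: the alternative would force $\alpha$'s path (coming up column $j$) and $\beta$'s path (coming across row $i$) to share a cross tile in row $i+1$ earlier in the processing order, contradicting ``first meeting.'' After the first crossing the two pipes continue in their original directions, so every later meeting of the pair reverses the arrangement, placing the higher color on the left.

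Given the lemma, combining the independent tile sampling in \eqref{eq:pipe_dream_measure} with the local reduction in \Cref{def:q_reduction_pipe_dream} at $q=0$ yields the following conditional law at a cell with bottom input $a$ and left input $b$ (with $a\ne b$):
\begin{enumerate}[$\bullet$]
\item if $a<b$ (higher color from the left), the pipes have already crossed, so any cross tile is reduced to an elbow, giving an elbow with probability $1$;
\item if $a>b$ (lower color from the left), no reduction occurs, so the tile is kept as sampled: a cross with probability $p$ and an elbow with probability $1-p$.
\end{enumerate}
These probabilities agree term-by-term with \eqref{eq:vertex_weights}: the first case gives $w_p(a,b;b,a)=1$, while the second gives $w_p(a,b;a,b)=p$ and $w_p(a,b;b,a)=1-p$ (the diagonal case $a=b$ never arises for distinct pipes). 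Both models also share the rainbow initial data on the left boundary of $\updelta_n$ with no inflow from the half-bump diagonal, so an induction on $\tau$ gives equality of the joint laws of all outputs on the top boundary, and hence of the two distributions on $S_n$.

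The main obstacle is the geometric lemma identifying ``already crossed'' with ``higher color on the left''; once it is in hand, the matching with \eqref{eq:vertex_weights} reduces to a straightforward four-case check, and the order-independence of the reduced pipe dream under compatible processing orders is immediate since each cell's update uses only its left and bottom neighbors.
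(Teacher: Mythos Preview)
Your approach matches the paper's: both identify the key local fact that, at each cell, the incoming pipes have already crossed if and only if the higher color enters from the left, and then verify case by case that the resulting transition kernel coincides with the vertex weights~\eqref{eq:vertex_weights}. The paper simply asserts this local fact without proof, so your planarity sketch actually goes further --- though the specific claim that the alternative would force the paths to ``share a cross tile in row $i+1$'' is not quite right as stated; a cleaner version tracks the relative order of the two pipes along successive anti-diagonals and observes that this order flips exactly when both enter the same cell.
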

\begin{proof}
	The Markov evolution of the stochastic vertex model
	is equivalent to a simultaneous
	exploration of a random pipe dream (that is, determining the state
	\raisebox{-2pt}{\includegraphics[scale=0.4]{cross}}
	or
	\raisebox{-2pt}{\includegraphics[scale=0.4]{elbows}}
	at each box),
	and its reduction by following the pipes.
	Indeed, in
	the evolution of the stochastic vertex model,
	in each box $(i,j)$, two strands of different colors meet as incoming pipes.
	There are two possibilities:
	\begin{enumerate}[$\bullet$]
		\item If the pipe of the lower numbered color
			is below, then these pipes have already met and crossed through each other
			(which is allowed only once in a reduced pipe dream).
			Thus, regardless of the state
			\raisebox{-2pt}{\includegraphics[scale=0.4]{cross}}
			or
			\raisebox{-2pt}{\includegraphics[scale=0.4]{elbows}}
			at $(i,j)$, the pipes \emph{must} bump off each other.
			In
			\eqref{eq:vertex_weights},
			this corresponds to the vertex weight $w_p(a,b;b,a)=1$, where $1\le a<b\le n$.
		\item
			If the pipe of the lower numbered color is to the left, then the pipes have not yet crossed
			through each other (but they may have bumped off at an elbow).
			Then, we place the tile
			\raisebox{-2pt}{\includegraphics[scale=0.4]{cross}}
			or
			\raisebox{-2pt}{\includegraphics[scale=0.4]{elbows}}
			at $(i,j)$ with probability $p$ or $1-p$, respectively.
			After placing the tile,
			the strands deterministically follow the paths drawn on it.
			The choice of the tile is equivalent
			to using the stochastic vertex weights
			$w_p(b,a;b,a)=p$ or $w_p(b,a;a,b)=1-p$, where $1\le a<b\le n$.
	\end{enumerate}
	We see by induction that the vertex model
	\eqref{eq:vertex_weights}
	produces the same random permutation
	as reducing the random pipe dream.
\end{proof}

\subsection{Height function and color forgetting}
\label{sub:color_forgetting}

Let $x,y\in \left\{ 1,\ldots,n  \right\}$.
For a fixed pipe dream $D$ and the corresponding permutation $w=w(D)$
(see \Cref{def:permutation_from_pipe_dream}),
define the \emph{permutation height function} as
\begin{equation}
	\label{eq:permutation_height_function}
	\begin{split}
		H(x,y)
		\coloneqq &
		\operatorname{\#}\bigl\{\textnormal{pipes of colors $\ge x$
		which exit through positions $j\ge y $ at the top}\bigr\}
		\\=&
		\operatorname{\#}\left(
			\left\{ w^{-1}(x),w^{-1}(x+1),\ldots,w^{-1}(n)  \right\}
			\cap
		\left\{ y,y+1,\ldots,n  \right\} \right).
	\end{split}
\end{equation}
Observe that $H(x,y)$ depends only on the permutation
$w$ and not on the structure of crossings and bumps in the pipe dream.
For a fixed $w$, the function $H(x,y)$ decreases in $x$ and $y$.
When the pipe dream and the permutation $\mathbf{w}$
are random, the height function $H(x,y)$ becomes a random variable.

In the permutation matrix of $w\in S_n$,
$H(x,y)$ is the number of entries in the rectangle
$[y,n]\times[x,n]$. Recall that the pipe of color $i$ exits at $w^{-1}(i)$, so the
rectangle is transposed.
See \Cref{fig:height_function_permutation_matrix} for an illustration.
This interpretation of $H(x,y)$ implies that
\begin{equation}
	\label{eq:height_function_bounds}
	H(x,y)\le \min(n-x,n-y).
\end{equation}

In colored stochastic six-vertex models, quantities like $H(x,y)$ are
referred to as \emph{colored height functions}, cf. \cite{borodin2020observables}.

\begin{figure}[htpb]
	\centering
	\begin{tikzpicture}[scale=.6]
  \foreach \x in {1,2,3,4,5,6} {
    \foreach \y in {1,2,3,4,5,6} {
        \draw (\x-.5,\y-.5) rectangle (\x+.5,\y+.5);
    }
  }

	\fill (1,2) circle (0.2);
  \fill (2,4) circle (0.2);
  \fill (3,1) circle (0.2);
  \fill (4,6) circle (0.2);
  \fill (5,5) circle (0.2);
  \fill (6,3) circle (0.2);

  \foreach \i in {1,2,3,4,5,6} {
    \node at (\i,0) {\i};
    \node at (0,\i) {\i};
  }
	\draw[blue,opacity=.3, fill] (2.5,3.5)--++(4,0)--++(0,3)--++(-4,0)--cycle;
\end{tikzpicture}
	\caption{Permutation matrix of $w=(2,4,1,6,5,3)$
		coming from the pipe dream in
		\Cref{fig:pipe_dream}, right (dots indicate 1's).
		The highlighted rectangle has
		$H(4,3)=2$ entries.}
	\label{fig:height_function_permutation_matrix}
\end{figure}
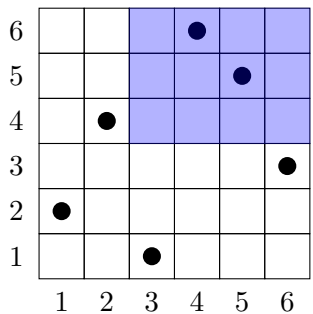

\begin{remark}
	\label{rmk:color_position}
	The color-position symmetry \cite{BorodinBufetov2021ColorPosition}
	of the colored stochastic six-vertex model
	implies that the distribution of $H(x,y)$ corresponding to the
	Grothendieck random permutation $\mathbf{w}$
	is symmetric in $x,y$.
	Indeed, by the color-position symmetry, $\mathbf{w}$ and $\mathbf{w}^{-1}$
	have the same distribution. Therefore,
	\begin{equation*}
		H(x,y)
		=
		\sum_{i=x}^{n}\sum_{j=y}^{n}\mathbf{1}_{\mathbf{w}^{-1}(i)=j}
		\stackrel{d}{=}
		\sum_{i=x}^{n}\sum_{j=y}^{n}\mathbf{1}_{\mathbf{w}(i)=j}
		= H(y,x).
	\end{equation*}
	However, we can access the distribution of $H(x,y)$ directly
	using color forgetting (\Cref{prop:color_forgetting} below), and do not
	rely on
	color-position symmetry.
\end{remark}

Consider an \emph{uncolored} (\emph{color-blind}) stochastic
vertex model
with all pipes of the same color.
We indicate pipes and empty edges by $0$ and $1$, respectively.
The weights of the color-blind model are defined as
\begin{equation}
	\label{eq:w_color_blind}
	\begin{split}
		w_p^{\bullet}(0,0;0,0)&=1,\qquad w_p^{\bullet}(1,1;1,1)=1,
		\\
		w_p^{\bullet}(1,0;1,0)&=p,\qquad
		w_p^{\bullet}(1,0;0,1)=1-p,
		\\
		w_p^{\bullet}(0,1;0,1)&=0,\qquad
		w_p^{\bullet}(0,1;1,0)=1.
	\end{split}
\end{equation}

\begin{proposition}
	\label{prop:color_forgetting}
	Fix $1\le x\le n$. In the colored stochastic vertex model of
	\Cref{sub:colored_six_vertex,sub:matching_pipe_dreams},
	erase all pipes of colors $<x$,
	and identify all the remaining pipes for colors $\ge x$.
	The resulting random configuration of
	uncolored pipes evolves
	according to
	the color-blind stochastic
	vertex model $w_p^{\bullet}$ \eqref{eq:w_color_blind}.

\end{proposition}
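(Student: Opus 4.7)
The plan is to reduce the claim to a local check at a single vertex and then invoke the Markov property of the colored six-vertex dynamics. Write $\pi_x\colon\{0,1,\ldots,n\}\to\{0,1\}$ for the projection sending colors $<x$ to $0$ and colors $\ge x$ to $1$, and extend it coordinate-wise. Since the colored dynamics updates the vertices on each anti-diagonal $j-i=\tau+1$ independently using the weights $w_p(a,b;\cdot,\cdot)$, it suffices to establish the \emph{fusion identity}
\begin{equation*}
	\sum_{\substack{c,d\,:\\ \pi_x(c)=\bar c,\ \pi_x(d)=\bar d}}
	w_p(a,b;c,d) \;=\; w_p^{\bullet}(\bar a,\bar b;\bar c,\bar d),
\end{equation*}
whenever $\pi_x(a)=\bar a$ and $\pi_x(b)=\bar b$. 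Once this is known, the projected update at each vertex depends only on the projected inputs, independence is preserved, and so the projected configuration evolves as a Markov chain with transition kernel $w_p^{\bullet}$.

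I would verify the identity by splitting into the four cases of $(\bar a,\bar b)\in\{0,1\}^2$. When $\bar a=\bar b$, both incoming colors lie on the same side of the threshold $x$; by pipe conservation ($w_p(a,b;c,d)=0$ unless $\{a,b\}=\{c,d\}$) both outgoing colors also lie on the same side, so the total projected weight is $1$, matching $w_p^{\bullet}(0,0;0,0)=w_p^{\bullet}(1,1;1,1)=1$. When $(\bar a,\bar b)=(1,0)$, we have $a\ge x>b$, so $a>b$; then \eqref{eq:vertex_weights} gives $w_p(a,b;a,b)=p$ (crossing, projecting to $(1,0)$) and $w_p(a,b;b,a)=1-p$ (bump, projecting to $(0,1)$), matching $w_p^{\bullet}(1,0;1,0)=p$ and $w_p^{\bullet}(1,0;0,1)=1-p$. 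When $(\bar a,\bar b)=(0,1)$, we have $a<x\le b$, so $a<b$; then the only nonzero weight is $w_p(a,b;b,a)=1$, forcing the deterministic output $(1,0)$, which matches $w_p^{\bullet}(0,1;1,0)=1$.

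There is essentially no obstacle here: the proposition is the statement that the colored stochastic $R$-matrix is compatible with the natural ``block'' projection that merges all colors $\ge x$ into a single pipe and all colors $<x$ into the empty color, a standard feature of such stochastic vertex models. The only mild point is to note that the case analysis exhausts all types of colored inputs $(a,b)$ that actually appear in the dynamics on $\updelta_n$, which it does because every interior edge carries some color in $\{1,\ldots,n\}$ by the rainbow initial condition on the left boundary.
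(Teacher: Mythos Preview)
Your proof is correct and follows essentially the same approach as the paper: both reduce to the four-case check on the incoming pair $(a,b)$ according to whether each color is $\ge x$ or $<x$, using that the weights $w_p$ depend only on the relative order of the incoming colors. Your version spells out the fusion identity and the case analysis more explicitly than the paper's sketch, but the underlying argument is identical.
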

\begin{proof}
	This is a standard property of the colored stochastic six-vertex model,
	see \cite[Section~2.4]{borodin_wheeler2018coloured} and references therein.
	In interacting particle systems,
	this property is known as the basic coupling, cf.
	\cite{liggett1976coupling}, \cite{Liggett1985}.
	Let us provide an idea of the proof.

	The crucial property is
	that the weights $w_p$ \eqref{eq:vertex_weights}
	depend only on the relative order of colors of the incoming pipes, and not the actual colors.
	Therefore, when we identify all colors $\ge x$, the resolution of the pipe
	crossings does not affect the behavior of the color-blind system.
	In detail, one can consider four cases
	for the incoming pipes $(a,b)$,
	$1\le a,b\le n$,
	depending on whether $a\ge x$ and/or $b\ge x$.
	The resulting probability distribution on possible outputs,
	$(a,b)$ or $(b,a)$, depends only on these cases, and not on
	the actual values of~$a,b$.
	These four cases correspond to the color-blind incoming pipes
	in $w_p^{\bullet}$ \eqref{eq:w_color_blind}.
\end{proof}

By \Cref{prop:color_forgetting},
the permutation height function $H(x,y)$
can be identified with an observable of the model
$w_p^{\bullet}$ with initial occupied configuration
$\left\{ x,x+1,\ldots,n   \right\}$ along the left boundary
(that is, the sites $\{1,\ldots,x-1\}$ are empty).
Indeed, in this color-blind model, $H(x,y)$ is simply
the
number of pipes exiting through the top boundary at positions $\ge y$.
We see that under the color forgetting, the color parameter $x$ in $H(x,y)$ became a parameter of the initial configuration.

\begin{remark}
	One can similarly forget the colors $>x$, and the resulting
	color-blind system evolves according to different
	weights (obtained from $w_p^{\bullet}$ by swapping $0\leftrightarrow 1$).
	It is more convenient for us to work with the model $w_p^{\bullet}$,
	which we directly relate
	to an interacting particle system in the next \Cref{sub:identification_vertex_TASEP}.
\end{remark}

\section{From vertex models to TASEP}
\label{sec:six_vertex_TASEP}

\subsection{TASEP with moving exit boundary}
\label{sub:TASEP_exit}

\begin{definition}
	\label{def:TASEP}
	Let $k\ge 1$.
	Let
	$\xi(t)\coloneqq \bigr(\xi_1(t)>\ldots>\xi_k(t) \bigr)\subset \mathbb{Z}$
	be the
	$k$-particle
	discrete time TASEP (Totally Asymmetric Simple Exclusion Process)
	having
	parallel updates
	and geometrically distributed jumps.
	In detail, $\xi(t)$, $t\in \mathbb{Z}_{\ge0}$, is a discrete time Markov chain
	on particle configurations in $\mathbb{Z}$ which at each time step $t\to t+1$ evolves as follows:
	\begin{equation}
		\label{eq:TASEP_parallel_def}
		\xi_i(t+1)=\xi_i(t)+\min \left( G_{i}(t+1), \xi_{i-1}(t)-\xi_i(t)-1 \right),\qquad
		1\le i\le k,
	\end{equation}
	where $G_i(t+1)$ are independent geometric random variables with parameter $p\in(0,1)$,
	that is,
	\begin{equation}
		\label{eq:geometric_random_variable}
		\Prob
		\left( G_i(t+1)=m \right)=(1-p)\ssp p^m,\qquad m\in \mathbb{Z}_{\ge0}.
	\end{equation}
	The update \eqref{eq:TASEP_parallel_def} occurs in parallel for all
	particles $1\le i\le k$, that is,
	the new positions $\xi_i(t+1)$ depend only on the configuration
	$\xi(t)$ at the previous time step, and new independent random variables.
	By agreement, we have $\xi_0(t)\equiv +\infty$,
	so that the first particle $\xi_1(t)$ performs an independent random walk
	with geometrically distributed jumps.
	See \Cref{fig:TASEP} for an illustration.
\end{definition}

\begin{figure}[htpb]
	\centering
	\begin{tikzpicture}
		[scale=1, very thick]
		\draw[->] (-1,0)--++(10,0) node [above right] {$\mathbb{Z}$};
		\foreach \x in {0,1,2,3,4,5,6,7,8}
		{
			\draw (\x,0.1)--++(0,-0.2);
		}
		\foreach \x in {0,2,3,8}
		{
			\draw[fill] (\x,0) circle [radius=0.15];
		}
		\node at (8,-0.5) {$\xi_1$};
		\node at (3,-0.5) {$\xi_2$};
		\node at (2,-0.5) {$\xi_3$};
		\node at (0,-0.5) {$\xi_4$};
		\draw[->] (0,0) to [out=90,in=-180] ++(.5,.7) to[out=0,in=90] ++(.5,-.5);
		\draw[->] (3,0) to [out=90,in=-180] ++(1.5,.7+.4) to[out=0,in=90] ++(1.5,-.5-.4);
	\end{tikzpicture}
	\caption{One step of the TASEP with parallel updates and geometric jumps.
		Here the independent geometric random variables could be
		$G_1=0$, $G_2=3$, $G_3=1$, $G_4=2$ (there is more than one choice
		of the $G_i$'s leading to the same update).
		For these $G_i$'s, the jumps of $\xi_3$ and $\xi_4$ are blocked by the preceding particles.}
	\label{fig:TASEP}
\end{figure}
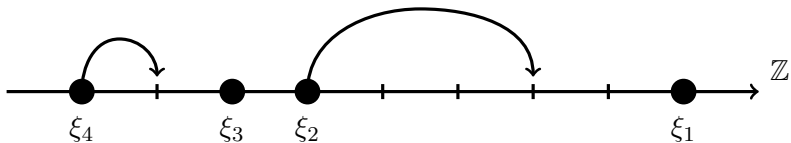

Start the TASEP from
the \emph{densely packed} (also called \emph{step})
initial configuration
$\left\{ 1,2,\ldots,k  \right\}$, that is,
$\xi_i(0)=k+1-i$, $1\le i\le k$.
Fix $n\ge k$, and introduce a \emph{moving exit boundary}
which starts at $n+\frac12$, and deterministically
moves to the left with speed $1$.
The \emph{exit time} of a particle $\xi_i(t)$ is defined by
\begin{equation}
	\label{eq:exit_time}
	T_{\mathrm{exit}}(i)\coloneqq \min\left\{ t\colon \xi_i(t)\ge n+1-t \right\}.
\end{equation}
For example, if $n=k$, then the first particle starting at
$n$ exits at time $t=1$ (but have not exited at the initial
time $t=0$). The exit times are almost surely ordered:
\begin{equation}
	\label{eq:exit_times_are_ordered}
	1\le T_{\mathrm{exit}}(1)<T_{\mathrm{exit}}(2)<\ldots<T_{\mathrm{exit}}(k)\le n.
\end{equation}
Note that tor any $n\ge k$,
all particles have exited the system by the time $t=n$.
See \Cref{fig:TASEP_exit_boundary,fig:TASEP_sims} for an illustration and simulations.

\begin{figure}[htpb]
	\centering
	\begin{tikzpicture}
		[scale=.8, very thick]
		\draw[->] (.3,0)--++(6.7,0) node [above right] {$\mathbb{Z}$};
		\draw[->] (.5,-1)--++(0,7) node [left] {$t$};
		\foreach \t in {0,1,2,3,4,5}
		{
		\draw (.65,\t)--++(-.3,0) node [left] {\scriptsize$\t$};
		\foreach \x in {1,2,3,4,5,6}
			{
				\draw (\x,\t) circle[radius=0.2];
			}
		}
		\draw[red,line width=3pt] (6.5,-.25)--++(0,.75)
		--++(-1,0)--++(0,1)
		--++(-1,0)--++(0,1)
		--++(-1,0)--++(0,1)
		--++(-1,0)--++(0,1)
		--++(-1,0)--++(0,1)
		--++(-1.5,0)
		;
		\node at (1,-1.2) {$\xi_3$};
		\node at (2,-1.2) {$\xi_2$};
		\node at (3,-1.2) {$\xi_1$};
		\node at (1,-.6) {\scriptsize$1$};
		\node at (2,-.6) {\scriptsize$2$};
		\node at (3,-.6) {\scriptsize$3$};
		\node at (4,-.6) {\scriptsize$4$};
		\node at (5,-.6) {\scriptsize$5$};
		\node at (6,-.6) {\scriptsize$6$};
		\foreach \pt in {
			(3,0),(2,0),(1,0),
			(5,1),(2,1),(1,1),
			(5,2),(3,2),(1,2),
			(5,3),(4,3),(2,3),
			(5,4),(4,4),(2,4),
			(5,5),(4,5),(2,5)
		}
		{
			\draw[fill] \pt circle[radius=0.2];
		}
		\node[red] (l) at (5.5,5.8) {moving exit boundary};
		\draw[red,line width=1pt,->] (l.west) -- (1.6,5.6);
	\end{tikzpicture}
	\qquad \qquad \qquad
	\includegraphics[height=.42\textwidth]{./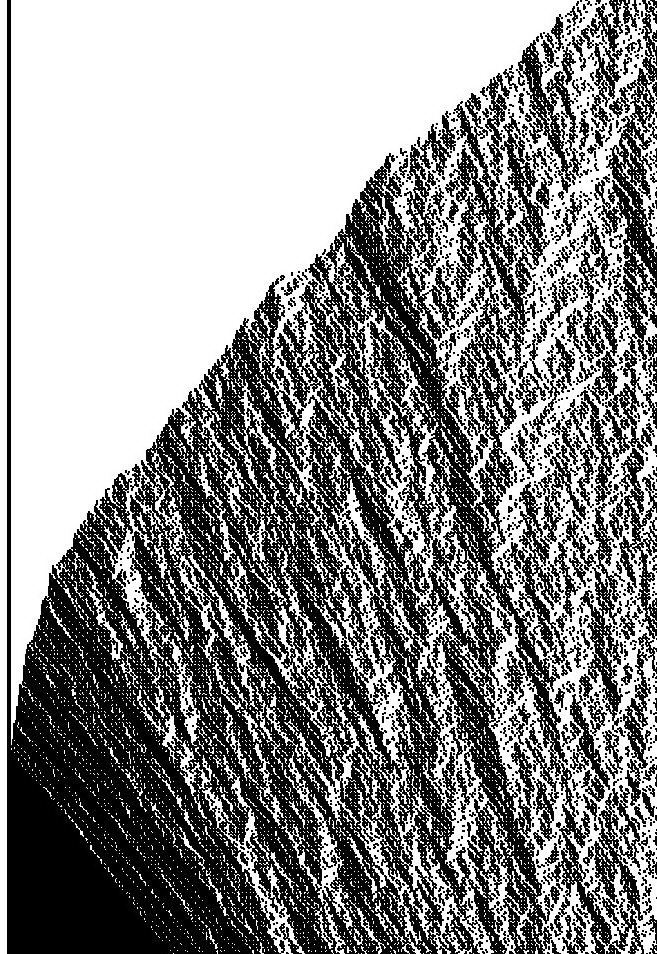}
	\caption{\textbf{Left}:
		Trajectory of TASEP with moving exit boundary
		for $k=3$ and $n=6$.
		Particles do not jump anymore after they exit
		(though
		the behavior beyond the boundary
		is irrelevant).
		The exit times are
		$T_{\mathrm{exit}}(1)=2$,
		$T_{\mathrm{exit}}(2)=3$, and
		$T_{\mathrm{exit}}(3)=5$.
		\textbf{Right}: A fragment (close to the left boundary) of the simulation with
		$n=2000$, $k=400$, and
		$p=0.8$.
		The last particle $\xi_k(t)$ globally follows a nonlinear trajectory
		that is a quadratic parabola near the point of tangence with the left boundary.}
	\label{fig:TASEP_exit_boundary}
\end{figure}

\begin{figure}[htpb]
	\centering
	\includegraphics[height=.45\textwidth]{./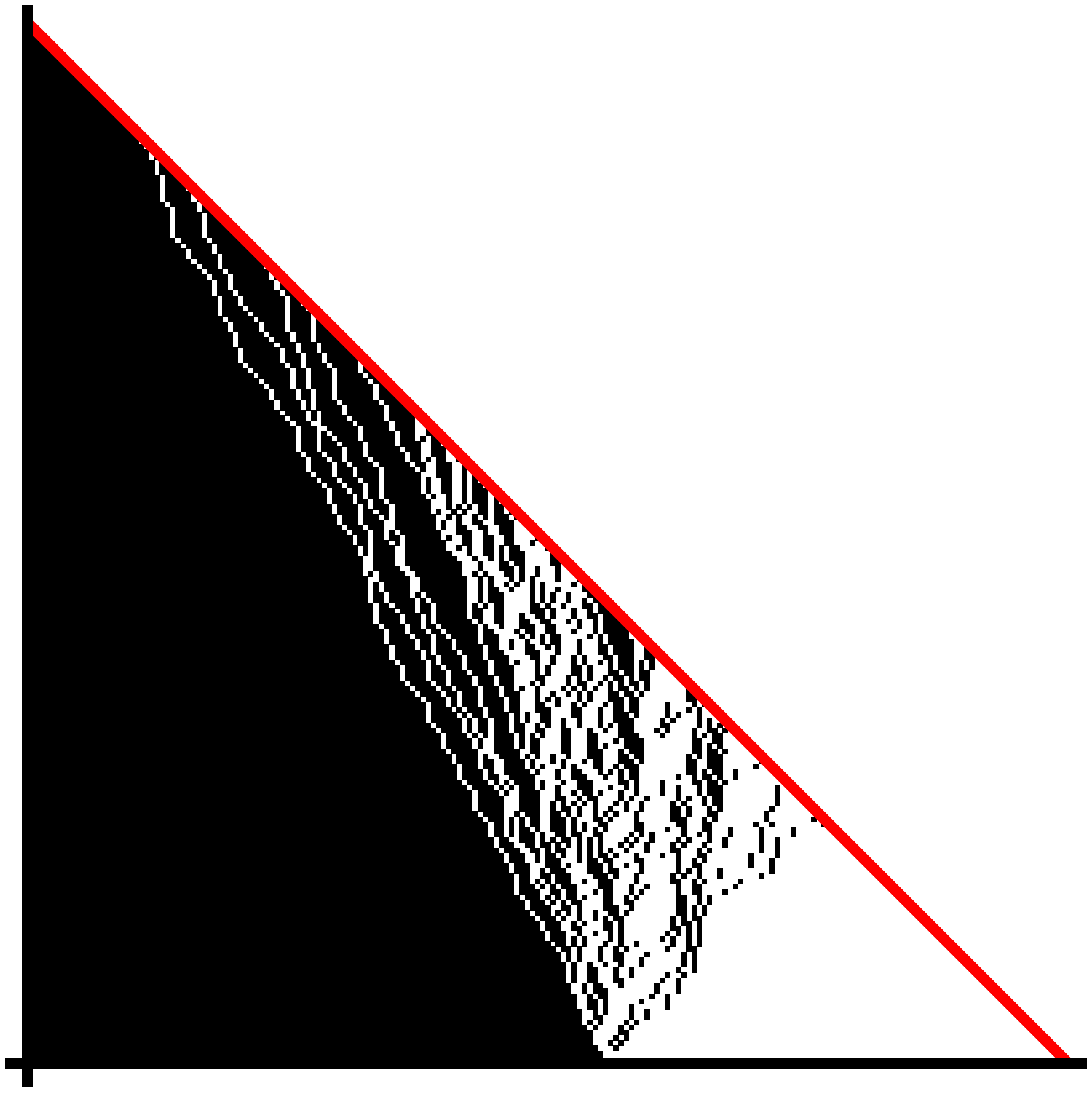}
	\qquad
	\includegraphics[height=.45\textwidth]{./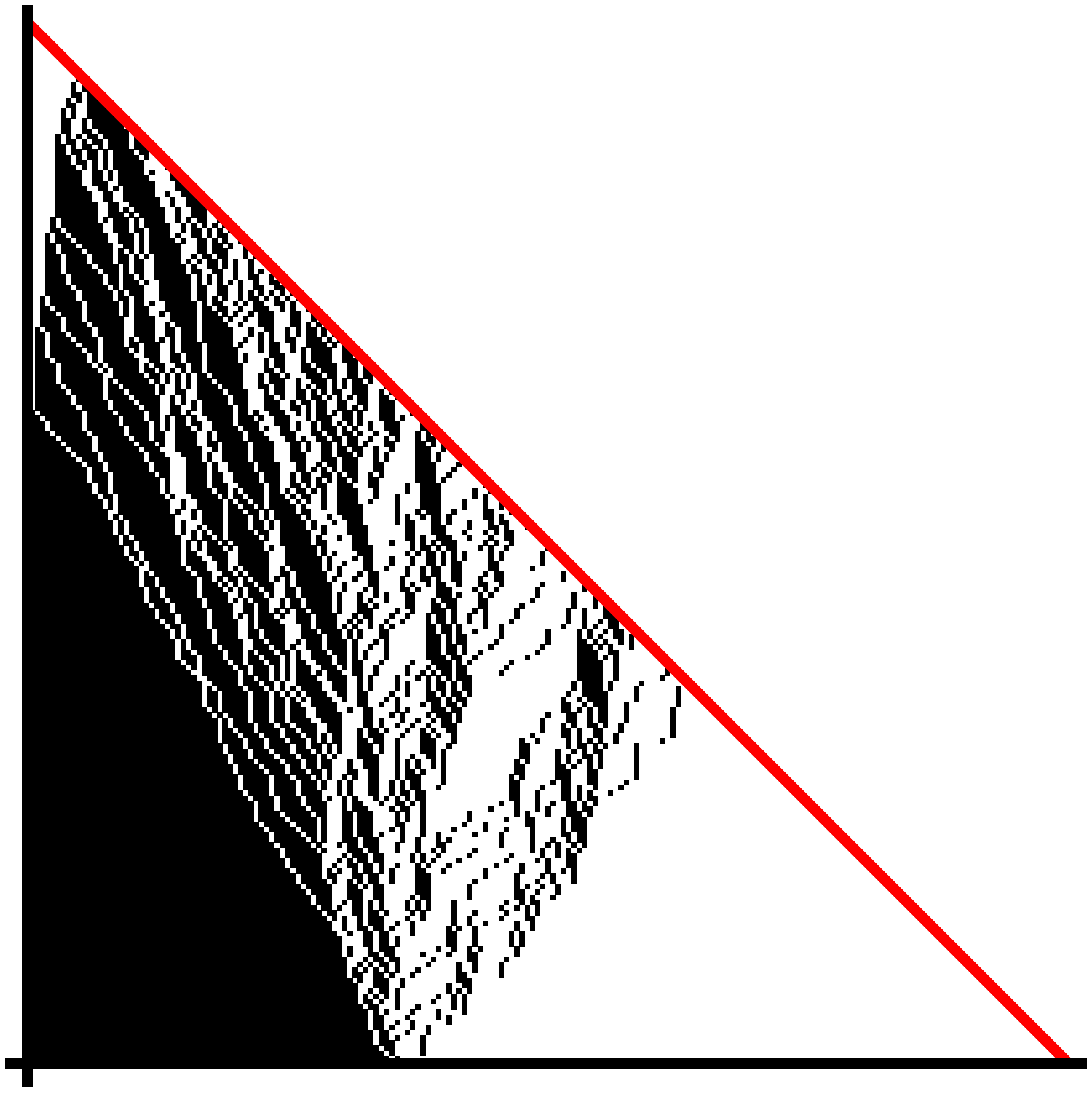}
	\caption{Simulations
		of TASEP with moving exit boundary for $n=200$, $p=0.5$,
		and $k=110$ (\textbf{left}) or $k=70$ (\textbf{right}).
		Particles beyond the diagonal exit boundary are not shown.
		In both cases, the first particle $\xi_1(t)$ (performing a random walk)
		globally follows a linear trajectory.
		In the left simulation, the last particle $\xi_k$ did not
		have time to move before the exit boundary caught up with it.
		In the right simulation, the last particle $\xi_k$ follows a nonlinear trajectory.
		See also \Cref{fig:TASEP_exit_boundary}, right, for a close-up in a larger simulation.}
	\label{fig:TASEP_sims}
\end{figure}

Integrability of TASEP with parallel updates, geometric jumps,
and densely packed initial configuration
can be traced back to
\cite{Vershik1986}, see also
\cite{draief2005queues},
\cite[Case C]{diekerWarren2008determinantal},
and \cite{BorFerr2008DF}.
We recall the necessary
integrability
results in \Cref{sub:TASEP_integrability} below
after connecting TASEP to vertex models and
Grothendieck random permutations
in the next \Cref{sub:identification_vertex_TASEP}.

\subsection{Identification with vertex models}
\label{sub:identification_vertex_TASEP}

Fix $n$ and $1\le x,y\le n$. Recall the
observables $H(x,y)$
defined by \eqref{eq:permutation_height_function}.
That is, $H(x,y)$ are random variables which are functions of the Grothendieck
random permutation (equivalently, of the colored stochastic six-vertex model).
In \Cref{prop:color_forgetting},
we identified $H(x,y)$ with observables of a color-blind vertex model with weights
$w_p^{\bullet}$ \eqref{eq:w_color_blind}.

\begin{theorem}
	\label{thm:vertex_TASEP_matching}
	For any $1\le x,y\le n$ and $0\le h\le n-x+1$, we have
	\begin{equation}
		\label{eq:vertex_TASEP_matching_theorem}
		\begin{split}
			\Prob_{\mathbf{w}}\bigl(H(x,y)\le h\bigr)
			&=
			\Prob_{\mathrm{TASEP}}\bigl(T_{\mathrm{exit}}(n-x+1-h)\le y-1\bigr)
			\\&=
			\Prob_{\mathrm{TASEP}}\bigl(
				\xi_{n-x+1-h}(y-1)\ge n-y+2
			\bigr).
		\end{split}
	\end{equation}
	Here
	$\Prob_{\mathbf{w}}$
	corresponds to the Grothendieck random permutation of order $n$,
	and
	$\Prob_{\mathrm{TASEP}}$
	is the probability distribution of the TASEP
	with $k=n-x+1$ particles and moving exit boundary.
	In \eqref{eq:vertex_TASEP_matching_theorem}
	we have, by agreement,
	$T_{\mathrm{exit}}(m)= 0$ and
	$\xi_m(t)= +\infty$ for all $m\le 0$.
\end{theorem}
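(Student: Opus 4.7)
My approach is to reduce the theorem to a coupling between the color-blind stochastic six-vertex model on the staircase and the discrete-time geometric TASEP.

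First, by Proposition~\ref{prop:color_forgetting}, I would replace the colored model with its color-blind counterpart $w_p^\bullet$, starting with $k \coloneqq n-x+1$ pipes entering the left boundary at rows $x, x+1, \ldots, n$. Under this reduction, $H(x,y)$ becomes the number of these $k$ pipes that exit the staircase at top columns $\geq y$. Calling the sorted random set of exit columns $c_1 < c_2 < \cdots < c_k$, it suffices to show that $(c_1, \ldots, c_k)$ has the same joint distribution as the sorted TASEP exit times $(T_{\mathrm{exit}}(1), \ldots, T_{\mathrm{exit}}(k))$. The second equality in~\eqref{eq:vertex_TASEP_matching_theorem} is then essentially tautological: once a TASEP particle crosses the moving boundary it is frozen, so $\xi_i(t) \geq n+1-t$ for every $t \geq T_{\mathrm{exit}}(i)$, and the reverse implication is immediate; setting $t = y-1$ gives the claim.

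The main work is the coupling. My plan is to process the vertex model anti-diagonal by anti-diagonal (as in the paragraph before Proposition~\ref{prop:pipe_dreams_to_vertex_model}), and identify the anti-diagonal index with TASEP time. Within a column, a pipe entering from the left first turns up deterministically (weight $w_p^\bullet(0,1;1,0)=1$), then makes successive independent up-or-right choices (with probabilities $p$ and $1-p$) until either exiting at the top or turning right into the next column. The critical two-pipe vertex $w_p^\bullet(1,1;1,1)=1$ forces the pipe coming from below to pass straight through while the pipe coming from the left is deflected rightward, and this corresponds exactly to the TASEP blocking rule $\xi_i(t+1) = \min(\xi_i(t) + G_i(t+1), \xi_{i-1}(t) - 1)$. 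The geometric $p$-distributed up-runs in the vertex model become the geometric TASEP jump variables $G_j(t)$; a pipe exiting through the top at column $c$ corresponds to its associated TASEP particle first crossing the moving boundary at time $c$; and the half-bump at the $n$-th anti-diagonal handles the pipe starting at row $n$ so it has the same effective dynamics as the others. I would sanity-check the coupling first in a small case such as $n=3$, $x=2$, $k=2$, confirming that the joint distribution of sorted exit column pairs matches the joint distribution of sorted exit time pairs before attempting the general induction.

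Given the coupling, the event $\{H(x,y) \leq h\}$ (at most $h$ of the $k$ exit columns are $\geq y$) is equivalent to $c_{k-h} \leq y - 1$, which translates to $T_{\mathrm{exit}}(k-h) \leq y - 1$, yielding the first equality. The main obstacle is clearly this coupling step. While the general philosophy ``color-blind stochastic six-vertex $\leftrightarrow$ geometric TASEP'' is standard in integrable probability, adapting it here requires care with the parallel anti-diagonal update on a staircase rather than a rectangle, the strong asymmetry between left-entering and below-entering pipes in $w_p^\bullet$, and in particular with establishing that the \emph{joint} (not just marginal) distribution of exit columns matches that of the TASEP exit times. I would expect the cleanest proof to propagate the coupling anti-diagonal by anti-diagonal, matching one step of vertex-model evolution to one step of the TASEP parallel update.
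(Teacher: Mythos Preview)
Your overall strategy matches the paper's: reduce to the color-blind model via Proposition~\ref{prop:color_forgetting}, then couple that model to geometric-jump TASEP so that exit columns become exit times, and read off the event $\{H(x,y)\le h\}$ as $\{T_{\mathrm{exit}}(k-h)\le y-1\}$.

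There is, however, one concrete slip that would derail the argument if carried through literally. You propose to ``identify the anti-diagonal index with TASEP time,'' but your own subsequent description (pipes processed \emph{within a column}, geometric up-runs, exit at column $c$ matched to exit time $c$) is a column-by-column picture, and that is also what the paper does. The paper takes the column index $j$ as TASEP time $t$, records the sorted row positions of pipes in column $t$ as $\eta_1(t)<\cdots<\eta_k(t)$, and then makes the explicit affine change $\xi_m(t)\coloneqq n+1-t-\eta_m(t)$. This substitution is the missing piece in your outline: it simultaneously (i) converts the deterministic ``turn up by one'' at each column into the $-t$ drift, so that what remains is the geometric jump; (ii) turns the staircase's top boundary (row~$0$) into exactly the moving wall $n+1-t$; and (iii) reverses orientation so that the pipe ordering $\eta_1<\cdots<\eta_k$ becomes the TASEP ordering $\xi_1>\cdots>\xi_k$. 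With this change of variables the exclusion reading of the $(1,1;1,1)$ vertex and the identification of exit column with $T_{\mathrm{exit}}$ become one-line checks, and no separate induction or small-case verification is needed. If you instead try to use the anti-diagonal $\tau=j-i$ as time, the per-step update is not the parallel geometric TASEP of Definition~\ref{def:TASEP}, and the boundary does not become the simple moving wall; so drop the anti-diagonal framing and commit to columns.
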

\begin{proof}
	The second identity in \eqref{eq:vertex_TASEP_matching_theorem}
	immediately follows from the definition of the exit times
	\eqref{eq:exit_time}. Let us focus on the first identity.

	By \Cref{prop:color_forgetting}, let us consider the color-blind vertex model
	with $k=n-x+1$ identical pipes entering the left boundary
	of the staircase shape $\updelta_n$
	at locations $\left\{ x,x+1,\ldots,n  \right\}$.
	The event $\left\{ H(x,y)\le h \right\}$ means that
	at most $h$ of these pipes exit the top boundary at positions $\ge y$.
	Note that the color-blind model has indistinguishable pipes, and thus the ``resolution''
	of their crossings
	does not change the behavior of the system (see the proof of \Cref{prop:color_forgetting} for more detail).

	View the horizontal coordinate $j$
	as time $t=0,1,\ldots,n $,
	and record the pipes' coordinates as
	\begin{equation*}
		\eta_1(t)<\ldots<\eta_k(t),\qquad t=0,1,\ldots,n .
	\end{equation*}
	The initial condition is $\eta_{k-m+1}(0)=n-m+1$, $1\le m\le k$.

	At each time step, according to the $w_p^{\bullet}$ \eqref{eq:w_color_blind},
	each pipe first deterministically turns up.
 	When a pipe faces up, it can travel up a further
	distance distributed as a geometric random variable with parameter $p$,
	see \eqref{eq:geometric_random_variable}.
	However, during a time step $t\to t+1$, each pipe $\eta_m$ cannot
	travel further than the previous pipe' location $\eta_{m-1}(t)$
	(this is the exclusion mechanism).
	When a pipe stops moving up, it turns right and waits till the next time step.
	Once a pipe reaches the top boundary, it exits the system.
	See \Cref{fig:vertex_TASEP_matching}, left, for an illustration.

	\begin{figure}[htpb]
		\centering
		\includegraphics[height=.35\textwidth]{./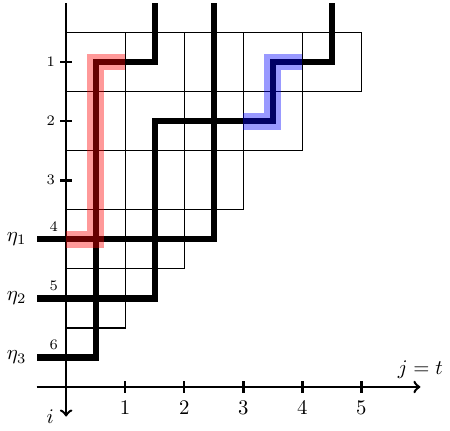}
	\qquad \qquad
	\scalebox{.9}{
	\begin{tikzpicture}
		[scale=.8, very thick]
		\draw[->] (.3,0)--++(6.7,0) node [above right] {$\mathbb{Z}$};
		\draw[->] (.5,-1)--++(0,7) node [left] {$t$};
		\foreach \t in {0,1,2,3,4,5}
		{
		\draw (.65,\t)--++(-.3,0) node [left] {\scriptsize$\t$};
		}
		\foreach \pt in {(1,1),(1,2),(1,3),(1,4),(1,5),
			(2,1),(2,2),(3,1),(3,2),(3,3),
			(4,0),(4,1),(4,2),
			(5,0),(5,1),
			(6,0)
			}
		{
      \draw \pt circle[radius=0.2];
		}
		\draw[red,line width=3pt] (6.5,-.25)--++(0,.75)
		--++(-1,0)--++(0,1)
		--++(-1,0)--++(0,1)
		--++(-1,0)--++(0,1)
		--++(-1,0)--++(0,1)
		--++(-1,0)--++(0,1)
		--++(-1.5,0)
		;
		\node at (1,-1.2) {$\xi_3$};
		\node at (2,-1.2) {$\xi_2$};
		\node at (3,-1.2) {$\xi_1$};
		\node at (1,-.6) {\scriptsize$1$};
		\node at (2,-.6) {\scriptsize$2$};
		\node at (3,-.6) {\scriptsize$3$};
		\node at (4,-.6) {\scriptsize$4$};
		\node at (5,-.6) {\scriptsize$5$};
		\node at (6,-.6) {\scriptsize$6$};
		\foreach \pt in {
			(3,0),(2,0),(1,0),
			(2,1),(1,1),
			(1,2),
			(2,3),(3,2),
			(2,4),(5,1)
		}
		{
			\draw[fill] \pt circle[radius=0.2];
		}
		\draw [line width=8pt,opacity=0.4,red] (3,0)--++(2,1);
		\draw [line width=8pt,opacity=0.4,blue] (2,3)--++(0,1);
	\end{tikzpicture}}
	\caption{\textbf{Left}: The evolution of the uncolored pipes $\eta(t)$.
		Here $n=6$ and $x=4$, so $k=3$. At time $t=2$, we have
		$\eta_2(2)=2$, $\eta_3(2)=4$, and the pipe $\eta_1$ has exited before $t=2$.
		We have $H(4,3)=2$. \textbf{Right}:
		The evolution of the process $\xi(t)$
		which is in bijection
		with the pipe configuration on the left (that is, $\xi_m(t)= n+1-t-\eta_m(t)$).
		In detail, a move of the particle $\xi_\ell$ by $r\ge0$ steps
		corresponds to the pipe moving $r+1$ steps up (at the same time increment).
		Two pairs of corresponding moves are highlighted.}
		\label{fig:vertex_TASEP_matching}
	\end{figure}

	Setting $\xi_m(t)\coloneqq n+1-t-\eta_m(t)$,
	one readily verifies that the evolution of $\xi_1(t)> \ldots > \xi_k(t)$
	is the same as that of the TASEP
	with $k=n-x+1$ particles
	defined in
	\Cref{sub:TASEP_exit}.
	In particular, subtracting $t$ from $n+1-\eta_m(t)$
	eliminates the deterministic up movement of the pipes by one at each time step.

	The top boundary for pipes (as in \Cref{fig:vertex_TASEP_matching}, left)
	becomes the moving exit boundary for TASEP. The event
	$\{H(x,y)\le h\}$ for pipes is equivalent to the event
	that at least $k-h=n-x+1-h$ of the TASEP particles
	have exited before time $y-1$. Because the
	exit times are ordered as in~\eqref{eq:exit_times_are_ordered},
	we get the desired first identity in \eqref{eq:vertex_TASEP_matching_theorem}.
\end{proof}

\subsection{Integrability of TASEP}
\label{sub:TASEP_integrability}

Fix $m\ge 0$, and let $\lambda=(\lambda_1\ge \ldots\ge \lambda_m\ge 0 )$
be a partition with at most $m$ parts.
The \emph{Schur symmetric polynomial} in $m$ variables indexed by $\lambda$ is defined as
\begin{equation*}
	s_\lambda(a_1,\ldots,a_m)\coloneqq
	\frac{\det[a_i^{\lambda_j+m-j}]_{i,j=1}^m}{\det[a_i^{m-j}]_{i,j=1}^m},
\end{equation*}
where $a_i$ are variables. The denominator is the Vandermonde determinant
$\prod_{1\le i<j\le m}(a_i-a_j)$.

Let $m,t\ge1$.
The \emph{Schur measure} \cite{okounkov2001infinite}
is a probability measure on partitions with at most $\min(m,t)$ parts
depending on parameters $a_1,\ldots,a_m,b_1,\ldots,b_t$. Its probability weights
are defined as
\begin{equation}
	\label{eq:Schur_measure}
	\Prob(\lambda)\coloneqq
	\prod_{i=1}^{m}
	\prod_{j=1}^{t}
	(1-a_ib_j)
	\cdot
	s_\lambda(a_1,\ldots,a_m)\ssp
	s_\lambda(b_1,\ldots,b_t).
\end{equation}
When $m=t=1$, the weights \eqref{eq:Schur_measure} define the geometric distribution
on $\lambda=(\lambda_1\ge0)$ with parameter $a_1b_1$.
In general, the form of the normalizing constant in \eqref{eq:Schur_measure}
follows from the Cauchy summation identity for Schur polynomials.
The infinite sum
for the normalizing constant runs
over all $\lambda$ of length $\le \min(m,t)$ and converges if $|a_ib_j|<1$ for all $i,j$.

The next statement connects TASEP to Schur measure,
which provides the necessary integrability structure for the former.

\begin{proposition}
	\label{prop:TASEP_Schur_measure}
	For any $t,m\ge1$, the displacement of the $m$-th particle
	in the TASEP with parallel updates, geometric jumps with parameter $p$,
	and densely packed initial configuration (as in \Cref{sub:TASEP_exit})
	has the same distribution as $\lambda_m$, the last part of a partition
	under the Schur measure
	\eqref{eq:Schur_measure}
	with $a_1=\ldots=a_m=1$ and $b_1=\ldots=b_t=p$:
	\begin{equation}
		\label{eq:TASEP_Schur_measure_concrete}
		\xi_m(t)-\xi_m(0)\stackrel{d}{=}\lambda_m,
		\qquad
		\lambda \sim
		(1-p)^{m t} s_\lambda(\underbrace{1,\ldots,1 }_m)\ssp
		s_\lambda(\underbrace{p,\ldots,p }_t).
	\end{equation}
\end{proposition}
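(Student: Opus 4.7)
The plan is to realize both sides of \eqref{eq:TASEP_Schur_measure_concrete} as functionals of a common Schur-process-distributed object. Collect the TASEP jump noises into the $m\times t$ matrix $G=(G_i(j))_{1\le i\le m,\,1\le j\le t}$ of iid geometric variables with $\Prob(G_i(j)=k)=(1-p)p^k$, and apply the Robinson--Schensted--Knuth correspondence to $G$. This yields a pair $(P,Q)$ of semistandard Young tableaux of a common shape $\lambda$. By the bijectivity of RSK between nonnegative integer matrices and pairs of SSYT of equal shape, together with the Cauchy identity $\prod_{i,j}(1-a_ib_j)^{-1}=\sum_{\lambda}s_{\lambda}(a_1,\ldots,a_m)\ssp s_{\lambda}(b_1,\ldots,b_t)$ specialized at $a_i=1$, $b_j=p$, the pushforward of the product law of $G$ to the shape $\lambda$ is exactly the Schur measure appearing on the right-hand side of \eqref{eq:TASEP_Schur_measure_concrete}.

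It remains to show that $\xi_m(t)-\xi_m(0)$ has the same law as $\lambda_m$. I would do this by coupling the parallel-update TASEP from the densely packed initial condition with a Markov dynamics on interlacing Gelfand--Tsetlin arrays of depth $m$, of the type constructed by Dieker and Warren \cite{diekerWarren2008determinantal}. In such a dynamics one maintains at each time $t$ a triangular array $\{\lambda^{(k)}\}_{k=1}^{m}$ with $\lambda^{(k-1)}\prec\lambda^{(k)}$, the top row projects to the $m$-particle TASEP after subtracting the step shift, and the top-row marginal is Schur-distributed with the parameters above; consequently the smallest top-row coordinate equals both $\xi_m(t)-\xi_m(0)$ and $\lambda_m$. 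Equivalently, one may invoke Greene's theorem to recognize $\lambda_1+\cdots+\lambda_k$ as the multi-path LPP time on $G$ and prove by induction in $t$ that the same quantity equals $\sum_{i=1}^{k}(\xi_i(t)-\xi_i(0))$; differencing $k=m$ and $k=m-1$ then yields the claim.

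The main obstacle is precisely this matching of particle coordinates with the Schur-distributed shape, since a naive cell-by-cell coupling of $G$ with the TASEP driving noise does not give pathwise equality of $\xi_m(t)-\xi_m(0)$ with $\lambda_m$: already for $m=t=2$ one checks that $\xi_2(2)-\xi_2(0)=\min(G_1(1),G_2(2))$ while $\lambda_2=\min(G_1(2),G_2(1))$, two distinct random variables that merely share the geometric$(p^2)$ distribution. The distributional identity must therefore be extracted via a less transparent correspondence, such as the intertwining with GT-pattern dynamics above, Vershik's \cite{Vershik1986} queueing representation, or the determinantal transfer-matrix analysis of Borodin--Ferrari \cite{BorFerr2008DF}; each of these established routes yields the proposition.
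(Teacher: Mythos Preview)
Your proposal is correct and aligns with the paper's treatment: the paper does not give a self-contained proof of this proposition but states it as a well-known result in Integrable Probability, pointing to exactly the two mechanisms you invoke --- the RSK correspondence (the paper specifies \emph{column} insertion) and the realization of TASEP as a marginal of a Markov dynamics on interlacing Gelfand--Tsetlin arrays --- with the same references \cite{Vershik1986}, \cite{diekerWarren2008determinantal}, \cite{BorFerr2008DF} (and additionally \cite{draief2005queues}, \cite{MatveevPetrov2014}). Your observation that the naive cell-by-cell coupling fails pathwise, illustrated by the $m=t=2$ computation, is a nice addition that the paper does not spell out.
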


\Cref{prop:TASEP_Schur_measure} is a well-known
result in Integrable Probability.
It follows either from the
Robinson--Schensted--Knuth (RSK) correspondence with column insertion,
or by representing the TASEP as a
marginally Markovian evolution of a process on the space of semistandard Young tableaux
(equivalently, interlacing arrays / Gelfand--Tsetlin patterns).

For the simpler continuous time TASEP (when each particle waits an exponentially
distributed time and jumps to the right by $1$ if the destination
is unoccupied), the connection to Schur measures is
exploited in the celebrated work \cite{johansson2000shape}
via last-passage percolation, which is related to the row RSK.
The RSK approach can be traced back to \cite{Vershik1986}.
Dynamics on interlacing arrays are constructed later in
\cite{BorFerr2008DF} using a non-RSK approach.

For our discrete time TASEP with geometric jumps,
the RSK was explicitly used in
\cite[Section 5]{draief2005queues}
to establish \Cref{prop:TASEP_Schur_measure}.
A systematic treatment of discrete time
dynamics on interlacing arrays connected to Schur measures
leading to various TASEPs and PushTASEPs may be found in
\cite[Section~4]{MatveevPetrov2014}.
It includes
RSK-type and other dynamics on interlacing arrays under one roof,
as well as a generalization of the Schur structure to the level of $q$-Whittaker
symmetric polynomials.

\subsection{Determinantal structure}
\label{sub:determinantal_structure}

The connection
to Schur measures (\Cref{prop:TASEP_Schur_measure})
allows to extract law of large numbers and
fluctuation results for the TASEP particles.
The key property of Schur measures which makes this possible is
the fact that they form a \emph{determinantal point process}.

Fix $m,t\ge 1$, and define a random point configuration
$X(\lambda)\coloneqq \{\lambda_j+m-j\}_{j=1}^{m}$ on $\mathbb{Z}_{\ge0}$,
where $\lambda$ is distributed as \eqref{eq:TASEP_Schur_measure_concrete}.
If $\lambda$ has less parts than $m$, append it by zeros.
These zeros translate into a part of $X(\lambda)$ of the form
$\{0,1,\ldots,l \}$ for some $l$.

\begin{proposition}
	\label{prop:det_Schur}
	The random point configuration $X(\lambda)$ on $\mathbb{Z}_{\ge0}$ is a
	determinantal point process. This means that for
	any $r\ge1$ and pairwise distinct points $u_1,\ldots,u_r \in \mathbb{Z}_{\ge0}$,
	the correlation functions have the form
	\begin{equation}
		\label{eq:determinantal_process_definition}
		\Prob(\textnormal{$X(\lambda)$ contains all of the points $u_1,\ldots,u_r $} )=
		\det\left[ K(u_i,u_j) \right]_{i,j=1}^{r}.
	\end{equation}
	The correlation kernel $K$ is given by a double contour integral:
	\begin{equation}
		\label{eq:kernel_Schur}
		K(u_1,u_2)=\frac{1}{(2\pi\mathbf{i})^2}\oiint \frac{dz \ssp dw}{z-w}\frac{w^{u_2-m}}{z^{u_1-m+1}}
		\frac{(1-p/z)^m}{(1-z)^t}\frac{(1-w)^t}{(1-p/w)^m},
		\qquad
		u_1,u_2\in \mathbb{Z}_{\ge0},
	\end{equation}
	where the contours are positively oriented simple closed curves satisfying
	$p<|w|<|z|<1$.
\end{proposition}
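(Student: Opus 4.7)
The plan is to identify $X(\lambda)$ as the point configuration of a \emph{biorthogonal ensemble} and derive the correlation kernel from this structure. Since both $s_\lambda(1^m)$ and $s_\lambda(p^t)$ vanish when $\lambda$ has too many parts, the measure is supported on partitions with at most $\min(m,t)$ parts, and the map $\lambda \mapsto X(\lambda)=\{x_j\}_{j=1}^m$ with $x_j=\lambda_j+m-j$ is a bijection onto strictly decreasing $m$-tuples in $\mathbb{Z}_{\ge 0}$.

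First I would rewrite each Schur factor as an $m\times m$ determinant via the Jacobi--Trudi identity,
\[ s_\lambda(1^m)=\det\bigl[h_{\lambda_i-i+j}(1^m)\bigr]_{i,j=1}^m,\qquad s_\lambda(p^t)=\det\bigl[h_{\lambda_i-i+j}(p^t)\bigr]_{i,j=1}^m, \]
with the convention $h_k\equiv 0$ for $k<0$. Using the generating functions $\sum_k h_k(1^m)\,z^k=(1-z)^{-m}$ and $\sum_k h_k(p^t)\,w^k=(1-pw)^{-t}$, each matrix entry becomes a contour integral over a small loop around the origin. Absorbing the column shifts into the integrands recasts the probability weight in the biorthogonal form
\[ \Prob(\lambda)=C\cdot\det\bigl[\phi_j(x_i)\bigr]_{i,j=1}^m\cdot\det\bigl[\psi_j(x_i)\bigr]_{i,j=1}^m, \]
where $\phi_j$ and $\psi_j$ are the coefficient-extraction functionals associated with the two generating functions, and $C$ is the prefactor from \eqref{eq:Schur_measure}.

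Next I would invoke the Eynard--Mehta theorem for biorthogonal ensembles: such a measure is automatically determinantal, with correlation kernel
\[ K(u_1,u_2)=\sum_{j,k=1}^m\phi_j(u_1)\,[G^{-1}]_{jk}\,\psi_k(u_2), \]
where $G_{jk}=\sum_{x\ge 0}\phi_j(x)\,\psi_k(x)$. A direct residue computation using the two generating functions expresses $G_{jk}$ as a Cauchy-type matrix whose inverse has a closed form, and the resulting double sum collapses via a geometric series into the single kernel $1/(z-w)$. Packaging the remaining pieces produces \eqref{eq:kernel_Schur}, with the contour inequalities $p<|w|<|z|<1$ chosen to ensure convergence of the geometric sum over the spectral index together with the series expansions of $(1-z)^{-t}$ near $z=0$ and of $(1-p/w)^{-m}$ near $w=\infty$.

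The main obstacle is the resummation of the double sum into $1/(z-w)$ while keeping the monomial shifts in $z$ and $w$ aligned with $u_1,u_2,m,t$. A cleaner route that sidesteps this bookkeeping is to quote Okounkov's original derivation for the Schur measure via the infinite-wedge formalism \cite{okounkov2001infinite}, which yields the kernel in a form equivalent to \eqref{eq:kernel_Schur}, and then to verify that the affine shift sending the Maya diagram $\{\lambda_i-i+\tfrac12\}_{i\ge 1}$ to our configuration $X(\lambda)\subset\mathbb{Z}_{\ge 0}$ is a translation by $m-\tfrac12$ that produces exactly the shifts $u_1-m+1$ and $u_2-m$ appearing in \eqref{eq:kernel_Schur}.
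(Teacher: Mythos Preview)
The paper does not give its own proof of this proposition; it simply attributes the result to Okounkov's original derivation \cite{okounkov2001infinite} and the Eynard--Mehta framework \cite{borodin2005eynard}. Your proposal is consistent with exactly these two references: the biorthogonal-ensemble route you sketch is the Borodin/Eynard--Mehta argument, and the alternative you mention at the end is Okounkov's infinite-wedge computation, so there is nothing to compare beyond noting that you have supplied more detail than the paper itself.
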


\Cref{prop:det_Schur} is due to
\cite{okounkov2001infinite},
see also
\cite{borodin2005eynard}.
General discussions and many examples of determinantal point processes
may be found
in \cite{Soshnikov2000}, \cite{Borodin2009},
or
\cite{peres2006determinantal}.

\subsection{Asymptotics of TASEP}
\label{sub:TASEP_asymptotics}

Determinantal structure powers the asymptotic behavior of
the TASEP particles.
We are interested in
probabilities of the form
\begin{equation}
	\label{eq:xi_bar_and_TASEP_probability}
	\Prob_{\mathrm{TASEP}}(\bar\xi_{m}(t)\ge u),
	\qquad
	\textnormal{where}\quad
	\bar \xi_m(t)\coloneqq \xi_m(t)-\xi_m(0).
\end{equation}
Indeed, since
\Cref{thm:vertex_TASEP_matching}
reduces the distribution of $H(x,y)$ to
probabilities of the form \eqref{eq:xi_bar_and_TASEP_probability},
we can forget about the moving exit boundary in TASEP.
Indeed, the parameter $n$ of the moving boundary is absorbed into $m$ and $u$.

\medskip

Let the parameters
$t,m,u$ grow to infinity proportionally to each other:
\begin{equation}
	\label{eq:parameter_scaling}
	m=\lfloor L\ssp \mathsf{m} \rfloor , \qquad
	t=\lfloor L\ssp \mathsf{t} \rfloor , \qquad
	u=\lfloor L\ssp \mathsf{u} \rfloor ,
	\qquad  L\to \infty.
\end{equation}
First, let us heuristically discuss the law of large numbers and fluctuations
of
$\Prob_{\mathrm{TASEP}}(\bar\xi_{m}(t)\ge u)$,
without providing exact constants.
The latter are given in \Cref{def:TASEP_constants} below,
and we formulate the asymptotic results in \Cref{thm:TASEP_limit}.
The proof of \Cref{thm:TASEP_limit} using the determinantal structure
is a standard application of the steepest descent method for double contour integrals.
We provide the computations (leading to the constants in \Cref{def:TASEP_constants})
in
\Cref{sec:app_TASEP_asymptotics}.

\medskip

At the ``hydrodynamic'' scale \eqref{eq:parameter_scaling},
TASEP has a limit shape
(first observed in the continuous time TASEP in \cite{Rost1981}, see also
\cite{aldous1995hammersley}). That is, there exists
a function $\mathsf{c}(\mathsf{m},\mathsf{t})$ such that
for any $\varepsilon>0$, we have
\begin{equation}
	\label{eq:TASEP_hydrodynamic}
	\lim_{L\to\infty}
	\Prob_{\mathrm{TASEP}}
	\bigl(\bar\xi_{m}(t)\ge L\ssp \mathsf{c}(\mathsf{m},\mathsf{t})+\varepsilon\bigr)
	=0,
	\qquad
	\lim_{L\to\infty}
	\Prob_{\mathrm{TASEP}}
	\bigl(\bar \xi_{m}(t)\ge L\ssp \mathsf{c}(\mathsf{m},\mathsf{t})-\varepsilon\bigr)
	=1.
\end{equation}
In other words, we have convergence in probability:
\begin{equation}
	\label{eq:TASEP_LLN}
	\lim_{L\to\infty}
	L^{-1}\ssp
	\bar\xi_{\lfloor L\ssp \mathsf{m} \rfloor }(\lfloor L\ssp \mathsf{t} \rfloor )
	=\mathsf{c}(\mathsf{m},\mathsf{t}).
\end{equation}
Let us note a certain a priori property of the limit shape:
\begin{lemma}
	\label{lemma:a_priori_zero_of_limit_shape}
	If $\mathsf{t}<\mathsf{m}/p$, we have $\mathsf{c}(\mathsf{m},\mathsf{t})=0$.
\end{lemma}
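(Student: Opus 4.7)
The plan is to introduce the first-move times
$T_m \coloneqq \min\{t \ge 0 : \bar\xi_m(t) \ge 1\}$
(with $T_m \coloneqq \infty$ if $\xi_m$ never moves) and to show that $\Prob(T_m > t) \to 1$ in the regime of the lemma. Since $\bar\xi_m(t)$ is non-negative and non-decreasing in $t$, the event $\{T_m > t\}$ coincides with $\{\bar\xi_m(t) = 0\}$, which is far stronger than what is needed to conclude $L^{-1} \bar\xi_m(t) \to 0$ in probability.

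The key structural claim is that the increments $\tau_m \coloneqq T_m - T_{m-1}$ (setting $T_0 \coloneqq 0$) are i.i.d.\ geometric with parameter $p$ on $\{1, 2, \ldots\}$, so that $T_m = \sum_{i=1}^m \tau_i$ has mean $m/p$. To see this, observe from the update rule \eqref{eq:TASEP_parallel_def} that as long as $\xi_m$ has not yet moved ($\bar\xi_m(s) = 0$), the gap $\xi_{m-1}(s) - \xi_m(s) - 1$ equals $\bar\xi_{m-1}(s)$, which is monotone non-decreasing in $s$ and first becomes $\ge 1$ at $s = T_{m-1}$; thereafter it remains $\ge 1$ until $\xi_m$ actually moves. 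Hence $\xi_m$ makes its first jump at the first $s > T_{m-1}$ with $G_m(s) \ge 1$. Since the sequence $\{G_m(s)\}_{s \ge 1}$ is i.i.d.\ with $\Prob(G_m(s) \ge 1) = p$ and is independent of $T_{m-1}$ (a stopping time for the filtration generated by $\{G_j(\cdot)\}_{j < m}$), the strong Markov property yields $\tau_m \sim \mathrm{Geom}(p)$ on $\{1, 2, \ldots\}$, and the $\tau_m$'s are mutually independent.

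By the law of large numbers applied to the i.i.d.\ sum $T_m = \sum_{i=1}^m \tau_i$, we obtain $T_m/m \to 1/p$ in probability as $m \to \infty$. Setting $m = \lfloor L\mathsf{m} \rfloor$ and $t = \lfloor L\mathsf{t} \rfloor$, the hypothesis $\mathsf{t} < \mathsf{m}/p$ gives $t/m \to \mathsf{t}/\mathsf{m} < 1/p$, so $\Prob(T_m > t) \to 1$. Combining with the first paragraph, $\Prob(\bar\xi_m(t) = 0) \to 1$, and therefore $L^{-1} \bar\xi_m(t) \to 0$ in probability, which is the desired identity $\mathsf{c}(\mathsf{m},\mathsf{t}) = 0$.

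The only technical care I anticipate is the verification that $\tau_m$ is exactly $\mathrm{Geom}(p)$ on $\{1, 2, \ldots\}$ and independent of the past. This uses (i) the parallel update (so the gap controlling step $s \to s+1$ is measurable with respect to the configuration at time $s$), (ii) the monotonicity of $\bar\xi_{m-1}(s)$ in $s$ (so that once the gap becomes $\ge 1$ it stays $\ge 1$ until $\xi_m$ moves), and (iii) the independence of the jump streams $\{G_m(\cdot)\}$ across $m$. Once this identification is in place, everything else is a one-line application of the LLN; in particular, the threshold $\mathsf{t} = \mathsf{m}/p$ in the lemma is exactly the mean $1/p$ of the geometric waiting times, which matches the characteristic speed $p$ at which the rarefaction wavefront erodes the densely packed initial configuration.
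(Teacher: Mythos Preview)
Your proof is correct and is essentially the dual of the paper's argument: the paper tracks the leftmost hole, which moves one step left with probability $p$ at each time (a Bernoulli process), while you track the first-move times $T_m$, whose increments are the corresponding geometric waiting times; in both cases the conclusion is that the ``unblocking front'' propagates at speed $p$, so $\xi_m$ has not moved by time $t$ when $\mathsf{t}<\mathsf{m}/p$. The paper invokes a large-deviation bound and you invoke the weak law of large numbers, but either suffices for the convergence in probability needed here.
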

\begin{proof}
	This follows from the fact that the leftmost hole in the TASEP configuration
	(which starts immediately to the right of $\xi_1(0)$ at time $t=0$)
	travels left by at most $1$ at each time step.
	The probability that the hole moves is $p=\Prob(G>0)$,
	where $G$ is the geometric random variable \eqref{eq:geometric_random_variable}.
	Therefore, if $\mathsf{t}<\mathsf{m}/p$, then the hole has not reached the
	$m$-th particle by time $t$ with probability $1-e^{-C\ssp L}$ for some $C>0$.
	This implies the claim.
\end{proof}

Next, since TASEP belongs to the Kardar--Parisi--Zhang (KPZ) universality class
\cite{CorwinKPZ}, we expect
fluctuations of order $L^{1/3}$ provided that $\mathsf{t}>\mathsf{m}/p$.
More precisely,
modifying $m$ and $u$ on the scale
$L^{\frac13}$ probes the scaling window in the law of large numbers
\eqref{eq:TASEP_hydrodynamic}. That is,
there exist constants
$\mathsf{v}_1(\mathsf{m},\mathsf{t}),
\mathsf{v}_2(\mathsf{m},\mathsf{t})>0$, such that
\begin{equation}
	\label{eq:TASEP_fluctuations_result}
	\lim_{L\to\infty}
	\Prob_{\mathrm{TASEP}}
	\left( \bar\xi_{\lfloor L\ssp \mathsf{m}- L^{1/3}\ssp \alpha\ssp \mathsf{v}_1(\mathsf{m},\mathsf{t})\rfloor}
		(\lfloor L\ssp \mathsf{t} \rfloor )\ge
	L\ssp \mathsf{c}(\mathsf{m},\mathsf{t})-
	L^{1/3}\ssp \beta \ssp \mathsf{v}_2(\mathsf{m},\mathsf{t})\right)
	=F_2\left( \alpha +\beta\right)
\end{equation}
for all $\alpha,\beta\in\mathbb{R}$.
Here, $F_2$ denotes the cumulative distribution function of
the Tracy--Widom GUE distribution. The plus signs
by $\alpha$ and $\beta$ in the right-hand side of \eqref{eq:TASEP_fluctuations_result}
are straightforward from the monotonicity of the pre-limit probabilities.

The distribution function $F_2$
was first discovered
in connection with the fluctuations of the largest
eigenvalue in large complex Hermitian random matrices
exhibiting unitary symmetry \cite{tracy1993level}.
Subsequently,
$F_2$ was put into a larger class of limiting distributions arising in random
growth models and interacting particle systems, known as the KPZ universality class.
We refer to the surveys \cite{CorwinKPZ},
\cite{QuastelSpohnKPZ2015},
\cite{halpin2015kpzCocktail} for a detailed exposition.
The appearance of $F_2$ in the fluctuations (out of several
candidates in the KPZ universality class)
is a feature of the step initial
configuration in our TASEP.

\begin{definition}
	\label{def:TASEP_constants}
	Let $\mathsf{m},\mathsf{t}>0$. Define
	\begin{equation}
		\label{eq:TASEP_c_constant_LLN}
		\mathsf{c}(\mathsf{m},\mathsf{t})
		\coloneqq
		\begin{cases}
			0,
			& \textnormal{if $\mathsf{t}\le \mathsf{m}/p$};
			\\
			\displaystyle\frac{(\sqrt{p\ssp \mathsf{t}}-\sqrt{\mathsf{m}})^2}{1-p},
			& \textnormal{if $\mathsf{t}\ge\mathsf{m}/p$}.
		\end{cases}
	\end{equation}
	If $\mathsf{t}>\mathsf{m}/p$, set
	\begin{equation}
		\label{eq:TASEP_fluctuations_constants}
		\mathsf{v}_1(\mathsf{m},\mathsf{t})\coloneqq
		\frac{\sqrt{p}\ssp\mathsf{m}^{1/3}}{\mathsf{t}^{1/6}}
		\frac{(\sqrt{\mathsf{t}/p}-\sqrt{\mathsf{m}})^{2/3}}
		{(\sqrt{p\mathsf{t}}-\sqrt{\mathsf{m}})^{1/3}},
		\qquad
		\mathsf{v}_2(\mathsf{m},\mathsf{t})\coloneqq
		\frac{\sqrt{p}\ssp
		(\sqrt{p\mathsf{t}}-\sqrt{\mathsf{m}})^{2/3}
		(\sqrt{\mathsf{t}/p}-\sqrt{\mathsf{m}})^{2/3}
		}{(\mathsf{m}\mathsf{t})^{1/6}(1-p)}
		.
	\end{equation}
\end{definition}

\begin{theorem}
	\label{thm:TASEP_limit}
	Let the constants $\mathsf{c}(\mathsf{m},\mathsf{t})$ and $\mathsf{v}_{1,2}(\mathsf{m},\mathsf{t})$ be
	given in \Cref{def:TASEP_constants}. Then
	\begin{enumerate}[$\bullet$]
		\item
			The law of large numbers \eqref{eq:TASEP_LLN} holds for all
			$\mathsf{m},\mathsf{t}>0$.
		\item
			The Tracy--Widom fluctuation result \eqref{eq:TASEP_fluctuations_result} holds
			under the condition $\mathsf{t}>\mathsf{m}/p>0$.
	\end{enumerate}
\end{theorem}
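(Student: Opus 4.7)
The plan is to reduce the statement to asymptotics of a Fredholm determinant via the Schur measure identification of \Cref{prop:TASEP_Schur_measure}, and then run the standard steepest descent analysis of the double contour integral kernel from \Cref{prop:det_Schur}.

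First, since $X(\lambda) = \{\lambda_j + m - j\}_{j=1}^m$ is strictly decreasing with minimum $\lambda_m$, the event $\{\lambda_m \ge u\}$ equals the gap event $\{X(\lambda) \cap A = \emptyset\}$ for $A = \{0, 1, \ldots, u-1\}$. Combined with \Cref{prop:TASEP_Schur_measure} and the standard gap probability formula for determinantal point processes, this gives
$$\Prob_{\mathrm{TASEP}}\bigl(\bar\xi_m(t)\ge u\bigr) = \det(I - K)_{\ell^2(A)},$$
reducing the problem to asymptotics of this Fredholm determinant under the scaling \eqref{eq:parameter_scaling}. In the integrand of \eqref{eq:kernel_Schur}, the exponential factors combine into $e^{L[S(w) - S(z)]}$ up to sub-exponential corrections, with action
$$S(z) = \mathsf{m}\log(1 - p/z) - \mathsf{t}\log(1 - z) - \mathsf{u}\log z.$$
The critical equation $S'(z) = 0$ is quadratic, and for $\mathsf{t} > \mathsf{m}/p$ its discriminant vanishes precisely when $\mathsf{u} = \mathsf{c}(\mathsf{m},\mathsf{t})$ from \eqref{eq:TASEP_c_constant_LLN}, producing a double critical point $z_c$ on the real interval $(p, 1)$. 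This coincidence is the source of both the LLN constant and the fluctuation window.

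For the law of large numbers, the regime $\mathsf{t} \le \mathsf{m}/p$ is handled by \Cref{lemma:a_priori_zero_of_limit_shape}, which matches $\mathsf{c} = 0$ in \eqref{eq:TASEP_c_constant_LLN}. For $\mathsf{t} > \mathsf{m}/p$ and $\mathsf{u}$ bounded away from $\mathsf{c}(\mathsf{m},\mathsf{t})$, analyze $\det(I - K)_{\ell^2(A)}$ by contour deformation. When $\mathsf{u} < \mathsf{c}$ the two critical points of $S$ are real and separated; route the $z$- and $w$-contours to cross the real axis through them along steepest descent and ascent directions while preserving $p < |w| < |z| < 1$, so that $\Re[S(z) - S(w)] \le -\delta < 0$ uniformly. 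Then the kernel entries are $O(e^{-\delta L})$ and $\det(I - K_A) \to 1$, giving $\Prob(\bar\xi_m(t) \ge u) \to 1$. When $\mathsf{u} > \mathsf{c}$ the critical points are complex conjugates and the local picture is the discrete sine kernel; since the probability of an empty gap of length $\Theta(L)$ inside a bulk region decays, $\det(I - K_A) \to 0$ and hence $\Prob(\bar\xi_m(t) \ge u) \to 0$. Monotonicity of $\bar\xi_m(t)$ in $u$ upgrades these two-sided bounds to the LLN \eqref{eq:TASEP_LLN} in probability.

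For the Tracy--Widom fluctuations under $\mathsf{t} > \mathsf{m}/p$, perform the local change of variables $z = z_c(1 + \sigma L^{-1/3}\tilde z)$ and analogously for $w$, choosing $\sigma > 0$ so that the cubic coefficient in the Taylor expansion of $S$ around $z_c$ normalizes to $\tilde z^3/3$. The constants $\mathsf{v}_1(\mathsf{m},\mathsf{t})$ and $\mathsf{v}_2(\mathsf{m},\mathsf{t})$ in \eqref{eq:TASEP_fluctuations_constants} are determined by requiring that the $L^{1/3}$-shifts in $m$ and $u$ translate into the prescribed shift $\alpha + \beta$ of the Airy argument after this rescaling. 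The rescaled kernel converges pointwise to the Airy kernel, and the tail bounds coming from the same contour deformation as above upgrade this to convergence of Fredholm determinants, yielding the limit $F_2(\alpha + \beta)$ in \eqref{eq:TASEP_fluctuations_result}. The main obstacle throughout is choosing contours that simultaneously pass through $z_c$ at the correct Airy angles, respect the radial ordering $p < |w| < |z| < 1$ in \eqref{eq:kernel_Schur}, and provide the global decay needed to exchange limit and integration; the standard resolution is a keyhole contour that hugs the real axis away from $z_c$ and bulges into the complex plane at the critical point, carried out in detail in \Cref{sec:app_TASEP_asymptotics}.
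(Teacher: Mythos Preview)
Your overall architecture matches the paper's: identify the gap probability with a Fredholm determinant via \Cref{prop:TASEP_Schur_measure} and \Cref{prop:det_Schur}, find the double critical point of the action $S$, and run steepest descent to the Airy kernel. However, your description of the key contour step contains a genuine error that would make the argument fail as written.

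You assert that the double critical point $z_c$ lies in $(p,1)$ and that the deformed contours can be chosen to pass through $z_c$ while \emph{preserving} the radial ordering $p<|w|<|z|<1$. In fact one computes
\[
z_{cr}=\frac{\sqrt{p\,\mathsf{t}}-\sqrt{\mathsf{m}}}{\sqrt{\mathsf{t}/p}-\sqrt{\mathsf{m}}},
\]
and a one-line check shows $z_{cr}<p$ whenever $p<1$. Hence no contour satisfying $|w|>p$ can pass through $z_{cr}$, and the ``keyhole preserving the ordering'' you describe is impossible. The paper's resolution is different: one deforms the $w$-contour to encircle $p$ and pass to the \emph{right} of $z_{cr}$, then drags the $z$-contour \emph{through} the $w$-contour to encircle $0$ and pass to the \emph{left} of $z_{cr}$. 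Crossing $z=w$ picks up a residue, but that residue has no pole at $w=p$, so the resulting single integral vanishes. This contour-crossing-with-vanishing-residue is the actual mechanism that lets both contours reach the critical point; without it the steepest descent cannot be set up. (A minor additional slip: your integrand is $e^{L[S(z)-S(w)]}$, not $e^{L[S(w)-S(z)]}$.)
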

The proof of \Cref{thm:TASEP_limit} is given in \Cref{sec:app_TASEP_asymptotics}.

\section{Limit behavior of Grothendieck random permutations}
\label{sec:Grothendieck_asymptotics}

\subsection{Law of large numbers for the permutation height function}
\label{sub:LLN_for_H}

In this section, we obtain the asymptotic behavior
of Grothendieck random permutations of growing order
via the
the pre-limit identity in distribution (\Cref{thm:vertex_TASEP_matching})
and
TASEP asymptotics (\Cref{thm:TASEP_limit}).
As a result, we complete the proof of \Cref{thm:Grothendieck_permutations_asymptotics_intro}
formulated in the Introduction.
First, we focus on
computations involving the law of large numbers, and
then add the fluctuation terms.
By \Cref{thm:vertex_TASEP_matching}, we have
\begin{equation*}
	\Prob_{\mathbf{w}}\bigl(H(x,y)\le h\bigr)
	=
	\Prob_{\mathrm{TASEP}}\bigl(
		\xi_{n-x+1-h}(y-1)\ge n-y+2
	\bigr),
\end{equation*}
where $H$ is the height function of the Grothendieck random
permutation~$\mathbf{w}$, and
$\xi_i(t)$ are the TASEP particles
with
the step initial configuration
$\xi_i(0)=n-x-i+2$, $1\le i\le n-x+1$.
Therefore,
\begin{equation}
	\label{eq:H_and_TASEP_centered_matching}
	\Prob_{\mathbf{w}}\bigl(H(x,y)\le h\bigr)
	=
	\Prob_{\mathrm{TASEP}}\bigl(
		\bar\xi_{n-x+1-h}(y-1)\ge n-y+1-h
	\bigr),
\end{equation}
where $\bar \xi_i(t)=\xi_i(t)-\xi_i(0)$ is the displacement.

\medskip
Let $h=\lfloor n\ssp \mathsf{h} \rfloor $, where $\mathsf{h}$ is the
scaled permutation height function.
Let also
$x=\lfloor n\ssp \mathsf{x} \rfloor $, $y=\lfloor n\ssp \mathsf{y} \rfloor $,
where $\mathsf{x},\mathsf{y}\in[0,1]$ are the scaled coordinates in the permutation matrix.
Note that
by \eqref{eq:height_function_bounds}, we have
$0\le \mathsf{h}\le \min(1-\mathsf{x},1-\mathsf{y})$.

For each $\mathsf{x},\mathsf{y},\mathsf{h}$, the
asymptotic location
of $n^{-1}\bar\xi_{n-x+1-h}(y-1)$ is determined
from the TASEP law of large numbers \eqref{eq:TASEP_LLN}, \eqref{eq:TASEP_c_constant_LLN}.
Namely, this location is equal to
\begin{equation*}
	\mathsf{c}(1-\mathsf{x}-\mathsf{h},\mathsf{y})=
	\frac{(\sqrt{p\ssp \mathsf{y}}-\sqrt{1-\mathsf{x}-\mathsf{h}})^2}{1-p}
	\ssp
	\mathbf{1}_{p\ssp \mathsf{y}\ge 1-\mathsf{x}-\mathsf{h}}
	.
\end{equation*}
There are two cases.
If
\begin{equation}
	\label{eq:h_inequality_initial}
	\mathsf{c}(1-\mathsf{x}-\mathsf{h},\mathsf{y})> 1-\mathsf{y}-\mathsf{h},
\end{equation}
then the probability \eqref{eq:H_and_TASEP_centered_matching} goes to~$1$.
If the inequality is reversed, then this probability goes to~$0$.
As $\mathsf{h}$ increases,
the left- and the right-hand sides of \eqref{eq:h_inequality_initial}
increase and decrease, respectively.
Therefore, we should define
\begin{equation}
\label{eq:h_circ}
	\mathsf{h}^\circ(\mathsf{x},\mathsf{y})\coloneqq
	\inf\left\{ \mathsf{h}\in[0,\min(1-\mathsf{x},1-\mathsf{y})]\colon
	\mathsf{c}(1-\mathsf{x}-\mathsf{h},\mathsf{y})> 1-\mathsf{y}-\mathsf{h} \right\}.
\end{equation}
If the set in \eqref{eq:h_circ} is empty, we
take the maximal possible value,
$\mathsf{h}^\circ(\mathsf{x},\mathsf{y})=\min(1-\mathsf{x},1-\mathsf{y})$.

To describe $\mathsf{h}^\circ$ more explicitly,
introduce the northeast part of the ellipse
\begin{equation}
	\label{eq:ellipse}
	\mathscr{E}_p\coloneqq
	\left\{
		(\mathsf{x},\mathsf{y})
		\colon
		(\mathsf{y}-\mathsf{x})^2/p+(\mathsf{y}+\mathsf{x}-1)^2/(1-p)=1,\
		1-p\le \mathsf{x}\le 1
	\right\}\subset [0,1]^2.
\end{equation}

\begin{figure}[htpb]
	\centering
	\begin{tikzpicture}
		\node at (0,0) {\includegraphics[height=.3\textwidth]{./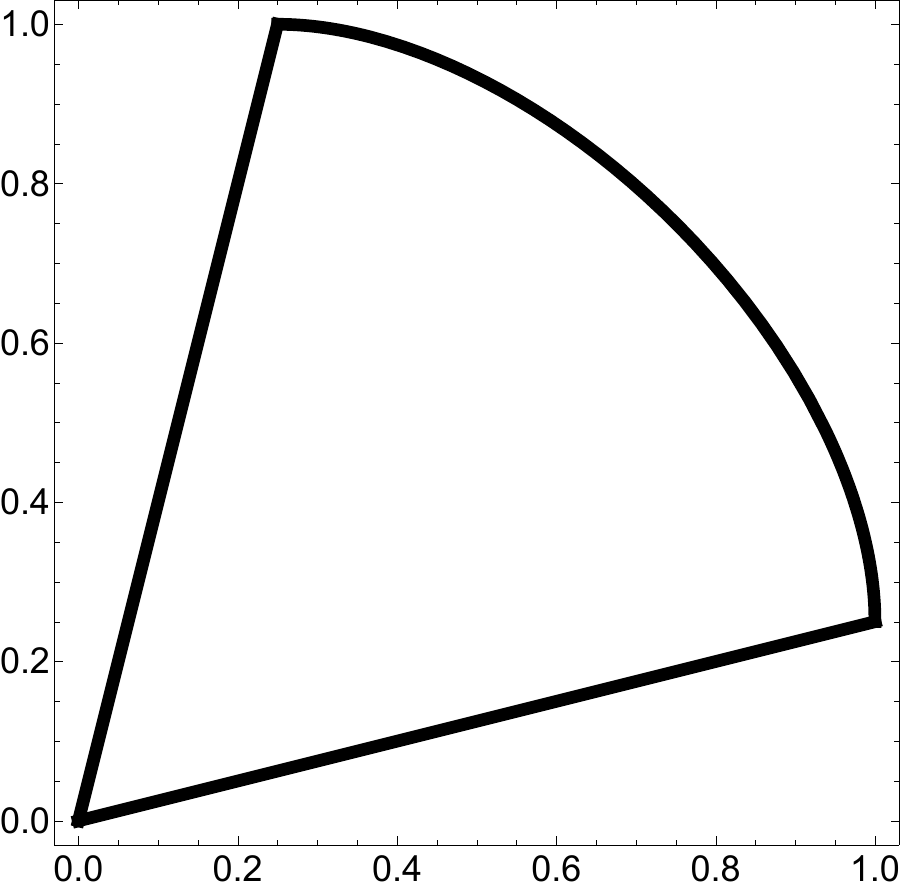}};
		\node at (8,0) {\includegraphics[height=.4\textwidth]{./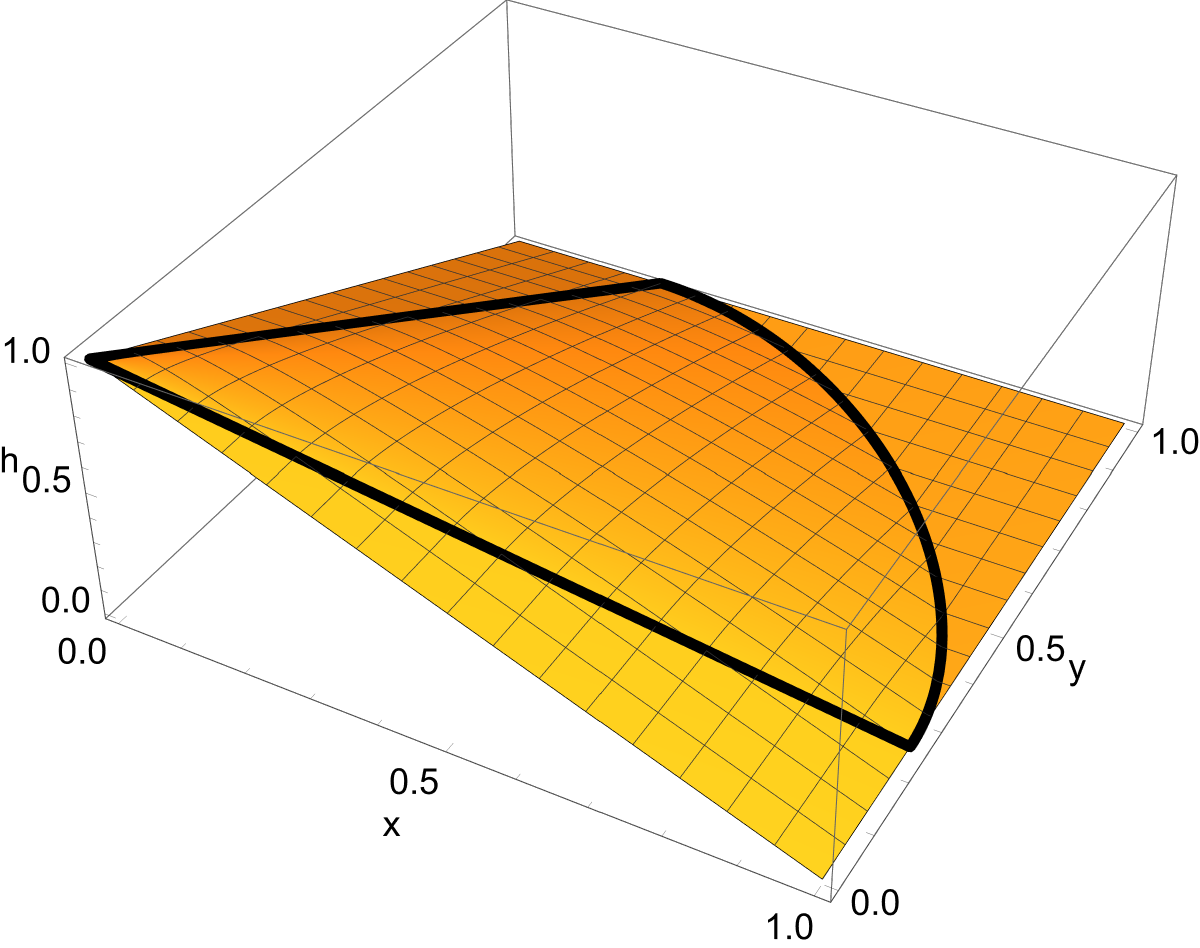}};
		\node at (-1.7,1.2) {\large$\mathscr{A}$};
		\node at (.7,-1.75) {\large$\mathscr{B}$};
		\node at (0,0) {\large$\mathscr{C}$};
		\node at (1.8,1.8) {\large$\mathscr{D}$};
	\end{tikzpicture}
	\caption{\textbf{Left:} The subsets \eqref{eq:h_circ_by_zones}
	of the unit square.
	\textbf{Right:}
	Graph of the limit shape $\mathsf{h}^\circ(\mathsf{x},\mathsf{y})$
	defined by \eqref{eq:h_circ} and given explicitly by
	\eqref{eq:h_circ_by_zones}.
	The function $\mathsf{h}^\circ$ is linear outside of the curved triangle
	$\mathscr{C}$.
	In both figures, the parameter is $p=3/4$.}
	\label{fig:Grothendieck_limit_shape_zones}
\end{figure}

\begin{lemma}
	\label{lemma:Grothendieck_limit_shape_equation_on_h}
	The function
	$\mathsf{h}^\circ$
	\eqref{eq:h_circ} is given by
	\begin{equation}
		\label{eq:h_circ_by_zones}
		\mathsf{h}^\circ(\mathsf{x},\mathsf{y})
		=
		\begin{cases}
			1-\mathsf{y},&(\mathsf{x},\mathsf{y})\in
			\mathscr{A}\coloneqq\left\{ 0<\mathsf{x}<1-p,\ \mathsf{x}/(1-p)<\mathsf{y}<1 \right\}
			;\\
			1-\mathsf{x},&
			(\mathsf{x},\mathsf{y})\in
			\mathscr{B}\coloneqq\left\{ 0<\mathsf{x}<1,\ 0<\mathsf{y}<(1-p)\ssp \mathsf{x} \right\}
			;\\
			\displaystyle
			1+
			\frac{2}{p} \sqrt{(1-p) \mathsf{x}\ssp \mathsf{y}}
			-\frac{\mathsf{x}+\mathsf{y}}{p}
			,&(\mathsf{x},\mathsf{y})\in
			\mathscr{C}\coloneqq\Biggl\{
				\parbox{.38\textwidth}{$0<\mathsf{x}<1,\ (1-p)\ssp \mathsf{x}<\mathsf{y}<\mathsf{x}/(1-p)$,
			\\
				$(\mathsf{x},\mathsf{y})$ \textnormal{below} $\mathscr{E}_p$} \Biggr\}
			;\\
			0,&
			(\mathsf{x},\mathsf{y})\in
			\mathscr{D}\coloneqq\left\{  \textnormal{$(\mathsf{x},\mathsf{y})$ above $\mathscr{E}_p$} \right\}
			.
		\end{cases}
	\end{equation}
	It
	is continuous on $[0,1]^2$.
\end{lemma}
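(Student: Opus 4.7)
The plan is to analyze the definition (4.3) via the auxiliary function
$F(\mathsf{h}) \coloneqq \mathsf{c}(1-\mathsf{x}-\mathsf{h},\mathsf{y}) - (1-\mathsf{y}-\mathsf{h})$, $\mathsf{h}\in[0,\min(1-\mathsf{x},1-\mathsf{y})]$. The first step is to verify that $F$ is non-decreasing. Indeed, writing out the cases in \eqref{eq:TASEP_c_constant_LLN}: when $p\mathsf{y}<1-\mathsf{x}-\mathsf{h}$, the first term is $0$ and the second is decreasing in $\mathsf{h}$; when $p\mathsf{y}\ge 1-\mathsf{x}-\mathsf{h}$, the first term $\tfrac{(\sqrt{p\mathsf{y}}-\sqrt{1-\mathsf{x}-\mathsf{h}})^2}{1-p}$ is non-decreasing in $\mathsf{h}$ (as $\sqrt{1-\mathsf{x}-\mathsf{h}}$ decreases toward $\sqrt{p\mathsf{y}}$ from above and then from below), and $-(1-\mathsf{y}-\mathsf{h})$ is again increasing. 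Monotonicity of $F$ shows that the infimum in (4.3) is either the unique root of $F(\mathsf{h})=0$, or the right endpoint $\min(1-\mathsf{x},1-\mathsf{y})$ in case $F\le 0$ throughout.

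The main step is solving $F(\mathsf{h})=0$ in zone $\mathscr{C}$. Expanding the square and moving terms to opposite sides yields the equivalent form
\begin{equation*}
p(1-\mathsf{h}) + (\mathsf{y}-\mathsf{x}) \;=\; 2\sqrt{p\mathsf{y}(1-\mathsf{x}-\mathsf{h})},
\end{equation*}
whose left-hand side is non-negative throughout $\mathscr{C}$ because $\mathsf{y}>(1-p)\mathsf{x}$ forces $p(1-\mathsf{h})\ge p(1-\mathsf{x})\ge \mathsf{x}-\mathsf{y}$. Squaring gives a linear equation in $\mathsf{h}$ with the explicit solution stated in (4.4). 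Two sanity checks then confirm that this solution lies in the required range: the inequality $\mathsf{h}^\circ\ge 0$ is, after squaring, equivalent to $(\mathsf{y}-\mathsf{x})^2/p+(\mathsf{y}+\mathsf{x}-1)^2/(1-p)\le 1$, which is exactly the condition of being on or below $\mathscr{E}_p$; and $\mathsf{h}^\circ\le \min(1-\mathsf{x},1-\mathsf{y})$ reduces to the AM--GM inequality $2\sqrt{(1-p)\mathsf{x}\mathsf{y}}\le \mathsf{x}+(1-p)\mathsf{y}$ (or its $\mathsf{x}\leftrightarrow\mathsf{y}$ counterpart).

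For the remaining zones I evaluate $F$ at the endpoints. In zone $\mathscr{D}$ the same squaring argument shows that $F(0)>0$ is equivalent to $(\mathsf{x},\mathsf{y})$ being strictly above $\mathscr{E}_p$, yielding $\mathsf{h}^\circ=0$. In zone $\mathscr{A}$ the defining condition $\mathsf{y}>\mathsf{x}/(1-p)$ reads $p\mathsf{y}<\mathsf{y}-\mathsf{x}$; evaluating at $\mathsf{h}=1-\mathsf{y}$ (the right endpoint, since $\mathsf{x}<\mathsf{y}$) gives $\mathsf{c}(\mathsf{y}-\mathsf{x},\mathsf{y})=0$ and $1-\mathsf{y}-\mathsf{h}=0$, so $F=0$ there, and by monotonicity $F\le 0$ on the whole interval, forcing the default value $\mathsf{h}^\circ=1-\mathsf{y}$. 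The analysis of $\mathscr{B}$ is symmetric: at $\mathsf{h}=1-\mathsf{x}$ one has $F=\tfrac{p\mathsf{y}}{1-p}-(\mathsf{x}-\mathsf{y})$, which is $<0$ precisely when $\mathsf{y}<(1-p)\mathsf{x}$, so again the set in (4.3) is empty and $\mathsf{h}^\circ=1-\mathsf{x}$.

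Continuity follows by direct substitution along the three interfaces of the partition $\mathscr{A}\cup\mathscr{B}\cup\mathscr{C}\cup\mathscr{D}$: on $\mathsf{y}=\mathsf{x}/(1-p)$ the $\mathscr{C}$ formula collapses to $1-\mathsf{y}$, matching $\mathscr{A}$; on $\mathsf{y}=(1-p)\mathsf{x}$ it collapses to $1-\mathsf{x}$, matching $\mathscr{B}$; and on $\mathscr{E}_p$, the identity $\mathsf{x}+\mathsf{y}-p=2\sqrt{(1-p)\mathsf{x}\mathsf{y}}$ makes the formula yield $0$, matching $\mathscr{D}$. I expect the only real subtlety to be the range check for the solution in $\mathscr{C}$ (squaring can in principle introduce extraneous roots), but both bounds reduce to elementary inequalities as indicated, so the argument is essentially a careful but routine case analysis.
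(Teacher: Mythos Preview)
Your argument is correct and follows the same case analysis as the paper (which is much terser, simply asserting that in $\mathscr{A}$ and $\mathscr{C}$ one solves $\mathsf{c}(1-\mathsf{x}-\mathsf{h}^\circ,\mathsf{y})=1-\mathsf{y}-\mathsf{h}^\circ$, while in $\mathscr{B}$ and $\mathscr{D}$ the defining set is empty or contains $0$). Your added detail---the explicit monotonicity of $F$, the squaring step, the range checks, and the interface verifications for continuity---fills in exactly what the paper omits.

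One small slip: the inequality chain you give for nonnegativity of the left-hand side before squaring, ``$p(1-\mathsf{h})\ge p(1-\mathsf{x})\ge \mathsf{x}-\mathsf{y}$'', is not right as written (the first step would need $\mathsf{h}\le\mathsf{x}$, and the second is not equivalent to $\mathsf{y}>(1-p)\mathsf{x}$). The correct chain uses $\mathsf{h}\le 1-\mathsf{x}$, hence $1-\mathsf{h}\ge\mathsf{x}$ and $p(1-\mathsf{h})\ge p\mathsf{x}$; combined with $\mathsf{y}>(1-p)\mathsf{x}$, i.e.\ $\mathsf{x}-\mathsf{y}<p\mathsf{x}$, this gives $p(1-\mathsf{h})+(\mathsf{y}-\mathsf{x})>0$ as needed. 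With this correction the proof is complete.
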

See \Cref{fig:Grothendieck_limit_shape_zones}, left, for an illustration
of the subsets $\mathscr{A},\mathscr{B},\mathscr{C},\mathscr{D}$
in \eqref{eq:h_circ_by_zones}. The graph of the function
$\mathsf{h}^\circ$ is given in
\Cref{fig:Grothendieck_limit_shape_zones}, right.
\begin{proof}
	In $\mathscr{A}$ and $\mathscr{C}$,
	we have
	$\mathsf{c}(1-\mathsf{x}-\mathsf{h}^\circ,\mathsf{y})= 1-\mathsf{y}-\mathsf{h}^\circ$.
	Solving this for $\mathsf{h}^\circ$ leads to the result.
	In $\mathscr{D}$, we have
	$\mathsf{c}(1-\mathsf{x}-\mathsf{h},\mathsf{y})> 1-\mathsf{y}-\mathsf{h}$
	for all $\mathsf{h}\ge0$, so $\mathsf{h}^\circ(\mathsf{x},\mathsf{y})=0$.
	In $\mathscr{B}$,
	we have
	$\mathsf{c}(1-\mathsf{x}-\mathsf{h},\mathsf{y})< 1-\mathsf{y}-\mathsf{h}$
	for all $0\le \mathsf{h}\le 1-\mathsf{x}$.
	This means that
	the set over which we take the infimum in
	\eqref{eq:h_circ} is empty.
	This implies that
	$\mathsf{h}^\circ(\mathsf{x},\mathsf{y})=
	1-\mathsf{x}$ in $\mathscr{B}$, which
	completes the proof.
\end{proof}

\Cref{lemma:Grothendieck_limit_shape_equation_on_h}
and identity \eqref{eq:H_and_TASEP_centered_matching}
immediately imply the law of large numbers for the
height function:
\begin{equation}
	\label{eq:Grothendieck_permutations_LLN_in_the_text}
	\lim_{n\to\infty}n^{-1}
	H
	\left( \lfloor n\ssp \mathsf{x} \rfloor,
	\lfloor n\ssp \mathsf{y} \rfloor \right)
	=\mathsf{h}^\circ(\mathsf{x},\mathsf{y}),
	\qquad \mathsf{x},\mathsf{y}\in[0,1],
\end{equation}
with convergence in probability.
This completes the proof of the first part of
\Cref{thm:Grothendieck_permutations_asymptotics_intro}
from the Introduction.

\subsection{Properties of the limiting permuton}
\label{sub:Grothendieck_permuton_density}

The law of large numbers
\eqref{eq:Grothendieck_permutations_LLN_in_the_text}
implies that the Grothendieck random permutations $\mathbf{w}\in S_n$
converge, as $n\to\infty$, to a (deterministic) \emph{permuton}
prescribed by
$\mathsf{h}^\circ(\mathsf{x},\mathsf{y})$.
Recall that a permuton is a probability measure
on $[0,1]^2$ with uniform marginals.
We refer to
\cite{hoppen2013limits},
\cite{bassino2019universal},
\cite{grubel2023ranks} for detailed treatment of permutons.

In detail, the permuton is connected to the height function as
\begin{equation*}
	\mathsf{h}^\circ(\mathsf{x},\mathsf{y}) =
	\Prob\left( X>\mathsf{x},\ssp Y>\mathsf{y} \right),
	\qquad
	\mathsf{x},\mathsf{y}\in[0,1],
\end{equation*}
where $(X,Y)\in[0,1]^2$ is a random point distributed according to the permuton.
Note that both $X$ and $Y$ are uniformly distributed on $[0,1]$,
while their joint distribution is nontrivial.
Indeed, from \eqref{eq:h_circ_by_zones}
if follows that
$\mathsf{h}^\circ(\mathsf{x},0) = 1-\mathsf{x}$,
$\mathsf{h}^\circ(0,\mathsf{y}) = 1-\mathsf{y}$,
as it should be for uniform marginals of a permuton.

\medskip
Let is find the part of the permuton
that is concentrated on
the curve
$\mathscr{E}_p$ \eqref{eq:ellipse}
(this concentration is visible in simulations, see \Cref{fig:intro_simulations}).
For small $\varepsilon>0$, we have
\begin{equation*}
	\mathsf{h}^\circ(\mathsf{x},\mathsf{y}-\varepsilon)-\mathsf{h}^\circ(\mathsf{x},\mathsf{y})
	=\Prob\left( X>\mathsf{x},\ \mathsf{y}-\varepsilon<Y\le \mathsf{y} \right).
\end{equation*}
Dividing this by $\varepsilon=\Prob(\mathsf{y}-\varepsilon<Y\le \mathsf{y})$,
taking the limit as $\varepsilon\to0$,
and passing to the complement event,
we obtain the conditional cumulative distribution function (cdf) of $X$ given $Y=\mathsf{y}$:
\begin{equation}
	\label{eq:Grothendieck_permuton_conditional_cdf}
	\Prob(X\le \mathsf{x}\mid Y = \mathsf{y})=
	1+
	\frac{\partial}{\partial \mathsf{y}}\mathsf{h}^\circ(\mathsf{x},\mathsf{y})=
	\begin{cases}
		0,& (\mathsf{x},\mathsf{y})\in\mathscr{A};\\
		1,& (\mathsf{x},\mathsf{y})\in\mathscr{B}\cup \mathscr{D};\\
		\dfrac{p-1}{p}+\dfrac{\sqrt{(1-p)\ssp \mathsf{x}\mathsf{y}}}{p\ssp \mathsf{y}},& (\mathsf{x},\mathsf{y})\in\mathscr{C}.
	\end{cases}
\end{equation}
This function has a jump discontinuity along the ellipse $\mathscr{E}_p$ \eqref{eq:ellipse},
that is, at the point
\begin{equation}
	\label{eq:x_p_on_ellipse}
	\mathsf{x}_p(\mathsf{y})
	\coloneqq
	\left( \sqrt{p(1-\mathsf{y})}+\sqrt{\mathsf{y}(1-p)} \right)^2,
	\qquad
	\mathsf{y}>1-p.
\end{equation}
We conclude:
\begin{proposition}
	\label{prop:Grothendieck_permuton_atom}
	Let $\mathsf{y}>1-p$.
	The unique discontinuity of the conditional distribution of $X$ given $Y=\mathsf{y}$ is
	an atom
	at $\mathsf{x}=\mathsf{x}_p(\mathsf{y})$, and the atom's mass is
	\begin{equation}
		\label{eq:Grothendieck_permuton_atom}
			\Prob( X = \mathsf{x}_p(\mathsf{y})\mid Y = \mathsf{y})=
			\frac{1}{p}-
			\dfrac{\sqrt{(1-p)\ssp \mathsf{x}_p(\mathsf{y})\ssp \mathsf{y}}}{p\ssp \mathsf{y}}
			=
			\frac{1}{p}
			-\frac{\sqrt{1-p}
				\left( \sqrt{p(1-\mathsf{y})}+\sqrt{\mathsf{y}(1-p)} \right)
				}{p\sqrt{\mathsf{y}}},
	\end{equation}
	where
	$1-p<\mathsf{y}\le 1$.
\end{proposition}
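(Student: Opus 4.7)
The plan is to work directly from the explicit piecewise formula for the conditional cdf $F(\mathsf{x}) \coloneqq \Prob(X \le \mathsf{x} \mid Y = \mathsf{y}) = 1 + \tfrac{\partial}{\partial \mathsf{y}} \mathsf{h}^\circ(\mathsf{x},\mathsf{y})$ already recorded in \eqref{eq:Grothendieck_permuton_conditional_cdf}, and to trace its behavior as $\mathsf{x}$ sweeps across $[0,1]$ for fixed $\mathsf{y} \in (1-p, 1]$. Since $\mathsf{y}/(1-p) > 1$ whenever $\mathsf{y} > 1-p$, the zone $\mathscr{B}$ is out of reach; checking the defining inequalities of \eqref{eq:h_circ_by_zones}, the horizontal slice visits $\mathscr{A}$, then $\mathscr{C}$, then $\mathscr{D}$ in succession, separated by two vertical boundaries. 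Within each zone, $F$ is smooth, so discontinuities can only occur at these two transition points.

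The first transition $\mathsf{x} = (1-p)\mathsf{y}$ (between $\mathscr{A}$ and $\mathscr{C}$) is checked to be a continuity point: the left value is $0$ (from $\mathscr{A}$), and substituting $\mathsf{x} = (1-p)\mathsf{y}$ into the $\mathscr{C}$-formula gives $\tfrac{p-1}{p} + \tfrac{(1-p)\mathsf{y}}{p\mathsf{y}} = 0$ as well. Hence no atom lives there, and the only candidate discontinuity is the $\mathscr{C}/\mathscr{D}$ boundary on $\mathscr{E}_p$.

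To pinpoint this second boundary, I would set the $\mathscr{C}$-formula for $\mathsf{h}^\circ$ equal to zero, isolate the radical, and square, arriving at the quadratic in $\mathsf{x}$ that matches the expanded form of \eqref{eq:ellipse}. Solving this quadratic with $\mathsf{y}$ fixed and keeping the root in the admissible range $[1-p,1]$ yields precisely $\sqrt{\mathsf{x}_p(\mathsf{y})} = \sqrt{p(1-\mathsf{y})} + \sqrt{\mathsf{y}(1-p)}$, in agreement with \eqref{eq:x_p_on_ellipse}. A brief check confirms that the other algebraic root $(\sqrt{p(1-\mathsf{y})} - \sqrt{\mathsf{y}(1-p)})^2$ lies inside $\mathscr{A}$ (i.e., below $(1-p)\mathsf{y}$) and therefore does not correspond to a $\mathscr{C}/\mathscr{D}$ boundary.

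Finally, the mass of the atom is the jump at $\mathsf{x}_p(\mathsf{y})$, computed by subtracting the left limit (given by the $\mathscr{C}$-formula) from the right limit $1$ (from $\mathscr{D}$):
\[
  1 - \left(\tfrac{p-1}{p} + \tfrac{\sqrt{(1-p)\mathsf{x}_p(\mathsf{y})\mathsf{y}}}{p\mathsf{y}}\right) = \tfrac{1}{p} - \tfrac{\sqrt{(1-p)\mathsf{x}_p(\mathsf{y})\mathsf{y}}}{p\mathsf{y}},
\]
which is the first expression in \eqref{eq:Grothendieck_permuton_atom}; substituting the closed form of $\mathsf{x}_p(\mathsf{y})$ produces the second expression. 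The proof is conceptually just differentiation of an explicit piecewise function together with identification of its singular locus with $\mathscr{E}_p$; the main potential pitfall is the bookkeeping of which ellipse root is physically relevant, and verifying no hidden jump occurs at the $\mathscr{A}/\mathscr{C}$ interface, both of which reduce to short algebraic comparisons.
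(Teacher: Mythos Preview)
Your proposal is correct and follows essentially the same approach as the paper: the paper states the proposition as a direct consequence of the preceding computation \eqref{eq:Grothendieck_permuton_conditional_cdf}, simply noting that the conditional cdf has its unique jump discontinuity along $\mathscr{E}_p$ and reading off the jump size. Your write-up supplies more explicit verification (the zone analysis for fixed $\mathsf{y}>1-p$, continuity at the $\mathscr{A}/\mathscr{C}$ interface, and identification of the correct quadratic root), but these are exactly the checks implicit in the paper's one-line derivation.
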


In particular, from \eqref{eq:Grothendieck_permuton_atom} we have
$\Prob( X = 1-p\mid Y = 1)
=1$, as it should be.

\begin{remark}
	\label{rmk:no_delta_in_2d}
	While conditionally on $Y=\mathsf{y}$ (where $\mathsf{y}>1-p$),
	the distribution of $X$ has an atom,
	the permuton (that is, the joint distribution of $X$ and $Y$)
	does not have atoms. This follows from the fact that
	$\mathsf{h}^\circ(\mathsf{x},\mathsf{y})$
	\eqref{eq:h_circ_by_zones} is continuous on $[0,1]^2$.
\end{remark}

By integrating
\eqref{eq:Grothendieck_permuton_atom}
in $\mathsf{y}$ from $1-p$ to $1$ (which is straightforward),
we obtain the ``singular'' mass of the permuton,
that is, the
mass
concentrated on the curve $\mathscr{E}_p$ \eqref{eq:ellipse}:
\begin{proposition}
	\label{prop:delta_mass}
	Let $(X,Y)\in[0,1]^2$ be a random point distributed according to our permuton.
	Then we have
	\begin{equation*}
		\Prob\left[ (X,Y)\in \mathscr{E}_p \right]
		=
		\Prob\left[ X=\mathsf{x}_p(Y) \right]
		=1-\sqrt{\frac{1-p}{p}}\ssp \arccos\sqrt{1-p},
		\qquad p\in[0,1].
	\end{equation*}
	In particular, for $p=\frac{1}{2}$,
	this expression is equal to $1-\frac{\pi}{4}$.
\end{proposition}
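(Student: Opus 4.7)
The plan is to integrate the conditional atom mass from Proposition~\ref{prop:Grothendieck_permuton_atom} against the uniform marginal of $Y$ on $[0,1]$, and then evaluate a single elementary trigonometric integral.

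First, since the permuton has uniform second marginal, and since by Lemma~\ref{lemma:Grothendieck_limit_shape_equation_on_h} a horizontal slice $\{Y=\mathsf{y}\}$ intersects $\mathscr{E}_p$ only when $\mathsf{y}>1-p$ (for $\mathsf{y}\le 1-p$ the point $(\mathsf{x},\mathsf{y})=(\mathsf{x}_p(\mathsf{y}),\mathsf{y})$ lies in the linear region $\mathscr{B}$, where \eqref{eq:Grothendieck_permuton_conditional_cdf} is continuous), I would write
\begin{equation*}
	\Prob\bigl[(X,Y)\in \mathscr{E}_p\bigr]=\int_{1-p}^{1}\Prob\bigl(X=\mathsf{x}_p(\mathsf{y})\mid Y=\mathsf{y}\bigr)\,d\mathsf{y}.
\end{equation*}
Next, I would simplify the integrand. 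Splitting the numerator $\sqrt{p(1-\mathsf{y})}+\sqrt{\mathsf{y}(1-p)}$ in \eqref{eq:Grothendieck_permuton_atom} and using $\frac{1}{p}-\frac{1-p}{p}=1$, a short computation gives the clean form
\begin{equation*}
	\Prob\bigl(X=\mathsf{x}_p(\mathsf{y})\mid Y=\mathsf{y}\bigr)=1-\sqrt{\frac{1-p}{p}}\sqrt{\frac{1-\mathsf{y}}{\mathsf{y}}}, \qquad \mathsf{y}\in(1-p,1].
\end{equation*}

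The problem thus reduces to evaluating $I\coloneqq\int_{1-p}^{1}\sqrt{(1-\mathsf{y})/\mathsf{y}}\,d\mathsf{y}$. With the substitution $\mathsf{y}=\sin^2\theta$ one has $\sqrt{(1-\mathsf{y})/\mathsf{y}}=\cos\theta/\sin\theta$ and $d\mathsf{y}=2\sin\theta\cos\theta\,d\theta$, so $I=\int 2\cos^2\theta\,d\theta=\theta+\sin\theta\cos\theta$, evaluated between $\theta=\arcsin\sqrt{1-p}$ and $\theta=\pi/2$. Using $\arcsin\sqrt{1-p}=\pi/2-\arccos\sqrt{1-p}$, I get $I=\arccos\sqrt{1-p}-\sqrt{p(1-p)}$. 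Substituting back yields
\begin{equation*}
	\Prob\bigl[(X,Y)\in\mathscr{E}_p\bigr]=p-\sqrt{\frac{1-p}{p}}\Bigl(\arccos\sqrt{1-p}-\sqrt{p(1-p)}\Bigr)=1-\sqrt{\frac{1-p}{p}}\arccos\sqrt{1-p},
\end{equation*}
which is exactly the stated formula; specialization to $p=\tfrac12$ gives $1-\pi/4$ as a sanity check.

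There is no substantive obstacle here: once Proposition~\ref{prop:Grothendieck_permuton_atom} is in hand, the main work is the algebraic collapse of the conditional atom mass to the one-term expression $1-\sqrt{(1-p)/p}\sqrt{(1-\mathsf{y})/\mathsf{y}}$, followed by a routine trigonometric substitution. The only point requiring mild care is confirming that the permuton carries no additional singular mass on $\mathscr{E}_p$ beyond this slice-by-slice atom contribution, which follows from Remark~\ref{rmk:no_delta_in_2d} and the fact that $\mathsf{h}^\circ$ is linear (hence contributes no extra singular measure) on the zones $\mathscr{A}$, $\mathscr{B}$, and $\mathscr{D}$.
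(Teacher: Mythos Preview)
Your proof is correct and follows exactly the approach the paper takes: integrate the conditional atom mass \eqref{eq:Grothendieck_permuton_atom} in $\mathsf{y}$ from $1-p$ to $1$ against the uniform marginal. The paper merely calls this integration ``straightforward'' without showing details, whereas you supply the clean simplification $1-\sqrt{(1-p)/p}\sqrt{(1-\mathsf{y})/\mathsf{y}}$ and the trigonometric substitution explicitly.
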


As an application, let us consider the law of large numbers of the number of inversions
$\inv(\mathbf{w})\coloneqq \#\{i<j\colon \mathbf{w}(i)>\mathbf{w}(j)\}$, where
$\mathbf{w}\in S_n$ is the Grothendieck random permutation.
By a result of
\cite{hoppen2013limits} (see also Theorem~1 in the survey \cite{grubel2023ranks} or
\cite[Theorem~2.5]{bassino2019universal}),
the convergence of $\mathbf{w}\in S_n$ to a permuton
is equivalent to the convergence (in probability) of all pattern counting statistics
of $\mathbf{w}$.
The limiting pattern counts are determined by the permuton.
In particular,
for inversions, we have
\begin{equation}
	\label{eq:Grothendieck_permuton_inversions_convergence_in_probability}
	\lim_{n\to\infty}\frac{\inv(\mathbf{w})}{\binom{n}{2}}=
	\gamma_p
	\coloneqq\Prob
	\left[ (X_1>X_2,\, Y_1<Y_2) \textnormal{ or } (X_1<X_2,\, Y_1>Y_2) \right],
	\qquad p\in(0,1),
\end{equation}
where $(X_1,Y_1)$ and $(X_2,Y_2)$ are independent
points of $[0,1]^2$
sampled from the limiting permuton.

\begin{proposition}[\Cref{prop:Grothendieck_permuton_inversions_intro} in the 	Introduction]
	\label{prop:Grothendieck_permuton_inversions}
	We have $\gamma_{p}=1-\sqrt{\frac{1-p}{p}}\ssp \arccos\sqrt{1-p}$.
\end{proposition}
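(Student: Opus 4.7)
The plan is to convert the inversion count $\gamma_p$ into an expectation against the Grothendieck permuton $\mu$ and evaluate the resulting integral. Since both marginals of $\mu$ are uniform, $\Prob(X_1 = X_2) = \Prob(Y_1 = Y_2) = 0$, so for two i.i.d.\ samples $(X_1,Y_1),(X_2,Y_2) \sim \mu$ the discordance/concordance dichotomy holds a.s. Combining $\mathsf{h}^\circ(x,y)=\mu([x,1]\times[y,1])$ with the independence of the samples and the $(X_1, Y_1) \leftrightarrow (X_2, Y_2)$ symmetry yields
\[
\gamma_p = 2\,\Prob(X_1<X_2,\,Y_1>Y_2) = 2\!\int\!\bigl((1-x)-\mathsf{h}^\circ(x,y)\bigr)d\mu(x,y) = 1 - 2\,\mathbb{E}_\mu[\mathsf{h}^\circ(X,Y)],
\]
using $\int(1-x)\,d\mu(x,y)=\tfrac12$. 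Thus it suffices to show that $\mathbb{E}_\mu[\mathsf{h}^\circ(X,Y)] = \tfrac12\sqrt{(1-p)/p}\,\arccos\sqrt{1-p}$.

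The first simplification is that the singular part of $\mu$, supported on $\mathscr{E}_p$ (\Cref{prop:delta_mass}), contributes zero: $\mathscr{E}_p$ is the common boundary of $\mathscr{C}$ and $\mathscr{D}$, on which $\mathsf{h}^\circ \equiv 0$, so continuity from \Cref{lemma:Grothendieck_limit_shape_equation_on_h} forces $\mathsf{h}^\circ|_{\mathscr{E}_p}\equiv 0$. Only the absolutely continuous part on $\mathscr{C}$ matters. From $F(x,y)=\mu([0,x]\times[0,y])=x+y-1+\mathsf{h}^\circ(x,y)$ (inclusion--exclusion with uniform marginals) and \eqref{eq:h_circ_by_zones}, the density on $\mathscr{C}$ is
\[
\rho(x,y) = \frac{\partial^2\mathsf{h}^\circ}{\partial x\,\partial y} = \frac{\sqrt{1-p}}{2p\sqrt{xy}}.
\]
The second simplification is the substitution $(X,Y)=(\sqrt x,\sqrt y)$, which absorbs $\rho\,dx\,dy$ into the flat measure $(2\sqrt{1-p}/p)\,dX\,dY$ and turns $\mathsf{h}^\circ$ into the quadratic $1-(X^2+Y^2-2\sqrt{1-p}\,XY)/p$. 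Passing to polar coordinates $(R,\phi)$, the wedge $(1-p)x<y<x/(1-p)$ becomes $\phi\in[\phi_0,\pi/2-\phi_0]$ with $\phi_0:=\arctan\sqrt{1-p}$, and $\mathsf{h}^\circ=1-a(\phi)R^2$ with $a(\phi):=(1-\sqrt{1-p}\sin 2\phi)/p$. The boundary $\mathscr{E}_p$ becomes $R=a(\phi)^{-1/2}$, and the radial integral collapses exactly: $\int_0^{a(\phi)^{-1/2}}(1-a(\phi)R^2)R\,dR = 1/(4a(\phi))$.

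Assembling the pieces,
\[
\mathbb{E}_\mu[\mathsf{h}^\circ(X,Y)] = \frac{\sqrt{1-p}}{2}\int_{\phi_0}^{\pi/2-\phi_0}\frac{d\phi}{1-\sqrt{1-p}\sin 2\phi}.
\]
The substitution $t=\tan\phi$ converts the integrand to $dt/\bigl((t-\sqrt{1-p})^2+p\bigr)$ with new limits $\sqrt{1-p}$ and $1/\sqrt{1-p}$, evaluating in closed form to $p^{-1/2}\arctan\sqrt{p/(1-p)}$. The identity $\arctan\sqrt{p/(1-p)}=\arccos\sqrt{1-p}$ (from $\cos\alpha=\sqrt{1-p}$, $\sin\alpha=\sqrt p$) finishes the computation, yielding the claimed formula for $\gamma_p$. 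The main difficulty is not conceptual but computational: spotting the $(X,Y)=(\sqrt x,\sqrt y)$ substitution is critical, as it simultaneously flattens the density, reduces $\mathsf{h}^\circ$ to a quadratic, and makes the final polar reduction tractable; a direct $(x,y)$-integration over $\mathscr{C}$ appears considerably less amenable to closed-form evaluation.
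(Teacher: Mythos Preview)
Your proof is correct and takes a genuinely different, cleaner route than the paper's. Both arguments begin by expressing $\gamma_p$ as an expectation against the permuton, but you reduce to $\gamma_p = 1 - 2\,\mathbb{E}_\mu[\mathsf{h}^\circ(X,Y)]$, whereas the paper uses $\gamma_p = 1 - \mathbb{E}_\mu[\mathsf{h}^\circ + \mathsf{h}^\bullet]$ and keeps both pieces throughout. These are equivalent (since $\mathsf{h}^\bullet = x+y-1+\mathsf{h}^\circ$ forces $\mathbb{E}_\mu[\mathsf{h}^\bullet]=\mathbb{E}_\mu[\mathsf{h}^\circ]$), but your version has the decisive advantage that $\mathsf{h}^\circ$ vanishes identically on the singular support $\mathscr{E}_p$, so the delta part of $\mu$ contributes nothing; the paper must instead carry along a nontrivial single integral over $\mathscr{E}_p$ (where $1-\mathsf{h}^\circ-\mathsf{h}^\bullet = 2-\mathsf{x}_p(\mathsf{y})-\mathsf{y}\neq 0$) and evaluate it separately. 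For the remaining double integral over $\mathscr{C}$, the paper integrates directly in $(\mathsf{x},\mathsf{y})$ and explicitly appeals to a computer algebra system for the antiderivatives, while your substitution $(x,y)\mapsto(\sqrt x,\sqrt y)$ followed by polar coordinates flattens the density, makes $\mathsf{h}^\circ$ quadratic in $R$, collapses the radial integral to $1/(4a(\phi))$, and reduces everything to a single elementary $\arctan$ integral. Your approach is more transparent and avoids machine computation; the only thing the paper's route might buy is that it generalizes mechanically to other integrands against $\mu$ where $\mathsf{h}^\circ$ does not happen to vanish on $\mathscr{E}_p$.
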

The fact that the scaled number of inversions is
the same as the singular mass of the permuton is surprising,
but we do not have a conceptual explanation for this coincidence.
\begin{proof}[Proof of \Cref{prop:Grothendieck_permuton_inversions}]
	Fix $(\mathsf{x},\mathsf{y})\in[0,1]^2$, and let $(X,Y)\in[0,1]^2$
	be a random point sampled from the permuton.
	We have
	\begin{equation}
		\label{eq:Grothendieck_permuton_inversions_proof}
		\Prob
		\left[ (X>\mathsf{x},\, Y<\mathsf{y}) \textnormal{ or } (X<\mathsf{x},\, Y>\mathsf{y}) \right]
		=1-\mathsf{h}^\circ(\mathsf{x},\mathsf{y})-
		\mathsf{h}^\bullet(\mathsf{x},\mathsf{y}),
	\end{equation}
	where $\mathsf{h}^\circ(\mathsf{x},\mathsf{y})=\Prob(X>\mathsf{x},\, Y>\mathsf{y})$ is given by
	\eqref{eq:h_circ_by_zones}, and
	\begin{equation*}
		\mathsf{h}^\bullet(\mathsf{x},\mathsf{y})\coloneqq
		\Prob(X\le \mathsf{x},\, Y\le \mathsf{y})
	\end{equation*}
	is the \emph{copula} \cite{grubel2023ranks} corresponding to the permuton.

	In \eqref{eq:Grothendieck_permuton_inversions_proof},
	we used the absence of atoms
	(\Cref{rmk:no_delta_in_2d}) and
	ignored the difference between strict and weak inequalities.
	The function $\mathsf{h}^\bullet$ can be computed using the
	conditional distribution \eqref{eq:Grothendieck_permuton_conditional_cdf}:
	\begin{equation*}
		\mathsf{h}^\bullet(\mathsf{x},\mathsf{y})=
		\int_0^{\mathsf{y}}
		\Prob(X\le \mathsf{x}\mid Y = w)\ssp dw=
		\begin{cases}
			\mathsf{x},& (\mathsf{x},\mathsf{y})\in \mathscr{A};\\
			\mathsf{y},& (\mathsf{x},\mathsf{y})\in \mathscr{B};\\
			\displaystyle
			\frac{p-1}{p}(\mathsf{x}+\mathsf{y})+
			\frac{2}{p}\sqrt{(1-p)\ssp \mathsf{x}\mathsf{y}},
			& (\mathsf{x},\mathsf{y})\in \mathscr{C};\\
			\mathsf{x}+\mathsf{y}-1,& (\mathsf{x},\mathsf{y})\in \mathscr{D}.
		\end{cases}
	\end{equation*}

	By \eqref{eq:Grothendieck_permuton_inversions_proof},
	the constant $\gamma_{p}$ (for general $p$) is given by
	\begin{equation*}
		\gamma_p=
		\operatorname{\mathbb{E}}
		\left[ 1 - \mathsf{h}^\circ(X,Y) - \mathsf{h}^\bullet(X,Y) \right],
	\end{equation*}
	where the expectation is taken with respect to the permuton.
	To evaluate this expectation, we use the conditional density
	of $X$ given $Y=y$ which is obtained from
	\eqref{eq:Grothendieck_permuton_conditional_cdf}:
	\begin{equation*}
		f_{X\mid Y=\mathsf{y}}(\mathsf{x})=
		\frac{1}{2p}\ssp\sqrt{\frac{1-p}{\mathsf{x}\mathsf{y}}}\ssp
		\mathbf{1}_{(\mathsf{x},\mathsf{y})\in\mathscr{C}}.
	\end{equation*}
	Recalling that for $Y=\mathsf{y}>1-p$ the conditional distribution of $X$ has an atom,
	we can express the expectation as follows:
	\begin{equation}
		\label{eq:Grothendieck_permuton_inversions_expectation_computation}
		\begin{split}
			\gamma_p=
			1&-
			\iint_{\mathscr{C}}
			\left(
				\mathsf{h}^\circ(\mathsf{x},\mathsf{y})+\mathsf{h}^\bullet(\mathsf{x},\mathsf{y})
			\right)
			f_{X\mid Y=\mathsf{y}}(\mathsf{x})\ssp d\mathsf{x}\ssp d\mathsf{y}
			\\&-
			\int_{1-p}^1
			\bigl(
				\mathsf{h}^\circ(\mathsf{x}_p(\mathsf{y}),\mathsf{y})+
				\mathsf{h}^\bullet(\mathsf{x}_p(\mathsf{y}),\mathsf{y})
			\bigr)
			\Prob
			\left( X=\mathsf{x}_p(\mathsf{y})\mid Y=\mathsf{y} \right)\ssp d\mathsf{y}.
		\end{split}
	\end{equation}
	In the single integral in
	\eqref{eq:Grothendieck_permuton_inversions_expectation_computation},
	one can check that
	$\mathsf{h}^\circ(\mathsf{x}_p(\mathsf{y}),\mathsf{y})+
	\mathsf{h}^\bullet(\mathsf{x}_p(\mathsf{y}),\mathsf{y})=\mathsf{x}_p(\mathsf{y})+\mathsf{y}-1$,
	and so the integral takes the form
	\begin{equation*}
		\int_{1-p}^1
		(\mathsf{x}_p(\mathsf{y})+\mathsf{y}-1)
		\left(
		\frac{1}{p}-
		\dfrac{\sqrt{(1-p)\ssp \mathsf{x}_p(\mathsf{y})\ssp \mathsf{y}}}{p\ssp \mathsf{y}}\right)
		d\mathsf{y}=
		\frac{1}{2} \left( p-1+\sqrt{\frac{1}{p}-1} \Bigl(\frac{\pi}{2} -\arctan\sqrt{\frac{1}{p}-1}\ssp\Bigr)\right).
	\end{equation*}
	The integral is expressed in a closed form since the integrand has an explicit
	antiderivative.\footnote{Antiderivatives throughout this proof
		are expressed through elementary functions. They are tedious but explicit,
	and were obtained by a computer algebra system (namely, \texttt{Mathematica}).}
	In the double integral, we have
	\begin{multline*}
		\left(
		\mathsf{h}^\circ(\mathsf{x},\mathsf{y})
		+
		\mathsf{h}^\bullet(\mathsf{x},\mathsf{y})
		\right)\ssp f_{X\mid Y=\mathsf{y}}(\mathsf{x})=
		\left( 1+\frac{4\sqrt{1-p}}{p}\sqrt{\mathsf{x}\mathsf{y}}
		+\left( 1-\frac{2}{p} \right)(\mathsf{x}+\mathsf{y})\right)
		\frac{1}{2p}\ssp\sqrt{\frac{1-p}{\mathsf{x}\mathsf{y}}}
		\\=
		\frac{\sqrt{1-p}}{2p\sqrt{\mathsf{x}\mathsf{y}}}
		+
		\frac{2(1-p)}{p^2}
		+
		\left( 1-\frac{2}{p} \right)\frac{\sqrt{1-p}}{2p}
		\cdot
		\frac{\mathsf{x}+\mathsf{y}}{\sqrt{\mathsf{x}\mathsf{y}}}.
	\end{multline*}
	For integrating over $\mathscr{C}$, we split
	the $d\mathsf{y}$ integral into two, and can readily
	compute the $d\mathsf{x}$ integral in both cases.
	Denote the $\mathsf{x}$-antiderivative by
	\begin{equation*}
		I(t)\coloneqq
		\frac{\sqrt{1-p}\sqrt{t}}{p\sqrt{\mathsf{y}}}
		+
		\frac{2(1-p)}{p^2}t
		+
		\left( 1-\frac{2}{p} \right)\frac{\sqrt{1-p}}{p}
		\cdot
		\frac{\frac13 t^{\frac32}+\mathsf{y}\sqrt{t}}{\sqrt{\mathsf{y}}}.
	\end{equation*}
	Then the double integral in \eqref{eq:Grothendieck_permuton_inversions_expectation_computation}
	is equal to
	\begin{multline*}
		\int_0^{1-p}\Bigl( I(\mathsf{y}/(1-p))-I(\mathsf{y}(1-p)) \Bigr)d\mathsf{y}
		+
		\int_{1-p}^1\Bigl( I(\mathsf{x}_p(\mathsf{y}))-I(\mathsf{y}(1-p)) \Bigr)d\mathsf{y}=
		\frac{1}{3} \left(p+\frac{2}{p}-3\right)
		\\
		+\frac{3 \pi  \sqrt{(1-p) p^3}-3 \sqrt{(1-p) p^3} \left(\arctan\sqrt{\frac{1}{p}-1}+
		\frac{\pi }{2}\right)+p ((9-5 p) p-4)}{6 p^2},
	\end{multline*}
	where the second line is the integral $\int_{1-p}^1 I(\mathsf{x}_p(\mathsf{y}))d\mathsf{y}$,
	whose integrand has an explicit antiderivative.
	Putting all the computed integrals together
	and using basic trigonometric identities, we arrive at the result.
\end{proof}

\subsection{Fluctuations of the permutation height function}
\label{eq:TW_for_H}

Let us now apply the second part of
\Cref{thm:TASEP_limit} to the Grothendieck random permutations, and
obtain fluctuations of the height function
around its limit shape $\mathsf{h}^\circ(\mathsf{x},\mathsf{y})$.
Throughout this subsection, we assume that the point $(\mathsf{x},\mathsf{y})$
belongs to the curved triangle $\mathscr{C}$
where the height function $\mathsf{h}^\circ$ is not linear.

Recall that by $H(x,y)$ we denote the random height function of the
Grothendieck random permutation $\mathbf{w}\in S_n$.
By \eqref{eq:H_and_TASEP_centered_matching}, we have
for all $r\in \mathbb{R}$:
\begin{equation}
	\label{eq:Grothendieck_permutations_fluctuations_first_formula}
	\begin{split}
		&
		\Prob_{\mathbf{w}}\bigl(
			H(\lfloor n\ssp \mathsf{x} \rfloor,\lfloor n\ssp \mathsf{y} \rfloor)
			\le
			n\ssp \mathsf{h}^\circ+ r\ssp n^{1/3}
		\bigr)
		\\&\hspace{60pt}=
		\Prob_{\mathrm{TASEP}}\bigl(
			\bar\xi_{
			\lfloor n\ssp(1-\mathsf{x}-\mathsf{h}^\circ-r\ssp n^{1/3}) \rfloor }
				(\lfloor n\ssp \mathsf{y} \rfloor )
				\ge
				n\ssp(1-\mathsf{y}-\mathsf{h}^\circ-r\ssp n^{1/3})
		\bigr)+o(1),
	\end{split}
\end{equation}
where we used the shorthand notation
\begin{equation}
	\label{eq:h_circ_curved}
	\mathsf{h}^\circ
	=
	\mathsf{h}^\circ(\mathsf{x},\mathsf{y})
	=
	1+
	\frac{2}{p} \sqrt{(1-p) x y}
	-\frac{x+y}{p}
	.
\end{equation}
The term $o(1)$ in \eqref{eq:Grothendieck_permutations_fluctuations_first_formula}
is due to the fact that we removed
shifts by $(+1)$, and combined the integer parts. These
modifications are negligible because fluctuations live on the scale $n^{1/3}$.

By the fluctuation result for TASEP,
the probability in the right-hand side of
\eqref{eq:Grothendieck_permutations_fluctuations_first_formula}
has the following behavior:
\begin{equation}
	\label{eq:Grothendieck_permutations_fluctuations_second_formula}
	\begin{split}
		&
	\lim_{n\to\infty}
		\Prob_{\mathrm{TASEP}}\bigl(
			\bar\xi_{
			\lfloor n\ssp(1-\mathsf{x}-\mathsf{h}^\circ-r\ssp n^{1/3}) \rfloor }
				(\lfloor n\ssp \mathsf{y} \rfloor )
				\ge
				n\ssp(1-\mathsf{y}-\mathsf{h}^\circ-r\ssp n^{1/3})
			\bigr)\\&\hspace{180pt}=
		F_2\left( r\Bigl( \frac{1}{\mathsf{v}_1(1-\mathsf{x}-\mathsf{h}^\circ,\mathsf{y})}
		+
		\frac{1}{\mathsf{v}_2(1-\mathsf{x}-\mathsf{h}^\circ,\mathsf{y})} \Bigr) \right),
	\end{split}
\end{equation}
where $F_2$ is the Tracy--Widom GUE cumulative distribution function,
and the constants $\mathsf{v}_1,\mathsf{v}_2$ are given by
\eqref{eq:TASEP_fluctuations_constants}.
We have
\begin{equation*}
	\frac{1}{\mathsf{v}_1(\mathsf{m},\mathsf{t})}
	+
	\frac{1}{\mathsf{v}_2(\mathsf{m},\mathsf{t})}=
	\frac{(\sqrt{\mathsf{t}}-\sqrt{\mathsf{m}\ssp p}) \ssp \mathsf{t}^{1/6}}{\mathsf{m}^{1/3}
	(\sqrt{\mathsf{t}/p}-\sqrt{\mathsf{m}})^{2/3}
	(\sqrt{\mathsf{t}\ssp p}-\sqrt{\mathsf{m}})^{2/3}}.
\end{equation*}
Let us denote the above expression with
$\mathsf{m}=1-\mathsf{x}-\mathsf{h}^\circ(\mathsf{x},\mathsf{y})$
and $\mathsf{t}=\mathsf{y}$ by
\begin{equation}
	\label{eq:V_constant_for_H}
	\mathsf{v}(\mathsf{x},\mathsf{y})\coloneqq
	\frac{(\sqrt{\mathsf{y}}-\sqrt{p(1-\mathsf{x}-\mathsf{h}^\circ(\mathsf{x},\mathsf{y}))})
	\ssp \mathsf{y}^{1/6}}{(1-\mathsf{x}-\mathsf{h}^\circ(\mathsf{x},\mathsf{y}))^{1/3}
	(\sqrt{\mathsf{y}/p}-\sqrt{1-\mathsf{x}-\mathsf{h}^\circ(\mathsf{x},\mathsf{y})})^{2/3}
	(\sqrt{\mathsf{y}\ssp p}-\sqrt{1-\mathsf{x}-\mathsf{h}^\circ(\mathsf{x},\mathsf{y})})^{2/3}},
\end{equation}
where $\mathsf{h}^\circ(\mathsf{x},\mathsf{y})$ is given in \eqref{eq:h_circ_curved}.
While it is not evident from \eqref{eq:V_constant_for_H},
inside $\mathscr{C}$
the function $\mathsf{v}(\mathsf{x},\mathsf{y})$
is symmetric in $\mathsf{x},\mathsf{y}$.

With the constant $\mathsf{v}(\mathsf{x},\mathsf{y})$, the convergence
in \eqref{eq:Grothendieck_permutations_fluctuations_second_formula}
implies the desired
Tracy--Widom GUE fluctuations of the height function
$H(x,y)$.
This completes the proof of
\Cref{thm:Grothendieck_permutations_asymptotics_intro}.

\section{Random permutations from non-reduced pipe dreams}
\label{sec:non-reduced}

Here we consider a different family of random permutations
which is
obtained by sampling a non-reduced pipe dream and not resolving any double crossings.
This corresponds to setting $q=1$ in \Cref{def:q_reduction_pipe_dream}
from the Introduction.
For shorter notation, throughout this section we denote these random permutations
$\mathbf{w}^{(1)}$ by $\mathbf{w}$.

\subsection{Exact formula}
\label{sub:exact_formulas}

Consider the staircase shape
$\updelta_n= \left\{ (i,j)\colon i+j\le n \right\}$
(where $i$ and $j$ are the row and column indices, respectively)
and
place tiles
\raisebox{-2pt}{\includegraphics[scale=0.4]{cross}}
or
\raisebox{-2pt}{\includegraphics[scale=0.4]{elbows}}
in each box of $\updelta_n$ independently with probabilities $p$ and $1-p$, respectively.
Let the random permutation $\mathbf{w}\in S_n$
is obtained by following the pipes starting at $(i,0)$, and ending at $(0,\mathbf{w}_i^{-1})$,
$i=1,\ldots,n$.
For example, for the pipe dream in \Cref{fig:pipe_dream}, this procedure gives
$w=241635$.

Observe that each pipe is a random walk in the bulk of
$\updelta_n$, equipped with the mandatory turns by
$90^\circ$ at the elbows at the diagonal boundary $i+j=n$.
We begin by deriving an exact formula for the probability
distribution of $\mathbf{w}_i^{-1}$, the outgoing position
of the $i$-th pipe. For general~$p$, this distribution
involves two Gauss hypergeometric functions ${}_2F_1$, which
for $p=\frac{1}{2}$ simplifies to a sum of binomial
coefficients. We do not use these exact formulas for
asymptotic analysis, but instead, in
\Cref{sub:random_walks_estimates} below, we obtain bounds on
the expected number of inversions using the random walk
approach.

Recall the Gauss hypergeometric function
$${}_2F_1(a,b;c;z) = \sum_{k\geq 0} \frac{ (a)_k (b)_k}{(c_k)} \frac{z^k}{k!},$$
where $(a)_k = a(a+1)\cdots (a+k-1)$. Set
\begin{equation*}
	\begin{split}
		F_1(i,j)&\coloneqq
		(1-p)p^{i+j-2} {}_2F_1\left(-i+1,-j+1;1; (1/p-1)^2\right)
		\\&\hspace{80pt}= \sum_{k=0}^{\infty} \binom{i-1}{k}\binom{j-1}{k} (1-p)^{2k+1} p^{i+j-2-2k};\\
		F_2(s,n) &\coloneqq n (1-p)^2p^{n+s-3}{}_2F_1\left(-s+2,-n+1,2, (1/p-1)^2 \right)
		\\&\hspace{80pt}= \sum_{k=0}^{\infty} \binom{s-2}{k}\binom{n}{k+1} (1-p)^{2k+2} p^{n+s-3-2k}.
	\end{split}
\end{equation*}
Note that both hypergeometric series terminate after finitely many terms.
For $p=\frac{1}{2}$, these functions simplify to binomial coefficients:
\begin{equation*}
	F_1(i,j)\big\vert_{p=1/2}
	=2^{-i-j+1}\binom{i+j-2}{j-1},
	\qquad
	F_2(s,n)\big\vert_{p=1/2}
	=2^{-s-n+1}\binom{n+s-2}{n-1}.
\end{equation*}

\begin{proposition}
	\label{prop:pipe_dream_distribution_natural}
	We have for all $i,j=1,\ldots,n $:
	\begin{equation}
		\label{eq:pipe_dream_distribution_natural}
		\Prob(\mathbf{w}^{-1}_i=j) = F_1(i,j) +
			p^n \mathbf{1}_{i+j=n+1}
			+
			F_2(i+j-n,n)\ssp \mathbf{1}_{i+j>n+1}.
	\end{equation}
\end{proposition}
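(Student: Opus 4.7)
I would prove the formula by directly enumerating the trajectories of pipe $i$ as lattice paths. A trajectory is a lattice path from $(i,0)$ to $(0,j)$ with $j$ right moves and $i$ up moves, starting with a right move and ending with an up move. Write it as $R^{a_1}U^{b_1}\cdots R^{a_K}U^{b_K}$ with positive $a_l,b_l$, $\sum a_l=j$, and $\sum b_l=i$, yielding $2K-1$ direction changes. Each of the $i+j-1$ intermediate cells is either a staircase box (with $r+c\le n$, independently a cross with probability $p$ or an elbow with probability $1-p$) or a half-elbow (with $r+c=n+1$, forcing a turn). If $b$ direction changes occur at half-elbows (``bounces''), the trajectory has probability $(1-p)^{2K-1-b}p^{i+j-2K}$. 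The key observation is that
\[
F_1(i,j)=\sum_{k}\binom{i-1}{k}\binom{j-1}{k}(1-p)^{2k+1}p^{i+j-2-2k}
\]
equals the sum of $(1-p)^{D}p^{i+j-1-D}$ over \emph{all} lattice paths with $D=2k+1$ direction changes from $(i,0)$ to $(0,j)$, ignoring the diagonal boundary.

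For $i+j\le n$, every trajectory satisfies $r+c\le n$ throughout, so $b=0$ and the actual weight matches the $F_1$ weight exactly; hence $\Prob(\mathbf{w}^{-1}_i=j)=F_1(i,j)$.

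For $i+j=n+1$, the only path reaching $r+c=n+1$ at an intermediate cell is $R^jU^i$, because $S=r+c$ attains its maximum $i+j=n+1$ only when all right moves precede all up moves. This trajectory has $K=1$, $b=1$, and actual weight $p^{n-1}$, whereas $F_1$ credits it with $(1-p)p^{n-1}$. The discrepancy $p^{n-1}-(1-p)p^{n-1}=p^n$ yields the additive correction.

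For $i+j>n+1$, trajectories can bounce multiple times, and moreover $F_1$ now includes contributions from lattice paths that leave the valid region $\{r+c\le n+1\}$. The difference $\Prob(\mathbf{w}^{-1}_i=j)-F_1(i,j)$ equals the sum of bouncing weight-corrections $[(1-p)^{D-b}-(1-p)^D]p^{i+j-1-D}$ over valid paths with $b\ge 1$, minus the weights of invalid paths. A reflection across the line $r+c=n+2$ puts invalid paths in weight-preserving bijection with lattice paths from $(i,0)$ to the reflected point $(n+2-j,n+2)$, while enumeration of bouncing trajectories by the endpoints of their right-runs on the diagonal $r+c=n+1$ produces the binomial factors $\binom{n}{k+1}$ (placements of $k+1$ bounces among the $n$ half-elbow cells) and $\binom{i+j-n-2}{k}$ (compositions of the excess displacement $i+j-n-1$ into $k+1$ positive parts). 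A Vandermonde-style convolution then collapses the resulting double sum to $F_2(i+j-n,n)$.

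\textbf{Main obstacle.} The technical heart is Case $i+j>n+1$: verifying the combinatorial identity showing that the invalid-path correction combined with the bouncing weight adjustment exactly reproduces $F_2$. I anticipate proving this by first fixing the number of bounces $b$, applying the reflection principle to match invalid paths with paths to the reflected endpoint, and then using a binomial convolution to simplify the sum over $K\ge b$ to the single-sum formula.
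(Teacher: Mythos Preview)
Your approach via direct lattice-path enumeration is genuinely different from the paper's, which proceeds by induction on $i$: it writes $X_i^{(n)} = X_{i-1}^{(n-1)} + Y$ where $Y$ is the displacement of the pipe within the top row, derives the conditional distribution of $Y$, and verifies that the claimed formula satisfies the resulting recursion via the hockey-stick identity. Your treatment of the cases $i+j\le n+1$ is correct and gives a clean interpretation of $F_1(i,j)$ and of the $p^n$ correction.

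The case $i+j>n+1$, however, has two concrete gaps. First, the reflection across $r+c=n+2$ is \emph{not} weight-preserving for the weight $(1-p)^D p^{i+j-1-D}$: reflecting the tail after the first visit to sum $n+2$ swaps $R$ and $U$, but the step into the reflection point is necessarily $R$, so the junction contributes a direction change in exactly one of the two paths, and $D$ shifts by $\pm 1$. Concretely, for $n=3$, $i=2$, $j=3$, the unique invalid path $RRRUU$ has $D=1$, while its reflection $RRRRR$ to $(n+2-j,n+2)=(2,5)$ has $D=0$. Second, your interpretation of $\binom{n}{k+1}$ as ``placements of $k{+}1$ bounces among the $n$ half-elbow cells'' cannot be right as stated: only the $s=i+j-n$ half-elbows in rows $n{+}1{-}j,\dots,i$ are reachable by the pipe, so a naive bounce count would produce $\binom{s}{k+1}$, not $\binom{n}{k+1}$. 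In the same example $s=2$, yet the $k=0$ term of $F_2(2,3)$ carries $\binom{3}{1}=3$. The factor $\binom{n}{k+1}$ must therefore arise from something other than bounce-position choices alone --- most likely from an algebraic manipulation rather than a one-step bijection.

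A workable salvage is to keep your path decomposition but replace the reflection by a transfer-matrix or generating-function computation for paths weighted by direction changes (these are classical and give hypergeometric answers), or simply to verify the identity $\Prob-F_1=F_2$ via the paper's row-by-row recursion, which sidesteps both issues.
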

\begin{proof}
    We proceed by induction on $i$.
		For $i=1$, we have
		$$\Prob(\mathbf{w}^{-1}_1=j) =
    \begin{cases}
    (1-p)\ssp p^{j-1}, & \textnormal{ if } 1\leq j<n;\\
		p^{n-1}, & \textnormal{ if } j=n,\\
    \end{cases}
    $$
    which coincides with the proposed formula. Now let $i>1$.
		Let $X_i^{(n)}$ denote the exit position of the pipe started at $(i,0)$,
		assuming that all squares on the diagonal $i+j=n+1$ are elbows.
		By considering where this pipe crosses the first row,
		we can represent
		$X_i^{(n)}=X_{i-1}^{(n-1)} + Y$,
		where $Y$ depends on $X_{i-1}^{(n-1)}$ and has the following
		conditional distribution
		given $X_{i-1}^{(n-1)}$:
		\begin{equation*}
			\Prob\bigl(Y=m\mid X_{i-1}^{(n-1)}\bigr) =
			\begin{cases}
				p,& m=0;\\
				(1-p)^2 \ssp p^{m-1},& 1\le m< n-X_{i-1}^{(n-1)};\\
				(1-p) \ssp p^{n-X_{i-1}^{(n-1)}-1},& m=n-X_{i-1}^{(n-1)}.
			\end{cases}
		\end{equation*}
		Since $\mathbf{w}^{-1}_i=X_i^{(n)}$,
		we can write
    \begin{equation}
			\label{eq:pipe_dream_main_recursion}
			\begin{split}
					&\Prob(X_{i-1}^{(n-1)}=j)\cdot p+\sum_{r< j} \Prob(X_{i-1}^{(n-1)}=r) \cdot
					\Prob(\mathbf{w}^{-1}_i =j)
					\\&\hspace{120pt}
					= \begin{cases}
					(1-p)^2 p^{j-r-1}, & j<n;\\
					\sum_{r< n} \Prob(X_{i-1}^{(n-1)}=r)\cdot (1-p) \ssp p^{n-r-1}, &  j=n.
        \end{cases}
			\end{split}
    \end{equation}
		Using the induction hypothesis,
		we know the distribution of $X_{i-1}^{(n-1)}$
		in terms of $F_1$ and $F_2$.
		Therefore, it remains to verify the recursion
		\eqref{eq:pipe_dream_main_recursion}
		for the answer \eqref{eq:pipe_dream_distribution_natural}.
		This is easily done by grouping binomial coefficients,
		summing over $r$, and using the hockey-stick identity.
\end{proof}

\subsection{Estimates from random walks}
\label{sub:random_walks_estimates}

We are interested in the asymptotics of the (expected) number of inversions
$\inv(\mathbf{w})$ in the random permutation $\mathbf{w}$ obtained from the non-reduced pipe dream model.
First, we use the following known bound in terms of a
\emph{displacement} (\emph{disarray})
$\operatorname{\mathrm{dis}}(w) \coloneqq \sum_{i=1}^n |i-w_i|
=
\sum_{i=1}^n|i-w_i^{-1}|$:

\begin{proposition}[Diaconis--Graham~\cite{diaconis1977spearman}]\label{prop:diaconis graham inversions}
	For any permutation $w$, we have
	 \begin{equation}
		 \label{eq:diaconis_graham}
			\frac12\operatorname{\mathrm{dis}}(w) \leq \inv(w) \leq \operatorname{\mathrm{dis}}(w).
	 \end{equation}
\end{proposition}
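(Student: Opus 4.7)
The plan is to deduce both inequalities from the identity
\[
w_i - i \;=\; a_i(w) - b_i(w),\qquad a_i(w) := \#\{j>i:w_j<w_i\},\quad b_i(w) := \#\{j<i:w_j>w_i\},
\]
which follows by writing $w_i = \#\{j:w_j\le w_i\}$ and $i = \#\{j:j\le i\}$ and splitting each count according to whether $j\le i$ or $j>i$. Since each inversion contributes once to some $a_i$-term and once to some $b_j$-term, we have $\sum_i a_i(w) = \sum_i b_i(w) = \inv(w)$, and therefore
\[
\operatorname{dis}(w) \;=\; \sum_i |a_i - b_i|,\qquad 2\inv(w) \;=\; \sum_i(a_i + b_i).
\]

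The lower bound $\operatorname{dis}(w)\le 2\inv(w)$ is then immediate from the pointwise estimate $|a_i - b_i|\le a_i + b_i$. Moreover, using $a_i + b_i - |a_i-b_i| = 2\min(a_i,b_i)$, the same identities yield the exact defect
\[
2\inv(w) - \operatorname{dis}(w) \;=\; 2\sum_i \min\bigl(a_i(w),b_i(w)\bigr),
\]
so the upper bound $\inv(w)\le \operatorname{dis}(w)$ is equivalent to the nontrivial estimate $\inv(w) \ge 2\sum_i \min(a_i,b_i)$. Combinatorially, $\min(a_i,b_i)$ is the maximal number of disjoint pairs $\bigl((j,i),(i,k)\bigr)$ of inversions sharing the middle index $i$, or equivalently the number of $321$-patterns with $i$ in the middle that can be realized by disjoint flanking pairs.

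To prove the upper bound, I plan to combine the previous identity with the complementary \emph{threshold identity}
\[
\operatorname{dis}(w) \;=\; 2\sum_{k=1}^{n-1} m_k,\qquad m_k := \#\{i\le k:w_i>k\} \;=\; \#\{j>k:w_j\le k\},
\]
obtained by writing each $|w_i - i|$ as the number of integer thresholds it crosses and swapping summation order. For every $k$, each of the $m_k^2$ pairs $(i,j)$ with $i\le k<j$ and $w_j\le k<w_i$ is automatically an inversion, accounting (with multiplicities) for all \emph{straddling} inversions (those with $w_i>i$ and $w_j<j$). The plan is to build a charging scheme distributing the $\sum_k 2m_k = \operatorname{dis}(w)$ unit tokens among the inversions of $w$ so that each inversion receives total weight at least $1$; this yields $\inv(w)\le \operatorname{dis}(w)$.

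The main obstacle is the upper bound: no pointwise estimate can suffice, since $|a_i - b_i|$ can vanish while $a_i + b_i > 0$, so genuinely global structure must be exploited. The delicate step in the charging scheme is handling the \emph{nested} inversions $(i,j)$ with $w_i \le i$ or $w_j \ge j$, which do not cross any threshold in the same transparent way as straddling inversions, and must be absorbed by tokens at nearby thresholds without over-charging any single one.
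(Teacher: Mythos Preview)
The paper does not prove this proposition at all: it is stated with attribution to Diaconis--Graham and used as a black box. So there is no in-paper argument to compare against.

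On the merits of your proposal: the lower bound is complete and correct. The identity $w_i-i=a_i-b_i$ and the bookkeeping $\sum_i a_i=\sum_i b_i=\inv(w)$ immediately give $\operatorname{dis}(w)=\sum_i|a_i-b_i|\le\sum_i(a_i+b_i)=2\inv(w)$, which is one of the standard proofs of that half.

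The upper bound, however, is only a plan. You correctly reduce $\inv(w)\le\operatorname{dis}(w)$ to $2\sum_i\min(a_i,b_i)\le\inv(w)$ and set up the threshold identity $\operatorname{dis}(w)=2\sum_k m_k$, but the promised charging scheme is never built. You yourself flag the genuine difficulty: inversions $(i,j)$ with $w_i\le i$ or $w_j\ge j$ straddle no threshold, and you give no mechanism for where their weight comes from or why no threshold is over-charged. As written, this is a gap, not a proof. If you want a self-contained argument, one clean route for the upper bound is the transposition/bubble-sort approach: any adjacent transposition that removes an inversion changes $\operatorname{dis}$ by $0$ or $-2$, so sorting $w$ to the identity in $\inv(w)$ steps decreases $\operatorname{dis}$ by at most $2\inv(w)$, hence $\operatorname{dis}(w)\le 2\inv(w)$; wait, that is again the lower bound. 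For the upper bound one instead shows each such swap decreases $\inv$ by $1$ while decreasing $\operatorname{dis}$ by at most $2$ and at least $0$, and then argues more carefully, or one invokes the original Diaconis--Graham argument. Either way, your proposal stops short of an actual proof of the harder inequality.
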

Taking the expectation of \eqref{eq:diaconis_graham} and using linearity,
we see that it suffices to understand the asymptotic behavior
of $\operatorname{\mathbb{E}}\bigl[\bigl|i-\mathbf{w}_i^{-1}\bigr|\bigr]$ for all $i$.
In principle, the exact distribution of $\mathbf{w}_i^{-1}$
from \Cref{prop:pipe_dream_distribution_natural}
should allow us to compute this expectation.
However, this is not straightforward.
Instead, we use the random walk interpretation of each individual
pipe:

\begin{proposition}\label{prop:abs_value_sigma_i}
	Fix $\varepsilon>0$ and let $i<(1-\varepsilon)n$. Then
	\begin{equation*}
		\operatorname{\mathbb{E}}
		\bigl[\bigl|i-\mathbf{w}_i^{-1}\bigr|\bigr]
		=
		\sqrt{\frac{4i}{\pi} \ssp \frac{p}{1-p}
		} + O(1),\qquad n\to\infty.
	\end{equation*}
\end{proposition}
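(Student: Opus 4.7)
The plan is to realize the pipe's trajectory as a random walk with i.i.d.\ geometric run lengths, separate the effect of the diagonal boundary, and invoke a renewal-type central limit theorem. Since each tile is an independent Bernoulli-$(1-p)$ elbow and no tile is visited twice, in the \emph{unbounded} model where the pipe ignores the diagonal boundary and walks freely until it reaches row $0$, the successive east- and north-run lengths are $E_k+1$ and $N_k+1$ with $E_k,N_k\sim\mathrm{Geom}(1-p)$ independent. Writing $\mathbf{w}_i^{(\infty)}$ for the unbounded exit column,
\begin{equation*}
	\mathbf{w}_i^{(\infty)} = \sigma + \sum_{k=1}^{\sigma} E_k,\qquad
	\sigma \coloneqq \min\Bigl\{k\colon \sum_{\ell=1}^{k}(N_\ell+1)\ge i\Bigr\},
\end{equation*}
and $\mathbf{w}_i^{(\infty)}-i = \sum_{k=1}^{\sigma}(E_k-N_k)+U_\sigma$, where $U_\sigma=\sum_{k=1}^{\sigma}(N_k+1)-i\in[0,N_\sigma]$ is the bounded overshoot.

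First I would couple the true exit column $\mathbf{w}_i^{-1}$ with $\mathbf{w}_i^{(\infty)}$. Along the trajectory, the antidiagonal coordinate $r+c$ performs a $\pm 1$ walk with Markov-correlated steps starting at $i$; its running maximum is $i+O(\sqrt i)$ with high probability, so for $i\le(1-\varepsilon)n$ a Hoeffding / reflection argument (applied through the run-length representation $\sum_{k}(E_k-N_k)$) yields $\Prob(\text{the pipe meets the boundary before exiting})\le e^{-c\ssp\varepsilon^2 n}$ for some $c=c(p)>0$. Since $|i-\mathbf{w}_i^{-1}|\le n$ always, the contribution of this event to $\operatorname{\mathbb{E}}|i-\mathbf{w}_i^{-1}|$ is $O(n\ssp e^{-c\varepsilon^2 n})=o(1)$, and on its complement $\mathbf{w}_i^{-1}=\mathbf{w}_i^{(\infty)}$; therefore it suffices to compute $\operatorname{\mathbb{E}}|i-\mathbf{w}_i^{(\infty)}|$ up to an $O(1)$ error.

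For the unbounded model, $\sigma$ is a stopping time with respect to the $N_k$'s and is independent of the $E_k$'s. Wald's identity gives $\operatorname{\mathbb{E}}[\sigma]=(1-p)(i+O(1))$, and combining the conditional-variance formula with the renewal-CLT estimate $\operatorname{Var}(\sigma)\sim p(1-p)\ssp i$ yields
\begin{equation*}
	\operatorname{Var}\bigl(\mathbf{w}_i^{(\infty)}\bigr)
	= \tfrac{1}{(1-p)^2}\operatorname{Var}(\sigma)+\tfrac{p}{(1-p)^2}\operatorname{\mathbb{E}}[\sigma]
	= \frac{2ip}{1-p}+O(1).
\end{equation*}
A joint CLT for $(\sigma,\sum_{k\le\sigma}E_k)$ with Berry--Esseen rate of order $i^{-1/2}$ (available since $E_k,N_k$ have all moments) shows that $(\mathbf{w}_i^{(\infty)}-i)/\sqrt{2ip/(1-p)}\Rightarrow N(0,1)$ at that rate in Kolmogorov distance. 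Combined with uniform exponential-moment tail bounds on $(\mathbf{w}_i^{(\infty)}-i)/\sqrt i$ (inherited from the exponential moments of $E_k,N_k$), this upgrades convergence in distribution to convergence of the first absolute moment at the same rate, producing
\begin{equation*}
	\operatorname{\mathbb{E}}\bigl|\mathbf{w}_i^{(\infty)}-i\bigr|
	= \sqrt{\tfrac{2ip}{1-p}}\cdot \operatorname{\mathbb{E}}|Z|+O(1)
	= \sqrt{\tfrac{4i}{\pi}\cdot \tfrac{p}{1-p}}+O(1),
\end{equation*}
with $Z\sim N(0,1)$, so that $\operatorname{\mathbb{E}}|Z|=\sqrt{2/\pi}$; together with the coupling step this proves the proposition. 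The main obstacle is securing the \emph{quantitative} $O(1)$ error (rather than $o(\sqrt i)$) in this last step, which requires the Berry--Esseen rate and uniform tail control rather than a plain weak CLT, as well as a careful handling of the mild dependence between $\sigma$ and $\sum_{k\le\sigma}E_k$ resolved by conditioning on $\sigma$.
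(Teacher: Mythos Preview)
Your argument is correct, but the paper's proof uses a simpler decomposition that avoids most of the machinery you invoke. Instead of tracking alternating east/north run lengths (which forces you through the stopping time $\sigma$, Wald's identity, and a renewal-CLT for $\operatorname{Var}(\sigma)$), the paper records the column $J_k$ of the pipe in each row $k=i,i-1,\ldots,0$ and observes that the row-to-row displacements $J_k-J_{k+1}$ are, away from the diagonal, i.i.d.\ with law $p\,\mathbf{1}_{m=0}+(1-p)^2 p^{m-1}\,\mathbf{1}_{m\ge 1}$, mean $1$ and variance $2p/(1-p)$. This realizes $\mathbf{w}_i^{-1}-i$ directly as a centered sum of a \emph{deterministic} number ($\approx i$) of i.i.d.\ terms, so no stopping time, no conditional-variance formula, and no renewal asymptotics for $\sigma$ are needed; the boundary coupling is handled by the same large-deviation bound you use. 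Both routes land on the same variance $2ip/(1-p)$; yours is more elaborate but has the virtue of making the east/north symmetry explicit, while the paper's buys a straight reduction to the classical i.i.d.\ setting. On the $O(1)$ error you are actually more careful than the paper, which simply invokes the CLT: you are right that a plain weak CLT gives only $o(\sqrt{i})$, and a quantitative rate (Berry--Esseen plus tails, or a $W_1$-rate CLT) is what is needed for the $O(1)$.
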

\begin{proof}
	Fix $i$. Let $J_i,J_{i-1},\ldots,J_1,J_0 $
	be the (random) column coordinates of the positions of the $i$-th pipe
	in row $i,i-1,\ldots,1,0$.
	In particular, $J_i=0$ and $J_0=\mathbf{w}^{-1}_i$.
	For example, in the pipe dream in \Cref{fig:pipe_dream}, left, we have
	$J_3=0$, $J_2=3$, $J_1=3$, and $J_0=5$ for $i=3$.

	We aim to upper bound the probability that
	the pipe reaches the diagonal $i+j=n$ first time at row $k_0$, that is,
	$J_{k_0}=n-k_0$. Denote this event by $R_{k_0}$,
	and observe that $\Prob(R_{a})\ge \Prob(R_{b})$ for $a<b$.
	Before the pipe reaches the diagonal,
	the horizontal displacements $J_{k}-J_{k+1}$, $k_0<k<i-1$, are iid (independent
	identically distributed), and are distributed as
	\begin{equation}
		\Prob\left( \{J_{k}-J_{k+1}=m\}\cap R_{k_0} \right)=p\ssp \mathbf{1}_{m=0}
		+
		(1-p)^2p^{m-1}\ssp \mathbf{1}_{m\geq 1},\qquad m\in \mathbb{Z}_{\ge 0},
		\quad k_0<k<i-1.
	\end{equation}
	One readily checks that the
	expectation and variance of the random variable in the right-hand side are equal to $1$
	and $\frac{2p}{1-p}$, respectively.
	We can represent
	\begin{equation*}
		J_{k_0}=
		J_{k_0}-J_{k_0+1}+J_{i-1}+\sum_{k=k_0+1}^{i-2}\left( J_k-J_{k+1} \right).
	\end{equation*}
	For the event $R_{k_0}$ to occur, the above sum must be
	equal to $n-k_0=i-k_0+(n-i)>i-k_0+n \varepsilon$.
	Since the expectations of the iid summands are equal to $1$,
	by a standard large deviation estimate, this probability
	is upper bounded by $e^{-c_\varepsilon n}$ for suitable
	$c_\varepsilon(k_0)>0$.\footnote{Note that if $i-k_0\ll n$, the actual bound is even stronger than this, but we do not need this precision.}
	By monotonicity, we can choose these constants such that $c_\varepsilon(k_0)\ge C_\varepsilon\coloneqq c_\varepsilon(0)>0$
	for all $k_0$.
	Taking the union over all $k_0$, we see that the probability
	that a pipe started at $i$ ever reaches the diagonal
	is exponentially small.

	Therefore, $J_0=\mathbf{w}^{-1}_i$
	is close (with exponentially small error in probability)
	to a sum of $i$ iid random variables
	$J_k-J_{k+1}$.
	By the Central Limit Theorem,
	this sum is approximately normal with mean $i$ and variance
	$2ip/(1-p)$. Subtracting $i$ from $\mathbf{w}^{-1}_i$, taking the absolute value,
	and using the fact that $\operatorname{\mathbb{E}}\left( |Z| \right)=\sqrt{2/\pi}$
	for standard normal $Z$, we obtain the result.
\end{proof}

We can now bound the expected number of inversions in $\mathbf{w}$ on the scale
$n^{3/2}$:
\begin{theorem}
	\label{thm:random_permutation_bounds}
	Fix $p\in[0,1)$.
	For every $\varepsilon
		>0$ and sufficiently large $n$, we have
		\begin{equation}
			\label{eq:non_red_bounds}
			\frac{2}{3\sqrt{\pi}} (1-\varepsilon)\ssp  n^{3/2} \sqrt{\frac{p}{1-p}}\leq
			\operatorname{\mathbb{E}}[\inv(\mathbf{w})] \leq
			\frac{4}{3\sqrt{\pi}} (1+\varepsilon) \ssp n^{3/2} \sqrt{\frac{p}{1-p}}.
		\end{equation}
\end{theorem}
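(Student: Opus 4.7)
The plan is to apply Diaconis--Graham (\Cref{prop:diaconis graham inversions}) to reduce the problem to two-sided estimates of
\begin{equation*}
	\operatorname{\mathbb{E}}[\operatorname{\mathrm{dis}}(\mathbf{w})]=\sum_{i=1}^{n}\operatorname{\mathbb{E}}\bigl[|i-\mathbf{w}_i^{-1}|\bigr],
\end{equation*}
and then bound each inequality separately: \Cref{prop:abs_value_sigma_i} directly for the lower bound, and a stochastic-dominance coupling by an iid random walk for the upper bound.

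For the lower bound, I would use $\operatorname{\mathbb{E}}[\inv(\mathbf{w})]\ge\tfrac{1}{2}\operatorname{\mathbb{E}}[\operatorname{\mathrm{dis}}(\mathbf{w})]$, restrict the sum to $i<(1-\varepsilon)n$ (dropping only non-negative terms), and substitute the asymptotic $\operatorname{\mathbb{E}}[|i-\mathbf{w}_i^{-1}|]=\sqrt{4ip/(\pi(1-p))}+O(1)$ from \Cref{prop:abs_value_sigma_i}. Using $\sum_{i=1}^{N}\sqrt{i}=\tfrac{2}{3}N^{3/2}(1+o(1))$, the leading contribution equals $\tfrac{4}{3\sqrt{\pi}}(1-\varepsilon)^{3/2}n^{3/2}\sqrt{p/(1-p)}$, and since $(1-\varepsilon)^{3/2}\ge 1-2\varepsilon$ while the $O(n)$ remainder is $o(n^{3/2})$, a relabeling of $\varepsilon$ produces the claimed lower bound.

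For the upper bound, the identity $\sum_i(\mathbf{w}_i^{-1}-i)=0$ allows me to write
\begin{equation*}
	\operatorname{\mathrm{dis}}(\mathbf{w})=2\sum_{i=1}^{n}(\mathbf{w}_i^{-1}-i)_+.
\end{equation*}
I would then introduce a stochastic-dominance coupling by extending the pipe dream past the staircase boundary with additional independent Bernoulli$(p)$ tiles. Let $Y_k$ denote the rightward displacement of the pipe in row $k$ in this unconstrained extension and $a_k$ its actual rightward displacement in the bounded model; the boundary half-bumps can only truncate, so $a_k\le Y_k$ pointwise. By translation invariance of the iid tile distribution within each row (so that the conditional law of $Y_k$ given the random entry column $j_k$ equals its marginal law) together with independence across rows, the $Y_k$ are iid with $\Prob(Y_k=0)=p$ and $\Prob(Y_k=m)=(1-p)^2 p^{m-1}$ for $m\ge 1$, giving $\operatorname{\mathbb{E}}[Y_k]=1$ and $\operatorname{Var}(Y_k)=2p/(1-p)$. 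Setting $S_i=\sum_{k=1}^{i}Y_k$, the inequality $(\mathbf{w}_i^{-1}-i)_+\le(S_i-i)_+$ holds pointwise. The classical CLT combined with uniform integrability yields $\operatorname{\mathbb{E}}[(S_i-i)_+]=\sqrt{ip/(\pi(1-p))}(1+o(1))$ as $i\to\infty$. Summing in $i$ and invoking $\sum_{i=1}^{n}\sqrt{i}=\tfrac{2}{3}n^{3/2}(1+o(1))$ yields $\operatorname{\mathbb{E}}[\operatorname{\mathrm{dis}}(\mathbf{w})]\le\tfrac{4}{3\sqrt{\pi}}(1+\varepsilon)n^{3/2}\sqrt{p/(1-p)}$ for sufficiently large $n$, and the upper bound follows from $\operatorname{\mathbb{E}}[\inv(\mathbf{w})]\le\operatorname{\mathbb{E}}[\operatorname{\mathrm{dis}}(\mathbf{w})]$.

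The main obstacle is the stochastic-dominance step: the entry columns $j_k$ are coupled through the earlier trajectory of the pipe, yet one needs the dominating sequence $\{Y_k\}$ to be iid. This is achieved by the extension past the boundary together with row-wise independence of the tile distribution, which makes the marginal law of $Y_k$ independent of $j_k$.
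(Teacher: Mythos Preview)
Your proposal is correct. The lower bound matches the paper's argument verbatim. For the upper bound the paper is much terser: it simply says ``the upper bound follows similarly, this time extending the Riemann sum to $n$,'' without explaining how the indices $i\ge(1-\varepsilon)n$ are handled---\Cref{prop:abs_value_sigma_i} does not cover them, and the trivial bound $|i-\mathbf w_i^{-1}|\le n$ on that range would contribute a term of order $\varepsilon n^2$. Your approach closes this gap cleanly: rewriting $\operatorname{\mathrm{dis}}(\mathbf w)=2\sum_i(\mathbf w_i^{-1}-i)_+$ reduces the problem to bounding only rightward excursions, and these are pointwise dominated by an unconstrained random walk once the tile field is extended past the diagonal. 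The observation that the dominating increments $Y_k$ are genuinely independent---because the entry column $c_k$ is measurable with respect to tiles in rows $>k$, while the law of $Y_k$ given $c_k$ depends only on row-$k$ tiles and is translation-invariant---is exactly what makes the coupling work. One minor caveat: the very first increment (row $k=i$, pipe entering from the left) has a shifted-geometric law rather than the one you state; this $O(1)$ discrepancy does not affect the $n^{3/2}$ asymptotics.
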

\begin{proof}
		For $p=0$, the permutation is identity with probability $1$,
		so the bounds
		\eqref{eq:non_red_bounds} hold trivially.

    Let $p>0$ and let us calculate $\operatorname{\mathbb{E}}[\operatorname{\mathrm{dis}}(\mathbf{w})]$ to apply \Cref{prop:diaconis graham inversions}.
		With \Cref{prop:abs_value_sigma_i} for $i< (1-\varepsilon)^{2/3}n$, we obtain
		for large $n$:
    \begin{align*}
			\operatorname{\mathbb{E}}[\operatorname{\mathrm{dis}}(\mathbf{w})] =\sum_{i=1}^n \operatorname{\mathbb{E}}\bigl[\bigl|\mathbf{w}_i^{-1}-i\bigr|\bigr] \geq \sum_{i=1}^{(1-\varepsilon)^{2/3}n} \frac{2}{\sqrt{\pi}} \sqrt{\frac{p}{1-p}} \sqrt{i} \geq \frac{2}{\sqrt{\pi}} \sqrt{\frac{p}{1-p}} \frac{2}{3} n\sqrt{n}\ssp (1-\varepsilon),
    \end{align*}
    where the last inequality is a simple Riemann sum approximation. The upper bound follows similarly, this time extending the Riemann sum to $n$.
\end{proof}

Numerical simulations suggest the following behavior of the
number of inversions:
\begin{conjecture}
	\label{conj:natural_model_number_of_inversions}
	For any $p\in[0,1)$, we have convergence in probability:
	\begin{equation}
		\label{eq:p_fit}
		\lim_{n\to\infty} \frac{\inv(\mathbf{w})}{n^{3/2}} = \varkappa\sqrt{\frac{p}{1-p}}.
	\end{equation}
\end{conjecture}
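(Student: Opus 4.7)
The plan is to write
\begin{equation*}
\inv(\mathbf{w})=\sum_{1\le i<j\le n}\mathbf{1}_{\{\mathbf{w}^{-1}_i>\mathbf{w}^{-1}_j\}}
\end{equation*}
and establish convergence in probability in two stages: (i) a refined mean computation that identifies the constant $\varkappa=\tfrac{2\sqrt 2}{3\sqrt\pi}$, and (ii) a variance estimate forcing concentration. Note that the Diaconis--Graham inequality used in Theorem \ref{thm:random_permutation_bounds} loses a factor of two, so a direct analysis of pair-inversion probabilities will be required rather than disarray.

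For stage (i), I would extend the random-walk representation from the proof of Proposition \ref{prop:abs_value_sigma_i} to pairs of pipes. Setting $\sigma^2\coloneqq 2p/(1-p)$, each exit position satisfies $\mathbf{w}^{-1}_i=i+\sigma\sqrt i\,Z_i+o(\sqrt i)$ with $Z_i$ asymptotically standard Gaussian. The central technical step is a bivariate CLT: for $i<j$ with $i\to\infty$ and $j-i=O(\sqrt i)$, the pair $(Z_i,Z_j)$ converges jointly to \emph{independent} standard normals. Although two pipes share randomness at tiles they both traverse, once their columns separate by $O(1)$, which typically occurs within $O(1)$ rows, the remaining horizontal-run variables for the two pipes come from disjoint tiles. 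Granting this, $\mathbf{w}^{-1}_j-\mathbf{w}^{-1}_i-(j-i)$ is asymptotically $N(0,\sigma^2(i+j))$ and
\begin{equation*}
\Prob(\mathbf{w}^{-1}_i>\mathbf{w}^{-1}_j)=\bar\Phi\!\left(\tfrac{j-i}{\sigma\sqrt{i+j}}\right)+o(1),
\end{equation*}
where $\bar\Phi$ is the standard normal tail. In the scaling $k=j-i\sim\sigma\sqrt{2i}\,s$, summing over $j$ at fixed $i$ yields $\sigma\sqrt i/\sqrt\pi\cdot(1+o(1))$ by a Riemann-sum approximation together with $\int_0^\infty\bar\Phi(s)\,ds=1/\sqrt{2\pi}$. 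Summing over $i$ then gives
\begin{equation*}
\operatorname{\mathbb{E}}[\inv(\mathbf{w})]\sim \frac{\sigma}{\sqrt\pi}\cdot\frac{2}{3}n^{3/2}=\frac{2\sqrt 2}{3\sqrt\pi}\sqrt{\tfrac{p}{1-p}}\,n^{3/2},
\end{equation*}
which lies strictly between the bounds of Theorem \ref{thm:random_permutation_bounds} and matches the numerical value $\varkappa\approx 0.53$.

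For stage (ii), I would expand $\mathrm{Var}(\inv(\mathbf{w}))$ as a sum over quadruples of covariances of indicators and show it is $o(n^3)$. When the four indices $i<j$ and $k<l$ have all pairwise separations $\omega(\sqrt n)$, a coupling with four independent pipes should make the covariance decay to zero, because the trajectories almost surely share no tiles at scale $\sqrt n$. The number of quadruples failing pairwise separation is $O(n^3)$, but with an average covariance going to zero, the total variance is genuinely $o(n^3)$, which combined with stage (i) gives convergence in probability via Chebyshev.

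The main obstacle is the joint CLT for two pipes, and its extension to the four-pipe independence needed in stage (ii). The coupling between pipes through shared tiles is delicate: the second pipe's trajectory depends on whether the first has already bumped or crossed at a common tile, so one cannot simply treat the two as independent random walks. A promising route is to bound the number of ``collision rows'' where both pipes lie in the same tile, showing it is $O(\log n)$ with high probability, and to verify that outside these rows the two pipes evolve independently; this would give the bivariate Gaussian limit up to negligible corrections. Propagating this control to quadruples of pipes, tight enough to force $\mathrm{Var}(\inv(\mathbf{w}))=o(n^3)$, is likely the heaviest technical step.
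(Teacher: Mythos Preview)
The statement you are attempting to prove is a \emph{conjecture} in the paper, not a theorem: the authors explicitly do not prove it and only support it with the bounds of \Cref{thm:random_permutation_bounds} and numerics. There is therefore no ``paper's own proof'' to compare against. The paper does record (in footnotes) that Defant subsequently settled the conjecture in \cite{defant2024randomsubwordspipedreams} with the exact constant $\varkappa=\dfrac{2\sqrt{2}}{3\sqrt{\pi}}$, and your stage~(i) heuristic recovers precisely this value, which is a good sign.

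That said, your proposal is a proof \emph{outline} with the hard part deferred. The central unproved claim is that for $i<j$ with $j-i=O(\sqrt i)$, the centered exit positions become asymptotically \emph{independent} Gaussians, so that $\mathbf w^{-1}_j-\mathbf w^{-1}_i-(j-i)$ has asymptotic variance $\sigma^2(i+j)$. This is not obvious: when $j-i$ is of order $\sqrt i$ (which is exactly the regime contributing to the leading term), the two pipes typically spend a nontrivial fraction of their lifetimes within $O(\sqrt i)$ of one another, and the number of shared tiles is not automatically $O(\log n)$ as you suggest. Every collision tile couples the two trajectories through a single Bernoulli, and the sign of the induced correlation is not a priori clear; getting the variance of the difference wrong by any constant factor would change $\varkappa$. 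A possible route is to exploit the interchangeability of ``cross'' and ``bump'' for the \emph{unordered} pair of positions, reducing to two genuinely independent walks plus a labelling, but you would need to make this precise.

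Stage~(ii) has the same gap amplified: bounding $\mathrm{Var}(\inv(\mathbf w))=o(n^3)$ requires controlling covariances of inversion indicators for \emph{overlapping} index pairs, and the dominant contribution again comes from indices within $O(\sqrt n)$ of each other, where independence of pipes is exactly what is in question. So the proposal identifies the right answer and the right decomposition, but the core technical ingredient (asymptotic decorrelation of nearby pipes) is asserted rather than proved.
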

Note that the bounds
$2/(3\sqrt\pi)\approx 0.376$ and $4/(3\sqrt\pi)\approx 0.752$
in \Cref{thm:random_permutation_bounds}
are at the same time bounds on $\varkappa$.
Simulations show that $\varkappa$ is close to $0.5$, but is not exactly equal to it.\footnote{After this article was posted to the \texttt{arXiv}, Defant settled this conjecture in \cite{defant2024randomsubwordspipedreams} and showed that $\varkappa=\dfrac{2\sqrt{2}}{3\sqrt{\pi}}$.}

The fact that the number of inversions lives on scale
$n^{3/2}$
implies a trivial permuton limit of~$\mathbf{w}$.
Denote
by $\mathrm{id}$ the
deterministic identity permuton
supported on the diagonal of $[0,1]^2$.

\begin{proposition}
	\label{prop:id_permuton_limit}
	For any fixed $p\in [0,1)$, the random permutations $\mathbf{w}\in S_n$ converge in distribution
	to $\mathrm{id}$ as $n\to\infty$.
\end{proposition}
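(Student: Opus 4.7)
The plan is to use the standard characterization of permuton convergence: the random permutations $\mathbf{w}(n)$ converge in distribution to the identity permuton $\mathrm{id}$ if and only if, for every continuous test function $f \in C([0,1]^2)$,
\begin{equation*}
	\int_{[0,1]^2} f \, d\mu_{\mathbf{w}(n)} \;\longrightarrow\; \int_0^1 f(x,x)\, dx
	\qquad \text{in probability},
\end{equation*}
where $\mu_w$ denotes the permuton associated to $w\in S_n$ (the uniform probability measure on the union of the squares $[(i-1)/n, i/n]\times[(w_i-1)/n, w_i/n]$). The whole argument is driven by the $n^{3/2}$ upper bound on the expected total displacement coming from \Cref{thm:random_permutation_bounds} (equivalently, from \Cref{prop:abs_value_sigma_i}).

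First I would replace the integral against $\mu_{\mathbf{w}(n)}$ by its Riemann-type approximation: by uniform continuity of $f$, one has
\begin{equation*}
	\int f\, d\mu_{\mathbf{w}(n)} \;=\; \frac{1}{n}\sum_{i=1}^{n} f\!\left(\tfrac{i}{n}, \tfrac{\mathbf{w}_i}{n}\right) + O\!\left(\omega_f(1/n)\right),
\end{equation*}
where $\omega_f$ is the modulus of continuity of $f$, and this error tends to $0$ deterministically.

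Next, I would compare this finite sum to the diagonal sum $\frac{1}{n}\sum_{i=1}^n f(i/n, i/n)$. Since $\operatorname{\mathbb{E}}[\operatorname{\mathrm{dis}}(\mathbf{w}(n))] = O(n^{3/2})$ by \Cref{thm:random_permutation_bounds}, Markov's inequality gives $n^{-2}\operatorname{\mathrm{dis}}(\mathbf{w}(n)) \to 0$ in probability. Thus for any fixed $\delta>0$, the number of indices $i$ with $|\mathbf{w}_i - i| \geq \delta n$ is at most $\operatorname{\mathrm{dis}}(\mathbf{w}(n))/(\delta n) = o_P(n)$. For the remaining $n - o_P(n)$ indices, uniform continuity gives $|f(i/n, \mathbf{w}_i/n) - f(i/n, i/n)| \leq \omega_f(\delta)$, while on the exceptional set we use the trivial bound $2\|f\|_\infty$. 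This yields
\begin{equation*}
	\left| \frac{1}{n}\sum_{i=1}^n f\!\left(\tfrac{i}{n},\tfrac{\mathbf{w}_i}{n}\right) - \frac{1}{n}\sum_{i=1}^n f\!\left(\tfrac{i}{n},\tfrac{i}{n}\right) \right| \leq \omega_f(\delta) + 2\|f\|_\infty \cdot \tfrac{\operatorname{\mathrm{dis}}(\mathbf{w}(n))}{\delta n^2},
\end{equation*}
whose right-hand side tends to $\omega_f(\delta)$ in probability, and then to $0$ as $\delta\to 0$. Finally, the diagonal sum $\frac{1}{n}\sum_{i=1}^n f(i/n, i/n)$ converges (deterministically) to $\int_0^1 f(x,x)\, dx$ as a standard Riemann sum.

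There is essentially no obstacle: the entire argument is a soft consequence of the displacement bound from \Cref{thm:random_permutation_bounds}, which provides the $o(n^2)$ control on $\operatorname{\mathrm{dis}}(\mathbf{w}(n))$ needed to collapse the empirical measure onto the diagonal. The only substantive work was already done in \Cref{prop:abs_value_sigma_i,thm:random_permutation_bounds}, so the proposition follows as a short corollary.
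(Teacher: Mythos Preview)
Your proof is correct and takes a genuinely different route from the paper's. Both arguments rest on the same input---the bound $\operatorname{\mathbb{E}}[\operatorname{\mathrm{dis}}(\mathbf{w})]=O(n^{3/2})$ from \Cref{thm:random_permutation_bounds} together with Markov's inequality---but they invoke different characterizations of permuton convergence. You work directly with the weak-convergence definition, testing against continuous functions and showing the empirical measure collapses onto the diagonal via a modulus-of-continuity and exceptional-set argument. The paper instead uses the pattern-frequency characterization: it shows $t(21,\mathbf{w})=\inv(\mathbf{w})/\binom{n}{2}\to 0$ in probability, then appeals to the recursion $t(21,w)=\sum_{\tau\in S_\ell} t(21,\tau)\,t(\tau,w)$ to deduce that every non-identity pattern frequency vanishes, and finally cites \cite[Theorem~2.5]{bassino2019universal}. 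Your approach is arguably more self-contained, avoiding the external theorem and the pattern recursion; the paper's approach is shorter on the page but leans on the combinatorial machinery of \cite{hoppen2013limits,bassino2019universal,grubel2023ranks}. Either is perfectly adequate here.
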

\begin{proof}
	For any $\tau\in S_k$ and $w\in S_n$, $k< n$, denote by
	$t(\tau,w)$
	\cite[Section~3]{grubel2023ranks}
	the relative frequency of
	the pattern $\tau$ in $w$.
	This is simply the number of times the pattern $\tau$ appears in $w$,
	divided by $\binom nk$.
	In particular, $t( 21, w)=\frac{\inv(w)}{\binom n2}$.
	We have for any $2<\ell < n$
	\cite[(9)]{grubel2023ranks}:
	\begin{equation}
		\label{eq:pattern_recursion_proof}
		t(21, w) = \sum_{\tau\in S_\ell}t(21, \tau)\ssp t(\tau, w).
	\end{equation}
	The term in the sum with $\tau=\mathrm{id}$ vanishes since $t(21, \mathrm{id})=0$.
	For our random permutations
	$\mathbf{w}\in S_n$, the left-hand side of \eqref{eq:pattern_recursion_proof}
	converges to zero in probability by \Cref{prop:abs_value_sigma_i}
	and Markov inequality:
	\begin{equation*}
		\Prob\left(
			\inv(\mathbf{w}) \ge \delta n^{2}
		\right)\le C_\delta\ssp n^{-\frac{1}{2}}\to 0,\qquad n\to\infty
	\end{equation*}
	for all $\delta>0$.
	For a fixed $\ell$, this implies that
	$t(\tau,\mathbf{w})\to0$,
	$n\to\infty$,
	in probability
	for all $\mathrm{id}\ne \tau\in S_\ell$, $\ell\ge3$. We see that all relative frequencies
	converge to zero except for $t(\mathrm{id}, \mathbf{w})\to 1$.
	Therefore,
	by
	\cite[Theorem~2.5]{bassino2019universal},
	we get the result.
\end{proof}

This completes the proof of \Cref{thm:non_reduced_intro} from the Introduction.

\section{Maximal principal specializations of Grothendieck polynomials}
\label{sec:maximal_specializations_Grothendieck}

\subsection{Grothendieck polynomials via bumpless pipe dreams}
\label{sub:square_ice}

In this subsection, we outline another
combinatorial model for Schubert and Grothendieck polynomials
based on bumpless pipe dreams. Some of these definitions and results
go back to
\cite{Lascoux02ice}, and are described in detail in
\cite{Weigandt2020_bumpless} and \cite{LamLeeShimozono}.

A \emph{bumpless pipe dream} is a
tiling $D$ of the $n\times n$ square
with six types of tiles:
\begin{equation}
	\label{eq:bumpless_tiles}
\emph{SE bump}  \,\, \raisebox{-2pt}{\includegraphics[scale=0.4]{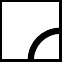}}\ssp, \quad \emph{NW bump} \,\, \raisebox{-2pt}{\includegraphics[scale=0.4]{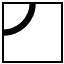}}\ssp, \quad \emph{cross}  \,\, \raisebox{-2pt}{\includegraphics[scale=0.4]{cross}}\ssp, \quad
\emph{empty}  \,\, \raisebox{-2pt}{\includegraphics[scale=0.4]{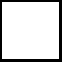}}\ssp, \quad
\emph{horizontal}   \,\, \raisebox{-2pt}{\includegraphics[scale=0.4]{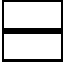}}\ssp, \quad \emph{vertical}  \,\, \raisebox{-2pt}{\includegraphics[scale=0.4]{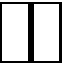}}\ssp.
\end{equation}
These tiles form a set of pipes labeled $1,\ldots,n$ going
from the bottom boundary (left to right) to the right
boundary. See \Cref{fig:bumpless_pipe_dream}, left, for an illustration.
We denote the set of all bumpless pipe dreams of size $n$ by $\BPD(n)$.

\begin{remark}
	\label{rmk:bumpless_pipe_dreams_six_vertex_ASMs}
	Bumpless pipe dreams are the same as
	configurations
	of the six-vertex model in the $n\times n$ square
	with domain wall boundary conditions (see \cite[Section
	3]{Weigandt2020_bumpless} or \cite{Lascoux02ice}).
	We refer to \cite[Section~2]{ZinnJustin20096Vertex}
	or \cite{reshetikhin2010lectures} for background and many properties
	on the six-vertex model. This object is also sometimes called
	the \emph{oscullating lattice paths} model \cite{Behrend2008}.

	The same configurations can be identified with
	alternating sign matrices (ASMs) of size $n$,
	see \cite{Bressoud1999} for a detailed exposition,
	and \cite{kuperberg1996another}, \cite[Section~2.5.6]{ZinnJustin20096Vertex} for applications
	of the six-vertex model to the enumeration of ASMs.
\end{remark}

\begin{figure}
  \centering
  \includegraphics{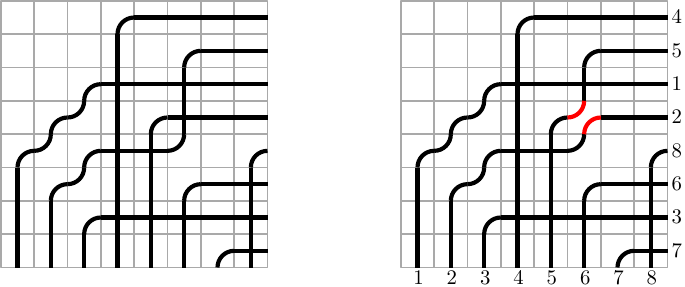}
	\caption{\textbf{Left}: Example of a (non reduced)
	bumpless pipe dream $D$. \textbf{Right}: The associated
	reduced bumpless pipe dream $D'$ of $D$.
	In this case, $D'$ is obtained by
	ignoring the second crossing of the pipes $2$ and $5$.
	The permutation is $w(D)=w(D')=45128637$.}
	\label{fig:bumpless_pipe_dream}
\end{figure}

\begin{definition}[{From bumpless pipe dreams to permutations; see \cite[Section~2.3]{Weigandt2020_bumpless}}]
	\label{def:bumpless_pipe_dream_permutation}
	A permutation $w(D)$ is associated to a bumpless pipe dream $D$
	by tracing the pipes from left to right, starting from the bottom boundary,
	and ignoring the second and subsequent crossings of each pair of pipes.
	See \Cref{fig:bumpless_pipe_dream}, right, for a reading of the
	bumpless pipe dream in the left part of the figure
	which leads to $w(D)$. The local rules for labeling the
	tiles in a bumpless pipe dream
	(which produce the permutation $w(D)$)
	are given in
	\Cref{fig:rules_label_bumpless}.
	Denote by $\BPD(w)\subset\BPD(n)$ the set of bumpless pipe dreams
	whose associated permutation is $w$.
\end{definition}

\begin{figure}
    \centering
    \includegraphics{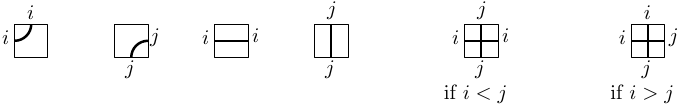}
    \caption{Labeling tiles in a bumpless pipe dream
		to read the associated permutation $w(D)$.}
		\label{fig:rules_label_bumpless}
\end{figure}

\begin{definition}
	A bumpless pipe dream is called \emph{reduced} if no two pipes
	cross twice.
	Denote the set of reduced bumpless pipe dreams of size $n$ by $\RBPD(n)$,
	and by $\RBPD(w)\subset \RBPD(n)$ the subset
	whose associated permutation is $w$.
\end{definition}

\begin{remark}
\label{rmk:Demazure_product_bpds}
	Alternatively, just as in pipe dreams (see \Cref{rmk:Demazure_product}), the permutation $w(D)$
	from a non-reduced bumpless pipe dream
	can be defined using the Demazure product, see \cite[\S 2.3]{Weigandt2020_bumpless} for details.
\end{remark}

For a bumpless pipe dream $D$ (reduced or not), denote by
$\NWbump(D)$ and $\emptytile(D)$ the set of NW bumps and empty tiles in $D$, respectively
(see \eqref{eq:bumpless_tiles} for the notation).
For a tile in a bumpless pipe dream, we denote its
horizontal and vertical coordinates by $(k,l)$, $1\le k,l\le n$.
Here $k$ increases from left to right, and $l$ increases from top to bottom.

\begin{theorem}[{\cite[Thm. 1.1]{Weigandt2020_bumpless}}] \label{thm:GrothPipeDreams_weigandt}
	For any permutation $w\in S_n$, we have
	\begin{equation} \label{eq: groth in terms of bpd}
		\mathfrak{G}^\beta_w(x_1,\ldots,x_n ) = \beta^{-\ell(w)} \sum_{D \in \BPD(w)} \prod_{(k,l)\in \emptytile(D)} \beta x_k \prod_{(k,l)\in \NWbump(D)} (1+\beta x_k).
	\end{equation}
	In particular, for $\beta=0$, we have
	for the Schubert polynomials:
	\begin{equation} \label{eq: schubs in terms of rbpd}
		\mathfrak{S}_w(x_1,\ldots,x_n ) = \sum_{D\in \RBPD(w)} \prod_{(k,l)\in \emptytile(D)} x_k.
	\end{equation}
\end{theorem}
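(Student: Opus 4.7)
My plan is to prove the identity \eqref{eq: groth in terms of bpd} by induction on the co-length $\ell(w_0)-\ell(w)$, using the recursive characterization of $\mathfrak{G}_w^\beta$ via the isobaric divided difference operators $\pi_i$ recalled in \Cref{sub:Grothendieck_polynomials_intro}. Write $\mathcal{G}_w(\mathbf{x})$ for the right-hand side of \eqref{eq: groth in terms of bpd}. Because the family $\{\mathfrak{G}_w^\beta\}_{w\in S_n}$ is uniquely determined by its value at $w_0$ together with the recursion $\mathfrak{G}_w^\beta=\pi_i\mathfrak{G}_{ws_i}^\beta$ whenever $\ell(ws_i)=\ell(w)+1$, it suffices to check (i) the base case $\mathcal{G}_{w_0}=x_1^{n-1}x_2^{n-2}\cdots x_{n-1}$, and (ii) the recursion $\mathcal{G}_w=\pi_i\mathcal{G}_{ws_i}$ under the same length condition.

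For the base case, I would enumerate $\BPD(w_0)$ explicitly: its unique reduced element has a staircase structure, and non-reduced elements arise by inserting additional crosses between two pipes that already crossed, which in the BPD picture corresponds to creating an NW bump together with a matched SE bump and possibly some empty tiles. Grouping BPDs in $\BPD(w_0)$ according to their set of NW bumps and empty tiles, I would expand the product $\prod_k(1+\beta x_k)^{m_k}$ column by column and show that the contributions telescope, leaving after the prefactor $\beta^{-\binom{n}{2}}$ exactly the desired monomial $x_1^{n-1}\cdots x_{n-1}$.

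The inductive step is the technical heart of the proof. Assuming the formula for $ws_i$ with $\ell(ws_i)=\ell(w)+1$, I would introduce a local combinatorial operation --- a \emph{$K$-theoretic droplet} move --- that takes each $D'\in\BPD(ws_i)$, locates a canonical configuration in the two adjacent rows $i$ and $i+1$, and produces a controlled family of BPDs for $w$ (one reduced, plus non-reduced variants obtained by leaving empty tiles or NW bumps in the affected region). I would then verify that the sum of weights of this family equals $\pi_i$ applied to the weight of $D'$, making the inductive step a local identity supported in two consecutive rows of the square.

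The main obstacle is matching the polynomial $\beta$-dependence on both sides: the operator $\pi_i$ introduces factors $(1-\beta x_i)$ and $(1-\beta x_{i+1})$ in its numerator and then divides by $x_i-x_{i+1}$, while the BPD side only contains factors of the form $\beta x_k$ (from empty tiles) and $(1+\beta x_k)$ (from NW bumps), with the opposite sign convention flagged in \Cref{sub:Grothendieck_polynomials_intro}. Keeping these expansions in correspondence across all strata of non-reduced BPDs requires a careful stratum-by-stratum case analysis of the $K$-droplet. A cleaner alternative I would fall back on is to interpret both sides as partition functions of a free-fermion six-vertex model (\Cref{rmk:bumpless_pipe_dreams_six_vertex_ASMs}) and use the Yang--Baxter equation to shuffle spectral parameters across the lattice, reducing the inductive step to a purely local vertex identity at the cost of importing the integrable lattice-model framework.
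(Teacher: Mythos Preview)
The paper does not prove this theorem: it is quoted verbatim from \cite[Thm.~1.1]{Weigandt2020_bumpless} and used as a black box (see the attribution in the theorem heading and the absence of any proof environment following it). So there is nothing to compare your argument against in this paper.

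As for your outline itself, the overall architecture is standard and sound: check the base case $w_0$ and then propagate via $\pi_i$. But what you have written is a plan, not a proof. The base case is not as easy as you suggest: $\BPD(w_0)$ contains many non-reduced elements, and your ``telescope'' assertion would need an explicit bijection or sign-reversing involution to be justified. In the inductive step, the real content is the construction and well-definedness of your $K$-droplet move and the verification that it intertwines $\pi_i$ with the BPD weight; you have only asserted that such a move exists. Weigandt's actual proof goes through a different route (via alternating sign matrices and compatible sequences), and the Yang--Baxter alternative you mention is closer to the approach of Lascoux \cite{Lascoux02ice} and the lattice-model papers \cite{BrubakerALCO,BuciumasScrimshaw}; either would require substantial additional work beyond what you have sketched.
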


\begin{corollary} \label{cor:partition function Grothendieck}
	Recall the notation \eqref{eq:v_n_u_n_notation_intro}.
	We have
	\begin{align}
	v_n(\beta) &=   \sum_{D \in \PD(n)} \beta^{\#\cross(D)-\ell(w(D))}, \label{eq:Groth pd partition function}\\
			&=
			\sum_{P \in \BPD(n)} \beta^{\#\emptytile(P)-\ell(w(P))}(1+\beta)^{\# \NWbump(P)},  \label{eq:Groth bpd partition function}
	\end{align}
	where $w(D)$ and $w(P)$ are the
	permutations associated to the pipe dream $D$ and bumpless pipe dream $P$, respectively.
\end{corollary}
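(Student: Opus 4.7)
The plan is straightforward: both identities follow by specializing the two pipe-dream formulas for Grothendieck polynomials at $x_1=\cdots=x_n=1$ and then summing over $w\in S_n$, using the fact that $\PD(n)$ and $\BPD(n)$ decompose as disjoint unions $\PD(n)=\bigsqcup_{w\in S_n}\PD(w)$ and $\BPD(n)=\bigsqcup_{w\in S_n}\BPD(w)$ according to the associated permutation.

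For the first equality \eqref{eq:Groth pd partition function}, I would start from \Cref{thm:Grothendieck_pipe_dreams_intro}: for any fixed $w$,
\begin{equation*}
	\Upsilon_w(\beta)=\mathfrak{G}_w^\beta(1,\ldots,1)=\sum_{D\in\PD(w)}\beta^{\#\cross(D)-\ell(w)}\wt(D)\bigl|_{x_i=1}=\sum_{D\in\PD(w)}\beta^{\#\cross(D)-\ell(w)}.
\end{equation*}
Since $\ell(w)=\ell(w(D))$ for every $D\in\PD(w)$, summing over $w\in S_n$ and reindexing the double sum as a single sum over $\PD(n)$ yields \eqref{eq:Groth pd partition function} directly.

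For the second equality \eqref{eq:Groth bpd partition function}, I would do the same specialization on Weigandt's formula \eqref{eq: groth in terms of bpd} from \Cref{thm:GrothPipeDreams_weigandt}: setting all $x_k=1$ turns the product $\prod_{(k,l)\in\emptytile(D)}\beta x_k$ into $\beta^{\#\emptytile(D)}$ and turns $\prod_{(k,l)\in\NWbump(D)}(1+\beta x_k)$ into $(1+\beta)^{\#\NWbump(D)}$. Combining the prefactor $\beta^{-\ell(w)}$, this gives
\begin{equation*}
	\Upsilon_w(\beta)=\sum_{D\in\BPD(w)}\beta^{\#\emptytile(D)-\ell(w)}(1+\beta)^{\#\NWbump(D)}.
\end{equation*}
Summing over $w\in S_n$ and using that every $P\in\BPD(n)$ lies in exactly one $\BPD(w)$ (with $\ell(w)=\ell(w(P))$) produces \eqref{eq:Groth bpd partition function}.

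There is essentially no obstacle here: both identities are bookkeeping consequences of previously stated theorems, and the only thing to keep track of is that the exponent $\ell(w)$ appearing in each summand can be written as $\ell(w(D))$ or $\ell(w(P))$ once we view the sum over all pipe dreams (resp.\ bumpless pipe dreams) rather than first fixing $w$. Accordingly, the write-up of the proof should be only a few lines.
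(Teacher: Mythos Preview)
Your proposal is correct and matches the paper's own proof essentially line for line: specialize \Cref{thm:Grothendieck_pipe_dreams_intro} and \Cref{thm:GrothPipeDreams_weigandt} at $x_i=1$, then sum over $w\in S_n$ using the disjoint decompositions of $\PD(n)$ and $\BPD(n)$. There is nothing to add.
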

\begin{proof}
	These identities follow from the formulas for Grothendieck polynomials in \Cref{thm:Grothendieck_pipe_dreams_intro,thm:GrothPipeDreams_weigandt}.
For \eqref{eq:Groth pd partition function}, we immediately get
\begin{align*}
	v_n(\beta) = \sum_w \mathfrak{G}_w^\beta(1^n) = \sum_{D \in PD(n)} \beta^{\#\cross(D)-\ell(w(D))}.
\end{align*}
For \eqref{eq:Groth bpd partition function}, we have
\begin{align*}
	v_n(\beta)&= \sum_w \mathfrak{G}_w^\beta(1^n) \\&=
	\sum_{P \in BPD(n)} \beta^{-\ell(w(P))} \sum_{D \in \BPD(w)} \prod_{(k,l)\in \emptytile(D)} \beta  \prod_{(k,l)\in \NWbump(D)} (1+\beta )\\&=
		\sum_{P \in BPD(n)}\beta^{\#\emptytile(P)-\ell(w(P))}(1+\beta)^{\# \NWbump(P)}.
\end{align*}
This completes the proof.
\end{proof}

\begin{remark}
Gao and Huang  \cite{GaoHuang} gave a bijection between reduced pipe dreams and reduced bumpless pipe dreams that preserves the corresponding weight of the diagrams in the corresponding expansions \eqref{eq: schubs in terms of rpd} and \eqref{eq: schubs in terms of rbpd} of Schubert polynomials. This bijection was recently generalized to the case of pipe dreams and bumpless pipe dreams with marked NW-bumps and Grothendieck polynomials by Huang--Shimozono--Yu \cite{HSY}.
\end{remark}

\begin{remark} \label{rem:size ASMs vs PD}
By the product
formula for the number of alternating sign matrices of size $n$
\cite{zeilberger1996proof}, \cite{kuperberg1996another}, we have
$\#\BPD(n) = \prod_{k=0}^{n-1} \frac{(3k+1)!}{(n+k)!}$. Moreover,
since $\RBPD(n)\subset \BPD(n)$, we have
\[
u_n(0) \leq v_n(0) \leq  \prod_{k=0}^{n-1} \frac{(3k+1)!}{(n+k)!}.
\]
From the asymptotics of the number of AMSs of order $n$
(which is relatively straightforward using the product formula),
we conclude that
\[
	\limsup_{n\to\infty} \frac{1}{n^2}\ssp \log_2 u_n(0)   \leq 0.37.
\]
The asymptotic behavior of the number of bumpless pipe dreams
(six-vertex model configurations)
with periodic boundary is originally due to
Lieb \cite{Lieb1967SixVertex}. The domain wall case
with general six-vertex parameters is treated
by Korepin--Zinn-Justin \cite{korepin2000thermodynamic}.
\end{remark}

\subsection{Asymptotics of Grothendieck random permutations via domino tilings}
\label{sub:square_ice_aztec_asymptotics}

Consider a random bumpless pipe dream $\mathbf{D}\in \BPD(n)$
with the distribution
\begin{equation}
	\label{eq:2_ASM_distribution}
	\Prob_{\textnormal{2-ASM}}(\mathbf{D}=D) = 2^{-\binom{n}{2}}\ssp 2^{\# \NWbump(D)},\qquad
	D\in \BPD(n).
\end{equation}
Via the bijection with Alternating Sign Matrices
(\Cref{rmk:bumpless_pipe_dreams_six_vertex_ASMs}),
this distribution is equivalent to choosing each alternating
sign matrix of size $n$ with probability
proportional to $2^{\#(\textnormal{number of $-1$'s})}$.
We call this distribution the \emph{2-enumerated ASM model},
and it is equivalent (via a two-to-one bijection) to
the \emph{uniform} probability distribution on
domino tilings of the Aztec diamond of size $n-1$.
This connection, as well as
the asymptotics of uniformly random domino tilings,
has been a subject of extensive research in
statistical mechanics and integrable probability,
see, for example,
\cite{cohn-elki-prop-96}, \cite{CohnKenyonPropp2000}, and
\cite{elkies1992alternating}, \cite{zinn2000six}, \cite{ferrari2006domino}.

The results from the previous
\Cref{sub:square_ice} immediately imply:
\begin{corollary}
	\label{cor:Grothendieck_bumpless_matching}
	The Grothendieck random permutation $\mathbf{w}\in S_n$
	with the parameter $p=\frac{1}{2}$
	(see \Cref{sub:random_permutations} for the definition)
	coincides with the permutation $w(\mathbf{D})$
	associated
	(as in \Cref{def:bumpless_pipe_dream_permutation})
	to the
	random bumpless pipe dream $\mathbf{D}\in \BPD(n)$
	with the distribution \eqref{eq:2_ASM_distribution}.
\end{corollary}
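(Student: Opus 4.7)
The plan is to verify that both random permutations assign the same probability to every $w \in S_n$ by computing each distribution as a specialization of the Grothendieck polynomial $\mathfrak{G}_w^{\beta=1}(1^n) = \Upsilon_w(1)$.

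First I would record the pipe-dream side. By \eqref{eq:pipe_dream_specialization_p_Grothendieck_intro} applied at $p=\frac{1}{2}$, together with $\frac{p}{1-p}=1$ in that case, we get directly
\begin{equation*}
	\Prob(\mathbf{w}=w) = 2^{-\binom{n}{2}}\ssp \mathfrak{G}_w^{\beta=1}(1^n) = 2^{-\binom{n}{2}}\ssp \Upsilon_w(1).
\end{equation*}

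Next I would handle the bumpless side. Summing \eqref{eq:2_ASM_distribution} over all $D \in \BPD(w)$ gives
\begin{equation*}
	\Prob_{\textnormal{2-ASM}}(w(\mathbf{D})=w) = 2^{-\binom{n}{2}}\sum_{D\in \BPD(w)} 2^{\#\NWbump(D)}.
\end{equation*}
To evaluate the inner sum, specialize \Cref{thm:GrothPipeDreams_weigandt} at $\beta=1$ and $x_1=\cdots=x_n=1$: the factor $\beta x_k=1$ attached to each empty tile contributes trivially, the factor $1+\beta x_k=2$ attached to each NW bump yields $2^{\#\NWbump(D)}$, and the prefactor $\beta^{-\ell(w)}=1$. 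Therefore
\begin{equation*}
	\Upsilon_w(1) = \mathfrak{G}_w^{\beta=1}(1^n) = \sum_{D\in \BPD(w)} 2^{\#\NWbump(D)},
\end{equation*}
so $\Prob_{\textnormal{2-ASM}}(w(\mathbf{D})=w) = 2^{-\binom{n}{2}}\ssp \Upsilon_w(1)$, matching the Grothendieck side.

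There is essentially no obstacle here beyond correctly specializing the two partition-function formulas; the only point to notice is the bookkeeping of the normalization $2^{-\binom{n}{2}}$, which in the 2-ASM model arises as the total number of tiles of the $n\times n$ square minus a doubling for each NW bump, and which matches the $(1-p)^{\binom{n}{2}}$ prefactor at $p=\frac{1}{2}$ on the pipe-dream side. A brief remark could note that summing \eqref{eq:2_ASM_distribution} over all $D\in \BPD(n)$ correctly yields $1$, as it equals $2^{-\binom{n}{2}}\ssp v_n(1) = 2^{-\binom{n}{2}}\cdot 2^{\binom{n}{2}} = 1$ by \eqref{eq:Grothendieck_specialization_v_n_u_n_intro}, confirming that \eqref{eq:2_ASM_distribution} is indeed a probability measure.
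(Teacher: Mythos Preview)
Your proposal is correct and follows exactly the approach the paper has in mind: the paper simply states that the corollary is immediate from the results of the preceding subsection, and you have spelled out precisely those details---specializing both \eqref{eq:pipe_dream_specialization_p_Grothendieck_intro} and \Cref{thm:GrothPipeDreams_weigandt} at $\beta=1$, $x_i=1$, $p=\tfrac12$ to see that both distributions assign probability $2^{-\binom{n}{2}}\Upsilon_w(1)$ to each $w$. (Your parenthetical about the normalization ``arising as the total number of tiles of the $n\times n$ square'' is a bit off---there are $n^2$ tiles, not $\binom{n}{2}$---but your subsequent verification via $v_n(1)=2^{\binom{n}{2}}$ is the right way to confirm \eqref{eq:2_ASM_distribution} is a probability measure.)
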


\begin{remark}
	\label{rmk:similar_general_p}
	For general $p$, an analog of \Cref{cor:Grothendieck_bumpless_matching}
	would involve a more complicated weighting of bumpless pipe dreams
	(six-vertex configurations). This weighting is still in the
	free-fermion family of six-vertex weights.
	At the level of domino tilings, the $p$-weighting corresponds to a
	deformation of the uniform model
	in which vertical and horizontal dominoes have different weights.
	This model is also well-studied,
	for example, see \cite{chhita2015asymptotic}.
	For simplicity, here we focus only on the case $p=\frac{1}{2}$.
\end{remark}

Our asymptotic results for
Grothendieck random permutations (\Cref{thm:Grothendieck_permutations_asymptotics_intro})
readily
translate into results about the behavior of the permutations
$w(\mathbf{D})$ coming from the 2-enumerated ASM model.
On the other hand, the asymptotic behavior of the bumpless pipe dream $\mathbf{D}$
in various regimes
has also been extensively studied,
and some of these results can be
brought back to Grothendieck random permutations.
Let us present just one example leading to asymptotically Gaussian fluctuations:

\begin{proposition}[Central limit theorem for the image of $n$]
	\label{prop:Gaussian}
	Let $\mathbf{w}\in S_n$ be the Grothendieck random permutation with $p=\frac{1}{2}$.
	Let $\mathbf{w}_n$ be the image of $n$ under $\mathbf{w}$.
	We have
	\begin{equation}
		\label{eq:CLT_Grothendieck}
		\frac{\mathbf{w}_n-n/2}{\sqrt{n/4}}
		\stackrel{d}{\longrightarrow} \mathcal{N}(0,1),\qquad
		n\to\infty.
	\end{equation}
	Here $\mathcal{N}(0,1)$ is the standard normal distribution.
\end{proposition}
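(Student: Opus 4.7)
The plan is to reduce the statement to a central limit theorem for a single random walk via the vertex--TASEP matching of \Cref{thm:vertex_TASEP_matching}, bypassing the need to invoke any external Aztec diamond asymptotics. The first observation is that at $x = n$, the permutation height function directly detects the exit location of the single largest-color pipe: by its definition \eqref{eq:permutation_height_function}, $H(n, y) = \mathbf{1}_{\mathbf{w}^{-1}(n) \geq y}$, since only the color $n$ appears in the indexing set. By the color-position symmetry (\Cref{rmk:color_position}), $\mathbf{w}$ and $\mathbf{w}^{-1}$ are equidistributed, so it is enough to establish the Gaussian limit for $\mathbf{w}^{-1}(n)$.

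Applying \Cref{thm:vertex_TASEP_matching} with $x = n$ and $h = 0$, the TASEP on the right-hand side has just $k = 1$ particle, starting at $\xi_1(0) = 1$. The formula reads
\begin{equation*}
\Prob(\mathbf{w}^{-1}(n) \leq y - 1) = \Prob(H(n, y) = 0) = \Prob_{\mathrm{TASEP}}\bigl(\xi_1(y-1) \geq n - y + 2\bigr).
\end{equation*}
A single particle experiences no blocking, and its monotonicity makes the freezing at the moving exit boundary irrelevant for this crossing probability; thus $\xi_1(y - 1) = 1 + G_1 + \cdots + G_{y-1}$, a sum of $y - 1$ i.i.d.\ geometric increments with parameter $p = 1/2$, mean $1$, and variance $2$.

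Setting $y = \lfloor n/2 + r\sqrt{n/4}\rfloor + 1$ for fixed $r \in \mathbb{R}$, the centered sum $S_{y-1} - (y-1)$ has variance $2(y-1) \sim n$, while the relevant threshold satisfies
\begin{equation*}
(n - y + 1) - (y - 1) \;=\; n - 2y + 2 \;=\; -r\sqrt{n} + O(1).
\end{equation*}
The classical CLT for i.i.d.\ sums therefore gives
\begin{equation*}
\lim_{n \to \infty} \Prob\bigl(\mathbf{w}^{-1}(n) \leq n/2 + r\sqrt{n/4}\bigr) = \Phi(r),
\end{equation*}
where $\Phi$ is the standard normal CDF, and color-position symmetry transfers this to $\mathbf{w}_n$, yielding \eqref{eq:CLT_Grothendieck}.

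There is no real obstacle: the nontrivial algebraic and combinatorial input is already packaged into \Cref{thm:vertex_TASEP_matching} together with \Cref{rmk:color_position}, and what remains is a one-line application of the classical CLT plus elementary bookkeeping (integer parts and the boundary-freezing convention), neither of which affects the asymptotics by monotonicity of the single-particle walk. The most delicate step, if any, is recognizing that the special case $x = n$ of \Cref{thm:vertex_TASEP_matching} collapses to a random walk and that color-position symmetry is what allows us to conclude about $\mathbf{w}_n$ rather than $\mathbf{w}^{-1}(n)$.
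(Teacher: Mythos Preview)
Your argument is correct. You specialize \Cref{thm:vertex_TASEP_matching} to $x=n$, $h=0$, so that the color-blind system has a single particle performing a free geometric random walk; the crossing probability against the moving exit boundary then reduces, by monotonicity of $\xi_1(t)+t$, to a one-sided tail of an i.i.d.\ sum, and the classical CLT finishes the job. The transfer from $\mathbf{w}^{-1}(n)$ to $\mathbf{w}_n$ via color--position symmetry (\Cref{rmk:color_position}) is legitimate, as that equidistribution is stated in the paper.

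This is a genuinely different route from the paper's proof, which goes through \Cref{cor:Grothendieck_bumpless_matching} to identify $\mathbf{w}_n$ with the position of the lone NW bump in the rightmost column of a $2$-enumerated ASM, and then quotes Johansson's eigenvalue CLT for the associated Aztec diamond particle process. Your approach is the one the paper alludes to in \Cref{rmk:Gaussian_from_random_walks}: it stays entirely within the TASEP/vertex-model framework already developed and needs only the undergraduate CLT, at the cost of invoking color--position symmetry (an external but elementary input). The paper's route, by contrast, showcases the free-fermion six-vertex connection and imports a sharper result that in principle gives more (e.g., joint asymptotics with other edge observables), but is less self-contained.
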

\begin{proof}
	Let
	$\mathbf{l}_n$ be the position of the single NW bump in the
	rightmost column of $\mathbf{D}$.  In
	\Cref{fig:bumpless_pipe_dream}, we have
	$\mathbf{l}_n=5$.
	By \Cref{cor:Grothendieck_bumpless_matching}, $\mathbf{w}_n$ has the
	same distribution as $\mathbf{l}_n$.
	In the language of domino tilings
	(see the map in, e.g., \cite{ferrari2006domino}), $\mathbf{l}_n$ is the position of the
	first particle in the particle process associated to the domino tiling.
	By
	\cite[Theorem~1.5]{johansson2006eigenvalues},
	$\mathbf{l}_n$ satisfies a central limit theorem as in
	\eqref{eq:CLT_Grothendieck}. Note that our standard deviation
	$\sqrt{n/4}$ matches the normalization in \cite{johansson2006eigenvalues}
	as the latter work deals with GUE random matrices
	whose diagonal elements have variance $2$.
	This completes the proof.
\end{proof}

\begin{remark}
	\label{rmk:Gaussian_from_random_walks}
	\Cref{prop:Gaussian} can also be proven directly by looking at the trajectory of the
	pipe of color $n$ in the random pipe inside the staircase shape
	(\Cref{sub:pipe_dreams}), as this trajectory is a random walk.
\end{remark}

\subsection{Exact formulas for elementary layered permutations}
\label{sub:exact_formulas_layered}

We are now in a position to discuss principal specializations
of Grothendieck polynomials on layered permutations.
We begin with a few exact formulas, for which we need some notation.

Let $\mathsf{C}_n\coloneqq\frac{1}{n+1}\binom{2n}{n}$ denote the
$n$th \emph{Catalan number} which counts the number of
\emph{Dyck paths} of size $n$. Let $\ls_{n}$ denote the $n$-th
\emph{little Schr\"{o}der number}
\cite[\href{http://oeis.org/A001003}{A001003}]{oeis} and
$\bs_n$ the $n$-th \emph{large Schr\"oder number}
\cite[\href{https://oeis.org/A006318}{A006318}]{oeis}. It is
known that $\bs_n=2\ssp\ls_n$ for $n\geq 1$.

The \emph{Narayana polynomial} is defined as
\[
\mathcal{L}_{n}(x) \coloneqq \sum_{d \in \mathrm{Dyck}(n)} x^{v(d)},
\]
where the sum is over Dyck paths of size $n$ and $v(d)$ is the number of valleys of the path $d$. The coefficients of $\mathcal{L}_n(x)$ are the {\em Narayana numbers} $N(n,k)=\frac{1}{k}\binom{n}{k}\binom{n}{k-1}$ \cite[\href{http://oeis.org/A001263}{A001263}]{oeis}. Note that $\mathsf{C}_n=\mathcal{L}_n(1)$ and $s_n=\mathcal{L}_n(2)$. The polynomials $\mathcal{L}_n(x)$ have the following generating function:
\begin{equation} \label{eq:Narayana gf}
\sum_{n\geq 0} y^n \mathcal{L}_n(x) = \frac{1-y(1+x)-\sqrt{(1-y(1+x))^2-4xy^2}}{2y}.
\end{equation}

Consider the following
\emph{elementary layered permutation}
(see also \eqref{eq:cross_product_of_permutations} for notation):
\begin{equation}
	\label{eq:elementary_layered_permutation_definition_intro}
	 w_0(k;n)\coloneqq  \mathrm{id}_k \times w_0(n) = (1,\ldots,k, k+n,k+n-1,\ldots,k+1)
	 \in S_{k+n}.
\end{equation}
Recall the notation $\Upsilon_w(\beta)$
\eqref{eq:Upsilon_w_definition_intro}.
A
formula due to Fomin and Kirilov \cite{FK} relates the
Schubert principal specialization $\Upsilon_{w_0(a;b)}(0)$
to Proctor's formula \cite{Proctor90} counting bounded
plane partitions of staircase shape:

\begin{theorem}
	\label{thm:Proctor}
	For nonnegative integers $k$ and $n$, we have
    \begin{equation} \label{eq:proctor}
			\Upsilon_{w_0(k;n)}(0) = \mathfrak{S}_{w_0(k;n)}(1,\ldots,1) \,=\, \det[\mathsf{C}_{n-2+i+j}]_{i,j=1}^k \,=\, \prod_{1\leq i<j\leq n} \frac{2k+i+j-1}{i+j-1}.
\end{equation}
\end{theorem}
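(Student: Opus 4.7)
The plan is to compute $\Upsilon_{w_0(k;n)}(0) = \mathfrak{S}_{w_0(k;n)}(1,\ldots,1)$ by first interpreting it combinatorially as a count of reduced pipe dreams, and then bijecting this count with a classical family of non-intersecting lattice paths or plane partitions. By \Cref{thm:Grothendieck_pipe_dreams_intro} specialized to $\beta = 0$ (so that only reduced pipe dreams survive) and $x_i = 1$ for every $i$ (so that each cross contributes weight $1$ to $\wt(D)$), we immediately obtain $\Upsilon_{w_0(k;n)}(0) = \#\RPD(w_0(k;n))$, reducing the problem to a pure enumeration.

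Next, I would exploit the special structure of $w_0(k;n) = \mathrm{id}_k \times w_0(n)$. Its descents are located exactly at positions $k+1, \ldots, k+n-1$, so any reduced word uses only the simple transpositions $s_{k+1}, \ldots, s_{k+n-1}$. Combined with the requirement that the first $k$ pipes exit in their own columns, this forces every cross in $D \in \RPD(w_0(k;n))$ to lie inside the narrow anti-diagonal band $\{(i,j) : k+2 \le i+j \le k+n\}$, while all tiles of the staircase $\updelta_{k+n}$ outside this band are forced to be elbows. Within the band I would set up a bijection between $\RPD(w_0(k;n))$ and $k$-tuples of non-intersecting lattice paths in a trapezoid (equivalently, plane partitions of the staircase shape $\delta_n$ with entries bounded by $2k$), obtained by recording, for each pipe of the reversed block, its trajectory through the band and translating crosses into unit steps.

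With this bijection in hand, the Lindström--Gessel--Viennot lemma yields the middle identity $\#\RPD(w_0(k;n)) = \det\bigl[\mathsf{C}_{n-2+i+j}\bigr]_{i,j=1}^{k}$, since the number of single paths between the resulting pairs of fixed endpoints is precisely a Catalan number with the stated index shift. The final product evaluation in \eqref{eq:proctor} then follows from Proctor's classical identity \cite{Proctor90} for bounded plane partitions of staircase shape, or, alternatively, from a direct Cauchy--Binet or Dodgson condensation argument applied to the Catalan determinant.

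The main obstacle is the combinatorial bijection in the second step: one has to verify that the forced-elbow argument is tight (so that no additional restrictions are silently imposed), that the trajectories one reads off really are pairwise non-intersecting and exhaust the reduced pipe dreams, and that the endpoint data match the Catalan entries $\mathsf{C}_{n-2+i+j}$ with precisely the right shift. Once this correspondence is pinned down, everything downstream is standard.
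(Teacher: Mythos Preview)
The paper does not actually prove this theorem: it is stated as a known result, attributed to Fomin--Kirillov \cite{FK} for the identification with a determinant/plane-partition count and to Proctor \cite{Proctor90} for the product evaluation. So there is no ``paper's own proof'' to compare against.

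Your outline is essentially the classical route and is sound at the level of a plan. A couple of small points to watch when you carry it out. First, the restriction on cross locations: a cross at $(i,j)$ contributes the generator $s_{i+j-1}$, and since any reduced word for $w_0(k;n)$ uses only $s_{k+1},\dots,s_{k+n-1}$, the band condition is $k+2\le i+j\le k+n$, which you have right; but you should also note that within each such diagonal the crosses must form a contiguous block anchored at the top-left (this is the ``reduced'' condition for $w_0$-type permutations), which is what actually gives you a plane-partition/tableau rather than an arbitrary subset. Second, the precise combinatorial model you land on is usually phrased as \emph{shifted} (or strict) staircase plane partitions with bounded parts, not ordinary ones; double-check that your stated bound ``$2k$'' matches the indexing of Proctor's formula you intend to invoke, since different sources normalize the shifted staircase and the bound differently. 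Once those details are aligned, the LGV step with Dyck-type single-path counts $\mathsf{C}_{n-2+i+j}$ and Proctor's product identity are indeed standard.
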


In \cite{MPP4GrothExcited}, an analogous formula for
the $\beta=1$
Grothendieck polynomials is proven:

\begin{theorem}[{\cite[Thm. 5.9]{MPP4GrothExcited}}]
	\label{thm:Groth_beta_1}
	For nonnegative integers $k$ and $n$, we have
	\begin{equation*}
			\Upsilon_{w_0(k;n)}(1) = \mathfrak{G}^{\beta=1}_{w_0(k;n)}(1,\ldots,1) = 2^{-\binom{k}{2}}\det[\ls_{n-2+i+j}]_{i,j=1}^k
		= 2^{-\binom{k+1}{2}}\det[\bs_{n-2+i+j}]_{i,j=1}^k.
	\end{equation*}
\end{theorem}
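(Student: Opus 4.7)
The plan is to prove the determinantal formula via a non-intersecting lattice path argument, paralleling the proof of Proctor's formula (\Cref{thm:Proctor}) for the Schubert case but replacing Dyck paths with Schröder paths to accommodate the $\beta=1$ Grothendieck deformation. Using the pipe dream expansion (\Cref{thm:Grothendieck_pipe_dreams_intro}) at $x_i=1$ and $\beta=1$, we have
\[
\Upsilon_{w_0(k;n)}(1) \,=\, \#\PD(w_0(k;n)),
\]
the number of (not necessarily reduced) pipe dreams whose associated permutation is $w_0(k;n) = \mathrm{id}_k\times w_0(n)$. Since the first $k$ values of $w_0(k;n)$ are fixed, its Rothe diagram is a shifted staircase $(n-1,n-2,\ldots,1)$ occupying rows $k+1,\ldots,k+n-1$ and columns $k+1,\ldots,k+n-1$; consequently, every pipe dream for $w_0(k;n)$ has all of its crosses confined to a $k\times(n-1)$ rectangular region adjacent to this staircase.

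Next, I would recall the classical bijection (used to prove \Cref{thm:Proctor}) between \emph{reduced} pipe dreams of $w_0(k;n)$ and reverse plane partitions of staircase shape $(n-1,n-2,\ldots,1)$ with entries bounded by $k$, and then transport it through the standard Lindström--Gessel--Viennot machinery to $k$-tuples of non-intersecting Dyck paths with prescribed endpoints, recovering $\det[\mathsf{C}_{n-2+i+j}]_{i,j=1}^k$. To handle the $\beta=1$ case, I would extend this bijection so that the extra (non-reduced) crosses in a pipe dream correspond to extra diagonal steps in the associated lattice paths; that is, Dyck paths are upgraded to \emph{Schröder paths} (with North, East, and a diagonal Northeast step). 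Applying LGV to non-intersecting $k$-tuples of Schröder paths then expresses $\#\PD(w_0(k;n))$ as a $k\times k$ determinant whose $(i,j)$-entry counts single Schröder paths of an appropriate length. A direct identification shows this entry equals $\ls_{n-2+i+j}$, up to the global factor $2^{-\binom{k}{2}}$ arising from the distinction between little and large Schröder paths (equivalently, from the two-coloring of diagonal steps). The second equality in the theorem is then immediate from $\bs_m=2\ssp\ls_m$ for $m\ge 1$: pulling a factor of $2$ from each of the $k$ rows of the little-Schröder determinant multiplies the determinant by $2^k$ and shifts the prefactor as $\binom{k}{2}+k=\binom{k+1}{2}$.

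The main obstacle will be cleanly implementing the bijection in the Schröder step: producing, for each non-reduced pipe dream of $w_0(k;n)$, a well-defined family of non-intersecting Schröder paths in which each excess cross beyond a minimal reduced configuration maps to a single diagonal step, and verifying that the path endpoints give precisely the index shift $(i,j)\mapsto n-2+i+j$. Equivalently, in the excited-diagram framework of \cite{MPP4GrothExcited}, one must check that marked excited diagrams of the staircase $(n-1,\ldots,1)$ inside the $k\times(n-1)$ rectangle biject with non-intersecting Schröder path families, where K-theoretic markings correspond to diagonal steps. Once this bijection and the associated weight-preserving correspondence are set up correctly, the remaining LGV application and identification with $\ls_{n-2+i+j}$ are standard combinatorial computations.
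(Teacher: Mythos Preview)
The paper does not give its own proof of this statement; it is quoted from \cite{MPP4GrothExcited}, whose argument goes through $K$-theoretic (marked) excited diagrams---the alternative framework you mention at the end of your proposal. Your general strategy, upgrading the Dyck-path/LGV proof of \Cref{thm:Proctor} by replacing Dyck paths with Schr\"oder paths, is the right intuition and is essentially what the cited reference does. The conversion from the little- to the large-Schr\"oder determinant at the end is also correct.

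There are, however, two concrete gaps. First, your claim that every pipe dream for $w_0(k;n)$ has its crosses confined to a $k\times(n-1)$ rectangle is false. Take $k=2$, $n=2$: the permutation is $(1,2,4,3)\in S_4$, and one checks directly that $\PD((1,2,4,3))$ consists of the seven nonempty subsets of the three anti-diagonal boxes $(1,3),(2,2),(3,1)$ in $\updelta_4$ (and indeed $7=2^{-1}(\ls_2\ls_4-\ls_3^2)$); this is not a $2\times 1$ rectangle. You have correctly located the Rothe diagram inside the permutation matrix, but that picture does not transport to the pipe-dream staircase in the way you assert. In general the crosses of a pipe dream in $\PD(w_0(k;n))$ range over the ribbon $\{(i,j)\in\updelta_{k+n}: i+j\ge k+2\}$, and any bijection to path families must be built on this region (or, more cleanly, on the excited-diagram model).

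Second, your account of the prefactor $2^{-\binom{k}{2}}$ does not hold up. The little/large Schr\"oder distinction contributes one factor of $2$ per row of the determinant, hence $2^k$---that is precisely the \emph{second} equality in the statement, not the first. The exponent $\binom{k}{2}$ signals that the correspondence between pipe dreams (equivalently, marked excited diagrams) and nonintersecting Schr\"oder families is $1$-to-$2^{\binom{k}{2}}$ rather than a bijection; pinning down the source of this overcount (one factor of $2$ per \emph{pair} of paths) is the substantive combinatorial step, and your sketch does not yet locate it. Compare the paper's own use of this determinant in the proof of \Cref{lemma:fbound}, where $\det[\ls_{\cdots}]$ is read directly as an NILP count and the power of $2$ is then explained via the Aztec-diamond bijection.
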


A general determinantal formula for all $\beta$ may be derived
from \cite[Thm.~5.9]{HKYY2019reverse}:
\begin{theorem}  \label{thm:det_formula_Grothendieck_beta}
	For nonnegative integers $k$ and $n$, we have
\begin{equation} \label{eq:Groth for w_0}
\Upsilon_{w_0(k;n)}(\beta) \,=\, (1+\beta)^{-\binom{k}{2}}\det[\mathcal{L}_{n+i+j-2}(1+\beta)]_{i,j=1}^k.
\end{equation}
\end{theorem}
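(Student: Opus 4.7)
The plan is to prove the formula via the Lindström--Gessel--Viennot (LGV) lemma applied to a family of non-intersecting lattice paths encoding (bumpless) pipe dreams for $w_0(k;n)$, with the Narayana polynomial $\mathcal{L}_m(1+\beta)$ emerging naturally as the single-path weight generating function. The appearance of $\mathcal{L}_m(1+\beta)$ rather than $\mathcal{L}_m(\beta)$ strongly suggests using the bumpless pipe dream formula of \Cref{thm:GrothPipeDreams_weigandt}, since at $x_k=1$ its NW bumps already contribute $(1+\beta)$, matching the weight per valley in the Narayana polynomial.

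First, I would apply \eqref{eq: groth in terms of bpd} at $x_k=1$ to rewrite
\[
\Upsilon_{w_0(k;n)}(\beta)=\beta^{-\binom{n}{2}}\sum_{D\in \BPD(w_0(k;n))}\beta^{\#\emptytile(D)}(1+\beta)^{\#\NWbump(D)},
\]
using $\ell(w_0(k;n))=\binom{n}{2}$. Then I would analyze the structure of $\BPD(w_0(k;n))$: because $w_0(k;n)^{-1}$ fixes $\{1,\ldots,k\}$ and reverses $\{k+1,\ldots,k+n\}$, the pipes split into a trivial family of $k$ straight pipes and an active family of $n$ pipes whose routing is confined to a staircase region of the $(k+n)\times(k+n)$ grid. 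This is the same skeleton that, at $\beta=0$, puts $\RBPD(w_0(k;n))$ in bijection with plane partitions of staircase shape inside a $k$-box and produces Proctor's formula \eqref{eq:proctor}. Each BPD should then be uniquely parameterized by a $k$-tuple of non-intersecting Dyck-like lattice paths with staggered endpoints, the $i$-th path having a length matching the Narayana index $n+i+j-2$ at the appropriate sink $j$.

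Third, the LGV lemma applies because the prescribed source--sink pairing is the unique one admitting a non-intersecting family. Using the generating function \eqref{eq:Narayana gf}, this should give
\[
(1+\beta)^{\binom{k}{2}}\Upsilon_{w_0(k;n)}(\beta)=\det\bigl[\mathcal{L}_{n+i+j-2}(1+\beta)\bigr]_{i,j=1}^k,
\]
where the extra prefactor $(1+\beta)^{\binom{k}{2}}$ comes from $\binom{k}{2}$ forced NW bumps at the corners of the staggered endpoint configuration that are overcounted by the single-path generating functions and must be divided out. A consistency check at $\beta=0$ returns $\det[\mathsf{C}_{n+i+j-2}]_{i,j=1}^k$, recovering the classical Lindström proof of Proctor's formula; at $\beta=1$ it recovers \Cref{thm:Groth_beta_1}.

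The main obstacle will be the second step: producing an explicit weight-preserving bijection between $\BPD(w_0(k;n))$ and the appropriate tuples of non-intersecting paths, with NW bumps corresponding to valleys and empty tiles (together with the prefactor $\beta^{-\binom{n}{2}}$) accounting correctly, and then identifying the $\binom{k}{2}$ corner overcount combinatorially rather than deriving it a posteriori. An alternative route, likely taken in \cite{HKYY2019reverse}, would be to first establish a Jacobi--Trudi type determinantal expansion for $\mathfrak{G}^\beta_{w_0(k;n)}$ via flagged reverse plane partitions or excited Young diagrams, and then specialize at $x_i=1$ using \eqref{eq:Narayana gf}; this sidesteps the explicit path bijection but requires the Jacobi--Trudi machinery.
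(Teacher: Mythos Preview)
The paper does not actually prove this theorem: it is stated with the preface ``A general determinantal formula for all $\beta$ may be derived from \cite[Thm.~5.9]{HKYY2019reverse},'' and no argument is given beyond this citation. So there is no paper proof to compare against; the alternative route you mention at the end---deriving a Jacobi--Trudi type determinantal expansion from the flagged reverse plane partition / excited diagram machinery of \cite{HKYY2019reverse} and then specializing---\emph{is} what the paper relies on.

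Your primary proposal (LGV on bumpless pipe dreams) is a reasonable heuristic outline, and you are candid that the second step is the real content. As written it is a plan, not a proof: the bijection between $\BPD(w_0(k;n))$ and $k$-tuples of non-intersecting Dyck-type paths with the correct weight transfer (empty tiles $\leftrightarrow$ $\beta$-weight, NW bumps $\leftrightarrow$ valleys weighted $1+\beta$) is asserted but not constructed, and the combinatorial origin of the $(1+\beta)^{\binom{k}{2}}$ overcount is guessed rather than identified. These are exactly the pieces that make such arguments work or fail. If you want to make this self-contained, you would need to exhibit the path model explicitly (for instance by tracing the boundaries of the ``excited'' region in each BPD, as in the $\beta=1$ case of \Cref{thm:Groth_beta_1}) and verify the weight correspondence tile by tile; otherwise, citing \cite{HKYY2019reverse} as the paper does is the honest move.
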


The determinants
in
\Cref{thm:Groth_beta_1,thm:det_formula_Grothendieck_beta}
can be efficiently computed via the Dodgson condensation (the Desnanot--Jacobi identity).
We use it to get the numerical data for $\beta=1$
presented in
\Cref{app:data_Groth}.

\subsection{Asymptotically maximal specializations for layered permutations}
\label{sub:asymptotically_maximal_specializations}

Here we prove that principal
specializations of $\beta=1$
Grothendieck polynomials on layered permutations
(see \eqref{eq:layered_permutation_definition_intro} for the definition)
attain the asymptotically largest
value. That is, we prove \Cref{thm:gmax_intro} from the Introduction.

\begin{theorem}\label{thm:gmax}
Consider the compositions $b^{(n)} = (\ldots, b_2, b_1)$ of $n$
such that
$$
b_1 + \ldots + b_{i-1} + (2 + \sqrt{2})b_i \le n \textnormal{ for } i = 1,2,\ldots,
\textnormal{ while }b_1 + \ldots + b_{i-1} \le n - 4,
$$
and $b^{(n)}$ has $o(n)$ parts.
Then for the layered permutations $w(b^{(n)})$ we have
$$
\lim_{n \to \infty} \frac{1}{n^2} \log_2
\Upsilon_{w(b^{(n)})} (1)
= \frac{1}{2},
$$
which is asymptotically maximal.
\end{theorem}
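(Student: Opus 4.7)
The upper bound $\frac{1}{n^2}\log_2\Upsilon_{w(b^{(n)})}(1)\le \frac12+o(1)$ is immediate from the pointwise inequality $\Upsilon_w(1)\le v_n(1)=2^{\binom n2}$ recorded in~\eqref{eq:Grothendieck_specialization_v_n_u_n_intro}, so the real work is in the matching lower bound. My strategy would be to decompose the specialization blockwise using a cross-product identity, and then analyze each block via the exact formula in \Cref{thm:Groth_beta_1}.

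The first step is to establish the multiplicative factorization
\begin{equation*}
\Upsilon_{w(b)}(1)\;=\;\prod_{j=1}^{\ell}\Upsilon_{w_0(n-s_j;\,b_j)}(1),
\end{equation*}
obtained by iterating the cross-product identity $\Upsilon_{u\times v}(1)=\Upsilon_u(1)\cdot \Upsilon_{\mathrm{id}_{|u|}\times v}(1)$. This latter identity follows from the Demazure-product interpretation of pipe dreams in \Cref{rmk:Demazure_product}: a cross at box $(r,c)$ contributes the transposition $s_{r+c-1}$, and since $u\times v$ has no reduced expression using $s_{|u|}$, any pipe dream of $u\times v$ decouples into a pipe dream of $u$ supported on boxes with $r+c\le|u|$ and a pipe dream of $\mathrm{id}_{|u|}\times v$ supported on the remaining boxes, with the anti-diagonal $r+c=|u|+1$ forced to be elbows. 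As a sanity check, for $b=(2,2)$ this reproduces $\Upsilon_{(2,1,4,3)}(1)=\Upsilon_{(2,1)}(1)\cdot \Upsilon_{(1,2,4,3)}(1)=1\cdot 7=7$, consistent with direct enumeration.

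Next, one applies \Cref{thm:Groth_beta_1} to rewrite each factor as a Hankel determinant of little Schr\"oder numbers,
\begin{equation*}
\Upsilon_{w_0(k;m)}(1)\;=\;2^{-\binom{k}{2}}\det\bigl[\ls_{m-2+i+j}\bigr]_{i,j=1}^{k},
\end{equation*}
and performs asymptotic analysis as $k,m\to\infty$ with $\alpha:=k/(k+m)$ fixed. The little Schr\"oder numbers are the moments of an explicit probability measure on $[(1-\sqrt 2)^2,(1+\sqrt 2)^2]$ with square-root edge densities (obtainable from~\eqref{eq:Narayana gf} at $x=2$), so the Hankel determinant factors as a product of $L^2$-norms of monic orthogonal polynomials for the tilted measure $x^{m-2}d\mu(x)$. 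A steepest-descent analysis at the right edge $(1+\sqrt 2)^2$, which can equivalently be carried out through the bumpless-pipe-dream / free-fermion six-vertex / Aztec-diamond picture of \Cref{rem:size ASMs vs PD} and \Cref{sub:square_ice_aztec_asymptotics}, yields, for $\alpha\ge 1/\sqrt 2$,
\begin{equation*}
\frac{1}{(k+m)^2}\log_2 \Upsilon_{w_0(k;m)}(1)\;\longrightarrow\; \tfrac12(1-\alpha^2).
\end{equation*}
The threshold $\alpha=1/\sqrt 2$ is precisely where the frozen boundary of the associated free-fermion model first becomes tangent to the domain boundary, so that the specialization saturates the entropy outside the circular frozen region; this is the source of the $(2+\sqrt 2)$ constant in the hypothesis.

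Finally, the hypothesis $s_{j-1}+(2+\sqrt 2)b_j\le n$ is exactly $b_j/(n-s_{j-1})\le 1-1/\sqrt 2$, equivalently $\alpha_j:=(n-s_j)/(n-s_{j-1})\ge 1/\sqrt 2$, so the preceding asymptotic applies uniformly to every block; the bound $b_1+\cdots+b_{i-1}\le n-4$ (effectively $b_i\ge 4$) ensures each block is large enough for the asymptotic to hold with a controllable error, and the hypothesis $\ell=o(n)$ keeps the total accumulated error at $o(n^2)$. Telescoping gives
\begin{equation*}
\log_2\Upsilon_{w(b^{(n)})}(1)\;\sim\;\tfrac12\sum_{j=1}^{\ell}\bigl[(n-s_{j-1})^2-(n-s_j)^2\bigr]\;=\;\tfrac{n^2}{2},
\end{equation*}
matching the upper bound. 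The main obstacle in making all of this rigorous is pinning down the asymptotic rate $\tfrac12(1-\alpha^2)$ and verifying the phase transition at $\alpha=1/\sqrt 2$; the cleanest route is through the free-fermion lattice-model description where the surface-tension / frozen-boundary computation for the Aztec diamond directly produces the constant and explains why $(2+\sqrt 2)$ is the sharp threshold in the theorem's hypothesis.
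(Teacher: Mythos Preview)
Your proposal is correct and follows essentially the same route as the paper: the upper bound from $v_n(1)=2^{\binom n2}$, the blockwise factorization $\Upsilon_{u\times w}(1)=\Upsilon_u(1)\cdot\Upsilon_{\mathrm{id}_{|u|}\times w}(1)$ (the paper's \Cref{prop:gprod}), the Schr\"oder Hankel determinant from \Cref{thm:Groth_beta_1}, the key asymptotic $\log_2 F(k,n-k)=\tfrac{n^2}{2}-\tfrac{k^2}{2}-O(n)$ for $k/n>1/\sqrt2$ (the paper's \Cref{lemma:fbound}), and the telescoping sum with $o(n)$ parts controlling the accumulated $O(n)$ errors. Your identification of the threshold $\alpha=1/\sqrt2$ with the arctic-circle tangency and of $(2+\sqrt2)$ as the resulting sharp constant is exactly the paper's reasoning. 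The only cosmetic difference is that you float an orthogonal-polynomial/moment approach to the block asymptotic before settling on the Aztec-diamond route; the paper goes directly via Lindstr\"om--Gessel--Viennot (interpreting $\det[\ls_{n-k-2+i+j}]$ as nonintersecting $0$-Schr\"oder paths), then identifies the ratio $\det/2^{\binom n2}$ with the probability that the innermost $n-k$ paths of a uniform Aztec-diamond tiling are in their lowest configuration, and invokes the Cohn--Elkies--Propp frozen-region estimate to bound that probability by $1-e^{-O(n)}$ when $k>n/\sqrt2$.
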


\begin{remark}
	In particular, one can take compositions to be geometric
	$b_i \sim (1 - \alpha) \alpha^{i - 1} n$, for any $\alpha \in [1/\sqrt{2}, 1)$.
\end{remark}

In the rest of this \Cref{sub:asymptotically_maximal_specializations},
we prove \Cref{thm:gmax}. First, observe that the Grothendieck
specializations enjoy the following basic factorization property:
\begin{proposition}\label{prop:gprod}
    Let $u \in S_k, w \in S_n$. We have
    $$
		\Upsilon_{u \times w}(\beta) = \Upsilon_{u}(\beta) \cdot \Upsilon_{\mathrm{id}_k \times w}(\beta),
    $$
		where
		$u\times w$ is the block permutation
		defined by
		\eqref{eq:cross_product_of_permutations}.
\end{proposition}

\begin{proof}
	This follows from properties of pipe dreams
	corresponding to permutations
	of the form $u \times w$.
	See \cite[Corollary~2.4.6]{ManivelBook} or \cite[(4.6)]{MacdonaldSchubertBook}.
\end{proof}

Let us denote (recall the notation \eqref{eq:elementary_layered_permutation_definition_intro})
\begin{equation}
	\label{eq:F_G_notation_for_layered}
	F(k, n) \coloneqq  \Upsilon_{w_0(k;n)}(1), \qquad G(k,n) \coloneqq  2^{\binom{k}{2}} \cdot F(k,n)=
	\det[\ls_{n-2+i+j}]_{i,j=1}^k,
\end{equation}
where the last equality follows from \Cref{thm:Groth_beta_1}.

An important step in obtaining the asymptotics of $\Upsilon_w(1)$ for layered permutations is to understand the behavior of the function $F(k,n)$. A similar analysis \cite{MPP4GrothExcited} was possible in the Schubert case ($\beta = 0$) thanks to an explicit product formula (\Cref{thm:Proctor}). For other $\beta$'s, we have determinantal (but not product) formulas (\Cref{thm:Groth_beta_1} and \Cref{thm:det_formula_Grothendieck_beta}). However, in the case $\beta=1$ we are able to asymptotically analyze these determinants via the correspondence with 2-enumerated ASMs.

\begin{lemma}\label{lemma:fbound}
Let
$k \in (n/\sqrt{2}, n]$. We have
\begin{equation}
	\label{eq:fbound}
\log_2 F(k, n - k) = \frac{n^2}{2} - \frac{k^2}{2} - O(n), \qquad  n \to \infty.
\end{equation}
\end{lemma}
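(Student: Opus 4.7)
The plan is to use the determinantal formula of \Cref{thm:Groth_beta_1},
\begin{equation*}
F(k, n-k) = 2^{-\binom{k}{2}} \det\bigl[\ls_{n-k-2+i+j}\bigr]_{i,j=1}^{k},
\end{equation*}
combined with the probabilistic identification from \eqref{eq:pipe_dream_specialization_p_Grothendieck_intro} at $p = 1/2$: namely, $F(k,n-k) = 2^{\binom{n}{2}} \Prob(\mathbf{w} = w_0(k;n-k))$, where $\mathbf{w} \in S_n$ is the $p=1/2$ Grothendieck random permutation. Since $\binom{n}{2} - \binom{k}{2} = (n^{2}-k^{2})/2 + O(n)$, the lemma is equivalent to the estimate $\log_2 \Prob(\mathbf{w} = w_0(k;n-k)) = -k^{2}/2 + O(n)$ uniformly for $k \in (n/\sqrt{2}, n]$.

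For the lower bound, I would construct an explicit family of at least $2^{\binom{n}{2}-\binom{k}{2}-O(n)}$ pipe dreams of order $n$ whose reduction is $w_0(k;n-k)$. A canonical reduced pipe dream places elbows in every box of rows $i \le k$ and crosses in every box of rows $i > k$; its Demazure product (\Cref{rmk:Demazure_product}) equals $w_0(k;n-k)$ by direct verification using the half-bumps on the diagonal. The family is then enlarged by allowing independent tile flips inside a carefully chosen sub-region of area $\binom{n}{2}-\binom{k}{2}-O(n)$ deep inside the reverse block, where each flipped crossing is redundant in the Demazure product.

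For the upper bound, I would analyze the shifted Hankel determinant $\det[\ls_{M+i+j}]_{i,j=1}^{k}$ (with $M = n-k-2$) asymptotically via orthogonal polynomials. The little Schr\"oder numbers $\ls_m$ are the moments of a compactly supported measure $d\mu$ on $[3-2\sqrt{2},\, 3+2\sqrt{2}]$ (read off from \eqref{eq:Narayana gf} with $x=2$), so the Hankel determinant factors as $\prod_{i=0}^{k-1} h_i^{(M)}$, where $h_i^{(M)}$ is the $L^{2}$-norm of the $i$-th monic orthogonal polynomial for the shifted weight $x^{M}\,d\mu(x)$. Steepest descent on the resulting double contour integral representation, in the spirit of \Cref{sub:determinantal_structure}, yields $\sum_{i=0}^{k-1} \log_2 h_i^{(M)} = n^{2}/2 + O(n)$ uniformly for $k \in (n/\sqrt{2}, n]$, giving the matching upper bound. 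The main obstacle is making this steepest-descent analysis uniform in $(k, M)$ with the stated $O(n)$ error; the hypothesis $k > n/\sqrt{2}$ is essential, since it places the relevant saddles outside the spectrum, whereas for $k \le n/\sqrt{2}$ the saddles enter the spectrum and a different regime (corresponding to the curved region $\mathscr{C}$ of \Cref{lemma:Grothendieck_limit_shape_equation_on_h}) would govern the asymptotics.
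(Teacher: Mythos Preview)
Your approach diverges substantially from the paper's, and the hard direction is left as a sketch.

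The paper does not split into separate lower and upper bounds. It interprets $G(k,n-k)=\det[\ls_{n-k-2+i+j}]_{i,j=1}^{k}$ via the Lindstr\"om--Gessel--Viennot lemma as the number of $k$-tuples of nonintersecting $0$-Schr\"oder paths, and then identifies this ensemble with the top $k$ paths in the nonintersecting-path picture of domino tilings of the Aztec diamond of order $n-1$. Thus $G(k,n-k)/2^{\binom{n}{2}}$ is exactly the probability, under the uniform tiling, that the bottom $n-k$ paths sit in their lowest possible (frozen) configuration. The hypothesis $k>n/\sqrt{2}$ places this strip strictly below the arctic circle, and \cite[Proposition~13]{cohn-elki-prop-96} gives that this probability is $1-e^{-O(n)}$. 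Both inequalities drop out simultaneously.

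In your proposal the upper bound is actually the trivial direction and needs no orthogonal-polynomial machinery: your own first paragraph already gives $F(k,n-k)=2^{\binom{n}{2}}\Prob(\mathbf w=w_0(k;n-k))\le 2^{\binom{n}{2}}$, hence $\log_2 F(k,n-k)\le n^{2}/2-O(n)$. The content of the lemma is entirely in the \emph{lower} bound, and here your argument has a gap. You claim a ``carefully chosen sub-region of area $\binom{n}{2}-\binom{k}{2}-O(n)$'' inside which tiles can be flipped freely, but you do not exhibit it; in fact the region you call the ``reverse block'' (rows $i>k$ of the staircase) has only $\binom{n-k}{2}$ boxes, which for $k$ near $n/\sqrt{2}$ is of order $(1-1/\sqrt{2})^{2}n^{2}/2\approx 0.043\,n^{2}$, far short of the required $\binom{n}{2}-\binom{k}{2}\approx n^{2}/4$. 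Any successful construction must therefore use a large portion of rows $i\le k$ as well, and showing that so many flips there preserve the Demazure product is precisely the arctic-circle statement in disguise. Finally, the connection you draw to the curved region $\mathscr{C}$ of \Cref{lemma:Grothendieck_limit_shape_equation_on_h} is misplaced: $\mathscr{C}$ governs the permuton of a \emph{typical} Grothendieck permutation, whereas the threshold $k=n/\sqrt{2}$ here is the tangency of the frozen strip with the arctic circle of the Aztec diamond.
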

In \eqref{eq:fbound} and throughout the proofs below in this subsection,
we assume that the constants in the $O(\cdot)$ notation are nonnegative.
\begin{proof}[Proof of \Cref{lemma:fbound}]
	By \eqref{eq:F_G_notation_for_layered}, it suffices to show that
	for $k>n/\sqrt 2$, we have
	\begin{equation}
		\label{eq:determinant_NILP}
		\det[\ls_{n-k-2+i+j}]_{i,j=1}^k = 2^{\frac{n^2}{2}-O(n)}.
	\end{equation}
	We employ the interpretation of the litte Schr\"oder number $\ls_m$
	\cite[\href{http://oeis.org/A001003}{A001003}]{oeis}
	as the
	number of Motzkin paths from $(0,0)$ to $(2m,0)$
	(with steps $(1,1)$, $(1,-1)$, and $(2,0)$)
	which do not have horizontal steps at height $0$.
	Let us call these paths the \emph{$0$-Sch\"oder paths},
	where $0$ represents the lower boundary of the path.
	Then,
	by the Lindstrom--Gessel--Viennot lemma
	\cite{KMG59-Coincidence},
	\cite{lindstrom1973vector},
	\cite{gessel1985binomial}
	the determinant
	\eqref{eq:determinant_NILP}
	counts $k$-tuples of nonintersecting $0$-Sch\"oder paths
	starting from $(-2i,0)$
	and ending at
	$(2(n-k+i-1),0)$, respectively, where $i=1,\ldots,k $.
	Denote this space of configurations by $\mathcal{M}_{k,n-k}$.
	See \Cref{fig:motzkin_paths_aztec} for an illustration.

	\begin{figure}[htpb]
		\includegraphics[width=1\textwidth]{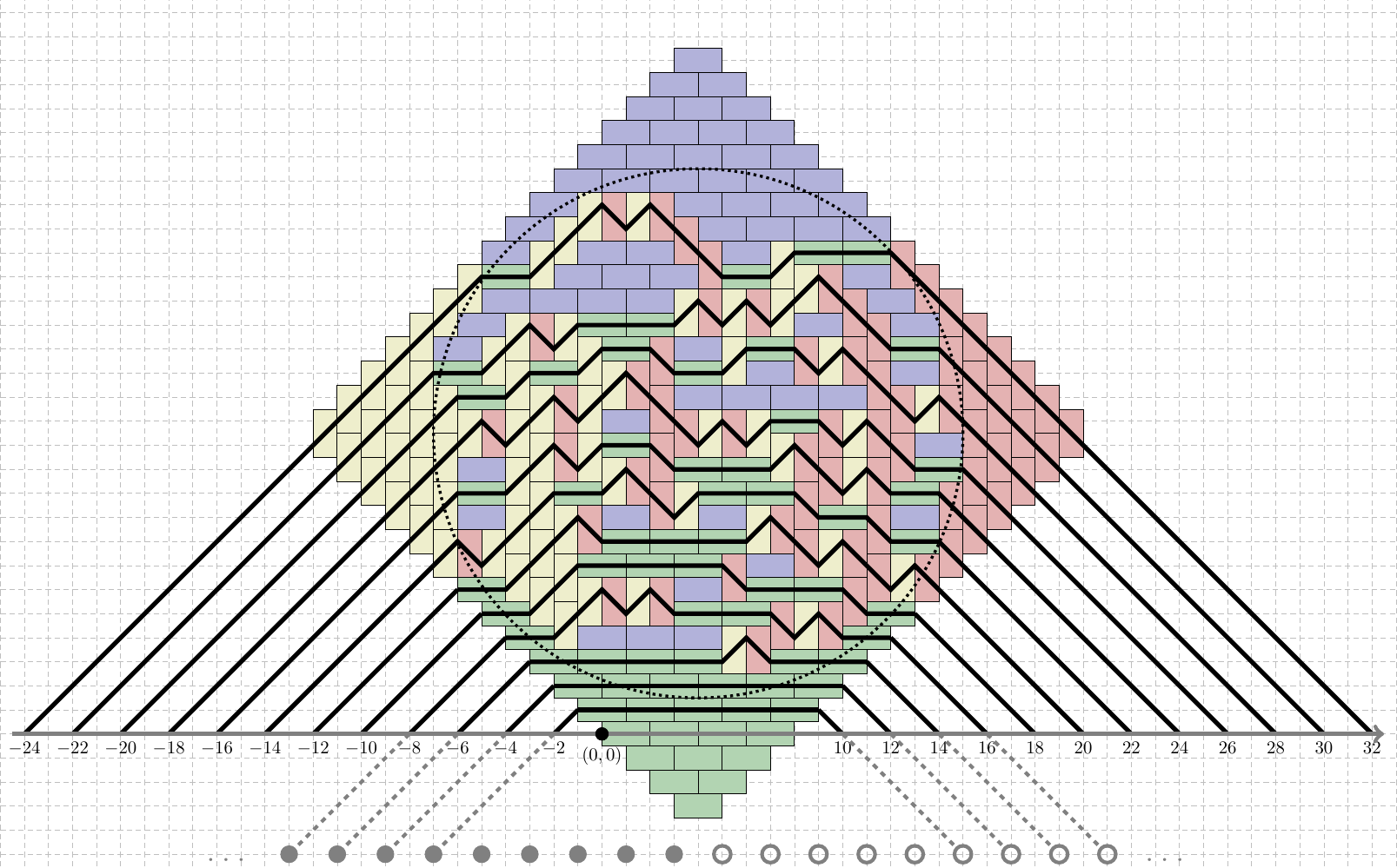}
		\centering
		\caption{Nonintersecting $0$-Sch\"oder paths
		from $\mathcal{M}_{12,5}$,
		the corresponding Aztec diamond $D_{n-1}$ of order $n-1=16$,
		and the arctic circle. The horizontal coordinate line is at the
		vertical height $0$. The gray points
		at the bottom are the starting (filled) and ending (hollow) points
		of the $(k+1-n)$-Schr\"oder paths representing the
		full domino tiling of the Aztec diamond.
		The dashed gray lines are the continuations of some of the paths from $\mathcal{M}_{k,n-k}$
		into paths from the ensemble $\mathcal{S}_n$.}
		\label{fig:motzkin_paths_aztec}
	\end{figure}

	A well-known
	correspondence (e.g., see \cite{eu2005simple})
	between domino tilings of the Aztec diamond
	and nonintersecting paths allows to interpret
	\begin{equation}
		\label{eq:determinant_NILP_count}
		\frac{\det [\ls_{n-k-2+i+j}]_{i,j=1}^k}{2^{\binom n2}}
	\end{equation}
	as a probability of a certain event in the uniformly random
	domino tiling of the Aztec diamond $D_{n-1}$ of order $n-1$.
	Let us explain this identification in detail.

	The total number of domino tilings of $D_{n-1}$ is
	$2^{\binom n2}$.
	Domino tilings of $D_{n-1}$ bijectively correspond to
	families of $n$ nonintersecting paths,
	for which we may take the $(k+1-n)$-Schr\"oder paths
	starting
	from $(n-k-2i,k+1-n)$, $i=1,\ldots,n$,
	and ending at $(n-k+2(i-1),k+1-n)$,
	respectively, where $i=1,\ldots,n$.
	Denote this ensemble of nonintersecting paths by $\mathcal{S}_n$,
	so $|\mathcal{S}_n|=2^{\binom n2}$.

	The $k$ paths from $\mathcal{M}_{k,n-k}$ may be continued diagonally
	to coincide with the outermost $k$ paths from the ensemble $\mathcal{S}_n$.
	This continuation procedure implies that the ratio
	$|\mathcal{M}_{k,n-k}|/|\mathcal{S}_n|$
	given by
	\eqref{eq:determinant_NILP_count}
	is equal to the probability that the $n-k$ paths
	in $\mathcal{S}_n\setminus \mathcal{M}_{k,n-k}$
	are in their \emph{lowest possible configuration}.
	See \Cref{fig:motzkin_paths_aztec} for an illustration.

	The model of uniformly random domino tilings of the Aztec diamond
	develops an arctic circle \cite{jockusch1998random}. In
	particular,
	the configuration outside of the circle
	inscribed in the Aztec diamond (illustrated in \Cref{fig:motzkin_paths_aztec})
	is frozen (nonrandom).
	Qualitatively,
	by \cite[Proposition~13]{cohn-elki-prop-96},
	this means that with probability $1-e^{-O(n)}$,
	the $n-k$ paths in $\mathcal{S}_n\setminus \mathcal{M}_{k,n-k}$
	are indeed in their lowest possible configuration.
	Note that here we rely on the assumption $k>n/\sqrt 2$,
	which guarantees that the top of the $n-k$ paths
	in $\mathcal{S}_n\setminus \mathcal{M}_{k,n-k}$
	does not reach the arctic circle.
	This establishes \eqref{eq:determinant_NILP},
	and completes the proof of \Cref{lemma:fbound}.
\end{proof}

\begin{remark}
	\label{rmk:colomo_pronko}
	The recent work \cite{colomo2024scaling}
	provides asymptotics of
	$\log_2 F(k, n - k)$ for all $k$, not necessarily in the range $k>n/\sqrt{2}$.
	For $k<n/\sqrt{2}$, the leading term
	is strictly smaller than~$\frac{n^2}2$,
	which agrees with the variational principle
	\cite[Theorem~1.3]{CohnKenyonPropp2000}.
	We do not need the regime $k<n/\sqrt{2}$ for \Cref{thm:gmax}.
\end{remark}

\begin{proof}[Proof of \Cref{thm:gmax}]
Let us denote $k_0 \coloneqq n$ and
$$
k_i \coloneqq n - b_1 - \ldots - b_i,
\quad i \ge 1,
$$
so that $b_i = k_{i-1} - k_i$.
We have by \Cref{prop:gprod} and \eqref{eq:elementary_layered_permutation_definition_intro} that
\begin{align} \label{eq:layered}
\Upsilon_{w(\ldots, b_2, b_1)}(1) &= \Upsilon_{w(\ldots, b_3, b_2)}(1) \cdot F(n-b_1,b_1)  \notag \\
&= \prod_{i \ge 1} F(n - b_1 - \ldots - b_i, b_i) = \prod_{i \ge 1} F(k_i, k_{i-1} - k_i).
\end{align}
From the inequalities on the $b_i$'s, we get
$$
\frac{k_i}{k_{i-1}} = \frac{n - b_1 - \ldots - b_i}{n - b_1 - \ldots - b_{i-1}}
\in (1/\sqrt{2}, 1).$$
Hence by \Cref{lemma:fbound}, we have
$$
\log_2 F(k_i, k_{i-1} - k_i) = \frac{k_{i-1}^2}{2} - \frac{k_i^2}{2} - O({k}_{i-1}).
$$
Therefore,
\begin{align*}
\frac{1}{n^2} \log_2 \Upsilon_{w(\ldots, b_2, b_1)}(1,\ldots, 1) &= \sum_{i \ge 1} \frac{1}{n^2} \log_2 F(k_i, k_{i-1} - k_i) \\
&= \sum_{i \ge 1} \frac{1}{n^2} \left(\frac{k_{i-1}^2}{2} - \frac{k_i^2}{2} - O(k_{i-1}) \right) \\
&= \frac{1}{2} - \frac{1}{n^2} O\left(\sum_{i \ge 1} k_{i-1} \right).
\end{align*}
Since $k_{i-1} = O(n)$ and there are $o(n)$ elements in our composition, we have
$$
\sum_{i \ge 1} k_{i} = o(n^2), \text{ and so }  \frac{1}{n^2} O\left(\sum_{i \ge 1} k_{i-1}  \right) = o(1),
$$
which completes the proof of \Cref{thm:gmax}.
\end{proof}

\begin{remark}[Layered vs all permutations]
	\label{rmk:layered global max}
	Recall the notation
	\eqref{eq:v_n_u_n_notation_intro},
	\eqref{eq:u_n_prime_definition_intro}.
	\Cref{thm:gmax} establishes that on layered permutations,
	the Grothendieck specializations $\Upsilon_w(1)$
	reach their asymptotic maximum: $u_n'(1)\sim 2^{\binom n2}$.
	However, it is not clear whether
	the absolute maximum, $u_n(1)$,
	is attained on layered permutations.
	Dennin
	\cite[Fig. 9]{dennin2022} verified that for $n\le 9$, one has
	$u_n(1)=u_n'(1)$.

	We present numerical data of
	optimal (i.e., achieving the exact, non-asymptotic maximum in $u_n'(1)$)
	layered
	permutations for $n\leq 500$ in
	\Cref{app:data_Groth}.
	We also comment on why these optimal layered permutations
	have parts whose behavior does not fall under the assumptions of \Cref{thm:gmax}.
\end{remark}

\subsection{Bounds on the constant for Grothendieck specializations with general $\beta$}
\label{sub:bounds_Groth}

In \cite{MPP4GrothExcited}, the authors conjectured that for $\beta$ fixed there exists a limit
\begin{equation}
	\label{eq:c_beta_defn}
	c(\beta)\coloneqq\lim_{n\to \infty} \frac{1}{n^2}\log_2 \upsilon_n(\beta).
\end{equation}
If the limit exists, it must satisfy
for $\beta\geq 0$:
\begin{equation} \label{eq:bounds from mpp4}
	\frac14 \log_2(2+\beta) \leq c(\beta) \leq \frac12 \log_2(2+\beta),
\end{equation}
see \cite[Section~7.12]{MPP4GrothExcited}.
The limit
\eqref{eq:c_beta_defn}
exists when $\beta=1$, see \eqref{eq:Grothendieck_specialization_v_n_u_n_intro},
and we have $c(1)=\frac{1}{2}$.
Here we
improve the bounds \eqref{eq:bounds from mpp4} in certain regimes:

\begin{proposition} \label{prop: new bounds from last square}
	If the limit $c(\beta)$ \eqref{eq:c_beta_defn} exists, then it must satisfy the following bounds.
	For $0 < \beta \leq 1$, we have
	\begin{equation*}
            \frac{1}{4} \max\{\log_2(2+\beta), 2\log_2(1+\beta)\} \leq c(\beta) \leq \frac{1}{2} \min\{\log_2(2+\beta), \log_2(1+1/\beta)\}.
					\end{equation*}
For $\beta \geq 1$, we have
\begin{equation*}
            \frac{1}{4} \max\{\log_2(2+\beta), 2\log_2(1+1/\beta)\} \leq c(\beta) \leq \frac{1}{2} \log_2(1+\beta).
					\end{equation*}
				See \Cref{fig:bounds_last_square} for an illustration of the bounds.
\end{proposition}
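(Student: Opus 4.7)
The proof reduces to a straightforward application of the pipe dream partition function formula \eqref{eq:Groth pd partition function} from \Cref{cor:partition function Grothendieck}. The key observation is that the set $\PD(n)$ is in bijection with the set of all subsets of the staircase shape $\updelta_n$ (independently choosing a cross or an elbow at each of the $\binom{n}{2}$ tiles), so that the ``unweighted'' partition function factors as
\begin{equation*}
\sum_{D \in \PD(n)} \beta^{\#\cross(D)} = (1+\beta)^{\binom{n}{2}}.
\end{equation*}

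I would then write $v_n(\beta) = \sum_{D \in \PD(n)} \beta^{\#\cross(D)} \cdot \beta^{-\ell(w(D))}$ and bound the second factor uniformly in $D$ using the trivial estimate $0 \leq \ell(w(D)) \leq \binom{n}{2}$. For $0 < \beta \leq 1$ this yields $1 \leq \beta^{-\ell(w(D))} \leq \beta^{-\binom{n}{2}}$, whereas for $\beta \geq 1$ we have $\beta^{-\binom{n}{2}} \leq \beta^{-\ell(w(D))} \leq 1$. Multiplying through by the identity above gives
\begin{equation*}
(1+\beta)^{\binom{n}{2}} \leq v_n(\beta) \leq (1+1/\beta)^{\binom{n}{2}} \quad (\beta \leq 1),
\qquad
(1+1/\beta)^{\binom{n}{2}} \leq v_n(\beta) \leq (1+\beta)^{\binom{n}{2}} \quad (\beta \geq 1).
\end{equation*}
Taking $\log_2$, dividing by $n^2$, and passing to the limit (assuming $c(\beta)$ exists) produces the new bounds $\tfrac12 \log_2(1+\beta)$ and $\tfrac12 \log_2(1+1/\beta)$ in the respective regimes.

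The final step is to combine these with the previously established bounds \eqref{eq:bounds from mpp4} of \cite{MPP4GrothExcited} by taking the maximum of lower bounds and the minimum of upper bounds; the factor of $\tfrac14$ in the statement arises because the new bounds are written as $\tfrac14 \cdot 2\log_2(\cdot)$ to match the form of the old ones. I do not expect any substantive obstacle: once the partition function identity is in hand, the entire argument is a one-line uniform estimate on $\beta^{-\ell(w)}$, and the only content is recognizing that this worst-case bound is already good enough to refine the existing bounds in the appropriate ranges of $\beta$.
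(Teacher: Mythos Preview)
Your proposal is correct and follows essentially the same argument as the paper: both use the pipe dream partition function identity \eqref{eq:Groth pd partition function}, bound $\beta^{-\ell(w(D))}$ uniformly via $0\le \ell(w(D))\le \binom{n}{2}$, sum to obtain $(1+\beta)^{\binom{n}{2}}$ and $(1+1/\beta)^{\binom{n}{2}}$ as the two endpoints, and then combine with the prior bounds \eqref{eq:bounds from mpp4}.
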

\begin{proof}
For $0<\beta \leq 1$ and
$D\in \PD(n)$, we can bound $ 1 \leq \beta^{-\ell(w(D))} \leq \beta^{-\binom{n}{2}}$.
Thus by \eqref{eq:Groth pd partition function}, we have
\begin{align*}
\sum_{D \in \PD(n)} \beta^{\#\cross(D)} \leq v_n(\beta) \leq \beta^{-\binom{n}{2}} \sum_{D \in \PD(n)} \beta^{\#\cross(D)},
\end{align*}
which leads to
$(1+\beta)^{\binom{n}{2}} \leq v_n(\beta) \leq \bigl(\frac{1+\beta}{\beta}\bigr)^{\binom{n}{2}}$.
Taking the logarithms and comparing with \eqref{eq:bounds from mpp4} gives the result.

For $\beta\geq 1$ and $D\in \PD(n)$, let us estimate
$ \beta^{-\binom{n}{2}} \leq \beta^{-\ell(w(D))} \leq 1$. Thus by \eqref{eq:Groth pd partition function}, we have
\begin{align*}
	\beta^{-\binom{n}{2}} \sum_{D \in \PD(n)} \beta^{\#\cross(D)} \leq v_n(\beta) \leq \sum_{D \in \PD(n)} \beta^{\#\cross(D)},
\end{align*}
yielding
$\bigl(\frac{1+\beta}{\beta}\bigr)^{\binom{n}{2}}\leq v_n(\beta) \leq (1+\beta)^{\binom{n}{2}}$,
which leads to the
desired bounds.
\end{proof}

\begin{figure}
\centering
    \includegraphics[width=0.3\textwidth]{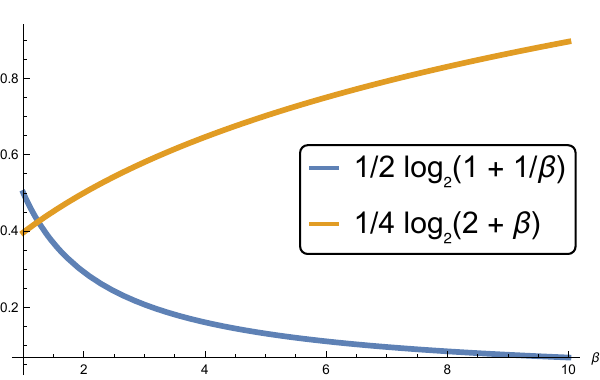} \quad   \includegraphics[width=0.3\textwidth]{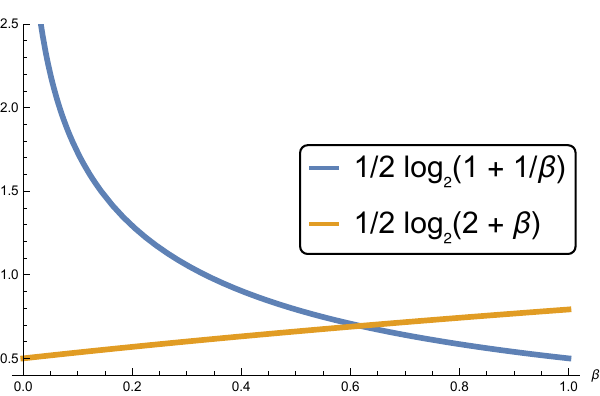} \quad
\includegraphics[width=0.3\textwidth]{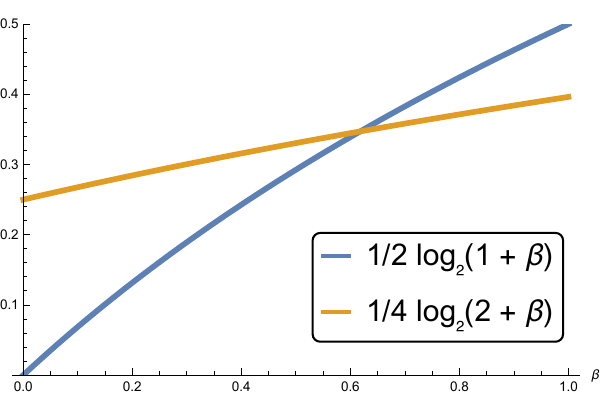}
    \caption{Comparing the bounds in \Cref{prop: new bounds from last square} for
		$\beta>1$ and $0<\beta<1$.}
		\label{fig:bounds_last_square}
\end{figure}

\section{Additional remarks and open problems}
\label{sec:add_remarks_and_open_problems}

\subsection{Optimal permutations achieving the exact maxima $u_n(1)$ and $u_n(\beta)$}

This project was motivated by Stanley's original question
from~\cite{stanley2017some} on determining the family of
permutations $w$ for which the Schubert specialization $\mathfrak{S}_w(1^n)$ is
asymptotically maximal. This question as well as the
asymptotic value remain widely open (see \cite[Table
1]{merzon2016determinantal} for data up to $n\leq 10$).
Dennin \cite[Section~4.2, Fig. 8]{dennin2022} extended this
question to Grothendieck specializations $\mathfrak{G}_w^{\beta=1}(1^n)$,
but one can ask the same question for general $\beta$.
Presumably, answering this
question would give rise to a family of optimal permutations
$w^\beta$, which for $\beta=0$ would answer the original
Schubert specialization question.

Among the candidates for
such a family $w^\beta$ are the layered permutations
\cite{merzon2016determinantal},
\cite{MoralesPakPanova2019}
(see
\Cref{app:data_Groth} for numerical evidence for $\beta=1$).
When $\beta=1$,
Dennin verified that
layered permutations indeed achieve the maximum in $u_n(1)$ for $n\leq 9$.
Our results of
\Cref{sub:asymptotically_maximal_specializations} suggest that layered
permutations are indeed good candidates
as they at least attain the asymptotic maximum in $u_n(1)$.

To obtain the asymptotics of Grothendieck specializations
on layered permutations for general $\beta$, one
has to study the asymptotics of the following Hankel
determinants:
\begin{equation}
	\label{eq:layered_Groth_det_general_beta}
	F(k,n,\beta)\coloneqq(1+\beta)^{-\binom{k}{2}}\det[\mathcal{L}_{n+i+j-2}(1+\beta)]_{i,j=1}^k,
\end{equation}
where $\mathcal{L}_n(x)$ is the Narayana polynomial, see \eqref{eq:Narayana gf}
and \Cref{thm:det_formula_Grothendieck_beta}.
For $\beta=0$, these determinants admit a product formula
and were analyzed in \cite{MPP4GrothExcited}.
We could access the special case $\beta=1$ due to the connection to
2-enumerated ASMs and domino tilings.
It would be interesting to find a deformation of the
domino tiling model related to the determinants \eqref{eq:layered_Groth_det_general_beta}.

\subsection{Typical vs optimal permutations}

One of our main results, \Cref{thm:Grothendieck_permutations_asymptotics_intro},
concerns ``typical'' Grothendieck
permutations.
By comparing the
permutation matrices in \Cref{fig:intro_simulations}, center,
and \Cref{fig:value_ps_Groth_layered}, left, we see that
typical permutations are far from layered.
However,
it is likely
that probabilities of typical Grothendieck random permutations
also achieve the asymptotic maximum.
For our permutations, this is not yet a rigorous statement,
as one has to prove the asymptotic uniformity of the probability weights
of typical Grothendieck permutations.

\subsection{Measures on permutations generated by elementary transpositions}
\label{sub:measures_on_permutations_oriented_swap}

Other Markov chains (and, more generally, stochastic processes on the symmetric
group) have been studied where
adjacent
permutations are connected
by an elementary transposition $s_i=(i,i+1)$, $i=1,\ldots,n-1 $.
These include
the oriented swap
process
\cite{angel2009oriented}, \cite{bufetov2022absorbing},
\cite{Zhang2023} which
is a natural dynamics on permutations
driven by independent nearest neighbor swaps,
which swap only pairs of indices that has not been swapped before.
The process starts from the identity and terminates at the full
reversal permutation.
Another model,
random sorting networks
\cite{angel2007randomSorting}, \cite{gorin2019random},
\cite{dauvergne2022archimedean}, is
obtained by putting a uniform probability measure
on all reduced words representing the full reversal permutation.
Random sorting networks cannot be
described by a simple Markov chain on permutations.

By design, both random sorting networks and the oriented swap
process terminate at the full reversal permutation,
and one is interested in the trajectories of individual elements.
In contrast, out Grothendieck random permutations are
obtained by running a Markov chain (reminiscent of the oriented
swap process) for a set amount of ``time'',
reading off the resulting permutation, and investigating its asymptotic properties.
Despite the differences in the setup,
in this work we observed certain asymptotic similarities
between the Grothendieck random permutations
and the models mentioned above.

\subsection{Local structure of Grothendieck permutations}

What is the local structure of the Grothendieck random permutations?
In \Cref{fig:intro_simulations}, left and center, one
can observe a certain local layered structure.
Understanding this local structure goes beyond the permuton convergence.
Can the local structure be
understood in terms of descent statistics, major index, or
pattern statistics other than the overall pattern counts?
For instance, layered permutations are the
ones maximizing the \emph{packing density} of the
pattern 132 \cite[Section~8.3.1]{Kit}.
Potential tools
for this analysis may be found in
\cite{borga2020local}. Alternatively,
one could translate the local behavior of random domino
tilings of the Aztec diamond \cite{Johansson2005arctic}
to the local behavior of Grothendieck permutations.

\subsection{The RS shape of a Grothendieck permutation}

Applying the Robinson--Schensted (RS) correspondence to large random
permutations, we numerically observe that the first column of the resulting
Young diagram grows linearly with $n$, while all other rows and column are of
order $\sqrt{n}$.\footnote{For layered permutations, all columns grow proportionally to $n$, while
the rows are of order $\log n$.}
This behavior (except for the first column)
is reminiscent of the limit shape phenomenon for Plancherel random
partitions exhibiting the
Vershik--Kerov--Logan--Shepp limit shape
\cite{VershikKerov_LimShape1077},
\cite{logan_shepp1977variational}.
It would be interesting to make these observations
rigorous, and to find the limit shape for the Grothendieck random permutations.
We are grateful to Philippe Biane for this suggestion.

\subsection{Random and maximal permutations for domain wall six-vertex model}

The Bumpless Pipe Dreams provide a surprising connection
of Schubert calculus and $K$-theory to
the six-vertex model, one of the central objects in
statistical mechanics.
The analysis of typical and maximal permutations for Schubert
and Grothendieck specializations prompts the analogous questions
about the six-vertex model.

Namely, consider the six-vertex
model with
domain wall boundary conditions and
general Boltzmann weights $a,b,c$ (for the 2-enumerated
ASMs, we have $a=b=1$ and $c=\sqrt{2}$).
Represent a random configuration of the six-vertex model
as a collection of pipes, and associate to it a random
permutation $\mathbf{w}$ by resolving the intersections
as in
\Cref{def:bumpless_pipe_dream_permutation}.
What is the behavior of $\mathbb{E}[\inv(\mathbf{w})]$ and
how does it vary depending on the weights of the six-vertex
model? Do the permutations $\mathbf{w}$ have a permuton limit?

\subsection{$q$-analogues and generalized principal specialization}

Zhang \cite{zhang2023qanalogue} extended the analysis in \cite{MoralesPakPanova2019} to the \emph{generalized principal Schubert specialization} $\mathfrak{S}_w(1,q,q^2,\ldots)$, when $q$ is a  root of unity. The analogue of layered permutations is the \emph{multi-layered} ones \cite[Section~4]{zhang2023qanalogue}. It would be interesting to extend our results to the Grothendieck specializations
	$\mathfrak{G}^{\beta}_w(1,q,q^2,\ldots)$ at a root of unity.
	However, having $q$ complex makes connections to probabilistic models less clear.

\appendix

\section{Data for maximal specializations on layered permutations}
\label{app:data_Groth}

\subsection{Dodgson condensation and data}

Principal specializations $\Upsilon_{w(b)}(\beta)$ of
Grothendieck polynomials with general $\beta$ on layered permutations
(see \Cref{sub:intro_Groth_spec} for notation)
can be maximized using dynamic programming, similarly to how this is done
for Schubert specializations in \cite{MoralesPakPanova2019}.
To implement it, one needs an efficient way to compute the
determinants
$D_{\beta}(n,k)\coloneqq \det[\mathcal{L}_{n+i+j-2}(1+\beta)]_{i,j=1}^k$
from
\Cref{thm:det_formula_Grothendieck_beta}.
A recursion
follows from
the Dodgson condensation
(Desnanot--Jacobi identity)
\cite{Dodgson1866}, \cite{Muir1906}:

\begin{proposition}
	For $k,n\in \mathbb{Z}_{\ge0}$,
	we have
\[
D_\beta(n,k) \,=\, \frac{1}{D_\beta(n+2,k-2)}\left(D_\beta(n+2,k-1)D_\beta(n,k-1) - D_\beta(n+1,k-1)^2\right),
\]
with initial conditions $D_\beta(n,0)=1$, $D_\beta(n,1)=\mathcal{L}_n(1+\beta)$, and $D_\beta(0,k)=(1+\beta)^{\binom{k}{2}}$.
\end{proposition}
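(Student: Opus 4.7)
The plan is to derive the recursion as a direct application of the classical Desnanot--Jacobi identity (the algebraic identity behind Dodgson's condensation method). Recall that for any $k\times k$ matrix $M$,
\[
\det(M)\cdot\det\bigl(M^{1,k}_{1,k}\bigr)
=\det\bigl(M^{1}_{1}\bigr)\det\bigl(M^{k}_{k}\bigr)
-\det\bigl(M^{1}_{k}\bigr)\det\bigl(M^{k}_{1}\bigr),
\]
where $M^{I}_{J}$ denotes the matrix obtained from $M$ by deleting the rows indexed by $I$ and the columns indexed by $J$.

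The key observation is that $M_k(n)\coloneqq [\mathcal{L}_{n+i+j-2}(1+\beta)]_{i,j=1}^k$ is a Hankel matrix: its $(i,j)$ entry depends only on the sum $i+j$. Consequently, each of the five sub-determinants appearing in Desnanot--Jacobi is again a Hankel determinant of the same form, only with shifted parameters. I would verify by elementary index shifts that deleting row $1$ and column $1$ (respectively row $k$ and column $k$) from $M_k(n)$ yields $M_{k-1}(n+2)$ (respectively $M_{k-1}(n)$); that deleting row $1$ and column $k$, or symmetrically row $k$ and column $1$, yields $M_{k-1}(n+1)$ in both cases (the identity of the two off-diagonal minors is automatic since $M_k(n)$ is symmetric); and that deleting rows $\{1,k\}$ and columns $\{1,k\}$ yields $M_{k-2}(n+2)$. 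Substituting these identifications into Desnanot--Jacobi gives
\[
D_\beta(n,k)\cdot D_\beta(n+2,k-2)
=D_\beta(n,k-1)\cdot D_\beta(n+2,k-1)-D_\beta(n+1,k-1)^2,
\]
and dividing by $D_\beta(n+2,k-2)$ yields the stated recursion.

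Finally I would check the initial conditions. The equalities $D_\beta(n,0)=1$ (empty determinant) and $D_\beta(n,1)=\mathcal{L}_n(1+\beta)$ are immediate from the definition. The formula $D_\beta(0,k)=(1+\beta)^{\binom{k}{2}}$ reduces, after the reindexing $i\mapsto i-1$, $j\mapsto j-1$, to the classical Hankel determinant identity $\det[\mathcal{L}_{i+j}(x)]_{i,j=0}^{k-1}=x^{\binom{k}{2}}$ for the Narayana polynomials; this can be proved by the Lindstr\"om--Gessel--Viennot lemma applied to suitable nonintersecting Schr\"oder-type paths, or alternatively by induction on $k$ using the recursion itself together with the base cases at $k=0,1$.

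The main ``obstacle'' is really just the bookkeeping in identifying the five Desnanot--Jacobi minors with the appropriate shifted $D_\beta$; once this matching is in place, the recursion follows automatically, and no asymptotic or analytic input is required.
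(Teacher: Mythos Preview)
Your argument for the recursion is exactly the paper's: apply Desnanot--Jacobi to the Hankel matrix $M_k(n)=[\mathcal{L}_{n+i+j-2}(1+\beta)]_{i,j=1}^k$ and identify the five minors with the shifted determinants $D_\beta(n+2,k-2)$, $D_\beta(n+2,k-1)$, $D_\beta(n,k-1)$, $D_\beta(n+1,k-1)$. The only point of divergence is the boundary value $D_\beta(0,k)=(1+\beta)^{\binom{k}{2}}$: rather than invoking a separate Hankel evaluation for Narayana polynomials via LGV or an auxiliary induction, the paper simply reads it off from \Cref{thm:det_formula_Grothendieck_beta} by noting that $w_0(0;k)=\mathrm{id}_k$, so $\mathfrak{G}^\beta_{w_0(0;k)}=1$ and hence $1=(1+\beta)^{-\binom{k}{2}}D_\beta(0,k)$. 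This is shorter and stays entirely within the paper's existing framework; your LGV route is a legitimate self-contained alternative, but the inductive option you mention would require closed forms for $D_\beta(1,k-1)$ and $D_\beta(2,k-1)$ as well, so it is less clean than it first appears.
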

\begin{proof}
	Let $M(n,k)=[\mathcal{L}_{n+i+j-2}(1+\beta)]_{i,j=1}^k$, so
	$D_{\beta}(n,k)=\det M(n,k)$. Clearly,
	\begin{equation*}
		D_{\beta}(n,0)=1,\qquad
		D_{\beta}(n,1)=\mathcal{L}_n(1+\beta).
	\end{equation*}
	For $n=0$, the
	permutation $w_0(0;k)=12\cdots k$ is the identity, and so
	$\mathfrak{G}^\beta_{w_0(0;k)}=1$. By
	\Cref{thm:det_formula_Grothendieck_beta}, we have
\[
1 = \Upsilon_{w_0(n,k)}(\beta) = (1+\beta)^{-\binom{k}{2}} \det[\mathcal{L}_{i+j-2}(1+\beta)]_{i,j=1}^k.
\]
Thus, $D_\beta(0,k)=\det M(0,k)=(1+\beta)^{\binom{k}{2}}$.

For $1\leq i,j,p,q\leq k$, let $M_{i,j}^{p,q}$ denote the matrix obtained from $M$
by deleting rows $i$ and $j$ and columns $p$ and $q$.
Similarly, let $M_i^p$ be obtained from $M$ by deleting row $i$ and column $p$.
By the Dodgson condensation, we have
\[
\det(M(n,k))\det(M(n,k)^{1,k}_{1,k}) \,=\, \det(M(n,k)^1_1)\det(M(n,k)^k_k) - \det(M(n,k)^k_1)\det(M(n,k)^1_k).
\]
Observe that
\begin{align*}
	M(n,k)^{1,k}_{1,k} &= M(n+2,k-2),\quad M(n,k)^1_1 = M(n+2,k-1),\\ \quad M(n,k)^k_k&=M(n,k-1),\quad M(n,k)^k_1=M(n,k)^1_k=M(n+1,k-1).
\end{align*}
This implies the result.
\end{proof}

The implementation of the recursion for $D_\beta(n,k)$ combined with dynamic programming
yields the data in \Cref{tab:layered_permutations}.
This data agrees with the asymptotic behavior $f(n)\to\frac{1}{2}$
obtained in \Cref{thm:gmax}.

\begin{table}[htbp]
	\scalebox{.96}{\small
\begin{minipage}{0.42\textwidth}
$$
\begin{array}{rrr}
\hline
n & (\ldots,b_2,b_1) & \qquad f(n)\\ \hline
2&(1, 1)&0.00000\\
3&(1, 2)&0.17611\\
4&(1, 3)&0.21621\\
5&(1, 1, 3)&0.24599\\
6&(1, 2, 3)&0.28068\\
7&(1, 2, 4)&0.31068\\
8&(1, 2, 5)&0.32354\\
9&(1, 3, 5)&0.33953\\
10&(1, 1, 3, 5)&0.34821\\
11&(1, 1, 3, 6)&0.35956\\
12&(1, 2, 3, 6)&0.36955\\
13&(1, 2, 4, 6)&0.37800\\
14&(1, 2, 4, 7)&0.38614\\
15&(1, 2, 4, 8)&0.39085\\
16&(1, 2, 5, 8)&0.39618\\
17&(1, 3, 5, 8)&0.40138\\
18&(1, 3, 5, 9)&0.40550\\
19&(1, 1, 3, 5, 9)&0.40887\\
20&(1, 1, 3, 6, 9)&0.41252\\
21&(1, 1, 3, 6, 10)&0.41605\\
22&(1, 2, 3, 6, 10)&0.41946\\
23&(1, 2, 4, 6, 10)&0.42223\\
24&(1, 2, 4, 6, 11)&0.42517\\
25&(1, 2, 4, 7, 11)&0.42797\\
26&(1, 2, 4, 7, 12)&0.43021\\
27&(1, 2, 4, 8, 12)&0.43206\\
28&(1, 2, 5, 8, 12)&0.43392\\
29&(1, 2, 5, 8, 13)&0.43590\\
30&(1, 3, 5, 8, 13)&0.43780\\
40&(1, 2, 4, 6, 10, 17)&0.45099\\
50&(1, 3, 5, 8, 13, 20)&0.45956\\
60&(1, 1, 3, 6, 10, 15, 24)&0.46537\\
70&(1, 2, 4, 7, 11, 18, 27)&0.46983\\
80&(1, 2, 5, 8, 13, 20, 31)&0.47312\\
90&(1, 1, 3, 5, 9, 14, 23, 34)&0.47573\\
100&(1, 2, 3, 6, 10, 16, 25, 37)&0.47792\\
110&(1, 2, 4, 7, 11, 17, 27, 41)&0.47975\\
120&(1, 2, 4, 8, 12, 19, 30, 44)&0.48125\\
 \hline
\end{array}
$$
\end{minipage}
\qquad
\begin{minipage}{0.55\textwidth}
$$
\begin{array}{rrr}
\hline
n & (\ldots,b_2,b_1) & \quad f(n)\\ \hline
130&(1, 3, 5, 8, 13, 21, 32, 47)&0.48255\\
140&(1, 1, 3, 5, 9, 14, 22, 34, 51)&0.48367\\
150&(1, 1, 3, 6, 10, 15, 24, 36, 54)&0.48465\\
160&(1, 2, 4, 6, 10, 16, 25, 38, 58)&0.48552\\
170&(1, 2, 4, 7, 11, 17, 27, 41, 60)&0.48631\\
180&(1, 2, 4, 7, 12, 19, 28, 43, 64)&0.48701\\
190&(1, 2, 5, 8, 12, 20, 30, 45, 67)&0.48763\\
200&(1, 3, 5, 8, 13, 21, 32, 47, 70)&0.48820\\
210&(1, 3, 5, 9, 14, 22, 33, 49, 74)&0.48871\\
220&(1, 1, 3, 5, 9, 15, 23, 34, 52, 77)&0.48917\\
230&(1, 1, 3, 6, 10, 15, 24, 36, 54, 80)&0.48961\\
240&(1, 2, 3, 6, 10, 16, 25, 38, 56, 83)&0.49001\\
250&(1, 2, 4, 6, 10, 17, 26, 39, 59, 86)&0.49038\\
260&(1, 2, 4, 7, 11, 17, 27, 41, 61, 89)&0.49072\\
270&(1, 2, 4, 7, 12, 18, 28, 42, 63, 93)&0.49104\\
280&(1, 2, 4, 8, 12, 19, 29, 44, 65, 96)&0.49134\\
290&(1, 2, 5, 8, 12, 20, 30, 45, 68, 99)&0.49161\\
300&(1, 3, 5, 8, 13, 20, 31, 47, 70, 102)&0.49187\\
310&(1, 3, 5, 8, 14, 21, 32, 48, 72, 106)&0.49211\\
320&(1, 3, 5, 9, 14, 22, 33, 50, 74, 109)&0.49234\\
330&	(1, 1, 3, 5, 9, 14, 23, 34, 52, 76, 112)&	0.49256\\
340	&(1, 1, 3, 6, 9, 15, 23, 35, 53, 79, 115)&	0.49276\\
350&	(1, 1, 3, 6, 10, 15, 24, 36, 55, 81, 118)&	0.49295\\
360&	(1, 2, 3, 6, 10, 16, 25, 37, 56, 83, 121)&	0.49314\\
370&	(1, 2, 4, 6, 10, 16, 25, 38, 58, 85, 125)&	0.49331\\
380&	(1, 2, 4, 6, 11, 17, 26, 39, 59, 87, 128)&	0.49347\\
390&	(1, 2, 4, 7, 11, 17, 27, 41, 60, 89, 131)&	0.49363\\
400&	(1, 2, 4, 7, 11, 18, 27, 42, 62, 92, 134)&	0.49378\\
410&	(1, 2, 4, 7, 12, 18, 28, 43, 64, 94, 137)&	0.49392\\
420&	(1, 2, 4, 7, 12, 19, 29, 44, 65, 96, 141)&	0.49406\\
430&	(1, 2, 5, 8, 12, 19, 30, 45, 67, 98, 143)&	0.49419\\
440&	(1, 2, 5, 8, 13, 20, 30, 46, 68, 100, 147)&	0.49431\\
450&	(1, 3, 5, 8, 13, 20, 31, 47, 70, 102, 150)&	0.49443\\
460&	(1, 3, 5, 8, 13, 21, 32, 48, 71, 105, 153)&	0.49454\\
470&	(1, 3, 5, 8, 14, 21, 33, 49, 73, 107, 156)&	0.49465\\
480&	(1, 3, 5, 9, 14, 22, 33, 50, 74, 109, 160)&	0.49476\\
490&	(1, 1, 3, 5, 9, 14, 22, 34, 51, 76, 111, 163)&	0.49486\\
500&	(1, 1, 3, 5, 9, 15, 23, 35, 52, 77, 113, 166)&	0.49495\\ \hline
\end{array}
$$
\end{minipage}
}
\caption{Tuples $b$ of layered permutations $w(b)$ maximizing $u'_n(1)$ for $n=2,\ldots, 30$, and every $10$ after up to $500$.
The third column is $f(n)\coloneqq  \frac{1}{n^2} \log_2 u'_n(1)$.
The full table up to $n=750$ is available on the \texttt{arXiv} as an ancillary \texttt{CSV} file.}
\label{tab:layered_permutations}
\end{table}

\subsection{Optimal layered permutations and \texorpdfstring{\Cref{thm:gmax}}{asymptotic maximum}}
\label{app:layered_permutations_discussion}

Observe that the optimal layered permutations given in
\Cref{tab:layered_permutations} do not fall into the
description of sequences given in \Cref{thm:gmax}.
For example, for $n=700$, the numerically optimal layered permutation
has parts $b^{(700)}_{\mathrm{opt}}=(1, 3, 5, 9, 14, 21, 33, 49, 73, 107, 157, 228)$,
whereas a sequence $b^{(700)}$ from \Cref{thm:gmax}
would satisfy the bound $b_1=n-k \le 700/(2 + \sqrt{2}) \approx
205$ on its largest part.
The parts of $b^{(700)}_{\mathrm{opt}}$ are suspiciously close to
the Fibonacci sequence (see \Cref{tab:layered_permutations_Fibonacci}).
Let us explain this discrepancy.

Optimal
layered permutations can be constructed inductively via the
optimization
$$
\max_{b_1 + b_2 + \cdots = n} \Upsilon_{w(\ldots, b_2, b_1)}(1) =
\max_{k} \max_{b_2 + b_3+\cdots = k}\bigl\{ \Upsilon_{w(\ldots,b_3, b_2)}(1) \cdot F(k, n-k)
\bigr\},
$$
where $b_1 = n-k$.
Assume that $\max_{b_2 +b_3+ \cdots = k} \Upsilon_{w(\ldots,
b_3,b_2)}(1) = 2^{\binom k2}  \cdot g(k)$, where $g(k) =
2^{-o(k^2)}$. To extract the first part $b_1$, we thus need to
find
\begin{equation*}
	\max_k
	\bigl\{ 2^{\binom k2}\cdot g(k) \cdot F(k, n-k) \bigr\}
	=
	\max_k \bigl\{g(k)\cdot G(k,n-k)\bigr\},
\end{equation*}
see \eqref{eq:F_G_notation_for_layered} for the notation.
On the other hand, in the proof of \Cref{thm:gmax} (in particular, see
\Cref{lemma:fbound}),
we estimated the asymptotic behavior of
$F(k,n-k)$, and
found that the coefficient
by $n^2$ in $\log_2 F(k,n-k)$ does not depend on $k$ as
long as $k>n/\sqrt{2}$.
These two problems lead to different choices of $k$.
See also
the
illustration in \Cref{fig:F_G_optimization}.

\begin{table}[htb]
\[
\begin{array}{ccccccccccccccccc} \hline
n     & 1&1& 2& 3& 5& 8& 13& 21& 34& 55& 89& 144& 233& 377& 610& \\
k^*&&&&1& 2& 3& 5& 8& 13& 21& 34& 56& 90& 147& 237\\ \hline
\end{array}
\]
\caption{If $n$ runs over the Fibonacci numbers, the
optimal $k^*$
maximizing $G(k,n-k)$
is close to the Fibonacci number
of index $2$ less. Note that $k^*$ is not the same as $n-b_1$ in \Cref{tab:layered_permutations}
due to the presence of the lower order term $g(k)$.}
\label{tab:layered_permutations_Fibonacci}
\end{table}

\begin{figure}
    \centering
    \includegraphics[width=0.4\linewidth]{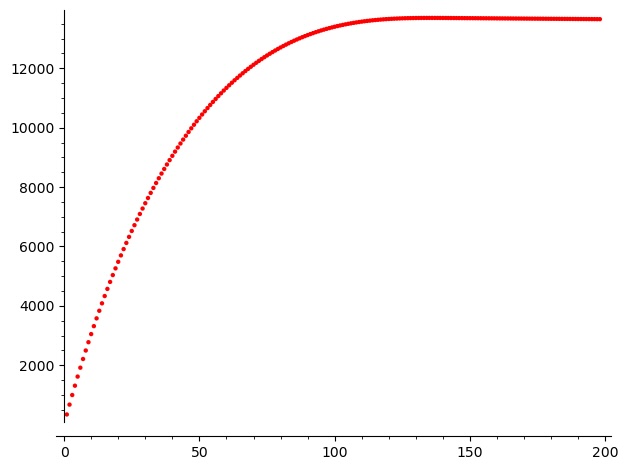}
    \qquad
    \includegraphics[width=0.4\linewidth]{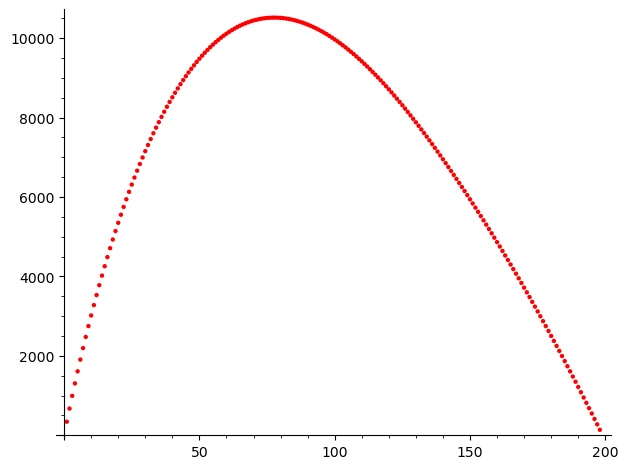}
		\caption{Plot of $\log_2 G(k,200-k)$ (\textbf{left})
			and $\log_2
			F(k,200-k)$ (\textbf{right}) for $k=1,\ldots,200$. The maxima on the figures
		occurs at $k=135$ and $k=78$, respectively.}
		\label{fig:F_G_optimization}
\end{figure}

The discrepancy between optimal layered permutations and the ones appearing in
\Cref{thm:gmax} should be of order $o(n)$ in the large $n$ limit. Indeed,
assume that $k/n<1/\sqrt{2}$. Then, as in the proof of
\Cref{lemma:fbound} (see also \Cref{rmk:colomo_pronko}),
one can show that $\log_2 F(k,n-k)\sim \alpha n^2-k^2/2-O(n)$ for some
$\alpha<\frac{1}{2}$, so
$2^{\binom k2} g(k) \ssp F(k, n-k)\sim 2^{\alpha n^2-o(n^2)}$,
which is not maximal.
We believe that the discrepancy of the part sizes
in optimal layered permutations is of order
$o(n)$ as $n\to\infty$.

\section{Limit shape and fluctuations in TASEP from contour integrals}
\label{sec:app_TASEP_asymptotics}

In this section we compute the constants
$\mathsf{c},\mathsf{v}_1,\mathsf{v}_2$
from \Cref{def:TASEP_constants}, and
show how to establish
\Cref{thm:TASEP_limit} by an asymptotic analysis of the correlation kernel.
We omit tedious but standard
estimates,
as they are
very similar to those in \cite[Section~3]{johansson2000shape}
or \cite{Petrov2017push}.

\subsection{Fredholm determinant}
\label{sub:TASEP_setup_app}

Recall that $\bar\xi_m(t)=\xi_m(t)-\xi_m(0)$ is the displacement of the $m$-th particle
at time $t$.
By \Cref{prop:TASEP_Schur_measure,prop:det_Schur},
we have
\begin{equation}
	\label{eq:lambda_m_t_Schur}
	\Prob_{\mathrm{TASEP}}(\bar\xi_{m}(t)\ge u)=
	\Prob(\lambda_m\ge u),
\end{equation}
where $\lambda_m$ is a point of the determinantal point process
$X(\lambda)=\{ \lambda_j+m-j \}_{j=1}^{m}\subset \mathbb{Z}_{\ge0}$ with the correlation kernel
\eqref{eq:kernel_Schur}:
\begin{equation}
	\label{eq:kernel_Schur_app}
	K(u_1,u_2)=\frac{1}{(2\pi\mathbf{i})^2}\oiint \frac{dz \ssp dw}{z-w}\frac{w^{u_2}}{z^{u_1+1}}
	\left( \frac{z-p}{w-p} \right)^{m}
	\left( \frac{1-w}{1-z} \right)^{t},
	\qquad
	u_1,u_2\in \mathbb{Z}_{\ge0},
\end{equation}
where the contours are positively oriented simple closed curves satisfying
$p<|w|<|z|<1$.
\begin{lemma}
	\label{lemma:Fredholm}
	The probability
	in the right-hand side of
	\eqref{eq:lambda_m_t_Schur} is equal to the Fredholm determinant
	of the kernel $K$ on $\{0,1,\ldots,u-1\}$:
	\begin{equation}
		\label{eq:Fredholm_Schur}
		\Prob(\lambda_m\ge u)=
		\det\left[ \mathrm{Id}-K \right]_{\le u-1}
		\coloneqq
		1+\sum_{\ell=1}^{\infty}\frac{(-1)^\ell}{\ell!}
		\sum_{i_1,\ldots,i_\ell=0}^{u-1}\det\left[ K(i_a,i_b) \right]_{a,b=1}^{\ell},
	\end{equation}
	where $\mathrm{Id}$ is the identity operator.
\end{lemma}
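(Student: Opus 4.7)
The plan is to reduce the statement to the standard gap-probability formula for determinantal point processes, which then gives the Fredholm determinant tautologically once the correlation kernel is known.

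First, I would translate the event $\{\lambda_m \ge u\}$ into a gap event for the point configuration $X(\lambda) = \{\lambda_j + m - j\}_{j=1}^m$. The key observation is that the differences $(\lambda_j + m - j) - (\lambda_{j+1} + m - (j+1)) = \lambda_j - \lambda_{j+1} + 1 \ge 1$ are strictly positive, so $j \mapsto \lambda_j + m - j$ is strictly decreasing and $\lambda_m = \min X(\lambda)$. (If $\lambda$ has fewer than $m$ parts, the ``extra'' zeros produce the initial segment $\{0, 1, \ldots\}$ in $X(\lambda)$, in which case $\lambda_m = 0$ and both sides of the claimed identity vanish for $u \ge 1$.) Consequently,
\begin{equation*}
\{\lambda_m \ge u\} \;=\; \{X(\lambda) \cap \{0, 1, \ldots, u-1\} = \emptyset\}.
\end{equation*}

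Second, I would apply inclusion--exclusion to this gap event. Writing $A = \{0, 1, \ldots, u-1\}$ and letting $\rho_\ell(i_1, \ldots, i_\ell)$ denote the $\ell$-point correlation functions of $X(\lambda)$, one has
\begin{equation*}
\Prob\bigl(X(\lambda)\cap A = \emptyset\bigr) \;=\; \sum_{\ell=0}^{\infty} \frac{(-1)^\ell}{\ell!} \sum_{i_1, \ldots, i_\ell \in A} \rho_\ell(i_1, \ldots, i_\ell),
\end{equation*}
where the $\ell=0$ term equals $1$. Since $|A| = u$ is finite and $X(\lambda)$ has at most $m$ points, this sum is actually finite (terminating at $\ell = \min(u, m)$), so there are no convergence issues.

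Third, by the determinantal property (Proposition \ref{prop:det_Schur}), $\rho_\ell(i_1, \ldots, i_\ell) = \det[K(i_a, i_b)]_{a,b=1}^\ell$ with $K$ given by the double contour integral \eqref{eq:kernel_Schur_app}. Substituting into the inclusion--exclusion expansion yields precisely the Fredholm determinant on the right-hand side of \eqref{eq:Fredholm_Schur}. There is essentially no obstacle here: the content of the lemma is entirely a bookkeeping consequence of the determinantal structure already recorded in Section \ref{sub:determinantal_structure}, and the identification of $\lambda_m$ with $\min X(\lambda)$.
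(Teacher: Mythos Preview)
Your proof is correct and follows essentially the same approach as the paper's: both identify $\{\lambda_m \ge u\}$ with the gap event $\{X(\lambda)\cap\{0,\ldots,u-1\}=\emptyset\}$ and then invoke inclusion--exclusion together with the determinantal correlation structure. You simply spell out explicitly (via $\lambda_m=\min X(\lambda)$ and the inclusion--exclusion expansion) what the paper delegates to standard references.
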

\begin{proof}
	The event $\{\lambda_m\ge u\}$ means that in the determinantal
	point process $X(\lambda)$, no particles
	are located in $\{0,1,\ldots,u-1 \}$.
	It is a well-known property of determinantal point processes that this
	probability is given by a Fredholm determinant, see e.g.
	\cite[Theorem~2]{Soshnikov2000}.
	This statement can be traced back to the inclusion--exclusion principle
	(e.g., see \cite[Appendix~A.3]{Borodin2000b}).
	Note that the sum over $\ell$ in \eqref{eq:Fredholm_Schur} is finite, as
	for $u>\ell$, the $\ell\times \ell$ determinant must have identical rows.
	Further details on Fredholm determinants may be found in
	\cite{Simon2005} or \cite{Bornemann_Fredholm2010}.
\end{proof}

Let us assume that
$m=\lfloor L\ssp \mathsf{m} \rfloor$,
$t=\lfloor L\ssp \mathsf{t} \rfloor$,
$u=\lfloor L\ssp \mathsf{u} \rfloor$,
where $L\to \infty$. By \Cref{lemma:a_priori_zero_of_limit_shape},
it suffices to consider the case $\mathsf{t}\ge\mathsf{m}/p$,
which we assume throughout the present
\Cref{sec:app_TASEP_asymptotics}.

\subsection{Steepest descent and law of large numbers}
\label{sub:steepest_descent_app}

The asymptotic analysis of $K(u_1,u_2)$ is done by
steepest descent (see, e.g., \cite[Section~3]{Okounkov2002} for an accessible introduction).
The integrand in \eqref{eq:kernel_Schur_app} has the form
\begin{equation}
	\label{eq:K_integrand}
	\frac{1}{z-w}\frac{w^{u_2}}{z^{u_1+1}}
	\left( \frac{z-p}{w-p} \right)^{m}
	\left( \frac{1-w}{1-z} \right)^{t}
	=\frac{\exp\left\{ L \bigr(
	S(z;u_1/L)
	-S(w;u_2/L)
	\right)
	+O(1)
	\bigr\}}{z(z-w)},
\end{equation}
where
\begin{equation}
	\label{eq:S_function}
	S(z;\mathsf{u})\coloneqq
	-\mathsf{u}\log z+
	\mathsf{m}\log(z-p)
	-\mathsf{t}\log(1-z).
\end{equation}
The term $O(1)$
arises from dropping integer parts in $m$ and $t$.
It is
negligible in the limit because the determinantal process
will be scaled from discrete to continuous space.

The point $\lambda_m$ is at the left edge of the determinantal process.
Therefore, to catch its asymptotic location $\mathsf{u}=\mathsf{c}(\mathsf{m},\mathsf{t})$,
we need to find a \emph{double critical point} of the function
$z\mapsto S(z;\mathsf{u})$.

\begin{remark}
	\label{rmk:double_critical_point}
	Let us explain the need for the double critical point
	in more detail.
	By \eqref{eq:determinantal_process_definition}, we have
	$K(u,u)=\Prob(u\in X(\lambda))$, which is the density
	of the random point configuration $X(\lambda)$.
	The equation $S'(z;\mathsf{u})=0$ for single critical points
	is quadratic, and its discriminant is
	a function of $p$ and our parameters $(\mathsf{m},\mathsf{t},\mathsf{u})$.
	Fox fixed $\mathsf{m}$ (viewed as a parameter of the determinantal
	process and not the scaled index on the last part $\lambda_m$),
	there are three possibilities depending on $(\mathsf{t},\mathsf{u})$:
	\begin{enumerate}[$\bullet$]
		\item If the discriminant is negative,
			then
			$K(u,u)$ converges to a value strictly between $0$ and $1$. Therefore,
			at the scaled time $\mathsf{t}$ around the scaled location
			$\mathsf{u}$ there is a random configuration of points from $X(\lambda)$
			of density strictly between zero and one.
		\item If the discriminant is positive, then
			$K(u,u)$ converges to $0$ or $1$,
			and around $\mathsf{u}$ there is either a densely packed, or an empty
			region of points from $X(\lambda)$.
		\item The case of zero discriminant (double critical points)
			is at the transition between the two cases, and
			thus
			corresponds to the edge of the random configuration $X(\lambda)$.
	\end{enumerate}
\end{remark}

Solving the double critical point equations $S'(z;\mathsf{u})=S''(z;\mathsf{u})=0$ in
$z,\mathsf{u}$, we find
\begin{equation}
	\label{eq:correct_edge_zc_u}
	z_{cr}=
	\frac{\sqrt{p\ssp \mathsf{t}}-\sqrt{\mathsf{m}}}{\sqrt{\mathsf{t}/p}-\sqrt{\mathsf{m}}}
	,\qquad
	\mathsf{u}_{cr}=
	\frac{(\sqrt{p\ssp \mathsf{t}}-\sqrt{\mathsf{m}})^2}{1-p}.
\end{equation}
Here out of two solutions to quadratic equations, we picked
the one with the smaller $\mathsf{u}_{cr}$ which corresponds to the left edge of $X(\lambda)$.
Note that the solution $\mathsf{u}_{cr}$ in \eqref{eq:correct_edge_zc_u} is the same as
the constant $\mathsf{c}(\mathsf{m},\mathsf{t})$
from \Cref{def:TASEP_constants}, which is not a coincidence.

Let us deform the integration contours in
the kernel \eqref{eq:kernel_Schur_app} to pass near the double critical point
$z_{cr}$. This point lies inside the interval $(0,p)$, while
the original contours satisfy $p<|w|<|z|<1$.
Due to high multiplicity poles in the integrand,
the $w$ contour should not be deformed through $p$, and
the $z$ contour should not be deformed through $0$ and $1$.

We deform the contours as follows. First,
deform $w$ such that the new $w$ contour goes around $p$ and passes to the right of $z_{cr}$.
Then, deform $z$ through $w$, such that
the new $z$ contour goes around $0$ and passes to the left of $z_{cr}$.
Dragging $z$ through $w$ picks up a residue at $z=w$, which needs to be integrated over the new $w$ contour.
However, taking the residue at $z=w$ eliminated the pole at $w=p$, so the
new single integral is zero.
This implies that the contour deformation does not produce extra terms in the kernel.
See \Cref{fig:contour_deformation}
for an illustration.

\begin{figure}[htpb]
	\centering
	\includegraphics[width=0.75\textwidth]{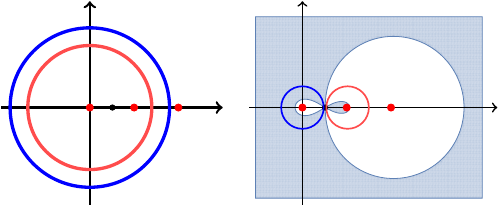}
	\caption{\textbf{Left}: Original contours in $K$ \eqref{eq:kernel_Schur_app}.
	\textbf{Right}: Deformed contours passing through the critical point. All contours are positively oriented.
	\\
	The
	highlighted region in the right plot is $\Re \bigl(S(z;\mathsf{u}_{cr})- S(z_{cr};\mathsf{u}_{cr})\bigr)<0$.
	The new $z$ and $w$ contour must lie inside (resp., outside) of this region
	for the contribution outside a neighborhood of $z_{cr}$ to vanish in the limit.
	In this example, $p=0.5$, $\mathsf{m}=1$, $\mathsf{t}=5$, so $z_{cr}\approx 0.27$ and $\mathsf{u}_{cr}\approx 0.68$.
	Large (red) dots show the points $0$, $p$, and $1$; the smaller dot is $z_{cr}$.}
	\label{fig:contour_deformation}
\end{figure}

To arrive at the law of large numbers $\lambda_m/L \to \mathsf{c}(\mathsf{m},\mathsf{t})$
(recall that this is the same as $\mathsf{u}_{cr}$),
it remains to show that changing $\mathsf{c}$
by $\varepsilon$ or $(-\varepsilon)$ makes the Fredholm determinant \eqref{eq:Fredholm_Schur} go to $0$ or $1$,
respectively. To show convergence to~$1$, it suffices to
estimate $|K(u_1,u_2)|\le e^{-\delta L}$ for
$u_1,u_2\le L\ssp (\mathsf{c}-\varepsilon)$, which follows from the steepest descent analysis.
To show convergence to $0$,
it suffices to look at the density function $K(u,u)$ for $u$ around $L(\mathsf{c}+\varepsilon)$.
In the limit, this density
is strictly positive, and so the event
$\lambda_m\ge L\ssp (\mathsf{c}+\varepsilon)$ has probability tending to zero.
We omit further details.

\subsection{Asymptotic fluctuations}
\label{sub:asymptotic_fluctuations_app}

Consider the probabilities of fluctuations as in \Cref{thm:TASEP_limit}:
\begin{equation}
	\label{eq:TASEP_fluctuations_app_initial_formula}
	\Prob_{\mathrm{TASEP}}
	\left( \bar\xi_{\lfloor L\ssp \mathsf{m}- L^{1/3}\ssp \alpha\ssp \mathsf{v}_1\rfloor}
		(\lfloor L\ssp \mathsf{t} \rfloor )\ge
	L\ssp \mathsf{c} -
	L^{1/3}\ssp \beta\ssp \mathsf{v}_2
\right)= \det\left[
		\mathrm{Id}
		-
		K^{L\ssp \mathsf{t},
		L\ssp \mathsf{m} - L^{1/3}\ssp \alpha\ssp \mathsf{v}_1}
	\right]_{\le L\ssp \mathsf{c}-
	L^{1/3}\ssp \beta\ssp \mathsf{v}_2 }+o(1).
\end{equation}
Assume that $\mathsf{m}$ and $\mathsf{t}$ are fixed
(recall that $\mathsf{c},\mathsf{v}_1,\mathsf{v}_2$ depend on them).
In the right-hand side of
\eqref{eq:TASEP_fluctuations_app_initial_formula},
we dropped integer parts as
they do not affect the limit to continuous space.

One can show that in the
series
\eqref{eq:Fredholm_Schur}
for the
Fredholm determinant \eqref{eq:TASEP_fluctuations_app_initial_formula},
terms where
$i_a< L\ssp \mathsf{c}-s L^{1/3}$ for at least one $a=1,\ldots,\ell $
(and $s$ sufficiently large but finite), are negligible in the limit.
Thus,
it suffices to consider
$
\tilde K(\tilde u_1,\tilde u_2)\coloneqq
K^{L\ssp \mathsf{t},
L\ssp \mathsf{m} - L^{1/3}\ssp \alpha\ssp \mathsf{v}_1}
(L\ssp \mathsf{c}+\tilde u_1 L^{1/3},L\ssp \mathsf{c}+\tilde u_2 L^{1/3})$,
where $\tilde u_1,\tilde u_2\in \mathbb{R}$.
This computation will also produce
the normalization
constants $\mathsf{v}_1$ and $\mathsf{v}_2$
for the Tracy--Widom GUE fluctuations.

In $\tilde K$,
deform the contours to pass near the double critical point
$z_{cr}=z_{cr}(\mathsf{m},\mathsf{t})$ given by \eqref{eq:correct_edge_zc_u}.
The contribution outside a small neighborhood of $z_{cr}$ vanishes in the limit.
In the neighborhood,
make a change of variables
\begin{equation*}
	z=z_{cr}+L^{-1/3}\frac{\tilde z}{\varrho},
	\qquad
	w=z_{cr}+L^{-1/3}\frac{\tilde w}{\varrho},
\end{equation*}
where $\varrho>0$ is to be determined.
The new contour $\tilde z$ goes from
$e^{-\frac{2\pi\mathbf{i}}{3}}\infty$ to $-1$ to
$e^{\frac{2\pi\mathbf{i}}{3}}\infty$,
and $\tilde w$ goes from
$e^{-\frac{\pi\mathbf{i}}{3}}\infty$ to $0$ to
$e^{\frac{\pi\mathbf{i}}{3}}\infty$.
The function
\eqref{eq:S_function}
is Taylor expanded in $L$ as
\begin{equation*}
	\begin{split}
		&L\left( S(z;\mathsf{c}+L^{-2/3}\tilde u_1 )-S(z_{cr};\mathsf{c}+L^{-2/3}\tilde u_1 ) \right)
		\\&\hspace{80pt}=
		-\frac{\tilde z^3}{3\varrho^3}
		\frac{p^{3/2}}{\sqrt{\mathsf{m}\ssp \mathsf{t}}}
		\frac{(\sqrt{\mathsf{t}/p}-\sqrt{\mathsf{m}})^4}{(1-p)^3 z_{cr}}
		+
		\frac{\tilde z}{\varrho\ssp z_{cr}}
		\left( -\tilde u_1
			+
			\frac{\alpha\ssp \mathsf{v}_1(\sqrt{p\ssp \mathsf{t}/\mathsf{m}}-1)}{1-p}
		\right).
	\end{split}
\end{equation*}
Let us pick $\varrho>0$ such that the coefficient
by $\tilde z^3$ is $(-1/3)$,
that is,
\begin{equation*}
	\varrho=
	\frac{\sqrt{p}}{(1-p)\ssp z_{cr}}
	\frac{(\sqrt{p\mathsf{t}}-\sqrt{\mathsf{m}})^{2/3}
	(\sqrt{\mathsf{t}/p}-\sqrt{\mathsf{m}})^{2/3}}{(\mathsf{m}\mathsf{t})^{1/6}}.
\end{equation*}
We also have
in the integrand:
\begin{equation*}
	\frac{dz \ssp dw}{z(z-w)}\sim -\frac{L^{-1/3}}{\varrho z_{cr} }
	\frac{d\tilde z \ssp d\tilde w}{\tilde w- \tilde z}.
\end{equation*}
Adding the difference
$S(z_{cr};\mathsf{c}+L^{-2/3}\tilde u_2 )-S(z_{cr};\mathsf{c}+L^{-2/3}\tilde u_1 )$
inside the exponent in the integrand \eqref{eq:K_integrand}
does not change the correlation kernel, since it is a
gauge transformation of the form
$\tilde K(\tilde u_1,\tilde u_2)\mapsto
\tilde K(\tilde u_1,\tilde u_2)\frac{f(\tilde u_2)}{f(\tilde u_1)}$,
where $f(\cdot)$ is a nonvanishing function.
Thus, we have
\begin{equation}
	\label{eq:K_Airy_asymptotics}
	\tilde K(\tilde u_1,\tilde u_2)\frac{f(\tilde u_2)}{f(\tilde u_1)}
	\sim
	-\frac{L^{-1/3}}{\varrho z_{cr} }
	\int_{e^{-\frac{2\pi\mathbf{i}}{3}}\infty}^{e^{\frac{2\pi\mathbf{i}}{3}}\infty}
	d\tilde z
	\int_{e^{\frac{\pi\mathbf{i}}{3}}\infty}^{e^{-\frac{\pi\mathbf{i}}{3}}\infty}
	d\tilde w\,
	\frac{\exp\left[ \frac{\tilde w^3}{3}
			-
			\frac{\tilde z^3}{3}
			+
			\tilde z U_1
			-
			\tilde w U_2
	\right]}{\tilde w-\tilde z},
\end{equation}
where
\begin{equation}
	\label{eq:big_U_as_function_of_small_u}
	U_{1,2}\coloneqq
	\frac{1}{\varrho\ssp z_{cr}}
	\left(
	-\tilde u_{1,2}
		+
		\frac{\alpha\ssp \mathsf{v}_1(\sqrt{p\ssp \mathsf{t}/\mathsf{m}}-1)}{1-p}
	\right)
	.
\end{equation}
The integral in \eqref{eq:K_Airy_asymptotics} together with the minus sign from the prefactor
is equal to the celebrated Airy$_2$ kernel
$\mathsf{A}\bigl(U_1,U_2\bigr)$
(e.g., see \cite[(40)]{Okounkov2002}).
The Tracy--Widom GUE cumulative distribution function
is the Fredholm determinant of the Airy$_2$ kernel, that is,
$F_2(r)=\det[\mathrm{Id}-\mathsf{A}]_{\ge r}$, $r\in \mathbb{R}$.

The additional prefactor $(L^{1/3}\varrho\ssp z_{cr})^{-1}$
in \eqref{eq:K_Airy_asymptotics}
combined with the sums in
the pre-limit Fredholm determinant \eqref{eq:Fredholm_Schur}
turns them into Riemann integrals of the Airy$_2$ kernel.
The normalization by
$\varrho z_{cr}$
is absorbed by
a change of variables:
\begin{equation*}
	\frac{1}{L^{1/3}\varrho \ssp z_{cr}}
	\sum_{i=\lfloor L^{1/3}\tilde u \rfloor \le -L^{1/3}\beta\ssp \mathsf{v}_2}
	g(\tilde u)
	\sim
	\frac{1}{\varrho \ssp z_{cr}}
	\int_{-\infty}^{-\beta \ssp \mathsf{v}_2}
	g(\tilde u)\ssp d\tilde u=
	\int_{
	\frac{1}{\varrho\ssp z_{cr}}
	\left(
		\beta \ssp \mathsf{v}_2
		+
		\frac{\alpha\ssp \mathsf{v}_1(\sqrt{p\ssp \mathsf{t}/\mathsf{m}}-1)}{1-p}
	\right)
	}^{+\infty}
	\mathsf{g}(U)\ssp dU.
\end{equation*}
Here we incorporated the bound
$L\ssp \mathsf{c}- L^{1/3}\ssp \beta\ssp \mathsf{v}_2$ from \eqref{eq:TASEP_fluctuations_app_initial_formula}
into the bound for $i$, and shifted
all indices by $L\ssp \mathsf{c}$.
The function $g(\tilde u)$ represents
terms like $K(\cdot, i_a)K(i_a,\cdot)$ in the
Fredholm determinant \eqref{eq:Fredholm_Schur}
arising when expanding the individual $\ell\times \ell$
determinants as sums over permutations.
Here $\tilde u$ and $i_a$ are related as
$i_a=\lfloor L^{1/3}\tilde u \rfloor$.
We also set $\mathsf{g}(U)=\mathsf{g}(\tilde u)$.

We see that
the following
convergence of Fredholm determinants holds:
\begin{equation}
	\label{eq:Airy_limit}
	\lim_{L\to \infty}
	\det\left[
		\mathrm{Id}
		-
		K^{L\ssp \mathsf{t},
		L\ssp \mathsf{m} - L^{1/3}\ssp \alpha\ssp \mathsf{v}}
	\right]_{\le L\ssp \mathsf{c} }
	=
	\det\left[
		\mathrm{Id}
		-
		\mathsf{A}
	\right]_{\ge r},
\end{equation}
where
\begin{equation*}
	r=
	\frac{1}{\varrho\ssp z_{cr}}
	\left(
		\beta \ssp \mathsf{v}_2
		+
		\frac{\alpha\ssp \mathsf{v}_1(\sqrt{p\ssp \mathsf{t}/\mathsf{m}}-1)}{1-p}
	\right).
\end{equation*}
We see that setting
\begin{equation}
	\label{eq:v_definition}
	\begin{split}
		\mathsf{v}_1=
		\mathsf{v}_1
		(\mathsf{m},\mathsf{t})
		&\coloneqq
		\frac{\varrho\ssp z_{cr}\ssp(1-p)}{\sqrt{p\ssp \mathsf{t}/\mathsf{m}}-1}
		=
		\frac{\sqrt{p}\ssp\mathsf{m}^{1/3}}{\mathsf{t}^{1/6}}
		\frac{(\sqrt{\mathsf{t}/p}-\sqrt{\mathsf{m}})^{2/3}}
		{(\sqrt{p\mathsf{t}}-\sqrt{\mathsf{m}})^{1/3}}
		;\\
		\mathsf{v}_2=
		\mathsf{v}_2
		(\mathsf{m},\mathsf{t})
		&\coloneqq
		\varrho\ssp z_{cr}
		=
		\frac{\sqrt{p}}{(1-p)}
		\frac{(\sqrt{p\mathsf{t}}-\sqrt{\mathsf{m}})^{2/3}
		(\sqrt{\mathsf{t}/p}-\sqrt{\mathsf{m}})^{2/3}}{(\mathsf{m}\mathsf{t})^{1/6}}
	\end{split}
\end{equation}
turns the right-hand side of \eqref{eq:Airy_limit} into the Tracy--Widom GUE distribution $F_2(\alpha+\beta)$.
This completes the proof of \Cref{thm:TASEP_limit}.

\bibliographystyle{alpha}
\bibliography{bib_pipes}

\medskip

\textsc{A. H. Morales, Université du Québec à Montréal, Montreal, QC, Canada and University of Massachusetts Amherst, Amherst, MA, USA}

E-mail: \texttt{morales\_borrero.alejandro@uqam.ca}

\medskip

\textsc{G. Panova, University of Southern California, Los Angeles, CA, USA}

E-mail: \texttt{gpanova@usc.edu}

\medskip

\textsc{L. Petrov, University of Virginia, Charlottesville, VA, USA}

E-mail: \texttt{lenia.petrov@gmail.com}

\medskip

\textsc{D. Yeliussizov, Kazakh--British Technical University, Almaty, Kazakhstan}

E-mail: \texttt{yeldamir@gmail.com}

\end{document}